\newtheorem{theorem}{Theorem}[section]
\newtheorem{cor}[theorem]{Corollary}
\newtheorem{lemma}[theorem]{Lemma}
\newtheorem{prop}[theorem]{Proposition}
\newtheorem*{theorem*}{Theorem}
\theoremstyle{definition}
\newtheorem{defin}[theorem]{Definition}
\newtheorem{fact}[theorem]{Fact}
\newtheorem{exa}[theorem]{Example}
\newtheorem{que}[theorem]{Question}
\newtheorem{notation}[theorem]{Notation}
\theoremstyle{remark}
\newtheorem*{rem}{Remark}
\newtheorem*{claim}{Claim}
\renewcommand{\rm}[1]{\mathrm{#1}}
\newcommand{\cA}{\mathcal{A}}
\newcommand{\rmB}{\mathrm{B}}
\newcommand{\cB}{\mathcal{B}}
\newcommand{\bbC}{\mathbb{C}}
\newcommand{\rmC}{\mathrm{C}}
\newcommand{\rmD}{\mathrm{D}}
\newcommand{\rmF}{\mathrm{F}}
\newcommand{\cF}{\mathcal{F}}
\newcommand{\sfG}{\mathsf{G}}
\newcommand{\bK}{\mathbf{K}}
\newcommand{\cL}{\mathcal{L}}
\newcommand{\bM}{\mathbf{M}}
\newcommand{\rmM}{\mathrm{M}}
\newcommand{\bN}{\mathbf{N}}
\newcommand{\bbN}{\mathbb{N}}
\newcommand{\rmN}{\mathrm{N}}
\newcommand{\cN}{\mathcal{N}}
\newcommand{\cO}{\mathcal{O}}
\newcommand{\cP}{\mathcal{P}}
\newcommand{\sfp}{\mathsf{p}}
\newcommand{\cQ}{\mathcal{Q}}
\newcommand{\sfq}{\mathsf{q}}
\newcommand{\bbR}{\mathbb{R}}
\newcommand{\rmS}{\mathrm{S}}
\newcommand{\cU}{\mathcal{U}}
\newcommand{\cV}{\mathcal{V}}
\newcommand{\cW}{\mathcal{W}}
\newcommand{\bbZ}{\mathbb{Z}}
\newcommand{\wt}{\widetilde}
\newcommand{\wh}{\widehat}
\newcommand{\ol}{\overline}
\newcommand{\Sa}{\mathrm{Sa}}
\newcommand{\sa}{\mathrm{Sa}}
\newcommand{\la}{\langle}
\newcommand{\ra}{\rangle}
\newcommand{\fr}{Fra\"iss\'e }
\renewcommand{\phi}{\varphi}
\newcommand{\aut}{\mathrm{Aut}}
\newcommand{\dom}{\mathrm{dom}}
\newcommand{\im}{\mathrm{Im}}
\newcommand{\str}{\mathbf{Str}}
\newcommand{\Int}{\mathrm{Int}}
\newcommand{\sn}{\rm{SN}}
\newcommand{\snrpc}{\rm{SN}_{\rm{RPC}}}
\newcommand{\snpc}{\rm{SN}_{\rm{PC}}}
\newcommand{\op}{\rm{op}}
\newcommand{\ngrpc}{\cN_G^{\rm{RPC}}}
\newcommand{\subg}{\rm{Sub}_G}
\newcommand{\fubini}[3]{\Sigma_{#1}^{#3}\Sigma_{#2}^{#3}}
\newcommand{\fin}[1]{[#1]^{<\omega}}
\def\-{\raisebox{.30pt}{-}}
\begin{document}
	
	\title{Ultracoproducts and weak containment for flows of topological groups}
	\author{Andy Zucker}
	\date{January 2024}
	\maketitle
	
	\begin{abstract}
		We develop the theory of ultracoproducts and weak containment for flows of arbitrary topological groups. This provides a nice complement to corresponding theories for p.m.p.\ actions and unitary representations of locally compact groups. For the class of locally Roelcke precompact groups, the theory is especially rich, allowing us to define for certain families of $G$-flows a suitable compact space of weak types. When $G$ is locally compact, all $G$-flows belong to one such family, yielding a single compact space describing all weak types of $G$-flows.
		\let\thefootnote\relax\footnote{2020 Mathematics Subject Classification. Primary: 37B05, 22F05, 03C20}
		\let\thefootnote\relax\footnote{Keywords: Topological dynamics, ultraproducts, weak containment}
		\let\thefootnote\relax\footnote{The author was supported by NSERC grants RGPIN-2023-03269 and DGECR-2023-00412.}
	\end{abstract}
	
	\section{Introduction}
	
	The notions of weak containment and weak equivalence have been of great importance in the study of representation theory and ergodic theory of locally compact groups. First defined for unitary representations by Godemont \cite{Godem_48} and developed further by Fell \cite{Fell_60, Fell_62}, weak containment was later defined by Kechris \cite{Kechris_Global} for probability measure preserving actions of locally compact groups on probability spaces. In both settings, a variety of useful properties a representation or a p.m.p.\ action might enjoy are invariants of weak equivalence, while at the same time being a coarse enough relation to be tractable, as opposed to simply considering isomorphism. We refer to the survey \cite{Burton_Kechris_wk} for more on weak containment of p.m.p. actions of countable groups.
	
	Very often, notions of weak containment can equivalently be phrased by developing a suitable notion of ultraproduct for the given class of objects; then one object weakly contains another exactly when an ultrapower of the first object embeds the second. Ultraproducts were first defined for first-order structures; Dacunha-Castelle and Krivine in \cite{DCK_BSpace_Ult} defined a notion of ultraproduct for Banach spaces, which was further developed by Henson \cite{Henson_BSpace_Ult}, thus allowing for a notion of ultraproduct for unitary representations of discrete groups. Loeb \cite{Loeb} defined a notion of ultraproduct for probability spaces, which was later used by Conley, Kechris and Tucker-Drob \cite{CKTD_Ults} to develop a notion of ultraproduct for p.m.p.\ actions of countable groups. More recently, Ben Yaacov and Goldbring in \cite{BYG_2021} defined two different notions of ultraproduct for unitary representations of locally compact groups, representing two different approaches to dealing with the non-discrete group topology.

	In this paper, we develop the theory of weak containment for flows of topological groups, i.e.\ a compact space $X$ equipped with a continuous action $G\times X\to X$, where $G$ is a topological group. We do this by first developing the theory of \emph{$G$-equicontinuous ultracoproducts}, then \emph{defining} weak containment via the property that a flow weakly contains another iff an ultracopower of the former factors onto the latter. The ultracoproduct construction for compact Hausdorff spaces was thoroughly developed by Bankston in a series of works \cite{Bankston_1984, Bankston_1987, Bankston_2003}, and the ultracoproduct construction of $G$-flows is implicit in work of Schneider \cite{Schneider_Equivariant} connecting topological dynamics to Gromov's metric measure geometry. When the group $G$ is allowed to be any topological group, a major difficulty of working at this level of generality is that the theory of weak containment becomes extremely subtle. For certain topological groups, the \emph{Fubini groups} that we define in Section~\ref{Section:Fubini}, we show that on a large class of $G$-flows, weak containment is indeed a pre-order. For \emph{locally Roelcke precompact} groups, we give a combinatorial description of the \emph{weak type} of a $G$-flow, which for many $G$-flows precisely captures its weak equivalence class.
	
	Two applications of this theory were major motivations for the work contained here and will appear in future works which we briefly preview. First, in upcoming joint work with G.\ Basso, we give a new characterization of those Polish groups with the property that their universal minimal flow has a comeager orbit. A major component of the proof is the analysis of ultracopowers of the universal minimal flow. We show that if $G$ is a Polish group whose universal minimal flow is non-metrizable and has a comeager orbit, then $\rmM(G)$ is ``almost" weakly rigid (Definition~\ref{Def:Weak_Rigid}) in that any ultracopower cannot be too much larger.
	
	The other application pertains to connections between topological dynamics and the notion of \emph{big Ramsey degrees} from structural Ramsey theory. The seminal paper of Kechris, Pestov, and Todor\v{c}evi\'c \cite{KPT} connects the study of the universal minimal flow of $\aut(\bK)$ for a countable ultrahomogeneous first-order structure $\bK$ (these are often called \emph{\fr structures}) to a property of the class of finite structures which embed into $\bK$ called the \emph{Ramsey property}. In \cite{Zucker_Metr_UMF}, the present author shows that for $\aut(\bK)$ as above, the metrizability of the universal minimal flow is exactly characterized by the associated class of finite structures having finite \emph{small Ramsey degrees}. In \cite{Zucker_BR_Dynamics}, a new dynamical object called the \emph{universal completion flow} is defined, and assuming a mild strengthening of finite big Ramsey degrees, it is shown that $\aut(\bK)$ admits a universal completion flow which is metrizable and unique. However, lacking from this result was a uniquely defined dynamical object that exists for any topological group, regardless of if $G$ has the form $\aut(\bK)$ or if $\bK$ has finite big Ramsey degrees. In upcoming work, such a dynamical object is given. The key difficulty is that while this dynamical object is not unique up to isomorphism, it is unique up to weak equivalence. 
	\vspace{3 mm}
	
	\textbf{Acknowledgments:} I thank Gianluca Basso for numerous detailed discussions as this project unfolded. I also thank Dana Barto\v{s}ov\'a for pointing me towards the work of Bankston, and I thank Isaac Goldbring and Martin Schneider for comments on an earlier draft.    
	
	\subsection{Notation and conventions}
	
	Most set-theoretic notation is standard. We write $\bbN = \omega$ for the set of non-negative integers, write $k< \omega$ when $k\in \omega$, and given $k< \omega$, we identify $k$ with the set $\{0,..., k-1\}$. If $I$ is a set, we write $\la a_i: i\in I\ra$ for the function with domain $I$ which sends $i\in I$ to $a_i$. We often call functions introduced this way \emph{tuples}.
	
	Most model-theoretic notation is also standard. Unless otherwise specified, we typically denote first-order structures with bold letters and let the un-bolded version denote the underlying set, i.e.\ $\bM$ has underlying set $M$.
	
	All groups and spaces in these notes are Hausdorff. If $X$ is a topological space, we let $\op(X)$ denote the set of non-empty open subsets of $X$ and $\exp(X)$ denotes the set of non-empty closed subsets of $X$. Given $x\in X$, we write $\op(x, X):= \{A\in \op(X): x\in A\}$. We let $\rmC(X)$ denote the algebra of continuous bounded functions from $X$ to $\bbC$. If $s > 0$, we put $\rmC^s(X) = \{f\in \rmC(X): \|f\|\leq s\}$. We write $\rmC(X, [0,1])$ for the continuous functions from $X$ to $[0, 1]$.
	
	We will take all pseudo-metrics to be bounded. If $\rho$ is a pseudometric on some set $X$, we take $\rho$-Lipschitz to refer to Lipschitz constant $1$; to refer to  another Lipschitz constant $c> 0$, we can form the pseudo-metric $c\rho$ and refer to $c\rho$-Lipschitz functions.

	\section{Ultracoproducts of compact spaces}
	\label{Section:Ultracoproducts_Spaces}
	
	Ultracoproducts and ultraproducts of families of compact spaces have been investigated by Bankston\footnote{I thank Dana Barto\v{s}ov\'a for pointing me to the reference \cite{Bankston_2003}.} (see \cite{Bankston_2003} and the references therein). In our construction of an ultracoproduct of $G$-flows, we will build the underlying space in a similar fashion, and in the case that $G$ is a discrete group, the underlying space of the ultracoproduct is exactly the ultracoproduct of the underlying spaces.
	
	We will make heavy use of \emph{Gelfand duality}, which states that the categories of compact Hausdorff spaces and unital commutative $C^*$-algebras are contravariantly isomorphic. To each compact Hausdorff space $X$, one associates the algebra $\rmC(X)$, and given a unital commutative $C^*$-algebra $\cA$, one forms the \emph{Gelfand space} of $\cA$, the space $\wh{\cA}$ of multiplicative linear functionals $\cA\to \bbC$ equipped with the topology of pointwise convergence. If $X$ and $Y$ are compact and $\phi\colon X\to Y$ is continuous, then one obtains a $*$-homomorphism $\hat{\phi}\colon \rmC(Y)\to \rmC(X)$ via $\hat{\phi}(f) = f\circ \phi$. Conversely, if $\cA$ and $\cB$ are unital commutative $C^*$-algebras and  $\eta\colon \cA\to \cB$ is a $*$-homomorphism, then one obtains a continous map $\hat{\eta}\colon \wh{\cB}\to \wh{\cA}$ via $\hat{\eta}(x)(a) = x(\eta(a))$.    
	
	Let $I$ be an infinite set and $\vec{X}= \la X_i: i\in I\ra$ a tuple of compact spaces. Then $\bigsqcup\vec{X}:= \bigsqcup_{i\in I} X_i$ is locally compact, and as such we can naturally view it as a dense open subspace of its \emph{beta compactification} $\beta(\bigsqcup \vec{X}):= \wh{\rmC(\bigsqcup\vec{X})}$, where we note that $\rmC(\bigsqcup \vec{X})\cong \bigcup_{s>0} \prod_{i\in I} \rmC^s(X_i)$. This space by definition satisfies the following universal property: for any compact space $Y$ and any continuous map $\phi\colon \bigsqcup \vec{X}\to Y$, there is a continuous extension $\wt{\phi}\colon \beta(\bigsqcup \vec{X})\to Y$. In particular, considering the continuous map $\pi_I\colon \bigsqcup \vec{X}\to \beta I$ with $\pi_I(x) = i$ iff $x\in X_i$, we obtain a continuous extension to $\beta(\bigsqcup \vec{X})$, which we also denote by $\pi_I$. Given an ultrafilter $\cU\in \beta I$, the \emph{ultracoproduct} of $\la X_i: i\in I\ra$ along $\cU$ is the space $\Sigma_\cU X_i := \pi_I^{-1}(\{\cU\})$. We can identify $\rmC(\Sigma_\cU X_i)$ with the  ultra\emph{product} of the $C^*$-algebras $\{\rmC(X_i)\colon i\in I\}$ along $\cU$. This is the algebra $$\left(\bigcup_{s>0} \prod_{i\in I} \rmC^s(X_i)\right)/ \sim_\cU,$$ where given $(p_i)_{i\in I}, (q_i)_{i\in I}\in \prod_{i\in I} \rmC^s(X_i)$, we declare that $(p_i)_{i\in I}\sim_\cU (q_i)_{i\in I}$ iff for every $\delta > 0$, we have $\{i\in I: \|p_i-q_i\|< \delta\}\in \cU$. Addition and multiplication are then defined coordinate-wise, and we set $\|[(p_i)_{i\in I}]_{\sim_\cU}\| = \lim_{i\to \cU} \|p_i\|$. We write $(p_i)_\cU\in \rmC(\Sigma_\cU X_i)$ for the corresponding continuous function on the ultracoproduct.
	
	When $X_i\cong X$ for every $i\in I$, we call $\Sigma_\cU X$ the \emph{ultracopower} of $X$ along $\cU$. In this case, the projection map $I\times X\to X$ continuously extends to $\beta(I\times X)$, and we let $\pi_{X, \cU}$ denote its restriction to $\Sigma_\cU X$; we call this the \emph{ultracopower map}.

	We now turn to ultraproducts. Recall that the ultraproduct of the \emph{sets} $X_i$ along $\cU$ is defined by $\Pi_\cU X_i:= \prod_{i\in I} X_i/E_\cU$, where $(x_i)_{i\in I} E_\cU (y_i)_{i\in I}$ iff $\{i\in I: x_i = y_i\}\in \cU$. Viewing the $X_i$ as spaces, consider the map $\lim_{\vec{X},\cU}\colon \prod_{i\in I} X_i\to \Sigma_\cU X_i$ given by $\lim_{\vec{X},\cU}((x_i)_{i\in I}) = \lim_{i\to \cU} x_i$. This map is $E_\cU$-invariant, giving us a map $\iota_{\vec{X},\cU}\colon \Pi_\cU X_i\to \Sigma_\cU X_i$.
	\vspace{3 mm} 
	
	\begin{claim}
		$\iota_{\vec{X},\cU}$ is injective.
	\end{claim}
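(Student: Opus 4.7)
The plan is to exploit the fact that $\beta(\bigsqcup \vec{X})$ is the Gelfand spectrum of $\rmC(\bigsqcup \vec{X})$, so its points are separated by bounded continuous functions on the disjoint union. Concretely, suppose $(x_i)_{i\in I}$ and $(y_i)_{i\in I}$ in $\prod_{i\in I} X_i$ are not $E_\cU$-equivalent, so that $A := \{i\in I : x_i \neq y_i\}$ lies in $\cU$. I want to produce a single $f \in \rmC(\bigsqcup \vec{X})$ whose continuous extension $\wt{f}\colon \beta(\bigsqcup \vec{X})\to \bbC$ takes different values at $\lim_{i\to \cU} x_i$ and $\lim_{i\to \cU} y_i$.

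First I would build $f$ coordinate-wise. For each $i\in A$, since $X_i$ is compact Hausdorff with $x_i \neq y_i$, Urysohn's lemma supplies $f_i \in \rmC(X_i,[0,1])$ with $f_i(x_i)=0$ and $f_i(y_i)=1$. For $i \notin A$, set $f_i \equiv 0$. The tuple $(f_i)_{i\in I}$ lies in $\prod_{i\in I}\rmC^1(X_i)$, and since each $X_i$ is clopen in $\bigsqcup \vec{X}$, it assembles to a genuine bounded continuous function $f\colon \bigsqcup \vec{X} \to [0,1]$. In particular $f \in \rmC(\bigsqcup \vec{X})$, and $f$ extends uniquely to $\wt{f}\in \rmC(\beta(\bigsqcup \vec{X}))$.

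Next I would use the defining property of the limit along $\cU$ in the compact space $\beta(\bigsqcup \vec{X})$: for every $g \in \rmC(\bigsqcup \vec{X})$ and every net $(z_i)_{i\in I}$ in $\bigsqcup \vec{X}$, one has $\wt{g}\bigl(\lim_{i\to \cU} z_i\bigr) = \lim_{i\to \cU} g(z_i)$, which follows directly from the continuity of $\wt{g}$ and the fact that convergence in $\beta(\bigsqcup \vec{X})$ is tested against its continuous functions. Applying this with $g=f$ and with $z_i = x_i$, respectively $z_i = y_i$, gives
\[
\wt{f}\bigl(\iota_{\vec{X},\cU}[(x_i)_{i\in I}]\bigr) = \lim_{i\to \cU} f_i(x_i) = 0,
\]
while, because $A \in \cU$ and $f_i(y_i)=1$ for $i\in A$,
\[
\wt{f}\bigl(\iota_{\vec{X},\cU}[(y_i)_{i\in I}]\bigr) = \lim_{i\to \cU} f_i(y_i) = 1.
\]
Hence the two images are distinct, proving injectivity.

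There is no real obstacle here: the argument is a clean application of Urysohn's lemma plus the universal property of $\beta$. The only points requiring care are the verification that the coordinate-wise family $(f_i)$ really does glue to a continuous bounded function on the disjoint union (immediate because the $X_i$ are clopen summands and $\|f_i\|\leq 1$ uniformly), and the identification of $\wt{f}$ evaluated on $\lim_{i\to \cU} z_i$ with the ultrafilter limit of the scalars $f(z_i)$, which is precisely how limits in the Gelfand spectrum are computed.
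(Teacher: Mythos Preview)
Your proof is correct and follows essentially the same approach as the paper: build a separating function coordinate-wise via Urysohn, glue it to obtain $f\in \rmC(\bigsqcup\vec{X})$, extend to $\beta(\bigsqcup\vec{X})$, and observe that the extension evaluates to $0$ at one limit and $1$ at the other. The paper's proof is just a more compressed version of exactly this argument.
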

	
	\begin{proof}
		Suppose $(x_i)_{i\in I}$ and $(y_i)_{i\in I}$ satisfy $I':=\{i\in I: x_i\neq y_i\}\in \cU$. Find $f\in \rmC(\bigsqcup \vec{X})$ so that $f(x_i) = 0$ and $f(y_i) = 1$ for each $i\in I'$. Upon continuously extending $f$, we must have $f\circ\lim_{\vec{X},\cU}((x_i)_{i\in I}) = 0$ and $f\circ \lim_{\vec{X},\cU}((y_i)_{i\in I}) = 1$, and hence $\lim_{\vec{X},\cU}((x_i)_{i\in I})\neq \lim_{\vec{X},\cU}((y_i)_{i\in I})$.
	\end{proof}
	
	It turns out (see \cite{Bankston_1987}) that the ultraproduct of the spaces $\la X_i: i\in I\ra$ is exactly the topology on $\Pi_\cU X_i$ which turns $\iota_{\vec{X},\cU}$ into a homeomorphism. Hence we suppress the notation $\iota_{\vec{X},\cU}$ and identify $\Pi_\cU X_i$ as a subspace of $\Sigma_\cU X_i$. 
	\vspace{3 mm}
	
	\begin{claim}
		If for every $n< \omega$ we have $\{i\in I: |X_i|> n\}\in \cU$, then $\Pi_\cU X_i\subsetneq \Sigma_\cU X_i$.
	\end{claim}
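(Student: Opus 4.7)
The plan is to exhibit a point $p\in\Sigma_\cU X_i\setminus \iota_{\vec{X},\cU}(\Pi_\cU X_i)$. Using the hypothesis, I first choose, for each $i\in I$, an integer $n_i\leq |X_i|$ with $\{i\in I:n_i>n\}\in\cU$ for every $n<\omega$, and then pick distinct points $y_i^0,\ldots,y_i^{n_i-1}\in X_i$. Set $D:=\{(i,k):i\in I,\,k<n_i\}$.

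The heart of the construction is an ultrafilter $\cV$ on $D$ with the properties (a) $\{(i,k)\in D:i\in A\}\in\cV$ for every $A\in\cU$, and (b) $\{(i,\sigma(i)):i\in I,\,\sigma(i)<n_i\}\notin\cV$ for every $\sigma:I\to\omega$. To see such $\cV$ exists, I check that the filter generated by these requirements has the finite intersection property: given $A_1,\ldots,A_p\in\cU$ and $\sigma_1,\ldots,\sigma_m:I\to\omega$, the set $A^{\ast}:= A_1\cap\cdots\cap A_p\cap\{i:n_i>m\}$ lies in $\cU$, and for any $i\in A^{\ast}$ some $k<n_i$ lies outside $\{\sigma_j(i):j\le m\}$. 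Zorn's lemma then extends the filter to the desired $\cV$.

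Define $p:=\lim_{(i,k)\to\cV}y_i^k\in\beta(\bigsqcup \vec{X})$; by (a), $\pi_I(p)=\cU$, so $p\in\Sigma_\cU X_i$. Suppose towards a contradiction that $p=\lim_{i\to\cU}x_i$ for some $(x_i)\in\prod_i X_i$. Then for every $(f_i)_i\in\prod_i \rmC^1(X_i)$ the common value $(f_i)_\cU(p)$ equals both $\lim_{i\to\cU}f_i(x_i)$ and $\lim_{(i,k)\to\cV}f_i(y_i^k)$. I distinguish two cases based on whether $x_i$ lies in $\{y_i^k:k<n_i\}$. In the first case, $A:=\{i:x_i\notin\{y_i^k:k<n_i\}\}\in\cU$, and for each $i\in A$ the finite set $\{x_i\}\cup\{y_i^k:k<n_i\}$ is closed in the normal space $X_i$; Tietze extension produces $f_i\in \rmC(X_i,[0,1])$ with $f_i(x_i)=0$ and $f_i(y_i^k)=1$ for all $k<n_i$ (setting $f_i\equiv 0$ off $A$). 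By (a), $\lim_{(i,k)\to\cV}f_i(y_i^k)=1$, while $\lim_{i\to\cU}f_i(x_i)=0$---contradiction. In the second case, $B:=\{i:x_i=y_i^{\sigma(i)}\text{ for some }\sigma(i)<n_i\}\in\cU$ for some $\sigma:I\to\omega$; Tietze supplies $f_i\in\rmC(X_i,[0,1])$ with $f_i(y_i^{\sigma(i)})=0$ and $f_i(y_i^k)=1$ for $k<n_i,\,k\neq\sigma(i)$, and property (b) applied to $\sigma$ forces $\{(i,k):i\in B,\,k\neq\sigma(i)\}\in\cV$, yielding the same $1\neq 0$ contradiction.

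The main obstacle is simultaneously arranging properties (a) and (b) on $\cV$, which reduces to the finite intersection property check above and is precisely where the quantitative form of the hypothesis enters. The Tietze extensions themselves are routine since they are only ever carried out on finite (hence closed) subsets of $X_i$.
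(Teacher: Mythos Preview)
Your proof is correct. The underlying idea---use the unbounded sizes of the $X_i$ to escape any finite collection of ``diagonal'' choices---is the same as the paper's, but the packaging is genuinely different. The paper works on the function side: for every tuple $\vec{x}=(x_i)_{i\in I}$ it fixes functions $p_i^{\vec{x}}\in\rmC(X_i,[0,1])$ vanishing at $x_i$ but with $\lim_\cU\|\prod_{\vec{x}\in Q}p_i^{\vec{x}}\|=1$ for every finite $Q$, and then invokes compactness of $\Sigma_\cU X_i$ to find a point where all $(p_i^{\vec{x}})_\cU$ equal $1$. You instead work on the point side: you build the witness $p$ explicitly as a $\cV$-limit of a pre-chosen finite grid of test points, where $\cV$ is an ultrafilter on the grid that projects to $\cU$ and avoids every graph of a section $\sigma$. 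Your FIP check for $\cV$ plays exactly the role that the norm condition on finite products plays in the paper.

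What each buys: the paper's argument is shorter and avoids the two-case split (it handles all $\vec{x}$ uniformly, whether or not $x_i$ lands on a test point), at the cost of leaving the existence of the functions $p_i^{\vec{x}}$ and of the point $y$ somewhat implicit. Your argument is more explicit about where $p$ sits---it is concretely an ultralimit of points along an auxiliary ultrafilter---and the Tietze step is transparent since it only ever involves finite subsets of $X_i$; the price is the case analysis on whether $x_i$ hits the grid. Both are clean, and neither needs more than normality of the $X_i$ and the cardinality hypothesis.
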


	\begin{proof}
		Fix for each $\vec{x} = (x_i)_{i\in I}\in \prod_{i\in I} X_i$ a tuple of functions $(p_i^{\vec{x}})_{i\in I}$ with each $p_i^{\vec{x}}\in \rmC(X_i, [0, 1])$, with $p_i^{\vec{x}}(x_i) = 0$, and with the property that for any $Q\in \fin{\prod_{i\in I} X_I}$, we have $\lim_\cU \|\prod_{\vec{x}\in Q} p_i^{\vec{x}}\| = 1$. Then there is $y\in \Sigma_\cU X_i$ with the property that $(p_i^{\vec{x}})_\cU(y) = 1$ for every $\vec{x}\in \prod_{i\in I} X_i$, which cannot hold if $y\in \Sigma_\cU X_i$.   
	\end{proof}

	In the case of ultra(co)powers, we have the (set-theoretic) ultrapower embedding $\delta_{X,\cU}\colon X\to \Pi_\cU X$, so also a map $j_{X,\cU}:= \iota_{X,\cU}\circ \delta_{X,\cU}\colon X\to \Sigma_\cU X$. In general $j_{X,\cU}$ is injective, but not continuous. 
	
	If $\{G_i: i\in I\}$ are discrete groups and each $X_i$ is a $G_i$-flow, we obtain an action of $\Pi_{i\in I} G_i$ on $\beta(\bigsqcup \vec{X})$ by simply taking the continuous extensions which are guaranteed to exist. In this way, we obtain an action of the ultraproduct $\Pi_\cU G_i$ on $\Sigma_\cU X_i$. When each $G_i$ is the same discrete group $G$ and we identify $G$ via it's image in $\Pi_\cU G$ under the ultrapower embedding, this is exactly the ultracoproduct of the $G$-flows $\la X_i: i\in I\ra$. 
	
	The discrete case is already quite useful for discussing some examples. 
	
	\begin{exa}
		Let us show that the ultracopower of a minimal flow need not be minimal. Consider $G = \bbZ$, and let $X$ be an irrational rotation of the circle. Fix $\cU\in \beta \omega\setminus \omega$, and form $\Sigma_\cU X\subseteq \beta (X\times \omega)$. Fix $x\in X$, and let $f_n\colon X\to [0,1]$ be continuous functions with $f_n(x) = 1$ and $f_n(y) = 0$ for any $y\in X\setminus (x-1/n, x+1/n)$. Let $f\colon \Sigma_\cU X\to [0,1]$ be the continuous function represented by $(f_n)_{n< \omega}$, and consider the open set $A:= \{y\in \Sigma_\cU X: f(y)> 1/2\}$. Then $j_{X,\cU}(x)\in A$, but $g\cdot j_{X,\cU}(x)\not\in A$ for any $g\in \bbZ\setminus \{0\}$.
		
		By contrast, certain topological groups will have the property that all ultracoproducts of minimal flows remain minimal; see Corollary~\ref{Cor:CAP_Ults}.
	\end{exa}

\section{$G$-continuity and $G$-compactification}
\label{Section:G_Cont}

For the rest of the paper, $G$ denotes an arbitrary topological group, and $G_{dsc}$ denotes $G$ with the discrete topology. We let $e_G$ denote the identity element and $\cN(G)$ denote a fixed base of symmetric open neighborhoods of $e_G$. We write $\rm{FS}(G)$ for the set of finite symmetric subsets of $G$ containing the identity, and given $U\in \cN(G)$, we write $\rm{FS}(U) = \{F\in \rm{FS}(G): F\subseteq U\}$.

\begin{defin}
	\label{Def:Seminorm}
	A \emph{semi-norm} on $G$ is a bounded, symmetric, continuous function $\sigma\colon G\to \bbR^{{\geq}0}$ with $\sigma(e_G) = 0$ and $\sigma(gh)\leq \sigma(g)+\sigma(h)$ for all $g, h\in G$. We set $\|\sigma\| = \sup\{\sigma(g): g\in G\}$.  If $c> 0$, we set $\rmB_\sigma(c) = \{g\in G: \sigma(g)< c\}$. We write $\rm{SN}(G)$ for the set of semi-norms on $G$. Given $s> 0$, write $\rm{SN}^s(G):= \rm{SN}(G)\cap \rmC^s(G)$. We call $\sigma\in \rm{SN}(G)$ a \emph{norm} if additionally $\sigma(g) = 0$ implies $g = e_G$. 
	
	If $\sigma, \sigma'\in \rm{SN}(G)$, we write $\sigma\leq \sigma'$ iff $\sigma(g)\leq \sigma'(g)$ for each $g\in G$. As $\max\{\sigma, \sigma'\}\in \rm{SN}(G)$, we see that for every $s> 0$, $(\rm{SN}^1(G), \leq)$ is a directed partial order.
\end{defin}

There is a 1-1 correspondence between semi-norms on $G$ and continuous right-invariant pseudo-metrics on $G$; given $\sigma\in \rm{SN}(G)$, we obtain the  pseudo-metric $\rho_\sigma$ via $\rho_\sigma(g, h) = \sigma(gh^{-1})$. Conversely, if $\rho$ is a continuous right-invariant pseudometric, we obtain the semi-norm $\sigma_\rho$ via $\sigma_\rho(g) = \rho(g, e_G)$. Similarly for continuous left-invariant pseudometrics.

We now describe a general procedure for producing semi-norms on $G$. 

\begin{defin}
	\label{Def:Generate_SN}
	Suppose $P\subseteq G\times \bbR^{{\geq}0}$ satisfies the following:
	\begin{enumerate}
		\item 
		$\forall g\in G\, \exists c\geq 0$ with  $(g, c)\in P$,
		\item
		The function $g\to \inf\{c\geq 0: (g, c)\in P\}$ is bounded and symmetric.
		\item
		$\displaystyle\lim_{g\to e_G} \inf\{c\in \bbR^{{\geq}0}: (g, c)\in P\} = 0$
	\end{enumerate}
	We define $[[ P]] \in \rm{SN}(G)$, the \emph{semi-norm generated by $P$}, via 
	$$[[ P]](g) = \inf\left\{\sum_{i< k} c_i: \exists g_0,..., g_k\in G \text{ with } (g_i, c_i)\in P \text{ and } g = g_0\cdots g_k\right\}.$$ 
	It is straightforward to verify that $[[ P]]$ is in fact a semi-norm. \qed
\end{defin}

One instance of Definition~\ref{Def:Generate_SN} that we will use frequently is the following. 

\begin{notation}
	\label{Notation:SN_from_sequence}
	Suppose $\vec{U} = \la U_n: n< \omega\ra$ is a sequence from $\cN(G)$ with $U_{n+1}\subseteq U_n$ for each $n< \omega$ and that $\vec{c} = \la c_n: n< \omega\ra$ is a sequence from $\bbR^{{\geq}0}$ satisfying $1\geq c_0\geq c_1\cdots$ and $\lim c_n = 0$. We set $\sigma_{\vec{U}, \vec{c}} = [[ (G\times \{1\})\cup \bigcup_{n< \omega} (U_n\times \{c_n\})]]\in \rm{SN}^1(G)$.  In the case that $c_n = 2^{-n}$ for each $n< \omega$, we omit it from the notation. 
\end{notation}

Using this notation, we note the following key lemma in the proof of the Birkhoff-Kakutani metrization theorem.

\begin{fact}[\cite{Berberian}, p.\ 28]
	\label{Fact:Birkhoff_Kakutani}
	If $\vec{U} = \la U_n: n< \omega\ra$ is a sequence from $\cN(G)$ with $U_{n+1}^3\subseteq U_n$ for each $n< \omega$ and $h\in U_n\setminus U_{n+1}$, we have $2^{-n-1}\leq \sigma_{\vec{U}}(h)\leq 2^{-n}$.
\end{fact}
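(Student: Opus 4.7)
The plan is to prove the two inequalities separately. For the upper bound, since $h \in U_n$ the one-term decomposition $h = h$ paired with $(h, 2^{-n}) \in P := (G \times \{1\}) \cup \bigcup_{m<\omega}(U_m \times \{2^{-m}\})$ immediately witnesses $\sigma_{\vec{U}}(h) \leq 2^{-n}$ via Definition~\ref{Def:Generate_SN}.

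The lower bound carries the real content. Writing $p(g) := \inf\{c : (g,c) \in P\}$, the key bookkeeping observation is that $p(g) < 2^{-m}$ if and only if $g \in U_{m+1}$, since the only pairs of $P$ involving $g$ have second coordinate $1$ or $2^{-\ell}$ for some $\ell$ with $g \in U_\ell$. Because $\sigma_{\vec{U}}(h) = \inf\bigl\{\sum_i p(g_i) : h = g_0 \cdots g_k\bigr\}$, the desired bound $\sigma_{\vec{U}}(h) \geq 2^{-n-1}$ for $h \notin U_{n+1}$ follows (by contrapositive, applied at $m = n+1$) from the following claim.

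\smallskip
\noindent\emph{Claim.} For every $k \geq 0$, every $m \geq 0$, and every $g_0,\ldots,g_k \in G$, if $\sum_{i=0}^k p(g_i) < 2^{-m}$, then $g_0 \cdots g_k \in U_m$.
\smallskip

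I would prove the claim by induction on $k$, uniformly in $m$. The case $k=0$ is exactly the observation above: $p(g_0) < 2^{-m}$ forces $g_0 \in U_{m+1} \subseteq U_m$. For $k \geq 1$ with total $s := \sum_{i=0}^k p(g_i) < 2^{-m}$, I would select $j \in \{0,\ldots,k\}$ to be the largest index with $\sum_{i<j} p(g_i) \leq s/2$; by maximality this forces $\sum_{i>j} p(g_i) < s/2 \leq 2^{-m-1}$, while the flanking partial sum on the left is automatically at most $s/2 < 2^{-m-1}$. Both flanking sub-products have strictly fewer than $k+1$ factors and partial $p$-sums below $2^{-m-1}$, so the inductive hypothesis applied at level $m+1$ puts $g_0 \cdots g_{j-1}$ and $g_{j+1} \cdots g_k$ in $U_{m+1}$ (with the convention that empty products equal $e_G \in U_{m+1}$). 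The isolated middle factor obeys $p(g_j) \leq s < 2^{-m}$, so $g_j \in U_{m+1}$ as well. The cube hypothesis $U_{m+1}^3 \subseteq U_m$ then places $g_0 \cdots g_k$ in $U_m$, completing the induction and the proof.

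The only conceptual obstacle is choosing the right splitting scheme: a naive two-piece split would require the weaker triangle-type relation $U_{m+1}^2 \subseteq U_m$, which we do not have. One must peel off a single ``expensive'' middle factor, which is always possible because the partial sums cross the midpoint $s/2$ in a single step of size at most $s < 2^{-m}$. This is precisely what lets the cube inclusion absorb all three pieces at once.
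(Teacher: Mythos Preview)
Your proof is correct and is precisely the classical argument; the paper itself does not prove this statement but cites it as a fact from \cite{Berberian}. Your splitting scheme (isolating a single middle factor so that both flanks carry at most half the cost and then invoking $U_{m+1}^3\subseteq U_m$) is exactly the standard device used there.
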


We remark that for any $\sigma\in \rm{SN}^1(G)$, there is $\vec{U}$ as in Fact~\ref{Fact:Birkhoff_Kakutani} with $\sigma\leq \sigma_{\vec{U}}$.

A \emph{left $G$-space} is a topological space $X$ equipped with a continuous left action $a\colon G\times X\to X$. Usually $a$ is omitted from the notation, and one writes $gx$ for $a(g, x)$. A \emph{$G$-flow} is a compact $G$-space. Given $G$-spaces $X$ and $Y$, a function $\phi\colon X\to Y$ is a \emph{$G$-map} if it is continuous and $G$-equivariant. A \emph{factor map} from $X$ onto $Y$ is a surjective $G$-map, and $Y$ is a \emph{factor} of $X$ if there is a factor map $\phi\colon X\to Y$.  One can also consider \emph{right} $G$-spaces and $G$-flows in the obvious manner. 

If $X$ is a $G$-space, then $G$ acts on $\rmC(X)$ on the right where given $p\in \rmC(X)$, $g\in G$, and $x\in X$, we set $(pg)(x) = p(gx)$. However, the action may not be continuous.

\begin{defin}
	\label{Def:G_Cont}
	If $X$ is a $G_{dsc}$-space, a function $p\in \rmC(X)$ is \emph{$G$-continuous} if the map $\lambda_p\colon G\to \rmC(X)$ given by $\lambda_p(g) = pg$ is norm continuous. Write $\rmC_G(X)\subseteq \rmC(X)$ for the subalgebra of $G$-continuous functions, and note that $\rmC_G(X)$ with the norm topology is a right $G$-space. If $s> 0$, write $\rmC_G^s(X):= \rmC_G(X)\cap \rmC^s(X)$.
\end{defin}

We emphasize in particular that $\rmC_G(X)$ is norm closed.

\begin{exa}
	\label{Exa:RUC}
	If $X = G$ is viewed as a left $G$-space in the typical way, we have $\rmC_G(G) = \rm{RUC}(G)$, the algebra of bounded, right-uniformly continuous functions on $G$. Recall that $p\colon G\to \bbC$ is right-uniformly continuous iff for any $\epsilon > 0$, there is $U\in \cN(G)$ such that $gh^{-1}\in U$ implies $|p(g) - p(h)|< \epsilon$.
\end{exa}

Sometimes, we will want a more quantitative way of describing $G$-continuity.

\begin{defin}
	\label{Def:OL}
	Let $X$ be a $G_{dsc}$-space, and fix $\sigma\colon G\to \bbR^{{\geq}0}$. We say that $p\in \rmC(X)$ is \emph{$\sigma$-orbit-Lipschitz} if whenever $g\in G$, we have $\|pg-p\|\leq \sigma(g)$. Write $\rmC_\sigma(X)$ for the $\sigma$-orbit-Lipschitz members of $\rmC(X)$. If $s> 0$, write $\rmC_\sigma^s(X):= \rmC_\sigma(X)\cap \rmC^s(X)$.
\end{defin}

When $\sigma \in \rm{SN}(G)$, we clearly have $\rmC_\sigma(X)\subseteq \rmC_G(X)$. Conversely, every $p\in \rmC_G(X)$ is $\sigma$-orbit-Lipschitz for some $\sigma\in \sn(G)$; indeed, consider $\sigma(g) = \|pg-p\|$. We also note that viewing $G$ as a $G$-space as in Example~\ref{Exa:RUC}, we have for any $\sigma\in \sn(G)$ that $\sigma\in \rmC_\sigma(G)$; indeed, for any $g, h\in G$, we have $\sigma(gh)-\sigma(h)\leq (\sigma(g)+\sigma(h))-\sigma(h) = \sigma(g)$ and $\sigma(h) - \sigma(gh)\leq (\sigma(g^{-1}) + \sigma(gh)) - \sigma(gh) = \sigma(g^{-1}) = \sigma(g)$.

If $X$ is a $G$-flow, it is straightforward to check that $\rmC_G(X) = \rmC(X)$. When $X$ is a non-compact $G$-space, we can use $\rmC_G(X)$ to create a useful compactification of $X$.

\begin{notation}
	\label{Notation:Alpha_G}
	If $X$ is a $G_{dsc}$-space, we write $\alpha_G(X) = \wh{\rmC_G(X)}$. The map $\iota_X^G\colon X\to \alpha_G(X)$ is defined so that for any $x\in X$ and $f\in \rmC_G(X)$, we have  $\iota_X^G(x)(f) = f(x)$.
\end{notation}

\begin{defin}
	\label{Def:G_compactification}
	Given a $G_{dsc}$-space $X$, a \emph{$G$-compactification} of $X$ is a pair $(Y, \iota)$, where $Y$ is a $G$-flow and $\iota\colon X\to Y$ is a $G$-map with dense image.  
\end{defin}

The next fact, while only stated for $G$-spaces in \cite{de_Vries_1977}, easily extends to $G_{dsc}$-spaces.

\begin{fact}[\cite{de_Vries_1977}]	
	\label{Fact:G_Compactification}
	Given a $G_{dsc}$-space $X$, then $(\alpha_G(X), \iota_X^G)$ is the \emph{maximal $G$-equivariant compactification of $X$}, i.e.\ if $(Y, \phi)$ is any other $G$-compactification of $X$, then there is a $G$-map $\wt{\phi}\colon \alpha_G(X)\to Y$ with $\phi = \wt{\phi}\circ \iota_X^G$.
\end{fact}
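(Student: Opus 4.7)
The plan is to build $\alpha_G(X)$ directly via Gelfand duality and then verify the universal property by a standard dualization argument. First, I would observe that $\rmC_G(X)$ is a unital $*$-subalgebra of $\rmC(X)$, since the algebra operations are jointly norm-continuous on bounded sets and commute with the right $G$-action, so that the orbits of sums, products, and conjugates remain norm-continuous. Combined with the fact emphasized in the text that $\rmC_G(X)$ is norm-closed, this makes $\rmC_G(X)$ a unital commutative $C^*$-algebra, so $\alpha_G(X) = \wh{\rmC_G(X)}$ is a compact Hausdorff space with $\rmC(\alpha_G(X)) \cong \rmC_G(X)$ by Gelfand duality.

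Next, I would equip $\alpha_G(X)$ with the dualized left $G$-action given by $(g\chi)(p) := \chi(pg)$, which is well-defined because $p \mapsto pg$ is a $*$-automorphism (in fact an isometric $*$-automorphism, since each $x\mapsto gx$ is a homeomorphism of $X$) of $\rmC_G(X)$. Joint continuity at $(g_0, \chi_0)$ reduces, via the sub-basis defining the weak-$*$ topology, to producing for each $p \in \rmC_G(X)$ and $\epsilon > 0$ a neighborhood $U \in \cN(G)$ and a weak-$*$ neighborhood $V \ni \chi_0$ with $|(g\chi)(p) - (g_0\chi_0)(p)| < \epsilon$ whenever $g \in g_0 U$ and $\chi \in V$. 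Splitting as $\chi(pg - pg_0) + (\chi - \chi_0)(pg_0)$, the first term is controlled by the $G$-continuity of $p$ at $g_0$, and the second by weak-$*$ convergence at the single element $pg_0 \in \rmC_G(X)$. The map $\iota_X^G \colon X \to \alpha_G(X)$ is then continuous (the evaluation functionals $\chi\mapsto \chi(p)$ generate the weak-$*$ topology and each $x \mapsto p(x)$ is continuous on $X$), $G$-equivariant (from $\iota_X^G(gx)(p) = p(gx) = (pg)(x) = (g \iota_X^G(x))(p)$), and has dense image: any $F \in \rmC(\alpha_G(X))$ vanishing on the image corresponds via Gelfand duality to some $p \in \rmC_G(X)$ with $p \equiv 0$ on $X$, forcing $F = 0$.

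For the universal property, given another $G$-compactification $(Y, \phi)$, form the pullback $\hat{\phi} \colon \rmC(Y) \to \rmC(X)$ by $\hat{\phi}(f) = f \circ \phi$. The crucial step is showing that $\hat{\phi}$ in fact lands in $\rmC_G(X)$: equivariance of $\phi$ gives $\hat{\phi}(f) \cdot g = \hat{\phi}(fg)$ as functions on $X$, and since $Y$ is a compact $G$-flow we have $\rmC_G(Y) = \rmC(Y)$, so $g \mapsto fg$ is norm-continuous in $\rmC(Y)$; because $\hat{\phi}$ is a norm-decreasing $*$-homomorphism, $\|\hat{\phi}(f)g - \hat{\phi}(f)\| \leq \|fg - f\| \to 0$ as $g \to e_G$, giving $\hat{\phi}(f) \in \rmC_G(X)$. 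Dualizing the restricted $*$-homomorphism $\hat{\phi} \colon \rmC(Y) \to \rmC_G(X)$ yields a continuous $G$-equivariant map $\wt{\phi} \colon \alpha_G(X) \to Y$, and $\wt{\phi} \circ \iota_X^G = \phi$ is verified by evaluating both sides as functionals against arbitrary $f \in \rmC(Y)$.

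The step I expect to require the most care is the joint continuity of the $G$-action on $\alpha_G(X)$ at a general point $(g_0, \chi_0)$. It hinges on the fact that $\|pg - pg_0\| \to 0$ as $g \to g_0$ for each individual $p \in \rmC_G(X)$, which is precisely the definition of $G$-continuity, but must be combined carefully with the weak-$*$ topology on $\alpha_G(X)$. This is the whole point of the construction: the original action $G \times X \to X$ need not be jointly continuous (it is only a $G_{dsc}$-action), yet passing to $\alpha_G(X)$ ``repairs'' this failure by restricting to the functions along which the action is already asymptotically Lipschitz.
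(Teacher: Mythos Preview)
The paper does not supply its own proof of this statement: it is recorded as a \emph{Fact} cited from \cite{de_Vries_1977}, with only the remark that the result, stated there for $G$-spaces, ``easily extends to $G_{dsc}$-spaces.'' Your argument is correct and is precisely the standard Gelfand-duality proof one would give; in particular, your verification that $\hat{\phi}\colon \rmC(Y)\to \rmC(X)$ lands in $\rmC_G(X)$ (using $\rmC_G(Y)=\rmC(Y)$ for the compact $G$-flow $Y$ and the contractivity of $\hat{\phi}$) and your joint-continuity argument for the action on $\alpha_G(X)$ via the splitting $\chi(pg-pg_0)+(\chi-\chi_0)(pg_0)$ are exactly the points that need checking, and both are handled correctly.
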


In full generality, understanding $\alpha_G(X)$ given $X$ can be a difficult problem. For example, Pestov \cite{Pestov_2017} has exhibited a Polish group $G$ and a faithful $G$-space $X$ with $\alpha_G(X)$ a singleton. Luckily, when we consider non-compact $G$-spaces, these will mostly be of the form $X = \bigsqcup_{i\in I} X_i$ where each $X_i$ is a $G$-flow. It is routine to see that in this case, $G$-continuous functions separate points from closed sets not containing them, hence $\iota_X^G$ is an embedding. When this holds, we suppress the notation $\iota_X^G$ and view $X$ as a subspace of $\alpha_G(X)$.

\begin{exa}
	\label{Exa:Samuel}
	With $X = G$ as in Example~\ref{Exa:RUC}, we have $\alpha_G(G) = \rm{Sa}(G)$, the \emph{Samuel compactification} of $G$. In a mild abuse of notation, we often identify $\rmC(\sa(G))$ and $\rmC_G(G)= \rm{RUC}(G)$. 
\end{exa}

\section{Ultracoproducts of $G$-flows}
\label{Section:Ultracoproduct_Flows}

For the time being, we fix the following notation. Let $I$ be an infinite set and $\la X_i: i\in I\ra$ a tuple of $G$-flows. Form the $G$-space $X:= \bigsqcup \vec{X}$ and the compactification $\alpha_G(X)$. We can view $\beta I$ as a \emph{motionless} $G$-flow, i.e.\ where $g\cU = \cU$ for every $g\in G$ and $\cU\in \beta I$. The map $\pi_I\colon X\to \beta I$ given by $\pi_I(x) = i$ iff $x\in X_i$ is a $G$-map, hence it continuously extends to $\alpha_G(X)$, and we also denote this continuous extension by $\pi_I$. 

\begin{defin}
	\label{Def:Ultracoproduct}
	Let $\cU\in \beta I$. The \emph{$G$-equicontinuous ultracoproduct of $\la X_i: i\in I\ra$ along $\cU$}, denoted $\Sigma_\cU^G X_i$, denotes the $G$-flow $\pi_I^{-1}(\{\cU\})\subseteq \alpha_G(X)$. When $G$ is discrete, we omit it from the notation. In the case $X_i = Y$ and $X = I\times Y$, we call $\Sigma_\cU^G Y$ the \emph{$G$-equicontinuous ultracopower} of $Y$ along $\cU$. 
	
	If $\la Y_i: i\in I\ra$ are $G$-flows and $\phi_i\colon X_i\to Y_i$ are $G$-maps, the map $\Sigma_\cU^G \phi_i\colon \Sigma_\cU^GX_i\to \Sigma_\cU^GY_i$ is the restriction to $\Sigma_\cU^GX_i$ of the continuous extension of $\bigsqcup_{i\in I} \phi_i\colon \bigsqcup X_i\to \alpha_G(\bigsqcup_{i\in I}Y_i)$. 
\end{defin}

We can dualize to obtain a notion of $G$-equicontinuous ultraproduct for the corresponding $C^*$-algebras. We can identify $\rmC_G(X)$ with the set
$$\bigcup_{s>0} \bigcup_{\sigma\in \rm{SN}(G)} \prod_{i\in I} \rmC_\sigma^s(X_i).$$
Under this identification, we then have $\rmC(\Sigma_\cU^G X_i)\cong \rmC_G(X)/\sim_\cU$, where $\sim_\cU$ is exactly as defined in Section~\ref{Section:Ultracoproducts_Spaces}. Given $(p_i)_{i\in I}\in \rmC_G(X)$, we write $(p_i)_\cU^G\in \rmC(\Sigma_\cU^G X_i)$ for the corresponding continuous function on the $G$-equicontinuous ultracoproduct. Conversely, note that if $p\in \rmC(\Sigma_\cU^G X_i)$, then the set of continuous extensions of $p$ to some $\tilde{p}\in \rmC(\alpha_G(X))$ are in canonical one-one correspondence with those $(p_i)_{i\in I}\in \rmC_G(X)$ with $(p_i)_\cU^G = p$.

The terminology is borrowed from \cite{BYG_2021}, where the authors consider two methods of forming an ultraproduct for unitary representations of locally compact groups. One corresponds to our Definition~\ref{Def:Ultracoproduct}. The other corresponds to instead forming $\alpha_G(\Sigma_\cU X_i)$; call this the \emph{$G$-continuous ultraproduct}. We identify continuous functions on $\alpha_G(\Sigma_\cU X_i)$ with the set
\begin{align*}
\{(p_i)_{i\in I}\in \bigcup_{s>0} \rmC^s(X_i):\,\,  &\exists \sigma\in \rm{SN}(G)\, \forall F\subseteq_\fin G\, \forall \epsilon > 0\\
&\{i\in I: \forall g\in F\, \|p_ig - p_i\|\leq \sigma(g)+\epsilon \}\in \cU  \}/\sim_\cU.
\end{align*}
As $\rmC(\Sigma_\cU^G X_i)\subseteq \rmC_G(\Sigma_\cU X_i)$, we obtain a factor map from the $G$-continuous ultracoproduct to the $G$-equicontinuous one\iffalse \footnote{In fact, a much more general construction is possible. If $\tau_0\subseteq \tau_1$ are group topologies on the group $G$, one can consider the Gelfand space of $C_{(G, \tau_0)}(\Sigma_\cU^{(G, \tau_1)} X_i)$. If $\tau_0'\supseteq \tau_0$ and $\tau_1'\supseteq \tau_1$ are group topologies on $G$, we have $C_{(G, \tau_0)}(\Sigma_\cU^{(G, \tau_1)} X_i)\subseteq C_{(G, \tau_0')}(\Sigma_\cU^{(G, \tau_1')} X_i)$.}\fi. An earlier version of this paper claimed that for locally compact $G$, the $G$-continuous and $G$-equicontinuous ultracoproducts coincided. However, this is not true. 

\begin{prop}
	\label{Prop:Different_Ultracoproducts}
	If $G$ is a non-discrete topological group, then there are an infinite set $I$ and $\cU\in \beta I$ with $\rmC(\Sigma_\cU^G \sa(G))\subsetneq \rmC_G(\Sigma_\cU \sa(G))$
\end{prop}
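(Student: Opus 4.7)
The strategy is to exhibit two distinct points of $\Sigma_\cU \sa(G)$ that coincide in $\Sigma_\cU^G \sa(G)$ but are separated by some function in $\rmC_G(\Sigma_\cU \sa(G))$; dually, this produces an element of $\rmC_G(\Sigma_\cU \sa(G))$ not lying in $\rmC(\Sigma_\cU^G \sa(G))$. Using non-discreteness, pick a sequence $(h_n)_{n<\omega}$ in $G$ with $h_n\to e_G$ and $h_n\neq e_G$; take $I = \omega$ and let $\cU$ be any non-principal ultrafilter on $\omega$. Regarding each $h_n$ as a point of $\sa(G)$ via $G\subseteq \sa(G)$, set $x := (h_n)_\cU$ and $y := j_{\sa(G),\cU}(e_G)$ in $\Sigma_\cU\sa(G)$.

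First I would verify that $x$ and $y$ are distinct in $\Sigma_\cU\sa(G)$: by Claim~2.2 and the identification of $\Pi_\cU\sa(G)$ as a subspace of $\Sigma_\cU\sa(G)$, this follows from $\{n: h_n = e_G\} = \emptyset \notin \cU$. On the other hand, $x$ and $y$ coincide in $\Sigma_\cU^G \sa(G)$: any $f\in \rmC(\Sigma_\cU^G \sa(G))$ is represented by a tuple $(f_n)\in \prod_n \rmC_\sigma^s(\sa(G))$ for some $\sigma\in \sn(G)$ and $s>0$, so $|f_n(h_n) - f_n(e_G)| \leq \|f_n\cdot h_n - f_n\|_\infty \leq \sigma(h_n)$; continuity of $\sigma$ at $e_G$ together with $h_n\to e_G$ yields $\sigma(h_n)\to 0$, hence $f(x) = \lim_\cU f_n(h_n) = \lim_\cU f_n(e_G) = f(y)$.

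The core of the argument is then to produce a uniformly sup-norm-bounded tuple $(p_n)\in \prod_n \rm{RUC}(G)$ satisfying both (i) $\lim_\cU\bigl(p_n(h_n) - p_n(e_G)\bigr)\neq 0$, and (ii) $\lim_\cU \|p_n\cdot g - p_n\|_\infty \to 0$ as $g\to e_G$ in $G$. Condition (ii) makes the asymptotic orbit seminorm $\sigma_\infty(g) := \lim_\cU \|p_n\cdot g - p_n\|$ continuous at $e_G$, which exactly places $(p_n)_\cU$ in $\rmC_G(\Sigma_\cU\sa(G))$ (via the characterization in terms of norm continuity of $g\mapsto (p_n)_\cU\cdot g$); condition (i) gives $(p_n)_\cU(x)\neq (p_n)_\cU(y)$, so combining with the previous step, $(p_n)_\cU\in \rmC_G(\Sigma_\cU\sa(G))\setminus \rmC(\Sigma_\cU^G\sa(G))$.

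The hard part is carrying out this construction for arbitrary non-discrete $G$: the tension is that (i) forces $p_n$ to separate $e_G$ from $h_n$ on the shrinking scale of $h_n$, while (ii) forces $p_n$ to be asymptotically invariant under every fixed $g\in G$ in the $\cU$-limit sense. When $G$ admits a non-trivial continuous character $\chi\colon G\to \bbT$ whose image at $e_G$ is open, the clean solution is $p_n(g) := \chi(g)^n$: then $\|p_n\cdot g - p_n\|_\infty = |\chi(g)^n - 1|$, so it suffices to take $\cU$ refining the filter base $\{\{n>0 : |n\chi(g)\bmod 1| < \epsilon \text{ for all } g\in F\} : F\subseteq_{\rm{fin}} G,\, \epsilon > 0\}$ (whose sets are infinite by a pigeonhole argument in the compact group $\bbT^G$, since $\bbN$ accumulates at $0$ in the closure of $n\mapsto (n\chi(g))_{g\in G}$) and to choose $h_n$ with $\chi(h_n) = 1/(2n)$, so $\chi(h_n)^n = -1$ constantly; then (ii) is realized with $\sigma_\infty\equiv 0$, and (i) gives $\lim_\cU(p_n(h_n) - p_n(e_G)) = -2$. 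For general non-discrete $G$ (including minimally almost periodic ones where such characters are unavailable), one replaces the character by a sequence of Birkhoff--Kakutani seminorms $\tau_n$ adapted to the scale of each $h_n$, together with a diagonalization over $n$ ensuring $\tau_n(g)\to_\cU 0$ pointwise on $G$, and then sets $p_n := \min(1,\tau_n)$; verifying the simultaneous compatibility of these requirements is what I expect to be the main technical content of the proof.
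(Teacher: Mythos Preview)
Your strategy---exhibit two points of $\Sigma_\cU\sa(G)$ that are identified in $\Sigma_\cU^G\sa(G)$ yet separated by some function in $\rmC_G(\Sigma_\cU\sa(G))$---is sound and genuinely different from the paper's. The character case is essentially correct. Two gaps remain in the general case.

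First, the opening move can fail: not every non-discrete topological group admits a non-trivial sequence converging to $e_G$. Take $G=\bigoplus_{\alpha<\omega_1}\bbZ/2\bbZ$ with the subgroups $U_\beta=\bigoplus_{\alpha\geq\beta}\bbZ/2\bbZ$ as a neighbourhood base at $0$; this is non-discrete, but since $\omega_1$ has uncountable cofinality, for any sequence $h_n\neq 0$ there is a single $\beta<\omega_1$ with $h_n\notin U_\beta$ for all $n$. Thus $I=\omega$ is not always available; one must pass to a net indexed by something like $\cN(G)$.

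Second, and more substantively, the general construction is left as a sketch, and ``Birkhoff--Kakutani seminorms adapted to the scale of $h_n$ with $\tau_n(g)\to_\cU 0$ pointwise'' hides the real difficulty. One needs seminorms that are small on an \emph{arbitrary prescribed finite set} $F\subseteq G$ while still reaching value $\geq 1$ somewhere inside an arbitrary $U\in\cN(G)$. Seminorms of the form $\sigma_{\vec U}$ are too rigid for this: their sub-level sets are nested neighbourhoods of $e_G$, so enlarging them to swallow $F$ tends to swallow the intended $h_i$ as well. What actually does the job is the paper's generated seminorm $p_i=[[\,\sigma\cup\{(g,\epsilon):g\in F\}\,]]$ for $i=(\sigma,F,\epsilon)$, together with the key claim that for any $F$, $\epsilon$, and $U\in\cN(G)$ one can choose $\sigma$ so that $U\not\subseteq \rmB_{p_i}(1)$; this is where non-discreteness is used in an essential way.

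The paper then bypasses the two-point picture entirely. Taking $I=\sn^1(G)\times\rm{FS}(G)\times\bbR^{>0}$ and a suitable $\cU$, one has $(p_i)_\cU\equiv 0$ (since eventually $g\in F$ and $\epsilon\to 0$), hence trivially $G$-continuous, while the key claim directly blocks any $(q_i)\in\rmC_\sigma(I\times\sa(G))$ from satisfying $(q_i)\sim_\cU(p_i)$. Your framework can be completed by grafting this seminorm construction onto it and enlarging $I$, but at that point the two proofs have essentially merged.
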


\begin{proof}
	Let $I = \rm{SN}^1(G)\times \rm{FS}(G) \times \bbR^{{>}0}$. As in Example~\ref{Exa:Samuel}, we identify continuous functions on $\sa(G)$ with members of $\rmC_G(G)$. With this identificaton in mind, given $i = (\sigma, F, \epsilon)\in I$, we set 
	$$p_i = [[ \sigma\cup \{(g, \epsilon): g\in F\}]]\in \sn^1(G).$$
	\begin{claim}
		For any $F\in \rm{FS}(G)$, any $\epsilon > 0$, and any $U\in \cN(G)$, there is $\sigma\in \rm{SN}^1(G)$ such that, writing $i = (\sigma, F, \epsilon)$, we have $U\not\subseteq \rmB_{p_i}(1)$.
	\end{claim}

	\begin{proof}[Proof of claim]
		Let $n< \omega$ satisfy $\epsilon n\geq 1$. Using continuity of the action and non-discreteness of the group, find $V\in \cN(G)$ such that $F\cap V = \{e_G\}$ and $U\not\subseteq (VF)^nV$. Find $\sigma\in \rm{SN}^1(G)$ with $\rmB_\sigma(1)\subseteq V$, and set $i = (\sigma, F, \epsilon)$. Observe that $\rmB_{p_i}(1)\subseteq (VF)^nV$, hence $U\not\subseteq \rmB_{p_i}(1)$. 
	\end{proof}
	Let $\cU\in \beta I$ be any ultrafilter satisfying both of the following:
	\begin{itemize}
		\item 
		Given $F\in \rm{FS}(G)$ and $\epsilon > 0$, $\{(\sigma, F', \epsilon')\in I: \sigma\in \sn^1(G), F'\supseteq F\text{ and } \epsilon'< \epsilon\}\in \cU$.
		\item
		For each $U\in \cN(G)$, the set $\{i\in I: U\not\subseteq \rmB_{p_i}(1)\}\in \cU$.
	\end{itemize} 
	By the first item, $(p_i)_\cU$ is constant, so certainly $G$-continuous. By the second, there is no $(q_i)_{i\in I}\in \rmC_G(I\times\sa(G))$ with $(p_i)_{i\in I}\sim_\cU (q_i)_{i\in I}$.  
\end{proof}

As the equicontinuous version will be the main notion of ultracoproduct that we consider, we shorten its name to \emph{$G$-ultracoproduct}. To us, this seems to be the ``correct" choice of $G$-ultracoproduct. It is the version that stays entirely within the realm of $G$-flows for the \emph{topological group} $G$ and that yields a notion of weak containment that is dynamically meaningful (see Theorem~\ref{Thm:Weak_Cont_Vietoris}). One downside though (or upside, depending on your point of view) is that this choice makes the theory of weak containment much more subtle.

Our earlier discussion on ultraproducts and ultracoproducts of compact spaces suggests how to form the $G$-ultra\emph{product} of the family $\la X_i: i\in I\ra$. Namely, we define $\lim_{\vec{X},\cU}^G\colon \prod_{i\in I} X_i\to \Sigma_\cU^G X_i$ and $\iota_{\vec{X},\cU}^G\colon \Pi_\cU X_i\to \Sigma_\cU^G X_i$  almost exactly as before, but this time, we \emph{define} the $G$-ultraproduct $\Pi_\cU^G X$ to be (homeomorphic to) the image of $\iota_{\vec{X},\cU}^G$. However, when $G$ is non-discrete, the map $\iota_{\vec{X},\cU}^G$ need not be injective.

In the case that $X_i = X$ for a fixed $G$-flow $X$, we call $\Sigma_\cU^G X$ the \emph{ultracopower} of $X$ along $\cU$, and we can form the ultracopower $G$-map $\pi_{X, \cU}^G\colon \Sigma_\cU^G X\to X$  by continuously extending the projection $I\times X\to X$ to $\alpha_G(I\times X)$, then restricting to $\Sigma_\cU^G X$. We also have the (set-theoretic) ultrapower embedding $\delta_{X,\cU}\colon X\to \Pi_\cU X$ as before, and therefore a map $j_{X,\cU}^G:= \iota_{X,\cU}^G\circ \delta_{X,\cU}\colon X\to \Sigma_\cU^G X$. While $j_{X,\cU}^G$ is always injective (indeed $\pi_X\circ j_{X,\cU}^G = \rm{id}_X$) and $G$-equivariant, it is in general not continuous.

\subsection{The Vietoris topology and weak containment}
Recall that if $Z$ is a compact Hausdorff space, then we equip $\exp(Z)$ with the \emph{Vietoris topology}, which is also compact Hausdorff. The typical basic open set in $\exp(Z)$ has the form
$$\rmN_{Q}:= \{K\in \exp(Z): K\subseteq \bigcup Q\text{ and }\forall\, A\in Q\, (K\cap A\neq \emptyset)\}$$
where $Q\in [\op(Z)]^{{<}\omega}$. If $\cB$ is a basis for the topology on $Z$, we may restrict our attention to the case $Q\subseteq \cB$. Another way of describing the Vietoris topology is that given a net $(Y_j)_{j\in J_0}$ from $\exp(Z)$ and $Y\in \exp(Z)$, we have $Y_j\to Y$ if both:
\begin{itemize}
	\item
	For any subnet $(Y_\alpha)_{\alpha\in J_1}$ and any $y_\alpha\in Y_\alpha$ with $(y_\alpha)_{\alpha\in J_1}$ convergent, we have $\lim y_\alpha\in Y$.
	\item 
	For any $y\in Y$, there is a subnet $(Y_\alpha)_{\alpha \in J_1}$ and $y_\alpha\in Y_\alpha$ with $\lim y_\alpha = y$. 
\end{itemize}
We remark that in both items, we must allow the possibility of passing to a subnet.

It turns out that the interaction between ultracoproducts of $G$-flows and Vietoris limits of subflows is quite fruitful. If $Z$ is a $G$-flow, we let $\rm{Sub}_G(Z)\subseteq \exp(Z)$ denote the closed subspace of $\exp(Z)$ consisting of $G$-subflows. 

\begin{lemma}
	\label{Lem:Vietoris}
	Given $\vec{X} = \la X_i: i\in I\ra$ a tuple of $G$-flows and writing $X = \alpha_G(\bigsqcup_{i\in I}X_i)$, we have $\Sigma_\cU^G X_i = \displaystyle\lim_{i\to \cU} X_i$ in $\rm{Sub}_G(\alpha_G(X))$.
\end{lemma}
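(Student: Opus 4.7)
The plan is to verify the standard compact-Hausdorff characterization of ultrafilter limits: $\lim_{i\to\cU} X_i = K$ in $\exp(\alpha_G(X))$ iff, for every basic Vietoris neighborhood $\rmN_Q$ of $K$ (with $Q = \{A_1,\ldots, A_n\}\subseteq \op(\alpha_G(X))$ finite), the set $\{i\in I : X_i \in \rmN_Q\} \in \cU$. Setting $K := \Sigma_\cU^G X_i$, I therefore need to show both $\{i : X_i \subseteq A_1\cup\cdots\cup A_n\} \in \cU$ and, for each $j$, $\{i : X_i \cap A_j \neq \emptyset\} \in \cU$. Since $\exp(\alpha_G(X))$ is compact Hausdorff, this will force $K = \lim_{i\to\cU} X_i$; membership in $\rm{Sub}_G(\alpha_G(X))$ is automatic, as this subspace is closed and each $X_i$ already lies in it.

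The ``each $A_j$ is hit'' condition uses the $C^*$-algebraic description of $\Sigma_\cU^G X_i$. Pick $y_j \in K\cap A_j$ and, by Urysohn in the compact Hausdorff space $\alpha_G(X)$, choose $f\in \rmC(\alpha_G(X), [0,1])$ with $f(y_j) = 1$ and support contained in $A_j$. Under the identifications $\rmC(\alpha_G(X)) = \rmC_G(\bigsqcup X_i)$ and $\rmC(K) \cong \rmC_G(\bigsqcup X_i)/\sim_\cU$, the tuple $(f_i)_{i\in I}$ with $f_i := f|_{X_i}$ represents $f|_K = (f_i)_\cU^G$, so $(f_i)_\cU^G(y_j) = 1$ and hence $\lim_{i\to\cU}\|f_i\| = \|(f_i)_\cU^G\| \geq 1$. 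In particular $\{i : \|f_i\| > 0\}\in\cU$, and since $f_i$ is supported in $X_i \cap A_j$, every such $X_i$ meets $A_j$.

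The containment $X_i \subseteq \bigcup Q$ along $\cU$ I handle by contradiction. Suppose $J := \{i : X_i \not\subseteq A_1\cup\cdots\cup A_n\}\in \cU$, and pick $x_i \in X_i \setminus \bigcup Q$ for $i\in J$ and $x_i\in X_i$ arbitrarily otherwise. The map $\tilde{x}\colon I \to \alpha_G(X)$, $i\mapsto x_i$, extends uniquely to a continuous $\beta \tilde{x}\colon \beta I \to \alpha_G(X)$ via the universal property of $\beta I$. Since $\pi_I(x_i) = i$ for each $i\in I$, uniqueness of continuous extensions yields $\pi_I\circ \beta\tilde{x} = \rm{id}_{\beta I}$, so $y := \beta\tilde{x}(\cU) \in \pi_I^{-1}(\{\cU\}) = K$. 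But $y$ is the $\cU$-ultrafilter limit of $(x_i)_{i\in I}$, so every neighborhood of $y$ meets $\{x_i : i\in J\}$, placing $y$ in the closed set $\overline{\{x_i : i\in J\}} \subseteq (\bigcup Q)^c$ and contradicting $K \subseteq \bigcup Q$. The main obstacle is simply juggling the identifications---$\rmC_G$ versus $\rmC$, function-level versus point-level descriptions of the ultracoproduct, and $\beta$-compactification universality---but once those are in hand, both halves are direct.
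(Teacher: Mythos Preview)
Your argument is correct. Both directions of the Vietoris condition are verified cleanly: the Urysohn/$C^*$-norm computation for the ``hit'' condition and the $\beta$-extension section for the covering condition are each valid.

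The paper's proof proceeds differently. It invokes the subnet description of Vietoris convergence rather than the ultrafilter-limit criterion you use: by compactness it passes to an accumulation point $Y$ of the net $(X_i)$, observes $Y\subseteq \Sigma_\cU^G X_i$ (essentially because $\pi_I$ is continuous and maps each $X_i$ to $i$), and for the reverse inclusion uses density of $\bigsqcup X_i$ in $\alpha_G(X)$ to find, for any $z\in \Sigma_\cU^G X_i$ and any open neighborhood $A\ni z$, a suitable subnet of points $x_\alpha\in X_\alpha\cap A$ converging to $z$. Your approach trades this hands-on subnet construction for the $C^*$-algebraic identification $\|(f_i)_\cU^G\| = \lim_\cU \|f_i\|$ and the universal property of $\beta I$; this makes the argument more explicit at the cost of invoking slightly heavier machinery. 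In particular, your ``hit'' step could also be done purely topologically (via $\pi_I^{-1}(\overline{S})$ being closed and $\bigsqcup X_i$ being dense), but the Urysohn route you chose is equally direct.
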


\begin{proof}
	As $\subg(\alpha_G(X))$ is compact, we may assume that the limit on the right hand side exists; call it $Y$. Clearly $Y\subseteq \Sigma_\cU^G X_i$. For the other inclusion, let $z\in \Sigma_\cU^G X_i$, and consider some $A \in \op(z, \alpha_G(X))$. As $\alpha_G(X)$ is a compactification of $X$, we have $A\cap X\neq \emptyset$, and from here it is routine to construct the needed subnet $(X_\alpha)_{\alpha\in J}$ and points $x_\alpha\in X_\alpha$ with $\lim x_\alpha = z$.
\end{proof}

\begin{prop}
	\label{Prop:Vietoris_Limit_WC}
	For any $G$-flow $Z$, $Y\in \rm{Sub}_G(Z)$, and net $(Y_i)_{i\in I}$ from $\rm{Sub}_G(Z)$ with $Y_i\to Y$, there is some ultracoproduct of the $Y_i$ which factors onto $Y$.  
\end{prop}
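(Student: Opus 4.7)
The plan is to extend the inclusion maps $Y_i \hookrightarrow Z$ to a single continuous $G$-map out of $\alpha_G\bigl(\bigsqcup_i Y_i\bigr)$, then restrict it to $\Sigma_\cU^G Y_i$ for an appropriate ultrafilter $\cU$. Lemma~\ref{Lem:Vietoris} identifies $\Sigma_\cU^G Y_i$ with a Vietoris limit, so continuity of the direct image operation (in the Vietoris topology) will compute the image of this restriction as the Vietoris limit of the $Y_i$ in $\subg(Z)$, which equals $Y$ by hypothesis, provided $\cU$ refines the tail filter of the net.

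Concretely, let $W = \overline{\bigcup_{i \in I} Y_i} \subseteq Z$, which is itself a $G$-subflow of $Z$, and let $\iota \colon \bigsqcup_{i \in I} Y_i \to W$ be the $G$-map given on the $i$-th summand by the inclusion $Y_i \hookrightarrow W$. Since $\iota$ has dense image in the $G$-flow $W$, Fact~\ref{Fact:G_Compactification} yields a continuous $G$-map $\tilde{\iota} \colon \alpha_G\bigl(\bigsqcup_i Y_i\bigr) \to W \subseteq Z$ extending $\iota$. Fix any ultrafilter $\cU$ on $I$ containing the tail filter of the net $(Y_i)_{i \in I}$, and let $\phi$ denote the restriction of $\tilde{\iota}$ to $\Sigma_\cU^G Y_i$; this is a $G$-map into $Z$. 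A check on subbasic opens shows that for any continuous map $f$ between compact Hausdorff spaces, the induced map $\exp(f)$ is Vietoris-continuous. Applying this to $\tilde{\iota}$ together with Lemma~\ref{Lem:Vietoris} gives
\[
\phi\bigl(\Sigma_\cU^G Y_i\bigr) \;=\; \tilde{\iota}\!\left(\lim_{i \to \cU} Y_i\right) \;=\; \lim_{i \to \cU} \tilde{\iota}(Y_i) \;=\; \lim_{i \to \cU} Y_i \;=\; Y,
\]
where the third equality uses that the restriction of $\tilde{\iota}$ to $Y_i$ is the inclusion into $Z$, and the last uses that $\cU$ refines the tail filter of the convergent net $Y_i \to Y$ in the compact Hausdorff space $\subg(Z)$. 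Since $\phi$ has compact domain and Hausdorff codomain, the image $Y$ is closed, and $\phi$ is the desired factor map of $\Sigma_\cU^G Y_i$ onto $Y$.

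The only point requiring any care is the application of the universal property: one must first pass to the $G$-subflow $W \subseteq Z$ so that $\iota$ has dense image, after which Fact~\ref{Fact:G_Compactification} produces $\tilde{\iota}$. Everything else is formal, granted the functoriality of the Vietoris topology under continuous maps and the observation that an ultrafilter extending the tail filter of a convergent net in a compact Hausdorff space computes the same limit. I do not anticipate any serious obstacle beyond this bookkeeping.
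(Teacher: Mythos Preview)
Your proposal is correct and follows essentially the same route as the paper: choose a cofinal ultrafilter, extend the inclusion maps to a $G$-map out of $\alpha_G(\bigsqcup_i Y_i)$, and use Lemma~\ref{Lem:Vietoris} together with Vietoris-continuity of direct images to identify the image of $\Sigma_\cU^G Y_i$ with $Y$. Your extra care in passing to $W=\overline{\bigcup_i Y_i}$ so that Fact~\ref{Fact:G_Compactification} applies verbatim is a nice touch; the paper simply asserts the existence of the extension $\alpha_G(\bigsqcup_i Y_i)\to Z$, which is justified by Gelfand duality (pullback along a $G$-map into a $G$-flow lands in $\rmC_G$) even without dense image, but your formulation avoids that implicit step.
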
	

\begin{proof}
	Letting $\leq_I$ denote the upwards-directed partial order on $I$, let $\cU\in \beta I$ be an ultrafilter such that for every $i\in I$, we have $\{j\in I: j\geq_I i\}\in \cU$. Let $\phi_i\colon Y_i\to Y_i$ be the identity for each $i\in I$, and let $\phi\colon \alpha_G(\bigsqcup_{i\in I} Y_i)\to Z$ be the continuous extension of the union of the $\phi_i$. As $\phi$ induces a continuous map between the respective Vietoris spaces, Lemma~\ref{Lem:Vietoris} yields $\phi[\Sigma_\cU^G Y_i] = Y$ as desired. 
\end{proof}

The conclusion of Proposition~\ref{Prop:Vietoris_Limit_WC} suggests the following relation between two $G$-flows.

\begin{defin}
	\label{Def:Weak_Containment}
	Let $X$ and $Y$ be $G$-flows. We say that $X$ is \emph{weakly contained} in $Y$ and write $X\preceq_G Y$ if $X$ is a factor of some ultracopower of $Y$. We say that $X$ and $Y$ are \emph{weakly equivalent} and write $X\sim_G Y$ if both $X\preceq_G Y$ and $Y\preceq_G X$.
\end{defin}

One of the main goals of this paper is to find sufficient conditions which ensure that weak containment is a pre-order and that weak equivalence is an equivalence relation. For now, we end the section with the following alternative characterization of weak containment.

\begin{theorem}
	\label{Thm:Weak_Cont_Vietoris}
	Given $G$-flows $X$ and $Y$, the following are equivalent.
	\begin{enumerate}
		\item 
		$X\preceq_GY$
		\item
		There are a $G$-flow $Z$ and a net $(Y_i)_{i\in I}$ from $\rm{Sub}_G(Z)$ with $Y_i\cong Y$ for each $i\in I$ and with $Y_i\to X'\in \rm{Sub}_G(Z)$ for some $X'\cong X$. 
	\end{enumerate}
\end{theorem}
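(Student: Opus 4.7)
The direction (2) $\Rightarrow$ (1) is immediate from Proposition~\ref{Prop:Vietoris_Limit_WC}: given the net $(Y_i)_{i\in I}$ converging to $X'\cong X$ in $\rm{Sub}_G(Z)$, the proposition yields an ultracoproduct of the $Y_i$ which factors onto $X'$, and since each $Y_i\cong Y$, this is an ultracopower of $Y$, establishing $X\preceq_G Y$.

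For (1) $\Rightarrow$ (2), suppose we are given a factor map $\phi\colon \Sigma_\cU^G Y\to X$ for some infinite $I$ and non-principal $\cU\in \beta I$. The plan is to build $Z$ as the pushout of $G$-flows $\alpha_G(I\times Y) \sqcup_{\Sigma_\cU^G Y} X$ along the inclusion $\Sigma_\cU^G Y \hookrightarrow \alpha_G(I\times Y)$ and the factor $\phi$. Via Gelfand duality, this amounts to taking $\rmC(Z)$ to be the pullback
$$\cB := \{(q, f) \in \rmC_G(I\times Y) \times \rmC(X) : q|_{\Sigma_\cU^G Y} = \phi^*(f)\},$$
with $Z := \wh{\cB}$, and writing $\eta\colon \alpha_G(I\times Y)\to Z$ and $\iota\colon X\to Z$ for the $G$-maps induced by the two projections of $\cB$. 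Since $\phi^*$ is injective (as $\phi$ is surjective), the projection $\cB\to \rmC_G(I\times Y)$ is injective, making $\eta$ surjective; and since the restriction $\rmC_G(I\times Y)\to \rmC(\Sigma_\cU^G Y)$ is surjective (as is clear from the characterization of $\rmC(\Sigma_\cU^G Y)$ as a quotient of a subset of $\prod_i \rmC(Y)$), the projection $\cB\to \rmC(X)$ is surjective, making $\iota$ injective. Thus $X':=\iota(X)$ is a copy of $X$ inside $Z$.

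The next step is to show that each $Y_i := \{i\}\times Y\subseteq \alpha_G(I\times Y)$ also embeds injectively into $Z$ under $\eta$, giving a subflow $Y_i' := \eta(Y_i)\cong Y$. The key observation is that for any $p\in \rmC(Y)$, the function $q\colon I\times Y\to \bbC$ defined by $q|_{\{i\}\times Y} = p$ and $q|_{\{j\}\times Y} = 0$ for $j\neq i$ is $G$-continuous (inheriting its orbit-Lipschitz modulus from $p$), and its continuous extension to $\alpha_G(I\times Y)$ vanishes on $\Sigma_\cU^G Y$ since $\cU$ is non-principal. Hence $(q, 0)\in \cB$, so that such ``bump'' functions yield elements of $\rmC(Z)$ separating any two distinct points of $Y_i$.

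Finally, the continuous $G$-equivariant map $\eta$ induces a continuous pushforward map $\rm{Sub}_G(\alpha_G(I\times Y))\to \rm{Sub}_G(Z)$. By Lemma~\ref{Lem:Vietoris}, $Y_i\to \Sigma_\cU^G Y$ along $\cU$ in $\rm{Sub}_G(\alpha_G(I\times Y))$, hence $Y_i' \to \eta(\Sigma_\cU^G Y) = \iota(\phi(\Sigma_\cU^G Y)) = X'$ along $\cU$ in $\rm{Sub}_G(Z)$. Reindexing this ultrafilter convergence via the directed set $(\cU, \supseteq)$ equipped with a choice function produces a genuine net $(Y_{i_U}')_{U\in \cU}\to X'$ in $\rm{Sub}_G(Z)$, as required. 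The main obstacle is the injectivity step: confirming that the pushout does not inadvertently collapse the individual copies $Y_i$, which depends on the existence of enough fibre-supported $G$-continuous functions to generate the needed elements of $\cB$.
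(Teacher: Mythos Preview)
Your proof is correct and is essentially the paper's argument, phrased via Gelfand duality rather than quotient spaces. The paper takes $Z = \alpha_G(I\times Y)/E_\phi$, where $E_\phi$ is the equivalence relation of $\phi$ viewed as a closed $G$-invariant relation on all of $\alpha_G(I\times Y)^2$; your pushout $\alpha_G(I\times Y)\sqcup_{\Sigma_\cU^G Y} X$ is exactly this quotient (since $\phi$ is onto), and your pullback $\cB$ is precisely $\rmC(\alpha_G(I\times Y)/E_\phi)$. Your explicit verification that $\eta|_{Y_i}$ is injective (via fibre-supported $G$-continuous functions vanishing on $\Sigma_\cU^G Y$ when $\cU$ is non-principal) and your remark that one may assume $\cU$ non-principal are details the paper leaves implicit, but the route is the same.
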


\begin{proof}
	$(2)\Rightarrow (1)$ follows from Proposition~\ref{Prop:Vietoris_Limit_WC}. For $(1)\Rightarrow (2)$, suppose $X\preceq_GY$ as witnessed by the set $I$, $\cU\in \beta I$, and factor map $\phi\colon \Sigma_\cU^GY\to X$. Let $E_\phi\subseteq (\Sigma_\cU^GY)^2$ denote the associated equivalence relation. Viewing $E_\phi\subseteq \alpha_G(I\times Y)^2$, $E_\phi$ is still a closed, $G$-invariant equivalence relation, so let $Z = \alpha_G(I\times Y)/E_\phi$, and write $\pi_\phi\colon \alpha_G(I\times Y)\to Z$ for the quotient map. Given $i\in I$, let $Y_i = \pi_\phi[\{i\}\times Y]$, and let $X' = \pi_\phi[\Sigma_\cU^GY]$. Then $Y_i\cong Y$, $X'\cong X$, and $\lim_{i\to \cU} Y_i = X'$.
\end{proof}

\section{Fubini sums and Tietze extensions}
\label{Section:Fubini}

Given infinite sets $I, J$ and ultrafilters $\cU\in \beta I$ and $\cV\in \beta J$, the \emph{Fubini sum} of $\cU$ and $\cV$, sometimes called the \emph{tensor product}, is the ultrafilter $\cU\otimes \cV\in \beta(I\times J)\setminus (I\times J)$ where given $A\subseteq I\times J$, we have
\begin{align*}
A\in \cU\otimes \cV &\Leftrightarrow \forall^\cU i\in I\, \forall^\cV j\in J\, (i, j)\in A\\ &\Leftrightarrow \{i\in I: \{j\in J: (i, j)\in A\}\in \cV\}\in \cU.
\end{align*}

Fubini sums of ultrafilters show up upon considering what happens upon taking the ultracopower of an ultracopower. Suppose $X$ is a $G$-flow, and consider first forming $\Sigma_\cV^G X\subseteq \alpha_G(J\times X)$, then forming $\Sigma_\cU^G \Sigma_\cV^G X\subseteq \alpha_G(I\times (\Sigma_\cV^G X))$. Compare this to $\Sigma_{\cU\otimes\cV}^G X\subseteq \alpha_G(I\times J\times X)$. Note that $\alpha_G(\{i\}\times J\times X)\subseteq \alpha_G(I\times J\times X)$, so let $\psi\colon \alpha_G(I\times (\Sigma_\cV^G X))\to \alpha_G(I\times J\times Z)$ be the map which sends $\{i\}\times \Sigma_\cV^G X$ to its natural copy inside $\alpha_G(I\times J\times X)$, then continuously extend. In particular, note that $\psi[\Sigma_\cU^G\Sigma_\cV^G X] = \Sigma_{\cU\otimes \cV}^G X$. We call $\psi$ the \emph{canonical factor map} from $\Sigma_\cU^G\Sigma_\cV^G X$ onto $\Sigma_{\cU\otimes \cV}^G X$ (of course, $\psi$ depends on $\cU$, $\cV$, $G$, and $X$, but these will typically be clear from context). It is helpful to think about $\psi$ in terms of the dual inclusion $\hat{\psi}$ of $C^*$-algebras. Continuous functions on $\Sigma_{\cU\otimes\cV}^G X$ are represented by $G$-continuous functions on $I\times J\times X$, i.e.\ uniformly bounded tuples $(p_{ij})_{(i, j)\in I\times J}$ such that for some $\sigma\in \rm{SN}(G)$, we have $p_{ij}\in \rmC_\sigma(X)$ for every $(i, j)\in I\times J$. On the other hand, continuous functions on $\Sigma_\cU^G\Sigma_\cV^G X$ are represented by uniformly bounded tuples $(p_{ij})_{(i, j)\in I\times J}$ such that the following both hold.
\begin{itemize}
	\item 
	For each $i\in I$, $(p_{ij})_{j\in J}\in \rmC_G(J\times X)$, i.e.\ there is $\sigma_i\in \rm{SN}(G)$ with $p_{ij}\in \rmC_{\sigma_i}(G)$ for every $j\in J$. 
	\item
	$((p_{ij})_{\cV}^G)_{i\in I}\in \rmC_G(I\times \Sigma_\cV^GX)$.
\end{itemize}
It is the non-uniformity of the $\sigma_i$ in the first item which can make $\hat{\psi}$ a strict inclusion. One of the main goals of this paper is to analyze the class of flows for which this does not happen.

\begin{defin}
	\label{Def:Fubini_GFlow}
	We say that a $G$-flow $X$ is \emph{Fubini} if for any infinite sets $I, J$ and ultrafilters $\cU\in \beta I$ and $\cV\in \beta J$, the canonical factor map $\psi\colon \Sigma_\cU^G \Sigma_\cV^G  X\to\Sigma_{\cU\otimes \cV}^G X$ is an isomorphism. We say that $G$ is \emph{Fubini} if $\rm{Sa}(G)$ is a Fubini $G$-flow. 
	
	We say that a $G$-flow $X$ is \emph{weakly Fubini} if for any infinite sets $I, J$ and ultrafilters $\cU\in \beta I$, $\cV\in \beta J$, we have  $\Sigma_\cV^G \Sigma_\cU^G X\preceq_G X$, and $G$ is \emph{weakly Fubini} iff $\sa(G)$ is. Note that Fubini implies weakly Fubini.
\end{defin}

	We note that both the class of Fubini $G$-flows and the class of weakly Fubini $G$-flows are closed under ultracopowers. Whether a given ultraco\emph{product} of (weakly) Fubini flows is (weakly) Fubini seems to be much more subtle; we will provide affirmative answers in some specific cases (see Corollary~\ref{Cor:SNRPC_Ults} and Proposition~\ref{Prop:Urysohn_Ults}). Eventually, we will see that when $G$ is locally compact, then \emph{every} $G$-flow is Fubini (see Proposition~\ref{Prop:Int_k_bounded_Fubini}, Proposition~\ref{Prop:LC_Respecting}, and Theorem~\ref{Thm:Seminorm_Respecting_Fubini}).

\begin{prop}
	\label{Prop:Weak_Equiv_ER_Fubini}
	On the class of weakly Fubini $G$-flows, weak containment is a pre-order, and weak equivalence is an equivalence relation.
\end{prop}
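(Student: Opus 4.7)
The plan is to verify reflexivity and transitivity of $\preceq_G$ on the weakly Fubini class; symmetry of $\sim_G$ is immediate from its definition, so $\sim_G$ will automatically be an equivalence relation once $\preceq_G$ is a pre-order. Reflexivity needs no hypothesis on $X$: for any $G$-flow $X$ and any $\cU\in \beta I$, the ultracopower map $\pi_{X,\cU}^G\colon \Sigma_\cU^G X\to X$ is already a factor map, being continuous and $G$-equivariant by construction and surjective because $j_{X,\cU}^G$ is a right inverse.

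For transitivity, I would start with weakly Fubini flows $X, Y, Z$ together with factor maps $\phi\colon \Sigma_\cU^G Y\to X$ and $\psi\colon \Sigma_\cV^G Z\to Y$ witnessing $X\preceq_G Y$ and $Y\preceq_G Z$. Applying the ultracoproduct functor from Definition~\ref{Def:Ultracoproduct} to $\psi$ yields a $G$-map $\Sigma_\cU^G\psi\colon \Sigma_\cU^G\Sigma_\cV^G Z\to \Sigma_\cU^G Y$, and I would first check it is surjective. This is where Lemma~\ref{Lem:Vietoris} does the work: given $y\in \Sigma_\cU^G Y$, realize $y$ as a Vietoris limit of points $y_\alpha\in Y$ sitting in copies indexed by a subnet with $i(\alpha)\to \cU$, lift each through $\psi$ to $x_\alpha$ in the corresponding copy of $\Sigma_\cV^G Z$, and pass to a cluster point $x\in \alpha_G(\bigsqcup_{i\in I}\Sigma_\cV^G Z)$. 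Lemma~\ref{Lem:Vietoris} forces $x\in \Sigma_\cU^G\Sigma_\cV^G Z$, and continuity of the induced map on $\alpha_G$'s sends $x$ to $y$. Composition then yields a factor map $\phi\circ \Sigma_\cU^G\psi\colon \Sigma_\cU^G\Sigma_\cV^G Z\to X$. Weak Fubini of $Z$ now provides some $\cW\in \beta K$ and a factor map $\Sigma_\cW^G Z\to \Sigma_\cU^G\Sigma_\cV^G Z$; composing everything produces a factor map $\Sigma_\cW^G Z\to X$, witnessing $X\preceq_G Z$.

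The one genuinely technical step is the surjectivity of $\Sigma_\cU^G\psi$, but this follows cleanly from the Vietoris characterization once one observes that the map $\alpha_G(\bigsqcup_i \Sigma_\cV^G Z)\to \alpha_G(\bigsqcup_i Y)$ respects the projection to $\beta I$. The only substantive ingredient is weak Fubini of $Z$, used precisely to collapse the double ultracopower $\Sigma_\cU^G\Sigma_\cV^G Z$ back to a single ultracopower of $Z$---exactly the obstacle that, in general, prevents transitivity from holding on the class of all $G$-flows.
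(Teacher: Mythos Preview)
Your proof is correct and follows essentially the same route as the paper: pass from $\psi$ to $\Sigma_\cU^G\psi$, compose with $\phi$, then invoke weak Fubini of $Z$ to collapse the iterated ultracopower. The paper simply asserts that $\phi\circ\Sigma_\cU^G\xi$ is surjective and omits reflexivity entirely; your Vietoris argument for surjectivity of $\Sigma_\cU^G\psi$ is valid, though a more direct justification is that the image is closed and contains the dense ultraproduct $\Pi_\cU^G Y$ (lift coordinatewise).
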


\begin{proof}
	It suffices to prove the first claim, so let $X, Y, Z$ be weakly Fubini $G$-flows with $X\preceq_G Y$ and $Y\preceq_G Z$. There are an infinite set $I$, an ultrafilter $\cU\in \beta I$, and a factor map $\phi\colon \Sigma_\cU^G Y\to X$. Similarly,  we can find $J$, $\cV\in \beta J$, and $\xi\colon \Sigma_\cV^G Z\to Y$. Then $\phi \circ \Sigma_\cU^G \xi\colon \Sigma_\cU^G\Sigma_\cV^G Z\to X$ is a surjective $G$-map. As $Z$ is weakly Fubini, there is some infinite set $K$ and $\cW\in \beta K$ and some factor map $\theta\colon \Sigma_\cW^G Z\to \Sigma_\cU^G\Sigma_\cV^G Z$. Then $\phi\circ \Sigma_\cU^G\xi\circ \theta \colon \Sigma_\cW^G Z\to X$ witnesses that $X\preceq_G Z$.
\end{proof}

In practice, the only way developed in this paper to show that a given flow or group is weakly Fubini is to show that it is weakly Fubini. We will eventually see that when $G$ is Fubini, a wide class of $G$-flows is Fubini, including all $G$-flows when $G$ is locally compact. First, we show how the Fubini property is related to a $G$-continuous version of the Tietze extension theorem. Recall that by the Tietze extension theorem, whenever $Y\subseteq X$ are compact spaces and $f\in \rmC(Y)$, there is $\tilde{f}\in \rmC(X)$ with $\tilde{f}|_Y = f$.

\begin{defin}
	\label{Def:G_Tietze}
	Let $Y\subseteq X$ be $G$-flows. Given $\sigma_0, \sigma_1\in \rm{SN}(G)$, we say that the inclusion $Y\subseteq X$ is \emph{$(\sigma_0, \sigma_1)$-Tietze} if whenever $f\in \rmC_{\sigma_0}^1(Y)$, there is $\tilde{f}\in \rmC_{\sigma_1}(X)$ with $\tilde{f}|_Y = f$; equivalently, one can demand $\tilde{f}\in \rmC_{\sigma_1}^1(X)$.  We say that $Y\subseteq X$ is \emph{Tietze} if for any $\sigma_0\in \rm{SN}(G)$, there is $\sigma_1\in \rm{SN}(G)$ such that $Y\subseteq X$ is $(\sigma_0, \sigma_1)$-Tietze. We say that $Y\subseteq X$ is \emph{weakly Tietze} if for any $\sigma_0\in \sn(G)$ and $\delta > 0$, there is $\sigma_1\in \sn(G)$ so that for any $f\in \rmC_{\sigma_0}^1(Y)$, there is $\tilde{f}\in \rmC_{\sigma_1}(X)$ with $\|\tilde{f}|_Y-f\| < \delta$; equivalently, one can demand $\tilde{f}\in \rmC_{\sigma_1}^{1+\delta}(X)$.
\end{defin}

\begin{rem}
	$Y\subseteq X$ is $(\sigma_0, \sigma_1)$-Tietze iff it is $(\min\{\sigma_0, 1\}, \min\{\sigma_1, 1\})$-Tietze. Hence $Y\subseteq X$ is Tietze iff for every $\sigma_0\in \sn^1(G)$, there is $\sigma_1\in \sn^1(G)$ such that $Y\subseteq X$ is $(\sigma_0, \sigma_1)$-Tietze.
\end{rem}

\begin{notation}
	\label{Notation:Fubini_SN}
	Given $\sigma, \sigma'\in \rm{SN}(G)$, $F\in \rm{FS}(G)$, and $\epsilon > 0$, we set 
	$$\Phi(\sigma, \sigma', F, \epsilon) := [[ \sigma\cup \{(g, \sigma'(g)+\epsilon): g\in F\}]]\in \rm{SN}(G).$$
\end{notation}

\begin{prop}
	\label{Prop:Fubini_Flow_Tietze}
	For any $G$-flow $X$, the following are equivalent.
	\begin{enumerate}
		\item 
		$X$ is Fubini.
		\item
		For any infinite set $J$ and $\cV\in \beta J$, $\Sigma_\cV^GX\subseteq \alpha_G(J\times X)$ is weakly Tietze.
		\item
		For any $\sigma_0\in \sn(G)$ and any $\delta > 0$, there is $\sigma_1\in \sn(G)$ such that for any $\sigma_2\in\sn(G)$, there are $F\in \rm{FS}(G)$ and $\epsilon > 0$ such that for any $f\in \rmC_{\Phi(\sigma_2, \sigma_0, F, \epsilon)}^1(X)$, there is $\tilde{f}\in \rmC_{\sigma_1}(X)$ with $\|\tilde{f}-f\|\leq \delta$.
	\end{enumerate}
\end{prop}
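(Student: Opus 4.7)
The proof will be a cyclic implication $(1) \Rightarrow (2) \Rightarrow (3) \Rightarrow (2) \Rightarrow (1)$, or rather two independent equivalences $(1) \Leftrightarrow (2)$ and $(2) \Leftrightarrow (3)$, since (3) is the purely-functional unpacking of the relative Tietze condition on $X$, while (2) sits geometrically between (3) and the Fubini property. Throughout I will identify $\rmC(\Sigma_\cV^G X)$ with $\rmC_G(J \times X)/{\sim_\cV}$, and use freely that $\rmC(\alpha_G(J\times X)) = \rmC_G(J\times X)$ parametrizes the ``uniformly $\sigma$-Lipschitz'' representatives. The central observation driving everything is that being Fubini is exactly the assertion that the dual inclusion $\hat\psi\colon \rmC(\Sigma_{\cU\otimes\cV}^G X)\to \rmC(\Sigma_\cU^G\Sigma_\cV^G X)$ is surjective, which in turn is an assertion about globally-uniform representations of per-slice-uniform tuples.

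For $(1)\Rightarrow (2)$ I argue by contrapositive: given $\cV\in \beta J$ witnessing failure of weak Tietze for some fixed $\sigma_0\in \sn^1(G)$ and $\delta>0$, for each $\sigma\in I:=\sn^1(G)$ choose $q_\sigma\in \rmC_{\sigma_0}^1(\Sigma_\cV^GX)$ admitting no $\sigma$-orbit-Lipschitz $\delta$-approximant in $\rmC(\alpha_G(J\times X))$. Pick $\cU\in \beta I$ containing $\{\sigma'\geq \sigma\}$ for every $\sigma$. Since $(q_\sigma)_{\sigma\in I}$ is uniformly $\sigma_0$-Lipschitz, it lives in $\rmC_G(I\times \Sigma_\cV^G X)$ and so defines $q^\ast\in \rmC(\Sigma_\cU^G\Sigma_\cV^G X)$; Fubini pulls a uniformly $\sigma_1$-Lipschitz tuple $(\tilde p_{\sigma,j})$ back through $\psi$ to represent $q^\ast$, and since $\cU$-many $\sigma$ dominate $\sigma_1$, any such $\sigma$ in the $\cU$-large set $\{\sigma: \lim_{j\to\cV}\|\tilde p_{\sigma,j}-q_{\sigma,j}\|<\delta\}$ gives a $\sigma$-orbit-Lipschitz $\delta$-approximant to $q_\sigma$, a contradiction. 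For $(2)\Rightarrow (1)$ I run this in reverse: any $q\in \rmC(\Sigma_\cU^G\Sigma_\cV^G X)$ is represented by a tuple $(p_{ij})$ whose slices $q_i=(p_{ij})^G_\cV$ are uniformly $\sigma_0$-Lipschitz (by equicontinuity of $(q_i)_{i}$ as an element of $\rmC_G(I\times \Sigma_\cV^G X)$); applying weak Tietze slice-by-slice with this $\sigma_0$ and a chosen $\delta$ produces $\tilde q_i\in \rmC_{\sigma_1}(\alpha_G(J\times X))$ which reassemble into a globally $\sigma_1$-Lipschitz tuple $(\tilde p_{ij})$ with $\|(\tilde p_{ij})^G_\cU - q\|\leq \delta$, so $q$ lies in the closure of the image of $\hat\psi$, which is already norm-closed.

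For $(3)\Rightarrow (2)$ I fix $\cV\in\beta J$ and $q\in \rmC_{\sigma_0}^1(\Sigma_\cV^G X)$, picking any representative $(q_j)\in \rmC_G(J\times X)$ with global Lipschitz bound $\sigma_2$; for each finite $F\subseteq G$ and $\epsilon>0$ the $\cV$-large set $\{j:\|q_jg-q_j\|\leq \sigma_0(g)+\epsilon\ \forall g\in F\}$ allows us to replace $(q_j)$ on a $\cV$-null set so that $q_j\in \rmC^1_{\Phi(\sigma_2,\sigma_0,F,\epsilon)}(X)$ for every $j$ (here I use that $\Phi(\sigma_2,\sigma_0,F,\epsilon)\leq \sigma_2$). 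Applying (3) with the $F,\epsilon$ it provides for this $\sigma_2$ yields $\tilde q_j\in \rmC_{\sigma_1}(X)$ with $\|\tilde q_j-q_j\|\leq \delta$ for all $j$, and the uniformly $\sigma_1$-Lipschitz tuple $(\tilde q_j)$ gives the required approximate extension. Conversely, for $(2)\Rightarrow (3)$, given $\sigma_0,\delta$, take $J=\mathrm{FS}(G)\times \bbR^{>0}$ with $\cV$ refining $\{(F',\epsilon'):F'\supseteq F,\ \epsilon'<\epsilon\}$, and let $\sigma_1$ be produced by (2) for this $\cV,\sigma_0,\delta$. If (3) failed for this $\sigma_1$ at some $\sigma_2$, the family $f^{F,\epsilon}\in \rmC^1_{\Phi(\sigma_2,\sigma_0,F,\epsilon)}(X)$ of counterexamples is uniformly $\sigma_2$-Lipschitz, so defines $q\in \rmC_{\sigma_0}^1(\Sigma_\cV^G X)$ by the same finite-$F$/small-$\epsilon$ cofinality of $\cV$, and then (2) forces a $\cV$-large set of $(F,\epsilon)$ at which $f^{F,\epsilon}$ does admit a $\sigma_1$-Lipschitz $\delta$-approximant, contradicting the choice of $f^{F,\epsilon}$.

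The main obstacle will be the $(1)\Rightarrow (2)$ direction: producing the right ``dominating'' ultrafilter on $\sn^1(G)$ and verifying that the tuple $(q_\sigma)_{\sigma}$ really does assemble into a bona fide element of $\rmC_G(I\times \Sigma_\cV^G X)$ requires that one be careful about where the $G$-equicontinuity bookkeeping takes place. The bookkeeping in $(2)\Leftrightarrow (3)$ is routine once one notices that $\Phi(\sigma_2,\sigma_0,F,\epsilon)$ is precisely the strongest pointwise-on-$F$ concession to $\sigma_0+\epsilon$ compatible with a global $\sigma_2$ bound, which is the semi-normic shadow of ``$\sigma_0$-Lipschitz up to the ultrafilter'' along a cofinal system of $(F,\epsilon)$.
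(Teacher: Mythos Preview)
Your proposal is correct and uses essentially the same ingredients as the paper: the contrapositives $\neg(2)\Rightarrow\neg(1)$ (via a cofinal ultrafilter on $\sn^1(G)$) and $\neg(3)\Rightarrow\neg(2)$ (via a cofinal ultrafilter on $\rm{FS}(G)\times\bbR^{>0}$) are identical to the paper's, and your direct slice-by-slice arguments for $(2)\Rightarrow(1)$ and $(3)\Rightarrow(2)$ together cover what the paper does in a single $(3)\Rightarrow(1)$ step. The only difference is organizational---the paper chains $(3)\Rightarrow(1)\Rightarrow(2)\Rightarrow(3)$ while you split into two equivalences---but the underlying ideas coincide.
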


\begin{proof}
	$(3)\Rightarrow (1)$: Towards showing $(1)$, fix infinite sets $I$ and $J$ and ultrafilters $\cU\in \beta I$ and $\cV\in \beta J$. Let $p\in \rmC^1(\fubini{\cU}{\cV}{G}X)$, towards showing that $p\in \im(\hat{\psi})$. Let $(p_i)_{i\in I}\in \rmC_G^1(I\times \Sigma_\cV^G X)$ satisfy $(p_i)_\cU^G = p$. Find $\sigma_0\in \rm{SN}(G)$ with $(p_i)_{i\in I}\in \rmC_{\sigma_0}^1(I\times \Sigma_\cV^GX)$. Fix $\delta > 0$. Find $\sigma_1\in \sn(G)$ as promised by item $(3)$. For each $i\in I$,  find $\sigma_i\in \sn(G)$ and $(p_{ij})_{j\in J}\in \rmC_{\sigma_i}^1(J\times X)$ satisfying $(p_{ij})_\cV^G = p_i$. Then given $i\in I$ and considering $\sigma_2 = \sigma_i$, find $F_i\in \rm{FS}(G)$ and $\epsilon_i> 0$ as given by $(3)$. Write $\Phi_i = \Phi(\sigma_i, \sigma_0, F_i, \epsilon_i)$. As $p_i\in \rmC_{\sigma_0}^1(\Sigma_\cV^GX)$, we must have 
	\begin{align*}
		J_i:= \{j\in J: p_{ij}\in \rmC_{\Phi_i}^1(X)\}\in \cV.
	\end{align*}
	For each $j\in J_i$, let $q_{ij}\in \rmC_{\sigma_1}(X)$ satisfy $\|q_{ij}-p_{ij}\|\leq \delta$. If $j\not\in J_i$, set $q_{ij}\equiv 0$. Then $(q_{ij})_{(i, j)\in I\times J}\in \rmC_{\sigma_1}(I\times J\times X)$, implying $((q_{ij})_\cV^G)_\cU^G\in \im(\hat{\psi})$, and also $\|((q_{ij})_\cV^G)_\cU^G - p\|\leq \delta$. As $\delta > 0$ was arbitrary and $\im(\hat{\psi})$ is norm-closed, we have $p\in \im(\hat{\psi})$. 
	\vspace{3 mm}

	$\neg(2)\Rightarrow \neg(1)$: Fix an infinite set $J$ and $\cV\in \beta J$ witnessing the failure of $(2)$. As $\Sigma_\cV^GX\subseteq \alpha_G(J\times X)$ is not weakly Tietze, find a bad $\sigma_0\in \rm{SN}(G)$ and $\delta > 0$ witnessing this. Let $\rmF\colon \sn(G)\to \rmC_{\sigma_0}^1(\Sigma_\cV^GX)$ be such that for each $\sigma_1\in \sn(G)$, the function $\rmF(\sigma_1)$ is bad, i.e.\ whenever $(q_j)_{j\in J}\in \rmC_{\sigma_1}(J\times X)$, we have $\|(q_j)_\cV^G - \rmF(\sigma_1)\|\geq \delta$.
	
	We set $I = \sn(G)$, and let $\cU\in \beta I$ be any ultrafilter such that for every $\sigma\in I$, we have $\{\sigma'\in I: \sigma\leq \sigma'\}\in \cU$. We show that $(\rmF(i))_\cU^G\in \rmC(\fubini{\cU}{\cV}{G}X)$ is not in $\im(\hat{\psi})$. Towards a contradiction, suppose $(q_{ij})_{(i, j)\in I\times J}\in \rmC_G(I\times J\times X)$ satisfied $((q_{ij})_\cV^G)_\cU^G = (\rmF(i))_\cU^G$. This would imply $\{i\in I: \|\rmF(i)-(q_{ij})_\cV^G\|< \delta\}\in \cU$. For some $\sigma\in \sn(G)$, we have $(q_{ij})_{(i, j)\in I\times J}\in \rmC_\sigma(I\times J\times X)$.  Since $\{i\in I: \sigma\leq i\}\in \cU$, we find $i\in I$ with both $\sigma \leq i$ and $\|\rmF(i) - (q_{ij})_\cV^G\|< \delta$. However, since $(q_{ij})_{j\in J}\in \rmC_\sigma(J\times X)\subseteq \rmC_i(J\times X)$, we must have $\|\rmF(i) - (q_{ij})_\cV^G\|\geq \delta$, a contradiction. 
	\vspace{3 mm}
	
	$\neg(3)\Rightarrow \neg(2)$: Let $\sigma_0\in \sn(G)$ and $\delta > 0$ witness the failure of $(3)$. Towards showing the failure of $(2)$, set $J = \rm{FS}(G)\times \bbR^{{>}0}$, and let $\cV\in \beta J$ be any ultrafilter such that for any $(F, \epsilon)\in J$, we have $\{(F', \epsilon')\in J: F'\supseteq F \text{ and } \epsilon'\leq \epsilon\}\in \cV$. Towards showing that $\Sigma_\cV^GX\subseteq \alpha_G(J\times X)$ is not weakly Tietze as witnessed by $\sigma_0$ and $\delta$, fix some $\sigma_1\in \sn(G)$. Given this $\sigma_1$, let $\sigma_2\in \sn(G)$ witness the failure of $(3)$. Given $j = (F, \epsilon)\in J$, let $p_j\in \rmC_{\Phi(\sigma_2, \sigma_0, F, \epsilon)}^1(X)\subseteq \rmC_{\sigma_2}^1(X)$ be such that whenever $\tilde{p}_j\in \rmC_{\sigma_1}(X)$, we have $\|\tilde{p}_j - p_j\|\geq \delta$. Then $(p_j)_{j\in J}\in \rmC_G^1(J\times X)$, and by our demands on $\cV\in \beta J$, we have $(p_j)_\cV^G\in \rmC_{\sigma_0}^1(\Sigma_\cV^GX)$. However, our construction of $(p_j)_{j\in J}$ ensures that for any $(\tilde{p}_j)_{j\in J}\in \rmC_{\sigma_1}(J\times X)$, we have $\|(\tilde{p}_j)_\cV^G-(p_j)_\cV^G\|\geq \delta$.
\end{proof}

In the case $X = \sa(G)$, we can say much more. We will make use of the following general fact about real-valued Lipschitz functions on metric spaces; as I couldn't find a good reference, the proof is included.

\begin{fact}
	\label{Fact:Within_Delta_Lipschitz}
	Given a set $X$, a pseudo-metric $\rho$ on $X$, $\delta > 0$, and $f\colon X\to \bbR$ a function satisfying $|f(x)- f(y)|\leq \rho(x, y)+\delta$, then there is $f'\colon X\to \bbR$ which is $\rho$-Lipschitz and with $\|f'-f\|\leq \delta/2$. If instead $f\colon X\to \bbC$, we can find $f'\colon X\to \bbC$ with $\|f'-f\|\leq \delta/\sqrt{2}$.
\end{fact}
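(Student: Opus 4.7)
The plan is to prove the real case via the classical inf-convolution construction, and then to address the complex case by applying that construction to the real and imaginary parts of $f$. For the real case, define $f^+(x) := \inf_{y \in X}(f(y) + \rho(x, y))$, which is well-defined since $f$ is bounded. The standard triangle-inequality argument shows $f^+$ is $\rho$-Lipschitz: for any $x_1, x_2, y$ one has $f(y)+\rho(x_1,y) \leq f(y)+\rho(x_2,y)+\rho(x_1,x_2)$, and taking the infimum over $y$ yields $f^+(x_1) \leq f^+(x_2)+\rho(x_1,x_2)$. Taking $y=x$ in the infimum gives $f^+(x) \leq f(x)$, while rearranging the hypothesis as $f(y)+\rho(x,y) \geq f(x)-\delta$ and taking the infimum over $y$ gives $f^+(x) \geq f(x)-\delta$. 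Hence $f' := f^+ + \delta/2$ is $\rho$-Lipschitz and satisfies $\|f'-f\|_\infty \leq \delta/2$, proving the real case.

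For the complex case, decompose $f = u+iv$ with $u,v\colon X\to\bbR$ the real and imaginary parts. Since $|u(x)-u(y)| \leq |f(x)-f(y)| \leq \rho(x,y)+\delta$, and similarly for $v$, the real case applies to each component, yielding $\rho$-Lipschitz $u', v'\colon X\to \bbR$ with $\|u-u'\|_\infty, \|v-v'\|_\infty \leq \delta/2$. The obvious candidate $f' := u'+iv'$ then satisfies $|f'(x)-f(x)|^2 = (u(x)-u'(x))^2+(v(x)-v'(x))^2 \leq 2(\delta/2)^2$, so $\|f'-f\|_\infty \leq \delta/\sqrt{2}$.

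The main subtlety I anticipate is that this candidate $f' = u'+iv'$ is generically only $\sqrt{2}$-Lipschitz with respect to $\rho$, rather than genuinely $\rho$-Lipschitz, because combining two independent $\rho$-Lipschitz real components in an $\ell_2$ fashion picks up a factor of $\sqrt{2}$. To upgrade this to an actual $\rho$-Lipschitz complex function while keeping the $\delta/\sqrt{2}$ error bound, I would invoke a Kirszbraun/Helly-style selection argument: for each finite $F \subseteq X$, the constraints $f'_F(x) \in \overline{B}(f(x), \delta/\sqrt{2})$ together with $|f'_F(x)-f'_F(y)| \leq \rho(x,y)$ form a system of closed convex conditions in $\mathbb{C}=\bbR^2$ whose joint feasibility follows from Helly's theorem in the plane combined with Jung-type bounds on circumradii (note that pairwise feasibility already holds since $|f(x)-f(y)| \leq \rho(x,y)+\delta \leq \rho(x,y)+\sqrt{2}\cdot(\delta/\sqrt{2})$); a Tychonoff compactness argument on $\prod_{x\in X}\overline{B}(f(x),\delta/\sqrt{2})$ then assembles the finite-case solutions into a global $f'$. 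Executing this selection step cleanly is where I expect the bulk of the remaining technical work to lie.
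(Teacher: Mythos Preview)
Your real-case argument via the inf-convolution $f^+(x)=\inf_{y}(f(y)+\rho(x,y))$ is correct and is in fact more direct than the paper's own proof, which first reduces to finite $X$ by compactness of the space of $\rho$-Lipschitz functions and then runs a minimality-of-intervals argument to locate the values $f'(x_k)$. Both arguments yield the same conclusion; yours avoids the reduction step entirely.

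For the complex case, the paper does exactly what you do in your first paragraph: it applies the real case to the real and imaginary parts and combines. The paper's proof says nothing further, so the $\sqrt{2}$-Lipschitz issue you raise is a genuine one that the paper simply does not address; you are being more careful here than the paper is. However, your proposed upgrade via Helly's theorem does not work as sketched: the feasibility constraints on $(f'_F(x))_{x\in F}$ define a closed convex set in $(\bbR^2)^{|F|}$, not in the plane, so planar Helly combined with pairwise feasibility is not enough to conclude global feasibility. A correct argument along these lines would need the full strength of a Kirszbraun-type theorem rather than Helly.

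That said, this extra work is unnecessary for the paper's purposes. Every invocation of this fact in the paper (e.g.\ in the proofs of $(2)\Rightarrow(3)$ and $(4)\Rightarrow(1)$ of Theorem~\ref{Thm:Fubini_Groups}, and in Theorem~\ref{Thm:Seminorm_Respecting_Fubini}) only requires the approximant to be $c\rho$-Lipschitz for \emph{some} fixed $c\geq 1$: either the conclusion being proved quantifies existentially over the target seminorm, or a subsequent appeal to Proposition~\ref{Prop:No_Constant} absorbs the constant. So the straightforward $\sqrt{2}\rho$-Lipschitz conclusion that both you and the paper actually obtain already suffices.
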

\begin{proof}[Proof of fact]
	By compactness of the space of $\rho$-Lipschitz functions, we may assume that $X = \{x_k: k< n\}$ is finite. Let $\{I_k = [a_k, b_k] : k< n\}$ be a minimal-under-inclusion set of closed intervals or single points such that for each $k, \ell < n$, we have $I_k\subseteq [f(x_k)-\delta/2, f(x_k)+\delta/2]$ and $\min\{|r-s|: r\in I_k, s\in I_\ell\}\leq \rho(x_k, x_\ell)$. If each $I_k = \{a_k\}$, we set $f'(x_k) = a_k$. Towards a contradiction, suppose $I_0 = [a_0, b_0]$ with $a_0< b_0$. By minimality of $\{I_k: k< n\}$, there are $k, \ell< n$ so that $b_k = a_0 - \rho(x_0, x_k)$ and $a_\ell = b_0+\rho(x_0, x_\ell)$. But now $a_\ell - b_k > \rho(x_0, x_\ell)+\rho(x_0, x_k)\geq \rho(x_k, x_\ell)$, a contradiction. The claim for $f\colon X\to \bbC$ follows by running the above argument on the real and imaginary parts of $f$.
\end{proof}

\begin{theorem}
	\label{Thm:Fubini_Groups}
	For any topological group $G$, the following are equivalent.
	\begin{enumerate}
		\item 
		$G$ is Fubini.
		\item
		For any $\sigma_0\in \rm{SN}^1(G)$, there is $\sigma_1\in \rm{SN}^1(G)$ such that for any $\sigma_2\in \rm{SN}^1(G)$ and any $\delta> 0$, there are $F\in \rm{FS}(G)$ and $\epsilon > 0$ such that pointwise, we have
		$$\Phi(\sigma_2, \sigma_0, F, \epsilon) \leq \sigma_1+\delta.$$
		\item
		For any infinite set $J$ and $\cV\in \beta J$, $\Sigma_\cV^G \sa(G)\subseteq \alpha_G(J\times \sa(G))$ is Tietze.
		\item
		For any $\sigma_0\in \rm{SN}^1(G)$ and any $\delta > 0$, there is $\sigma_1\in \rm{SN}^1(G)$ such that for every $\sigma_2\in \rm{SN}^1(G)$, there are $F\in \rm{FS}(G)$ and $\epsilon > 0$ such that pointwise, we have
		$$\Phi(\sigma_2, \sigma_0, F, \epsilon) \leq \sigma_1+\delta.$$
		\item
		For any infinite set $J$ and $\cV\in \beta J$, $\Sigma_\cV^G \sa(G)\subseteq \alpha_G(J\times \sa(G))$ is weakly Tietze.
	\end{enumerate}
\end{theorem}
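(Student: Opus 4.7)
The strategy is to apply Proposition~\ref{Prop:Fubini_Flow_Tietze} to $X = \sa(G)$ to link $(1)$ with the weakly Tietze condition $(5)$, and then to establish the remaining equivalences via two techniques: a ``test-function'' argument linking the extension conditions on $\sa(G)$ to the combinatorial conditions on semi-norms, and an iterative approximation argument promoting weakly Tietze into Tietze. Proposition~\ref{Prop:Fubini_Flow_Tietze} applied to $X = \sa(G)$ gives $(1) \Leftrightarrow (5)$ at once, and the implications $(3) \Rightarrow (5)$ and $(2) \Rightarrow (4)$ are trivial (exact extension specializes the $\delta$-approximate case, and the quantifier order in $(2)$ is stronger than that in $(4)$).

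For $(4) \Leftrightarrow (5)$, the crucial observation is that every $\sigma \in \sn(G)$ is itself a $\sigma$-orbit-Lipschitz function on $\sa(G)$, by the remark after Definition~\ref{Def:OL}, so that $\Phi := \Phi(\sigma_2, \sigma_0, F, \epsilon) \in \rmC^1_\Phi(\sa(G))$. For $(5) \Rightarrow (4)$, apply the equivalent form Proposition~\ref{Prop:Fubini_Flow_Tietze}(3) for $X = \sa(G)$ to the test function $\Phi$: the approximant $\tilde\Phi \in \rmC_{\sigma_1}(\sa(G))$ obeys $|\tilde\Phi(g) - \tilde\Phi(e_G)| \leq \sigma_1(g)$, and combining this with $\|\tilde\Phi - \Phi\| \leq \delta/2$ and $\Phi(e_G) = 0$ yields the pointwise bound $\Phi(g) \leq \sigma_1(g) + \delta$. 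For $(4) \Rightarrow (5)$, given $f \in \rmC^1_\Phi(\sa(G))$ with $\Phi \leq \sigma_1 + \delta/\sqrt{2}$ from $(4)$, the restriction $f|_G$ satisfies $|f(h_1) - f(h_2)| \leq \sigma_1(h_1 h_2^{-1}) + \delta/\sqrt{2}$, so Fact~\ref{Fact:Within_Delta_Lipschitz} supplies a genuine $\sigma_1$-Lipschitz approximant $\tilde f$ within $\delta/2$ in sup-norm, which then extends continuously to $\sa(G)$.

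To close the cycle, I would prove $(4) \Rightarrow (3)$ by iterated Tietze extension and $(3) \Rightarrow (2)$ by a parallel test-function argument. For $(4) \Rightarrow (3)$, given $f \in \rmC^1_{\sigma_0}(\Sigma_\cV^G \sa(G))$, iteratively apply the weakly Tietze consequence of $(4)$ with errors $2^{-n}$ to appropriately renormalized residuals, assembling a uniformly convergent series $\tilde f = \sum_n \tilde f_n$ that exactly extends $f$. The $\sn^1(G)$-boundedness of each $\sigma_1^{(n)}$ produced by $(4)$ (after rescaling the residual problem into $\sn^1$) ensures that the cumulative semi-norm $\sigma_1 := \sum_n \sigma_1^{(n)}/2^{n-1}$ converges by the Weierstrass $M$-test and lies in $\sn(G)$, witnessing that $\tilde f$ is $\sigma_1$-orbit-Lipschitz. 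For $(3) \Rightarrow (2)$, first observe that $(3)$ yields a single $\sigma_1$ valid for the weakly Tietze property at every $\delta > 0$; adapting the contrapositive $\neg(3) \Rightarrow \neg(2)$ argument of Proposition~\ref{Prop:Fubini_Flow_Tietze} to preserve this $\sigma_1$ across $\delta$ reduces matters to Proposition~\ref{Prop:Fubini_Flow_Tietze}(3) for $\sa(G)$ with a $\delta$-independent $\sigma_1$, after which the $f = \Phi$ test-function argument from above yields the pointwise bound $(2)$. I expect the iterative step of $(4) \Rightarrow (3)$, in particular tracking the norms of the intermediate semi-norms $\sigma_0^{(n)}$ and the controlled growth of $\sigma_1^{(n)}$ under rescaling, to be the main technical obstacle.
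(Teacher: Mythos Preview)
Your approach is essentially correct, but you take a noticeably different route than the paper for two of the implications, and the paper's path is cleaner.

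The paper, like you, obtains $(1)\Leftrightarrow(5)$ from Proposition~\ref{Prop:Fubini_Flow_Tietze} and notes $(3)\Rightarrow(5)$, $(2)\Rightarrow(4)$ trivially. It then closes the cycle differently: it proves $(1)\Rightarrow(2)$ by a direct $\neg(2)\Rightarrow\neg(1)$ construction (building explicit $I,J,\cU,\cV$ and test seminorms $p_{ij}=\Phi(\max\{\rmS(\sigma):\sigma\in i\},\sigma_0,F,\epsilon)$ so that no $G$-continuous correction exists), proves $(2)\Rightarrow(3)$ directly (the $\delta$-independent $\sigma_1$ in $(2)$ lets one correct a representative $(p_j)_j$ coordinate-wise via Fact~\ref{Fact:Within_Delta_Lipschitz} with errors tending to $0$ along $\cV$, yielding an \emph{exact} extension in $\rmC_{\sigma_1}$), and proves $(4)\Rightarrow(1)$ by the same coordinate-wise trick. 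Your $(4)\Leftrightarrow(5)$ test-function argument is precisely this last idea, packaged through Proposition~\ref{Prop:Fubini_Flow_Tietze}(3).

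Where you genuinely diverge is replacing the pair $(2)\Rightarrow(3)$ and $\neg(2)\Rightarrow\neg(1)$ by $(4)\Rightarrow(3)$ via iterated approximate extension and $(3)\Rightarrow(2)$ via a contrapositive adaptation. Your $(3)\Rightarrow(2)$ does work once you notice that the $J,\cV$ built in Proposition~\ref{Prop:Fubini_Flow_Tietze}'s $\neg(3)\Rightarrow\neg(2)$ step do not depend on $\delta$; applying Tietze for that fixed $J,\cV$ gives a single $\sigma_1$, and your $f=\Phi$ argument finishes. Your $(4)\Rightarrow(3)$ iteration also works, but you should be explicit that the residuals $r_n$ satisfy $\|r_n\|\le 2^{-n}$, so one can always truncate to land in $\rmC^1_{\tau_n}$ with $\tau_n=\min\{\sigma_0+\sum_{k<n}\sigma_1^{(k)},1\}\in\sn^1(G)$ before invoking $(4)$; this is what keeps the $\sigma_1^{(n)}$ in $\sn^1(G)$ and makes $\sum_n \min\{\sigma_1^{(n)}, 3\cdot 2^{-n}\}$ a genuine seminorm independent of $f$. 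The paper's direct $(2)\Rightarrow(3)$ sidesteps this entire bookkeeping by exploiting the stronger quantifier order of $(2)$ up front.
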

	
\begin{proof}
	$(1)\Leftrightarrow (5)$ follows from Proposition~\ref{Prop:Fubini_Flow_Tietze}. $(3)\Rightarrow (5)$ and $(2)\Rightarrow (4)$ are clear.
	\vspace{3 mm}
	
	$\neg (2)\Rightarrow \neg (1)$: Suppose (2) fails, as witnessed by some bad $\sigma_0\in \rm{SN}^1(G)$ which we now fix. Then, let $\rmS\colon \rm{SN}^1(G)\to \rm{SN}^1(G)$ and $\rmD\colon \rm{SN}^1(G)\to \bbR^{{>}0}$ be such that for each $\sigma_1\in \rm{SN}^1(G)$, $\rmS(\sigma_1)$ is the bad $\sigma_2$ and $\rmD(\sigma_1)$ is the bad $\delta$. In particular, for every $\sigma_1\in \rm{SN}^1(G)$,  $F\in \rm{FS}(G)$, and $\epsilon > 0$, we have that
	$$\Phi(\rmS(\sigma_1), \sigma_0, F, \epsilon)\not\leq \sigma_1+\rmD(\sigma_1).$$ 
	
	We set $I = [\rm{SN}^1(G)]^{<\omega}$ and $J = \rm{FS}(G)\times \bbR^{{>}0}$. Given $i \in I$ and $j = (F, \epsilon)\in J$, set 
	$$p_{ij}:= \Phi(\max\{\rmS(\sigma): \sigma\in i\}, \sigma_0, F, \epsilon),$$
	where $\max\{\rmS(\sigma): \sigma\in i\}\in \rm{SN}(G)$ is the pointwise maximum.
	Let $\cU\in \beta I$ be any ultrafilter such that for every $i\in I$, we have $\{i'\in I: i'\supseteq i\}\in \cU$, and let $\cV\in \beta J$ be any ultrafilter such that for every $(F, \epsilon)\in J$, we have $\{(F', \epsilon'): F'\supseteq F\text{ and }\epsilon'\leq \epsilon\}\in \cV$. Then for each $i\in I$, we have $(p_{ij})_\cV^G \in \rmC_{\sigma_0}(\Sigma_\cV^G \sa(G))$, so in particular $((p_{ij})_\cV^G)_\cU^G\in \rmC(\Sigma_\cU^G\Sigma_\cV^G \sa(G))$. 
	
	We finish by showing that there is no $(q_{ij})_{(i,j)\in I\times J}\in \rmC_G(I\times J\times \sa(G))$ with $((p_{ij})_\cV^G)_{i\in I} \sim_\cU ((q_{ij})_\cV^G)_{i\in I}$. Towards a contradiction, suppose there was such a $(q_{ij})_{(i,j)\in I\times J}\in \rmC_{\sigma_1}(I\times J\times \sa(G))$ for some $\sigma_1\in \rm{SN}(G)$. We observe that 
	$$((p_{ij})_\cV^G)_{i\in I} \sim_\cU ((q_{ij})_\cV^G)_{i\in I}\Leftrightarrow (p_{ij})_{(i, j)\in I\times J}\sim_{\cU\otimes \cV} (q_{ij})_{(i,j)\in I\times J}.$$
	Thus for each $\delta > 0$, we can find $A_\delta\in \cU\otimes \cV$ such that $\|p_{ij}-q_{ij}\| \leq \delta$ for each $(i, j)\in A_\delta$. For each $i\in I$, let $A_\delta^i = \{j\in J: (i, j)\in A_\delta\}$; consider $\delta = \rmD(\sigma_1)$, and set $B:= \{i\in I: A_{\rmD(\sigma_1)}^i\in \cV\}\in \cU$. By our demand on $\cU$, we can find $i\in B$ with $\sigma_1\in i$, which we now fix. In particular, for every $j\in J$, we have that $p_{ij}\not\leq \sigma_1+\rmD(\sigma_1)$. But since $p_{ij}(e_G) = 0$, it follows that for $j\in A_{\rmD(\sigma_1)}^{i}$, we have $q_{ij}\not\in \rmC_{\sigma_1}(I\times J\times \sa(G))$, a contradiction.
	\vspace{3 mm}
	
	$(2)\Rightarrow (3)$: Suppose $(2)$ holds, and fix an infinite set $J$ and $\cV\in \beta J$. Given $\sigma_0\in \rm{SN}^1(G)$, let $\sigma_1\in \rm{SN}^1(G)$ be as guaranteed by $(2)$. We show that $\Sigma_\cV^G \sa(G)\subseteq \alpha_G(J\times \sa(G))$ is $(\sigma_0, \sigma_1)$-Tietze. Let $p\in \rmC_{\sigma_0}(\Sigma_\cV^J \sa(G))$, and let $(p_j)_{j\in J}\in \rmC_G(J\times G)$ satisfy $(p_j)_\cV^G = p$. For some $\sigma_2\in \rm{SN}^1(G)$, we have $(p_j)_{j\in J}\in \rmC_{\sigma_2}(J\times G)$. It follows that for every finite $F\subseteq G$ and $\epsilon > 0$, we have that 
	\begin{align*}
		&\{j\in J: \forall g\in F\, \|p_{j} - p_{j}g\|\leq \sigma_0(g)+ \epsilon\}\in \cV\\
		\Rightarrow \,\, &\{j\in J: \forall h\in G\, \|p_{j} - p_{j}h\|\leq \Phi(\sigma_2, \sigma_0, F, \epsilon)(h)\}\in \cV.
	\end{align*}  
	Given $\delta >0$, there are $F\in \rm{FS}(G)$ and $\epsilon> 0$ such that 
	$$\Phi(\sigma_2, \sigma_0, F, \epsilon) \leq \sigma_1+\delta.$$
	Hence for each $0< n< \omega$, we have
	$$J_{n}:=\{j\in J: \forall h\in G\, \|p_{j} - p_{j}h\|\leq \sigma_1(h)+1/n\}\in \cV.$$

	For each $0< n< \omega$ and $j\in J_{n}\setminus J_{n+1}$, use Fact~\ref{Fact:Within_Delta_Lipschitz} to find $q_{j}\in \rmC_{\sigma_1}(G)$ satisfying $\|p_{j} - q_{j}\|\leq 1/n$. For $j\in J\setminus J_1$, set $q_j\equiv 0$. Then $(q_{j})_{j\in J}\in \rmC_{\sigma_1}(J\times \sa(G))$ and $((p_{ij})_\cV^G)_{i\in I} \sim_\cU ((q_{ij})_\cV^G)_{i\in I}$.
	\vspace{3 mm}
	
	$(4)\Rightarrow (1)$. The proof is very similar to the proof of $(3)\Rightarrow (1)$ from Proposition~\ref{Prop:Fubini_Flow_Tietze}. Writing $X = \sa(G)$, the proof is almost identical except for finding the functions $q_{ij}$. This time, we have for each $i\in I$ and $j\in J_i$ that $p_{ij}\in \rmC_{\Phi_i}(X)\subseteq\rmC_{\sigma_1+\delta}(X)$. We then use Fact~\ref{Fact:Within_Delta_Lipschitz} to find $q_{ij}$ with $\|q_{ij}-p_{ij}\| \leq \delta$. The rest of the proof is identical.
\end{proof}

We end the section by giving some examples and non-examples of Fubini groups.

\begin{defin}
	\label{Def:Int_k_bounded}
	Given $0< k< \omega$, we say that $S\subseteq G$ is \emph{$k$-bounded} if for any $V\in \cN(G)$, there is a finite $F\subseteq G$ with $S\subseteq (VF)^kV$. We say that $S\subseteq G$ is \emph{Roelcke precompact}, or RPC, if it is $1$-bounded, and we say $G$ is RPC if it is an RPC subset of itself. We say that $S\subseteq G$ is \emph{internally $k$-bounded} if for any $V\in \cN(G)$, there is a finite $F\subseteq S$ with $S\subseteq (VF)^kV$. 
\end{defin}

 Note that for each $0< k< \omega$, the set of (internally) $k$-bounded subsets of $G$ is an ideal closed under conjugation and inverses. When $k = 1$, this ideal is also closed under left and right translations, and we have:

\begin{lemma}
	\label{Lem:LRPC_1_bounded}
	If $S\subseteq G$ is $1$-bounded, then it is internally $1$-bounded.
\end{lemma}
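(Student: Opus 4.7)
The plan is a standard ``refinement'' argument: given $V \in \cN(G)$, I will use the $1$-boundedness hypothesis at a smaller neighborhood $W$ whose square lies in $V$, then replace each of the finitely many ``centers'' coming from $G$ by an actual witness from $S$.

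More precisely, fix $V \in \cN(G)$ and choose $W \in \cN(G)$ with $W = W^{-1}$ and $W^2 \subseteq V$. By $1$-boundedness of $S$ there is a finite $F \subseteq G$ with $S \subseteq WFW$. Let $F' = \{f \in F : WfW \cap S \neq \emptyset\}$, and for each $f \in F'$ pick $s_f \in WfW \cap S$. Writing $s_f = w_1 f w_2$ with $w_i \in W$, we get $f = w_1^{-1} s_f w_2^{-1}$, and hence
$$WfW \;=\; W w_1^{-1} s_f w_2^{-1} W \;\subseteq\; W^2 s_f W^2 \;\subseteq\; V s_f V.$$
Taking the union over $f \in F'$ gives $S \subseteq V \{s_f : f \in F'\} V$, with $\{s_f : f \in F'\}$ a finite subset of $S$. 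Since $V \in \cN(G)$ was arbitrary, $S$ is internally $1$-bounded.

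There is no real obstacle here; the only point worth checking carefully is that one needs $W$ symmetric with $W^2 \subseteq V$ (so that both $Ww_1^{-1}$ and $w_2^{-1}W$ sit inside $V$), which is available in any topological group from the standing assumption that $\cN(G)$ is a base of symmetric open neighborhoods of $e_G$. Note that the analogous argument for $k > 1$ would break down because expanding $(VF)^k V$ after such a substitution produces extra alternating factors that no longer combine into a $(VF')^k V$ expression; that is why the lemma is stated only for $k = 1$.
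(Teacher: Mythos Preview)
Your proof is correct and is essentially identical to the paper's: both pass to a symmetric $W$ with $W^2\subseteq V$, apply $1$-boundedness to get $S\subseteq WFW$, and then replace each relevant $f\in F$ by some $s_f\in S\cap WfW$ to obtain $WfW\subseteq Vs_fV$. Your closing remark about why the argument fails for $k>1$ is a nice addition not present in the paper.
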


\begin{proof}
	Fix $V\in \cN(G)$. Find $W\in \cN(G)$ with $W^2\subseteq V$. Find a finite $F\subseteq G$ with $S\subseteq WFW$; we may assume that for each $f\in F$, we have $S\cap WfW\neq\emptyset$. For each $f\in F$, pick $f'\in S\cap WfW$, and set $F'= \{f': f\in F\}$. Then $VF'V\supseteq WFW \supseteq U$.  
\end{proof}

\begin{defin}
	\label{Def:LRPC_Group}
	We say that $G$ is \emph{locally Roelcke precompact}, or LRPC, if some $U\in \cN(G)$ is RPC. Write $\ngrpc$ for the RPC members of $\cN(G)$. We call $\sigma\in \rm{SN}(G)$ \emph{RPC} if $\rmB_{\sigma}(1)$ is RPC. Write $\snrpc(G)\subseteq \sn(G)$ for the set of RPC seminorms on $G$. When $G$ is LRPC, $\snrpc(G)$ is upwards closed, and by Fact~\ref{Fact:Birkhoff_Kakutani}, it is upwards cofinal in $\rm{SN}(G)$.
\end{defin}

Roelcke precompact groups are ubiquitous throughout mathematics. Among the Polish non-Archimedean groups, a result of Tsankov \cite{Tsankov_Oligo} shows that the RPC groups are exactly those which are inverse limits of (groups isomorphic to) oligomorphic permutation groups. By a classical result due independently to Ryll-Nardzewski, Engeler, and Svenonius (see for instance \cite{Hodges}), the oligomorphic permutation groups are exactly the automorphism groups of countable, $\omega$-categorical structures. Upon generalizing to metric structures, Ben Yaacov and Tsankov \cite{BY_T_WAP} show that the RPC Polish groups are exactly the automorphism groups of $\omega$-categorical structures. Upon weakening to LRPC groups, we get an even wider class, in particular, one that contains all locally compact groups. In this case, $S\subseteq G$ is RPC iff $S$ is precompact. Hence when $G$ is locally compact, we write $\snpc(G)$ instead of $\snrpc(G)$. We refer to \cite{Zielinski_LRPC} for more discussion on LRPC groups.

\begin{prop}
	\label{Prop:Int_k_bounded_Fubini}
	For every $0< k< \omega$, if $G$ has a base of internally $k$-bounded subsets, then $G$ is Fubini. Additionally, when $G$ is LRPC and $\sigma_0\in \snrpc(G)$, then when verifying item $(2)$ of Theorem~\ref{Thm:Fubini_Groups}, we may take $\sigma_1 = \sigma_0$. 
\end{prop}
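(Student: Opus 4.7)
The plan is to verify item $(2)$ of Theorem~\ref{Thm:Fubini_Groups}. Given $\sigma_0 \in \sn^1(G)$, I will exhibit $\sigma_1 \in \sn^1(G)$ such that for every $\sigma_2 \in \sn^1(G)$ and $\delta > 0$ one can produce $F \in \rm{FS}(G)$ and $\epsilon > 0$ with $\Phi(\sigma_2, \sigma_0, F, \epsilon) \leq \sigma_1 + \delta$ pointwise. The core idea is to take any near-optimal $\sigma_1$-factorization $h = g_0 \cdots g_m$ and convert it piece-by-piece into a $\Phi$-factorization of comparable cost: trivial pieces of cost $1$ are left alone, using $\Phi(g_i) \leq \sigma_2(g_i) \leq 1$, while each ``level-$n$'' piece $g_i$ living in an internally $k$-bounded neighborhood is expanded as $v_0 f_1 v_1 \cdots f_k v_k$ by the containment $U_{n+M} \subseteq (V F_{n+M})^k V$ furnished by internal $k$-boundedness.

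For the LRPC case I take $\sigma_1 = \sigma_0$. Since $\rmB_{\sigma_0}(1)$ is RPC and the RPC ideal is closed under subsets, every sub-ball $\rmB_{\sigma_0}(s)$ is RPC, hence by Lemma~\ref{Lem:LRPC_1_bounded} internally $1$-bounded. Given $\sigma_2, \delta$, pick $V \in \cN(G)$ with $\sup_V \sigma_2 \leq \delta/4$, set $\epsilon = \delta/4$, choose a partition $0 = s_0 < s_1 < \cdots < s_N = 1$ with $s_i - s_{i-1} \leq \delta/4$, and for each $i$ fix a finite $F_{s_i} \subseteq \rmB_{\sigma_0}(s_i)$ with $\rmB_{\sigma_0}(s_i) \subseteq V F_{s_i} V$; let $F = \{e_G\} \cup \bigcup_i (F_{s_i} \cup F_{s_i}^{-1})$. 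For $h \in \rmB_{\sigma_0}(1)$ and $i$ minimal with $\sigma_0(h) \leq s_i$, we have $s_i \leq \sigma_0(h) + \delta/4$, and the decomposition $h = v_0 f v_1$ with $f \in F_{s_i}$ gives $\Phi(h) \leq 2\cdot (\delta/4) + s_i + \epsilon \leq \sigma_0(h) + \delta$; when $\sigma_0(h) = 1$, the trivial bound $\Phi(h) \leq 1$ is enough.

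For the general case, I use Fact~\ref{Fact:Birkhoff_Kakutani} together with the internally $k$-bounded base to produce $\vec U = \la U_n : n< \omega \ra$ with $U_{n+1}^3 \subseteq U_n$, each $U_n$ internally $k$-bounded, and $\sigma_0 \leq \sigma_{\vec U}$. Setting $M = \lceil 1 + \log_2 k \rceil$ so that $k/2^M \leq 1/2$, define $\sigma_1 := \sigma_{\vec U'} \in \sn^1(G)$ where $U_n' = U_{n+M}$. Given $\sigma_2$ and $\delta$, choose $N \geq M$ so large that $\sup_{U_n} \sigma_2 \leq 2^{M-n}$ for every $n \geq N$ (possible by continuity of $\sigma_2$ at $e_G$), and then pick $V \in \cN(G)$ and $\epsilon > 0$ small enough that $(k+1) \sup_V \sigma_2 + k\epsilon \leq \delta \cdot 2^{-(N-M)}/2$. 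For each $j \in \{M, \ldots, N{-}1\}$ pick finite $F_j \subseteq U_j$ with $U_j \subseteq (V F_j)^k V$, and set $F = \{e_G\} \cup \bigcup_j (F_j \cup F_j^{-1})$.

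To verify, fix $h$ and a $\sigma_1$-factorization $h = g_0 \cdots g_m$ of cost $\sum c_i \leq \sigma_1(h) + \delta/4$. Trivial pieces contribute $\Phi(g_i) - c_i \leq 0$; a level-$n_i$ piece in $U_{n_i+M}$ with $n_i + M < N$ (case A) satisfies, using $\sigma_0(f) \leq \sigma_{\vec U}(f) \leq 2^{-(n_i+M)}$ for $f \in F_{n_i+M}$, the bound $\Phi(g_i) - c_i \leq -c_i/2 + (k+1)\sup_V \sigma_2 + k\epsilon$; and a level-$n_i$ piece with $n_i + M \geq N$ (case B) satisfies $\Phi(g_i) \leq \sigma_2(g_i) \leq 2^{M-(n_i+M)} = c_i$ by the choice of $N$. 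Since each case-A cost obeys $c_i \geq 2^{M-N+1}$, the number of case-A pieces is at most $2^{N-M}$, so the aggregate excess is at most $2^{N-M} \cdot ((k+1)\sup_V\sigma_2 + k\epsilon) \leq \delta/2$, yielding $\Phi(h) \leq \sigma_1(h) + 3\delta/4 < \sigma_1(h) + \delta$. The principal difficulty is calibrating the parameters simultaneously: $N$ must grow with $\sigma_2$ so that case-B pieces are absorbed automatically by the $\sigma_2$-bound, while $V$ and $\epsilon$ must shrink like $2^{-(N-M)}$ in order to keep the per-piece overhead of the (possibly many) case-A pieces within the global $\delta$-budget.
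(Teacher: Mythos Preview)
Your proof is correct, but it takes a different route from the paper's in both parts.

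For the general $k$-bounded case, the paper proceeds by a \emph{single} decomposition of $h$: it locates $h$ in some $U_i\setminus U_{i+1}$ (or in $U_m$), writes $h=v_0f_0\cdots v_{k-1}f_{k-1}v_k$ via internal $k$-boundedness of $U_i$, and then invokes the Birkhoff--Kakutani \emph{lower} bound $\sigma_1(h)\geq 2^{a-i-1}\geq k\cdot 2^{-i}$ to absorb the $k\cdot 2^{-i}$ cost of the $f$-pieces. You instead pass through an arbitrary near-optimal $\sigma_1$-factorization $h=g_0\cdots g_m$ and expand each factor separately, never using the lower bound on $\sigma_1(h)$. This forces you into the case-A/case-B split and the careful counting argument bounding the number of case-A pieces by $2^{N-M}$, which in turn dictates how small $V$ and $\epsilon$ must be. The paper's approach is shorter and avoids this calibration entirely; yours trades the lower-bound trick for a more algorithmic, piece-by-piece estimate.

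For the LRPC case, the paper again uses a single $F$ for all of $\rmB_\sigma(1)$ and recovers $\sigma(f)\leq \sigma(h)+\sigma(v_0)+\sigma(v_1)$ via the triangle inequality (which is why it must also control $\sigma$ on $V$, not just $\sigma_2$). You instead layer finite sets $F_{s_i}$ inside nested sub-balls $\rmB_{\sigma_0}(s_i)$, so that $\sigma_0(f)<s_i\leq\sigma_0(h)+\delta/4$ is immediate. Both are clean; yours uses internal $1$-boundedness at every scale rather than once, but avoids bounding $\sigma_0$ on $V$.
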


\begin{proof}
	Fix $\sigma_0\in \rm{SN}^1(G)$, towards verifying item $(2)$ of Theorem~\ref{Thm:Fubini_Groups}. By the remark after Fact~\ref{Fact:Birkhoff_Kakutani}, we may assume that $\sigma_0 = \sigma_{\vec{U}}$ for some $\vec{U} = \la U_n: n< \omega\ra$ with $U_n\in \cN(G)$ internally $k$-bounded and with $U_{n+1}^3\subseteq U_n$ for every $n< \omega$. Fix some $0< a< \omega$ such that $2^{a-1}\geq k$, set $V_n = U_{n+a}$, and set $\sigma_1 = \sigma_{\vec{V}}$. Note that if $\sigma_1(h)< 1$, then $\sigma_1(h) = 2^a\cdot \sigma_0(h)$. 
	
	Now suppose $\sigma_2\in \rm{SN}^1(G)$ and $\delta> 0$ are given. Fix $m< \omega$ with $2^{-m}\leq \frac{\delta}{3k}$. In particular, we have $\sigma_0[U_m]\leq \frac{\delta}{3k}$. We also fix $N> m$ such that $\sigma_2[U_N]\leq \frac{\delta}{3(k+1)}$. We choose $\epsilon = \frac{\delta}{3k}$. To choose $F$, for each $i\leq m$, $U_i$ is internally $k$-bounded, so we may find $F_i\in \rm{FS}(U_i)$ with $U_i\subseteq (U_NF_i)^kU_N$. We set $F = \bigcup_{i\leq m} F_i$. 
	
	Write $\Phi = \Phi(\sigma_2, \sigma_0, F, \epsilon)$. Consider some $h\in G$. If $\sigma_1(h) = 1$, there is nothing to prove, so assume $\sigma_1(h)< 1$. If $h\in U_m$, then writing $h = v_0f_0\cdots v_{k-1}f_{k-1}v_k$ with $f_j\in F_m$ for $j< k$ and $v_j\in U_N$ for $j\leq k$, we have:
	$$\Phi(h)\leq \frac{\delta(k+1)}{3(k+1)} + \frac{\delta k}{3k} + \epsilon k \leq \delta.$$
	Suppose for some $i< m$ that $h\in U_i\setminus U_{i+1}$. As $\sigma_1(h)< 1$, Fact~\ref{Fact:Birkhoff_Kakutani} gives us $\sigma_1(h) = 2^a\cdot \sigma_0(h)\geq 2^{a-i-1}\geq k\cdot 2^{-i}$. Write $h = v_0f_0\cdots v_{k-1}f_{k-1} v_k$ with $f_j\in F_i$ for $j< k$ and $v_j\in U_N$ for $j\leq k$. Then:
	$$\Phi(h)\leq \frac{\delta(k+1)}{3(k+1)} + k\cdot 2^{-i} + \epsilon k \leq \sigma_1(h)+\delta.$$
	It follows that $G$ is Fubini.
	
	When $G$ is LRPC and $\sigma_0\in \snrpc(G)$ (note that here we are not assuming $\sigma_0 = \sigma_{\vec{U}}$), we set $\sigma_1 = \sigma_0 := \sigma$. Suppose $\sigma_2\in \sn^1(G)$ and $\delta > 0$ are given, and fix $V\in \cN(G)$ with $\max\{\sigma, \sigma_2\}[V]\leq \frac{\delta}{5}$. Set $\epsilon = \frac{\delta}{5}$, and let $F\in \rm{FS}(G)$ satisfy $\rmB_{\sigma}(1) \subseteq VFV$.  Write $\Phi = \Phi(\sigma_2, \sigma, F, \epsilon)$. Given $h\in G$, if $\sigma(h) = 1$, there is nothing to show, so suppose $\sigma(h)< 1$. Write $h = v_0 f v_1$ with $v_0, v_1\in V$ and $f\in F$, and note that $f = v_0^{-1}hv_1^{-1}$. We have:
	\begin{align*}
		\Phi(h) &\leq \Phi(v_0)+ \Phi(f) + \Phi(v_1)\\
		&\leq \sigma_2(v_0) + \sigma(f) + \epsilon + \sigma_2(v_1)\\
		&\leq \sigma_2(v_0) + \sigma(v_0^{-1}) + \sigma(h)+ \sigma(v_1^{-1}) + \epsilon + \sigma_2(v_1)\\
		&\leq \sigma(h) + \delta. \qedhere
	\end{align*}
\end{proof}

\begin{cor}
	\label{Cor:LRPC_Tietze}
	When $G$ is LRPC and $\sigma\in \snrpc(G)$, then for any infinite set $J$ and $\cV\in \beta J$, we have that $\Sigma_\cV^G \sa(G)\subseteq \alpha_G(J\times \sa(G))$ is $(\sigma, \sigma)$-Tietze.
\end{cor}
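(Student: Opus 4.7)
The plan is to observe that this corollary is essentially a quantitative refinement of the $(2)\Rightarrow(3)$ step in Theorem~\ref{Thm:Fubini_Groups}. Proposition~\ref{Prop:Int_k_bounded_Fubini} already tells us that, under the hypotheses of the corollary, item $(2)$ of Theorem~\ref{Thm:Fubini_Groups} holds with $\sigma_1=\sigma_0=\sigma$, so my task reduces to re-running the proof of $(2)\Rightarrow(3)$ with this particular choice of $\sigma_1$ in mind and checking that the extension it produces indeed lives in $\rmC_\sigma(\alpha_G(J\times \sa(G)))$ rather than just $\rmC_{\sigma_1}$ for some unspecified $\sigma_1$.

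Concretely, I would start with $p\in \rmC_\sigma^1(\Sigma_\cV^G \sa(G))$ and choose a representative $(p_j)_{j\in J}\in \rmC_{\sigma_2}(J\times \sa(G))$ for some $\sigma_2\in \rm{SN}^1(G)$. For each $n\geq 1$, Proposition~\ref{Prop:Int_k_bounded_Fubini} applied with $\delta=1/n$ supplies $F_n\in \rm{FS}(G)$ and $\epsilon_n>0$ with $\Phi(\sigma_2,\sigma,F_n,\epsilon_n)\leq \sigma+1/n$ pointwise. The same implication used in the proof of Theorem~\ref{Thm:Fubini_Groups} (using that $(p_j)_\cV^G=p\in \rmC_\sigma^1(\Sigma_\cV^G\sa(G))$) shows
$$J_n:=\bigl\{j\in J:\forall h\in G,\ \|p_j-p_jh\|\leq \sigma(h)+1/n\bigr\}\in \cV.$$

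The final step is to apply Fact~\ref{Fact:Within_Delta_Lipschitz} on each $J_n\setminus J_{n+1}$: for each such $j$, we obtain $q_j\in \rmC_\sigma(\sa(G))$ with $\|q_j-p_j\|\leq 1/n$ (and we set $q_j\equiv 0$ for $j\notin J_1$). Then $(q_j)_{j\in J}\in \rmC_\sigma(J\times \sa(G))$ extends continuously to a function in $\rmC_\sigma(\alpha_G(J\times \sa(G)))$, whose restriction to $\Sigma_\cV^G\sa(G)$ is $(q_j)_\cV^G$; since $\|p_j-q_j\|$ tends to zero along $\cV$, we have $(q_j)_\cV^G=(p_j)_\cV^G=p$, giving the desired $(\sigma,\sigma)$-Tietze extension. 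There is no real obstacle here; the only thing to emphasize is that it is precisely the strengthening $\sigma_1=\sigma$ from Proposition~\ref{Prop:Int_k_bounded_Fubini} that makes the orbit-Lipschitz parameter of the extension match that of $p$, rather than merely dominate it.
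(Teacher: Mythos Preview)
Your proposal is correct and follows exactly the approach the paper indicates: the paper's proof is simply the one-line instruction to combine Proposition~\ref{Prop:Int_k_bounded_Fubini} (which supplies $\sigma_1=\sigma_0=\sigma$ in item~(2) of Theorem~\ref{Thm:Fubini_Groups}) with the proof of $(2)\Rightarrow(3)$ there, and you have faithfully unpacked that combination. The only cosmetic omission, shared with the paper, is that you do not explicitly say what $q_j$ is for $j\in\bigcap_n J_n$, but there $p_j\in\rmC_\sigma(\sa(G))$ already, so $q_j=p_j$ works.
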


\begin{proof}
	Combine Proposition~\ref{Prop:Int_k_bounded_Fubini} with the proof of $(2)\Rightarrow (3)$ from Theorem~\ref{Thm:Fubini_Groups}.
\end{proof}

Thus not only is every LRPC group Fubini, but the class of Fubini groups is strictly larger. As an example of a non-LRPC Fubini group, consider the automorphism group of the rational Urysohn space with the topology of discrete pointwise convergence (i.e.\ viewing the rational Urysohn space as a countable first-order structure). Any stabilizer of a non-empty finite subset of the rational Urysohn space is internally $2$-bounded.

In the other direction, we argue that every Fubini group must be \emph{locally bounded}; we refer to \cite{Rosendal_2021} for the definition. 

\begin{prop}
	\label{Prop:Not_Fubini}
	Suppose for every $U\in \cN(G)$, there is $V\in \cN(G)$ such that for every $k< \omega$ and $F\in \rm{FS}(G)$, we have $U\not\subseteq (VF)^kV$. Then $G$ is not Fubini. In particular, every Fubini group is locally bounded.
\end{prop}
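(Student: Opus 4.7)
The plan is to violate condition (2) of Theorem~\ref{Thm:Fubini_Groups}. I would take any $\sigma_0\in \rm{SN}^1(G)$ (for instance the zero seminorm) and $\delta = 1/8$ as the witness, and show that no $\sigma_1\in \rm{SN}^1(G)$ fulfills the quantifier promise. Given such a candidate $\sigma_1$, my first move is to pick $U\in \cN(G)$ with $\sigma_1[U] < \delta/2$, then apply the hypothesis to $U$ to obtain $V\in \cN(G)$ with $U\not\subseteq (VF)^kV$ for every $k< \omega$ and $F\in \rm{FS}(G)$; since the hypothesis persists upon shrinking $V$ (as $V'\subseteq V$ implies $(V'F)^kV'\subseteq (VF)^kV$), I may assume $V\subseteq U$. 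Next, I construct $\sigma_2 = \sigma_{\vec{W}}\in \rm{SN}^1(G)$ via Notation~\ref{Notation:SN_from_sequence} with a sequence $\vec W$ satisfying $W_0 = G$, $W_1 = V$, $W_n\subseteq V$ for $n\geq 1$, and $W_{n+1}^3\subseteq W_n$ for every $n$ (enlarging the base $\cN(G)$ to include $G$ if needed, which is harmless). By Fact~\ref{Fact:Birkhoff_Kakutani}, this yields $\sigma_2(g)\geq 1/2$ for every $g\in G\setminus V$.

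For any $F\in \rm{FS}(G)$ and $\epsilon > 0$, I will then pick an integer $k\geq 1/(2\epsilon)$ and use the hypothesis to find $h\in U\setminus (VF)^kV$. The goal is to show $\Phi(\sigma_2, \sigma_0, F, \epsilon)(h)\geq 1/2$, which already exceeds $\sigma_1(h) + \delta\leq 3\delta/2 = 3/16 < 1/2$. Suppose toward contradiction that some decomposition $h = g_0 g_1\cdots g_{m-1}$ has total cost strictly less than $1/2$, where each piece $g_i$ is assigned a cost $c_i$ that is either $\sigma_2(g_i)$ (a \emph{V-step}) or, when $g_i\in F$, the alternative $\sigma_0(g_i)+\epsilon$ (an \emph{F-step}). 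Since each individual cost is bounded by the total, every V-step piece satisfies $\sigma_2(g_i) < 1/2$ and hence lies in $V$ by construction of $\sigma_2$. Grouping consecutive V-steps (allowing empty groups when two F-steps are adjacent) yields a presentation $h = b_0 f_1 b_1 f_2 \cdots f_\ell b_\ell$, where each $b_i\in V^{n_i}$ is a block product (with $b_i = e_G\in V$ when $n_i = 0$), each $f_i\in F$, and $\ell$ counts the F-steps.

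The main technical point, and the step I expect to require the most care, is the block-product estimate: by subadditivity of $\sigma_2$, the sum of V-step costs within a block is at least $\sigma_2(b_i)$, so any $b_i\notin V$ would satisfy $\sigma_2(b_i)\geq 1/2$ and exhaust the budget on its own. Hence every $b_i\in V$, forcing $h\in (VF)^\ell V$. On the other hand, each F-step contributes at least $\epsilon$ to the total cost, so $\ell\cdot \epsilon < 1/2$, giving $\ell < 1/(2\epsilon)\leq k$; thus $h\in (VF)^\ell V\subseteq (VF)^kV$, contradicting the choice of $h$. This establishes the claim and shows $G$ fails condition (2) of Theorem~\ref{Thm:Fubini_Groups}, so $G$ is not Fubini; the ``in particular'' clause then follows by contrapositive, since the hypothesis of the proposition is exactly the negation of local boundedness in Rosendal's sense.
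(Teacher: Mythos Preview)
Your argument is correct and follows essentially the same route as the paper's proof: both take $\sigma_0\equiv 0$, and given a candidate $\sigma_1$, choose $U$ small for $\sigma_1$, obtain $V$ from the hypothesis, pick $\sigma_2$ so that $\sigma_2$-small elements lie in $V$, and then show that any cheap $\Phi$-decomposition of $h$ forces $h\in (VF)^kV$. The paper is terser---it simply asserts $\rmB_\Phi(1)\subseteq (VF)^kV$ for $k>1/\epsilon$ and uses $\delta=1/2$---whereas you spell out the block-grouping argument explicitly and work with $\delta=1/8$; the differences are cosmetic.
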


\begin{proof}
	Consider $\sigma_0 \equiv 0$, and fix $\sigma_1\in \rm{SN}^1(G)$. Write $U = B_{\sigma_1}(1/2)$. Find $V\in \cN(G)$ such that for every $k< \omega$ and $F\in \rm{FS}(G)$, we have $U\not\subseteq (VF)^kV$. Pick $\sigma_2\in \rm{SN}^1(G)$ such that $\rmB_{\sigma_2}(1)\subseteq V$. Fix $F\in \rm{FS}(G)$ and $\epsilon > 0$, and write $\Phi = \Phi(\sigma_2, 0, F, \epsilon)$. Then if $k> 1/\epsilon$, we have $\rmB_\Phi(1)\subseteq (VF)^kV$, so in particular, $\rmB_{\sigma_1}(1/2)\not\subseteq \rmB_\Phi(1)$. Setting $\delta = 1/2$, we see that $\Phi \not\leq \sigma_1 +\delta$. Thus $G$ is not Fubini.
	
	The ``in particular'' follows from Proposition 2.15(5) of \cite{Rosendal_2021}. 
\end{proof}

We end the section with two questions. The first is straightforward.

\begin{que}
	\label{Question:Fubini}
	Are there any topological groups which are not weakly Fubini? Are there any $G$-flows which are not weakly Fubini? If $G$ is LRPC, are there any $G$-flows which are not Fubini?
\end{que}

The second question regards the complexity of the set of Polish Fubini groups. To make sense of this, we need to fix a way of discussing the collection of Polish groups as a standard Borel space. One method of doing this is to fix a universal Polish group $\sfG$, for instance the isometry group of the Urysohn space \cite{Uspenskij_Urysohn}, and view the closed subgroups of $\sfG$ as a Borel subset of $F(\sfG)$, the standard Borel space of closed subsets of $\sfG$. Using a mild modification of Theorem~12.13 from \cite{Kechris_Classical}, one can find a sequence of Borel functions $d_n\colon F(\sfG)\to \sfG$ such that $d_n(H)\in H$ and such that $\{d_n(H): n< \omega\}$ is a dense subgroup of $H$ for every closed subgroup $H\in F(\sfG)$. Then considering Theorem~\ref{Thm:Fubini_Groups}, we see that the collection of Polish Fubini groups is $\bf{\Pi}_3^1$. Can this be improved? 

\begin{que}
	\label{Question:Fubini_Borel}
	In a suitable standard Borel space of Polish groups, what is the complexity of the set of Fubini groups? Is this subspace Borel? Is it ${\bf{\Pi}}^1_3$-complete?
\end{que}

\section{Gleason complete flows and their relatives}

By definition, $G$ is Fubini iff the $G$-flow $\sa(G)$ is Fubini. This section will show that when $G$ is Fubini, then all \emph{Gleason complete} (formerly called \emph{MHP}) flows are Fubini. We will also discuss weakenings of the Gleason complete property which are sufficient for this. 

First, we discuss why Theorem~\ref{Thm:Fubini_Groups} works for $\sa(G)$, but not necessarily other $G$-flows. So suppose $G$ is Fubini and $X$ is a $G$-flow. Fix $\sigma_0\in \sn^1(G)$, and let $\sigma_1\in \sn^1(G)$ be as given by item $(2)$ of Theorem~\ref{Thm:Fubini_Groups}. Fix an infinite set $J$ and $\cV\in \beta J$. Towards attempting to show that $\Sigma_\cV^GX\subseteq \alpha_G(J\times X)$ is $(\sigma_0, \sigma_1)$-Tietze, suppose $p\in \rmC(\Sigma_\cV^GX)$, and let $(p_j)_{j\in J}\in \rmC_G(J\times X)$ satisfy $(p_j)_\cV = p$. Then following the proof of $(2)\Rightarrow (3)$ from Theorem~\ref{Thm:Fubini_Groups}, we obtain that for any $\delta > 0$, the set $\{j\in J: p_j\in \rmC_{\sigma_1+\delta}(X)\}$ is in $\cV$. When $X = \sa(G)$, Fact~\ref{Fact:Within_Delta_Lipschitz} allows us to find $q_j\in \rmC_{\sigma_1}(X)$ with $\|p_j-q_j\|\leq \delta/\sqrt{2}$, and this allows us to correct $(p_j)_{j\in J}$ to a new continuous extension of $p$ in $\rmC_{\sigma_1}(J\times X)$.

This section yields a large class of $G$-flows, the \emph{cofinally seminorm respecting} $G$-flows (Definition~\ref{Def:Seminorm_Respecting}) which satisfy the appropriate analog of Fact~\ref{Fact:Within_Delta_Lipschitz}, thus allowing the above proof to work. In particular, when $G$ is locally compact, all $G$-flows satisfy this property.

\subsection{The Gleason completion}
\label{Subsection:Gleason}

\begin{defin}
	\label{Def:Gleason_Complete}
	A $G$-space $X$ is called \emph{pre-Gleason} if whenever $A\in \op(X)$ and $x\in \ol{A}$, then for any $U\in \cN_G$, we have $x\in \Int(\ol{UA})$. A pre-Gleason $G$-flow is called \emph{Gleason complete}.
\end{defin}

To each $G$-space $X$, one can construct its \emph{Gleason completion}, a Gleason complete $G$-flow $\rmS_G(X)$ and a partially defined $G$-map from $\rmS_G(X)$ to $X$ satisfying a particular universal property. When $X$ is a $G$-flow, this map will be total, giving a factor map from $\rmS_G(X)$ to $X$. We take a moment to discuss the construction of the Gleason completion and the universal property that it satisfies, which we do in slighly more generality than in \cite{ZucDirectGPP}; we refer to \cite{ZucDirectGPP}, \cite{ZucMHP}, and \cite{LeBoudecTsankov} for more details.

\begin{defin}
	\label{Def:Near_Ult_Space}
	Fix a $G$-space $X$. A set $\cF\subseteq \op(X)$ has the \emph{near finite intersecton property}, or \emph{near FIP}, if whenever $Q\in [\cF]^{<\omega}$, we have $\bigcap_{A\in Q} UA\neq\emptyset$. A \emph{near ultrafilter} on $\op(X)$ is a set $\sfp\subseteq \op(X)$ which is maximal with respect to having the near FIP.  We let $\rmS_G(X)$ denote the set of near ultrafilters on $\op(X)$. If $A\in \op(X)$, we let $C_A = \{\sfp\in \rmS_G(X): A\in \sfp\}$ and $N_A = \rmS_G(X)\setminus C_A$. We equip $\rmS_G(X)$ with the compact Hausdorff topology given by the basis $\{N_A: A\in \op(X) \text{ and } \Int(X{\setminus}A)\neq\emptyset\}$ . Letting $G$ act on $\rmS_G(X)$ via $A\in g\sfp$ iff $g^{-1}A\in \sfp$, this action is continuous, making $\rmS_G(X)$ a $G$-flow (see \cite{ZucDirectGPP}). We call $\rmS_G(X)$ the \emph{Gleason completion} of $X$.
\end{defin} 

\begin{rem}
	Given $\sfp\in \rmS_G(X)$, a base of not-necessarily-open neighborhoods of $\sfp$ is given by $\{C_{UA}: A\in \sfp, U\in \cN_G\}$.
\end{rem}

We will soon see that $\rmS_G(X)$ is indeed Gleason complete (though one can also argue this directly). In \cite{ZucDirectGPP}, the universal property satisfied by $(\rmS_G(X), \pi_X)$ is only stated and proven when $X$ is a $G$-flow. However, one can phrase the universal property in an abstract way which works for any $G$-space $X$. 

\begin{defin}
	\label{Def:Irreducible}
	Let $X$ be a $G$-space, $Y$ a $G$-flow, and $\pi\subseteq Y\times X$. We call $(Y, \pi)$ an \emph{irreducible cover} of $X$ if the following all hold.
	\begin{itemize}
		\item 
		$\pi\subseteq Y\times X$ is closed and $G$-invariant.
		\item 
		$\pi$ is a partial function, and $\pi\colon \dom(\pi)\to X$ is a factor map.
		\item 
		For every $B\in \op(Y)$, there is some $x\in X$ with $\pi^{-1}(\{x\})\subseteq B$. In particular, this implies that $\dom(\pi)\subseteq Y$ is dense. 
	\end{itemize}
	When $X$ is a $G$-flow, the definition simplifies to stating that $\pi\colon Y\to X$ is a factor map satisfying the third bullet. In this case, we call $(Y, \pi)$ an \emph{irreducible extension} of $X$.
	
	If $(Y, \pi)$ is an irreducible cover of $X$, we define the \emph{fiber image} map $\pi_{fib}\colon \op(Y)\to \op(X)$ via $\pi_{fib}(B) = \{x\in X: \pi^{-1}(x)\subseteq B\}$. To see that $\pi_{fib}(B)$ is indeed open, suppose $x_i\to x$ with $x_i\not\in \pi_{fib}(B)$. We can find $y_i\in \dom(\pi)\setminus B$ with $(y_i, x_i)\in \pi$. We may assume $y_i\to y\in Y\setminus B$. But then $(y_i, x_i)\to (y, x)$, and as $\pi$ is closed, we have $(y, x)\in \pi$, implying $\pi^{-1}(\{x\})\not\subseteq B$.  
\end{defin}

\begin{defin}
	\label{Def:Univ_Prop_Gleason}
	Given a $G$-space $X$, define $\pi_X^G\subseteq \rmS_G(X)\times X$ by declaring that $(\sfp, x)\in \pi_X^G$ iff $\op(x, X)\subseteq \sfp$. 
\end{defin}

\begin{prop}
	\label{Prop:PI_X_Irreducible}
	$(\rmS_G(X), \pi_X^G)$ is an irreducible cover of $X$. Furthermore, we have that $(\pi_X^G)^{-1}$ is a function iff $X$ is pre-Gleason, in which case it is a continuous embedding. It follows that when $X$ is Gleason complete, $\pi_X^G\colon \rmS_G(X)\to X$ is an isomorphism.
\end{prop}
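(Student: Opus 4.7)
The plan is to prove the three assertions in sequence: the irreducible-cover axioms, the equivalence with pre-Gleasonness, and the isomorphism in the Gleason-complete case.

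First I would verify the four conditions of being an irreducible cover. Closedness of $\pi_X^G\subseteq \rmS_G(X)\times X$: if $(\sfp_0, x_0)\notin \pi_X^G$, there is some $A\in \op(x_0, X)$ with $A\notin \sfp_0$, and shrinking $A$ to an open neighborhood $B$ of $x_0$ with $B\subseteq A$ gives the open product set $N_A\times B$ around $(\sfp_0, x_0)$ disjoint from $\pi_X^G$ (any $(\sfp', x')$ in it would force $A\in \sfp'$ since $B\subseteq A\in \op(x', X)$, contradicting $\sfp'\in N_A$). $G$-invariance is immediate from $A\in g\sfp\Leftrightarrow g^{-1}A\in \sfp$. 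For the partial-function property, if $x_1\neq x_2$ both pair with $\sfp$, Hausdorffness and continuity of the action yield $B_i\in \op(x_i, X)$ and $V\in \cN(G)$ with $VB_1\cap VB_2=\emptyset$, contradicting near FIP of $\sfp$ applied to $\{B_1, B_2\}$ with neighborhood $V$. For surjectivity, $\op(x, X)$ visibly has near FIP, so extends by Zorn to a near ultrafilter mapping to $x$; continuity of $\pi_X^G$ on its domain then follows from closedness of the relation and the choice of topology on $\rmS_G(X)$. For irreducibility, shrink a nonempty open $B\subseteq \rmS_G(X)$ to a basic open $N_A\subseteq B$ (so $\Int(X\setminus A)\neq \emptyset$), pick $x\in \Int(X\setminus A)$, and by continuity of the action choose $V\in \cN(G)$ and $A_x\in \op(x, X)$ with $VA_x\cap A=\emptyset$; for symmetric $U$ with $U^2\subseteq V$ a short computation gives $UA_x\cap UA=\emptyset$, so any near ultrafilter containing $\op(x, X)$ must omit $A$, placing $(\pi_X^G)^{-1}(\{x\})\subseteq N_A\subseteq B$.

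For the equivalence in part two, the key idea is to introduce the candidate $\sfq_x:=\{B\in \op(X): x\in \ol{VB}\text{ for every }V\in \cN(G)\}$. Unpacking what it means for $\op(x, X)\cup \{B\}$ to have near FIP quickly shows that every near ultrafilter containing $\op(x, X)$ lies inside $\sfq_x$, so uniqueness of the preimage of $x$ is equivalent to $\sfq_x$ itself having near FIP. For $(\Leftarrow)$, assume pre-Gleasonness, fix $B_1,\ldots,B_n\in \sfq_x$ and $U\in \cN(G)$, and pick symmetric $V$ with $V^2\subseteq U$; from $x\in \ol{VB_i}$, pre-Gleasonness applied to the open set $VB_i$ with neighborhood $V$ yields $x\in \Int(\ol{V^2 B_i})\subseteq \Int(\ol{UB_i})$ for every $i$. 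Pick an open neighborhood $W_0\ni x$ inside $\bigcap_i \Int(\ol{UB_i})$, and recursively set $W_{k+1}=W_k\cap UB_{k+1}$; since a nonempty open set contained in a closure necessarily meets the set, each $W_{k+1}$ is again nonempty open, so $\emptyset\neq W_n\subseteq \bigcap_i UB_i$, as required. For $(\Rightarrow)$, assume pre-Gleasonness fails at some $A\in \op(X)$, $x\in \ol{A}$, and $U\in \cN(G)$ with $x\notin \Int(\ol{UA})$, and set $B=X\setminus \ol{UA}$, so $x\in \ol{B}$. Both $\op(x, X)\cup \{A\}$ and $\op(x, X)\cup \{B\}$ have near FIP, yet for symmetric $V$ with $V^2\subseteq U$ we have $VA\cap VB=\emptyset$; extending to maximal near ultrafilters produces two distinct preimages of $x$.

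Part three then follows quickly. Under pre-Gleasonness, $(\pi_X^G)^{-1}$ is a function, and it is continuous because $\pi_X^G$ is closed: any subnet limit $\sfp$ of $(\pi_X^G)^{-1}(x_i)$ with $x_i\to x$ satisfies $(\sfp, x)\in \pi_X^G$, forcing $\sfp=(\pi_X^G)^{-1}(x)$. Injectivity is automatic, so it is a continuous embedding. When $X$ is Gleason complete, the image of this embedding is compact, hence closed in $\rmS_G(X)$, but it equals $\dom(\pi_X^G)$, which is dense by the irreducibility clause already established; so $\dom(\pi_X^G)=\rmS_G(X)$, making $\pi_X^G$ a continuous $G$-equivariant bijection between compact Hausdorff spaces, hence a $G$-flow isomorphism. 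The main technical obstacle I expect is the bootstrap argument showing $\sfq_x$ has near FIP in the pre-Gleason case: the pre-Gleason upgrade from $x\in \ol{VB_i}$ to $x\in \Int(\ol{UB_i})$ is precisely what allows the inductive construction of the $W_k$ to remain open and nonempty.
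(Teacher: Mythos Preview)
Your proposal is correct and follows essentially the same route as the paper. Two small points of difference: first, the paper uses the simpler candidate $\{A\in\op(X): x\in\ol{A}\}$ for the unique preimage of $x$, whereas you use the a~priori larger set $\sfq_x=\{B:\forall V\in\cN(G),\,x\in\ol{VB}\}$; under pre-Gleasonness these coincide, and your inductive $W_k$ argument supplies the near-FIP verification that the paper simply asserts. Second, your one-line deduction of continuity of $\pi_X^G$ on its domain from closedness of the graph is not quite sufficient when $X$ is a non-compact $G$-space (closed graph alone does not force continuity without compactness of the target); the paper instead argues directly via the basic open sets $N_A$ of $\rmS_G(X)$, and you would need to do the same. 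This is a minor patch and does not affect the rest of your argument, which in the Gleason-complete case only uses compact $X$ anyway.
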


\begin{proof}
	Certainly $\pi_X^G$ is $G$-invariant. To check that it is closed, suppose $(\sfp_i, x_i)_{i\in I}$ is a net from $\pi_X^G$ and $(\sfp_i, x_i)\to (\sfp, x)\in \rmS_G(X)\times X$. Let $A\in \op(x, X)$. Then eventually $A\in \op(x_i, X)$, so $\sfp_i\in C_A$. As $\sfp_i\to \sfp$ and $C_A\subseteq \rmS_G(X)$ is closed, we have $\sfp\in C_A$, i.e.\ $A\in \sfp$.
	
	To see that $\pi_X^G$ is a partial function, suppose $x_0\neq x_1\in X$. As $X$ is a $G$-space, we can find $A_i\in \op(x_i, X)$ for $i< 2$ and some $U\in \cN(G)$ with $UA_0\cap UA_1 = \emptyset$. Hence $C_{A_0}\cap C_{A_1} = \emptyset$.

	To see that $\pi_X^G$ is a factor map, we check surjectivity and continuity. For each $x\in X$, $\op(x, X)\subseteq \op(X)$ has the near FIP, and any $p\in \rmS_G(X)$ with $\op(x, X)\subseteq \sfp$ satisfies $(\sfp, x)\in X$. Hence $\pi_X^G$ is onto. For continuity, let $(\sfp_i)_{i\in I}$ be a net from $\dom(\pi_X^G)$, and suppose $\sfp_i\to \sfp\in \dom(\pi_X^G)$. Fix $A\in \op(\pi_X^G(\sfp), X)$. Find some $B\in \op(\pi_X^G(\sfp), X)$ and $U\in\cN(G)$ with $UB\subseteq A$. As $B\in \sfp$, we have $\Int(X{\setminus}A)\not\in \sfp$. As non-membership is open in $\rmS_G(X)$, eventually $\Int(X{\setminus}A)\not\in \sfp_i$. For such $i\in I$, we must have $\pi_X^G(\sfp_i)\in \ol{A}$. 
	
	To check that $\pi_X^G$ is irreducible, fix $A\in \op(X)$ with $\Int(X{\setminus}A)\neq\emptyset$, and consider the basic open set $N_A\subseteq \rmS_G(X)$. Fix some $x\in \Int(X{\setminus}A)$. As $x\not\in \ol{A}$, we can find $B\in \op(x, X)$ and $U\in \cN(G)$ with $UA\cap UB = \emptyset$. This implies that $(\pi_X^G)^{-1}(\{x\})\subseteq N_A$.
	
	If $X$ is not pre-Gleason, find some $A\in \op(X)$, $x\in \ol{A}$, and $U\in \cN(G)$ with $x\not\in \Int(\ol{UA})$. Hence $X{\setminus}\ol{UA}$ is an open set with $x\in \ol{X{\setminus}\ol{UA}}$. Then $\op(x, X)\cup \{A\}$ and $\op(x, X)\cup \{X{\setminus}\ol{UA}\}$ both have the near FIP, so find $\sfp, \sfq\in \rmS_G(X)$ with $\op(x, X)\cup \{A\}\subseteq\sfp$ and $\op(x, X)\cup \{X{\setminus}\ol{UA}\}\subseteq \sfq$. Then $\sfp\neq \sfq$ and $(\sfp, x), (\sfq, x)\in \pi_X^G$. Hence $(\pi_X^G)^{-1}$ is not a function.
	
	If $X$ is pre-Gleason, then for any $x\in X$, we have $\{A\in \op(X): x\in \ol{A}\}\in \rmS_G(X)$. Since any $\sfp\in \rmS_G(X)$ satisfying $(\sfp, x)\in \pi_X^G$ must satisfy $\sfp\subseteq \{A\in \op(X): x\in \ol{A}\}$, we see that $(\pi_X^G)^{-1}$ is a function, with $(\pi_X^G)^{-1}(x) = \{A\in \op(X): x\in \ol{A}\}$. To see that $(\pi_X^G)^{-1}$ is a continuous embedding, it only remains to check continuity (as we have already verified the continuity of $\pi_X^G$). So let $(x_i)_{i\in I}$ be a net from $X$ with $x_i\to x\in X$. Fix some $A\in\op(X)$ with $(\pi_X^G)^{-1}(x)\in N_A$. Thus $x\not\in\ol{A}$. So eventually $x_i\not\in \ol{A}$, implying that eventually $(\pi_X^G)^{-1}(x_i)\in N_A$. 
\end{proof}

\begin{theorem}
	\label{Thm:Universal_Irreducible_Extension}
	$(\rmS_G(X), \pi_X^G)$ is the \emph{universal irreducible cover} of $X$, i.e.\ whenever $\pi\subseteq Y\times X$ is an irreducible cover, there is a $G$-map $\phi\colon \rmS_G(X)\to Y$ such that $(\phi\times \rm{id}_X)[\pi_X^G] \subseteq \pi$. 
\end{theorem}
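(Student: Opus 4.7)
The plan is to construct $\phi$ pointwise. For each $\sfp \in \rmS_G(X)$, I will produce a non-empty compact set $K(\sfp) \subseteq Y$ and show it is a singleton, whose unique element will be declared to be $\phi(\sfp)$. Let $\sfp^\cap$ denote the family of finite intersections $\bigcap_{j \leq k} UA_j$ with $A_1, \ldots, A_k \in \sfp$ and $U \in \cN(G)$. By the near FIP of $\sfp$, every member of $\sfp^\cap$ is a non-empty open subset of $X$, and $\sfp^\cap$ is downward-directed under inclusion (given finitely many such intersections, shrink $U$ to any $U'' \in \cN(G)$ contained in all the relevant neighborhoods and form one intersection using all the $A_{l,j}$). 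Set
\[K(\sfp) := \bigcap \{\ol{\pi^{-1}(\tilde A)} : \tilde A \in \sfp^\cap\} \subseteq Y,\]
where $\pi^{-1}(\tilde A) := \{y \in Y : \exists x \in \tilde A,\ (y, x) \in \pi\}$ is the relational preimage. Surjectivity of $\pi\colon \dom(\pi) \to X$ makes each $\pi^{-1}(\tilde A)$ non-empty, and downward-directedness of $\sfp^\cap$ gives the FIP for $\{\ol{\pi^{-1}(\tilde A)}\}$, so $K(\sfp) \neq \emptyset$ by compactness.

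The main technical step is to prove $|K(\sfp)| = 1$. Suppose for contradiction $y_1 \neq y_2 \in K(\sfp)$, and pick disjoint open neighborhoods $B_i \ni y_i$ together with $U \in \cN(G)$ satisfying $UB_1 \cap UB_2 = \emptyset$. Let $A_i := \pi_{fib}(B_i)$, non-empty by irreducibility, and observe that $\pi^{-1}(A_i) \subseteq B_i$ combined with surjectivity of $\pi$ forces $UA_1 \cap UA_2 = \emptyset$. I will derive the contradiction by showing $A_1, A_2 \in \sfp$, violating the near FIP. An auxiliary density lemma (a standard consequence of irreducibility) is that $\pi^{-1}(A_i)$ is dense in $B_i$: for any non-empty open $B' \subseteq B_i$, irreducibility yields $x \in \pi_{fib}(B') \subseteq A_i$ with $\pi^{-1}(x) \subseteq B'$, and $\pi^{-1}(x) \neq \emptyset$ gives $B' \cap \pi^{-1}(A_i) \neq \emptyset$. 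With this in hand, fix $\tilde A \in \sfp^\cap$; since $y_i \in \ol{\pi^{-1}(\tilde A)}$ there is $y^* \in B_i \cap \dom(\pi)$ with $\pi(y^*) \in \tilde A$, and continuity of $\pi$ at $y^*$ on $\dom(\pi)$ produces an open neighborhood $B^{**} \subseteq B_i$ of $y^*$ with $\pi(B^{**} \cap \dom(\pi)) \subseteq \tilde A$. Density then supplies $z \in B^{**} \cap \pi^{-1}(A_i)$, yielding $\pi(z) \in A_i \cap \tilde A$. Since this holds for every $\tilde A \in \sfp^\cap$, the set $\sfp \cup \{A_i\}$ has the near FIP, so $A_i \in \sfp$ by maximality.

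Defining $\phi(\sfp)$ as the unique point of $K(\sfp)$, the remaining verifications are routine. $G$-equivariance $\phi(g\sfp) = g\phi(\sfp)$ reduces to $gK(\sfp) = K(g\sfp)$, which holds because the families $\{g\tilde A : \tilde A \in \sfp^\cap\}$ and $(g\sfp)^\cap$ are mutually cofinal via the homeomorphism $U \mapsto gUg^{-1}$ of $G$. For continuity at $\sfp$, given an open neighborhood $B$ of $\phi(\sfp)$ in $Y$, compactness yields finitely many $\tilde A_l = \bigcap_j U_l A_{l,j} \in \sfp^\cap$ with $\bigcap_l \ol{\pi^{-1}(\tilde A_l)} \subseteq B$; choosing $V_l \in \cN(G)$ with $V_l^2 \subseteq U_l$, the closed sets $C_{V_l A_{l,j}}$ are neighborhoods of $\sfp$ in $\rmS_G(X)$, so $V_l A_{l,j} \in \sfp_i$ eventually, whence $K(\sfp_i) \subseteq \bigcap_l \ol{\pi^{-1}(\bigcap_j V_l^2 A_{l,j})} \subseteq \bigcap_l \ol{\pi^{-1}(\tilde A_l)} \subseteq B$. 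Finally, for $(\sfp, x) \in \pi_X^G$, the hypothesis $\op(x, X) \subseteq \sfp$ places $\phi(\sfp) \in \ol{\pi^{-1}(UA)}$ for every $A \in \op(x, X)$ and $U \in \cN(G)$; extracting a net $(y_\alpha, x_\alpha) \in \pi$ with $y_\alpha \to \phi(\sfp)$ and $x_\alpha \to x$, and invoking the closedness of $\pi$ in $Y \times X$, yields $(\phi(\sfp), x) \in \pi$. The principal obstacle is the singleton argument, where I need irreducibility both to supply the density of $\pi^{-1}(A_i)$ in $B_i$ and to make $A_i$ non-empty, coupled with the continuity of $\pi$ on its merely dense domain, to convert the closure-level condition $y_i \in \ol{\pi^{-1}(\tilde A)}$ into the combinatorial membership $A_i \in \sfp$.
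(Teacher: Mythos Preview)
Your argument is correct. The paper's own proof is a one-line sketch: it declares $\phi(\sfp)=y$ iff $\pi_{fib}(B)\in\sfp$ for every $B\in\op(y,Y)$, then defers the verifications to the proof of Theorem~3.2 in \cite{ZucDirectGPP}. Your construction is the dual one: rather than pulling open sets from $Y$ down to $X$ via $\pi_{fib}$, you push the near-ultrafilter $\sfp$ forward to $Y$ via relational preimages $\pi^{-1}$ and intersect closures. The two definitions agree, and indeed your singleton argument recovers the paper's characterization as its key step: you show that whenever $y\in K(\sfp)$ and $B\in\op(y,Y)$, then $\pi_{fib}(B)\in\sfp$. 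What your route buys is a fully self-contained proof with the compactness and continuity verifications spelled out, at the cost of some extra bookkeeping with the auxiliary family $\sfp^\cap$; the paper's formulation is cleaner to state but leans on an external reference for the details.

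A couple of minor cosmetic points: in the equivariance step, ``the homeomorphism $U\mapsto gUg^{-1}$ of $G$'' should read ``conjugation by $g$ permutes $\cN(G)$''; and in the continuity step you switch notation from $\sfp$ to $\sfp_i$ without introducing a net, though the intended meaning is clear.
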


\begin{proof}
	We define $\phi\colon \rmS_G(X)\to Y$ by declaring that $\phi(\sfp) = y$ iff for each $B\in \op(y, Y)$, we have $\pi_{fib}(B)\in \sfp$. The argument that this is well defined and satisfies the conclusion of the theorem statement is then very similar to the proof of Theorem~3.2 from \cite{ZucDirectGPP}.
\end{proof}

Notice that automatically, the map $\phi\colon \rmS_G(X)\to Y$ from Theorem~\ref{Thm:Universal_Irreducible_Extension} is an irreducible extension. Also note that when $X$ is a $G$-space, $(Y, \pi)$ is an irreducible cover of $X$, and $(Z, \psi)$ is an irreducible extension of $Y$, then writing 
$$\pi\circ \psi:= \{(z, x)\in Z\times X: \psi(z)\in \dom(\pi) \text{ and } \pi(\psi(z)) = x\},$$ we have that $(Z, \pi\circ \psi)$ is an irreducible cover of $X$. From this observation, the universal property of $(\rmS_G(X), \pi_X^G)$ implies that $\pi_{\rmS_G(X)}^G\colon \rmS_G(\rmS_G(X))\to \rmS_G(X)$ is an isomorphism. Thus by Proposition~\ref{Prop:PI_X_Irreducible}, $\rmS_G(X)$ is Gleason complete.

When $X$ is a $G$-flow, then all discussion of irreducible covers simplifies to discussing irreducible extensions, and we call $(\rmS_G(X), \pi_X^G)$ the \emph{universal irreducible extension} of $X$. This is the setting originally considered in \cite{ZucDirectGPP}. When $X$ is minimal and $(Y, \pi)$ is an irreducible extension, then $Y$ is also minimal, and furthermore, the map $\pi$ is \emph{highly proximal}, meaning that for any $x\in X$, there is a net $(g_i)_{i\in I}$ from $G$ such that $g\cdot \pi^{-1}(\{x\})$ converges in $2^X$ to a singleton. This is the notion originally considered in \cite{Auslander_Glasner}. Conversely, \emph{if $Y$ is minimal} and $\pi\colon Y\to X$ is highly proximal, then $\pi$ is irreducible. Thus \emph{among minimal flows}, $\pi_X^G\colon \rmS_G(X)\to X$ is the universal highly proximal extension, and \emph{among minimal flows}, $\rmS_G(X)$ is maximally highly proximal, explaining the ``MHP" terminology used in \cite{ZucMHP}. However, upon considering non-minimal flows, the notions of irreducible and highly proximal extensions become distinct. Thus with the authors of \cite{LeBoudecTsankov}, we have agreed upon the new terminology used both here and in \cite{LeBoudecTsankov}.    

When $X$ is pre-Gleason, the embedding $(\pi_X^G)^{-1}\colon X\to \rmS_G(X)$ given by Proposition~\ref{Prop:PI_X_Irreducible} satisfies a stronger universal property we have already encountered.

\begin{prop}
	For $X$ a pre-Gleason $G$-space, we have $(\rmS_G(X), (\pi_X^G)^{-1})\cong (\alpha_G(X), \iota_X^G)$.
\end{prop}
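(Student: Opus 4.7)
The plan is to realize $\rmS_G(X)$ and $\alpha_G(X)$ as mutually inverse $G$-compactifications of $X$, obtaining one map from each of their universal properties.

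First I would confirm that $(\rmS_G(X), (\pi_X^G)^{-1})$ is itself a $G$-compactification of $X$ in the sense of Definition~\ref{Def:G_compactification}. That $(\pi_X^G)^{-1}\colon X\to\rmS_G(X)$ is a $G$-equivariant continuous embedding is the content of Proposition~\ref{Prop:PI_X_Irreducible}. Its image is dense: any basic open $N_A\subseteq\rmS_G(X)$ contains, by irreducibility of $\pi_X^G$, some fiber $(\pi_X^G)^{-1}(\{x\}) = \{(\pi_X^G)^{-1}(x)\}$. Invoking the universal property of $\alpha_G(X)$ (Fact~\ref{Fact:G_Compactification}) then produces a $G$-map $\tilde{\psi}\colon \alpha_G(X)\to\rmS_G(X)$ with $\tilde{\psi}\circ\iota_X^G = (\pi_X^G)^{-1}$.

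Next, I would observe that the factorization $(\pi_X^G)^{-1} = \tilde{\psi}\circ\iota_X^G$ forces $\iota_X^G\colon X\to\alpha_G(X)$ itself to be a topological embedding: injectivity is immediate from injectivity of $(\pi_X^G)^{-1}$, and for any open $V\subseteq X$ we may choose open $W\subseteq\rmS_G(X)$ with $V = ((\pi_X^G)^{-1})^{-1}(W)$ and then $V = (\iota_X^G)^{-1}(\tilde{\psi}^{-1}(W))$, so the topology of $X$ coincides with the subspace topology from $\alpha_G(X)$.

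The key step is to run the universal property in the other direction via Theorem~\ref{Thm:Universal_Irreducible_Extension}. Set $\iota^{\leftarrow}:=\{(\iota_X^G(x), x) : x\in X\}\subseteq\alpha_G(X)\times X$. I would verify that $(\alpha_G(X), \iota^{\leftarrow})$ is an irreducible cover of $X$: closedness comes from the graph of the continuous map $\iota_X^G$ into Hausdorff $\alpha_G(X)$ being closed in $X\times\alpha_G(X)$ and swapping factors; $G$-invariance is the $G$-equivariance of $\iota_X^G$; the partial-function property is injectivity of $\iota_X^G$; that $\iota^{\leftarrow}$ is a factor map on its domain $\iota_X^G(X)$ follows from $\iota_X^G$ being an embedding onto that domain together with surjectivity onto $X$; and irreducibility follows from $(\iota^{\leftarrow})^{-1}(\{x\}) = \{\iota_X^G(x)\}$ combined with density of $\iota_X^G(X)$. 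Theorem~\ref{Thm:Universal_Irreducible_Extension} then yields a $G$-map $\phi\colon\rmS_G(X)\to\alpha_G(X)$ with $(\phi\times\rm{id}_X)[\pi_X^G]\subseteq\iota^{\leftarrow}$; applied to the pair $((\pi_X^G)^{-1}(x), x)\in \pi_X^G$ this gives $\phi\circ(\pi_X^G)^{-1} = \iota_X^G$.

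To finish, I would check $\tilde{\psi}$ and $\phi$ are mutually inverse. The composition $\tilde{\psi}\circ\phi$ is a continuous self-map of $\rmS_G(X)$ which agrees with $\rm{id}$ on the dense subset $(\pi_X^G)^{-1}(X)$, since $\tilde{\psi}\circ\phi\circ(\pi_X^G)^{-1} = \tilde{\psi}\circ\iota_X^G = (\pi_X^G)^{-1}$; symmetrically, $\phi\circ\tilde{\psi}$ agrees with $\rm{id}$ on $\iota_X^G(X)$. Hausdorffness of both spaces then forces each composition to be the identity, producing the $G$-equivariant homeomorphism $(\rmS_G(X), (\pi_X^G)^{-1})\cong (\alpha_G(X), \iota_X^G)$. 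The main obstacle is establishing that $(\alpha_G(X),\iota^{\leftarrow})$ is an irreducible cover; every bullet ultimately reduces to the assertion that $\iota_X^G$ is a $G$-equivariant topological embedding with dense image, and that assertion in turn is delivered for free by the pre-Gleason hypothesis through the factorization of $(\pi_X^G)^{-1}$.
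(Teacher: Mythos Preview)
Your argument is correct but proceeds along a different route from the paper. The paper argues directly that $(\rmS_G(X),(\pi_X^G)^{-1})$ satisfies the universal property of the maximal $G$-compactification: given any $G$-compactification $(Y,\phi)$ of $X$, one defines $\wt{\phi}\colon\rmS_G(X)\to Y$ by $\wt{\phi}(\sfp)=y$ iff $\phi^{-1}(B)\in\sfp$ for every $B\in\op(y,Y)$, and checks that this is a $G$-map with $\wt{\phi}\circ(\pi_X^G)^{-1}=\phi$. Uniqueness of the maximal $G$-compactification then gives the isomorphism.

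Your approach instead plays the two universal properties (maximal $G$-compactification for $\alpha_G(X)$, universal irreducible cover for $\rmS_G(X)$) against each other to get maps in both directions. This is a perfectly valid ``two-sided'' strategy and has the advantage that no explicit formula is needed. The cost is that you must verify the extra hypothesis that $(\alpha_G(X),\iota^{\leftarrow})$ is an irreducible cover, which in turn forces you to argue that $\iota_X^G$ is a topological embedding; your derivation of this from the factorization $(\pi_X^G)^{-1}=\tilde{\psi}\circ\iota_X^G$ is correct and is a nice observation in its own right. The paper's method is shorter because it only uses one universal property and absorbs the remaining checks into the phrase ``routine to check.''
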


\begin{proof}
	Suppose $(Y, \phi)$ is a $G$-compactification of $X$. We define $\wt{\phi}\colon \rmS_G(X)\to Y$ by declaring that $\wt{\phi}(\sfp) = y$ iff for every $B\in \op(y, Y)$, we have $\phi^{-1}(B)\in \sfp$. It is routine to check that this works.
\end{proof}

In particular, whenever $\vec{X}:= \la X_i: i\in I\ra$ is a tuple of Gleason complete $G$-flows, then $\bigsqcup \vec{X}$ is a pre-Gleason $G$-space, thus allowing for a different construction of the ultracoproduct in this case. However, $\Sigma_\cU^GX_i$ need not be Gleason complete.

\begin{exa}
	\label{Exa:MHP_not_preserved}
	Suppose $G$ is discrete. Then Gleason completeness becomes an entirely topological property -- a $G$-flow $X$ is Gleason complete iff the space $X$ is extremally disconnected. By a theorem of Bankston \cite{Bankston_1984}, so long as $\cU$ is countably incomplete and $\cU$-many $X_i$ are infinite, then $\Sigma_\cU X_i$ is never basically disconnected, so in particular never extremally disconnected. In particular, the ultracoproduct of Gleason complete flows can fail to be Gleason complete.
\end{exa}

\subsection{Lower semi-continuous metrics via seminorms}

\begin{defin}
	\label{Def:SN_Function_On_Flow}
	Fix a $G$-flow $X$ and $\sigma\in \sn^1(G)$. We define $\partial_\sigma^X\colon X\times X\to [0, 1]$ by declaring that for any $x, y\in X$ and  $0\leq c< 1$, then
	$$\partial_\sigma^X(x, y)\leq c \Leftrightarrow \forall A\in \op(x, X)\, \forall B\in \op(y, Y)\, \forall \epsilon > 0\, [\rmB_{\sigma}(c+\epsilon)\cdot A\cap B\neq \emptyset].$$
	We define the function $\rho_\sigma^X\colon X\times X\to [0, 1]$ where given $x, y\in X$, we have
	$$\rho_\sigma^X(x, y) = \sup\{|f(x) - f(y)|: f\in \rmC_\sigma^1(X)\}.$$ 
	When $X$ is understood, we can omit it from the notation.
\end{defin}

Note that we have $\rho_\sigma \leq \partial_\sigma$. Furthermore, $\rho_\sigma$ is a pseudo-metric, and this pseudo-metric is \emph{lower semi-continuous}, i.e.\ for each $c\in [0, 1]$, the set $\{(x, y)\in X\times X: \rho_\sigma(x, y)\leq c\}$ is closed. If $\sigma$ is a norm, then $\rho_\sigma$ is a metric. Metrics of this form for $G$ a Polish group were first considered in the case of $X = \sa(G)$ in \cite{BYMT} and further investigated in \cite{ZucMHP}, where it is shown that for Gleason complete flows, $\rho_\sigma = \partial_\sigma$. To investigate these functions further, let us recall the following result of Ben Yaacov, a Tietze extension theorem for \emph{topometric spaces} that we will make frequent use of. Our statement is in part more general and in part less, but the proof carries over almost exactly.

\begin{fact}[\cite{Ben_Yaacov_2013}]
	\label{Fact:Topometric_Tietze}
	Let $X$ be a compact space and $\rho$ a lower semi-continuous pseudo-metric on $X$. If $Y\subseteq X$ is compact and $f\in \rmC(Y)$ is $\rho$-Lipschitz, then for any $c> 1$, there is $\tilde{f}\in \rmC(X)$ extending $f$ which is $c\rho$-Lipschitz.
\end{fact}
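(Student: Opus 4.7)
My plan is to use a McShane--Whitney type extension formula. Specifically, I would set
$$\tilde{f}(x) := \inf_{y \in Y} \bigl(f(y) + c\rho(x, y)\bigr)$$
for $x \in X$, and verify that $\tilde{f}$ extends $f$, is $c\rho$-Lipschitz, and is continuous on $X$. The existence (indeed attainment) of the infimum on the compact set $Y$ is guaranteed since $y \mapsto f(y) + c\rho(x, y)$ is lower semi-continuous on $Y$ (the first summand is continuous, the second is LSC in $y$ because $\rho$ is LSC on $X \times X$).

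The first two properties require only routine verifications. Given $y_0 \in Y$, taking $y = y_0$ in the defining infimum yields $\tilde{f}(y_0) \leq f(y_0)$. Conversely, using that $f$ is $\rho$-Lipschitz, for every $y \in Y$ we have $f(y) + c\rho(y_0, y) \geq f(y_0) - \rho(y_0, y) + c\rho(y_0, y) = f(y_0) + (c{-}1)\rho(y_0, y) \geq f(y_0)$, so $\tilde{f}(y_0) \geq f(y_0)$. For the Lipschitz property, the triangle inequality for $\rho$ gives, for any $x, x' \in X$ and $y \in Y$, $f(y) + c\rho(x, y) \leq f(y) + c\rho(x', y) + c\rho(x, x')$; taking the infimum over $y$ and using symmetry yields $|\tilde{f}(x) - \tilde{f}(x')| \leq c\rho(x, x')$.

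The hard part will be continuity of $\tilde{f}$. Because $\rho$ is only lower semi-continuous, the $c\rho$-Lipschitz property does not automatically translate into topological continuity, so this step must be handled separately. The lower semi-continuous half is not hard: if $x_n \to x$ in $X$ and $y_n \in Y$ attains the infimum at $x_n$, extract a subsequence $y_n \to y^* \in Y$ by compactness, then use LSC of $\rho$ together with continuity of $f$ to obtain $\liminf_n \tilde{f}(x_n) \geq f(y^*) + c\rho(x, y^*) \geq \tilde{f}(x)$. The upper semi-continuous half is the genuinely delicate point, and is where the strict inequality $c > 1$ must be leveraged. The approach I would pursue, following Ben Yaacov, is to introduce the dual McShane expression $\tilde{f}_\downarrow(x) := \sup_{y \in Y}(f(y) - c\rho(x, y))$, which is upper semi-continuous and $c\rho$-Lipschitz by symmetric arguments. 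The $\rho$-Lipschitz property of $f$ combined with the triangle inequality for $\rho$ gives, for all $x \in X$ and $y, y' \in Y$, $f(y) + c\rho(x, y) - (f(y') - c\rho(x, y')) \geq -\rho(y, y') + c\rho(x, y) + c\rho(x, y') \geq (c{-}1)(\rho(x, y) + \rho(x, y')) \geq 0$, so $\tilde{f}_\downarrow \leq \tilde{f}$ pointwise, with equality on $Y$. One then applies a Katětov-style sandwich lemma, refined to preserve the Lipschitz constant against an LSC pseudo-metric, to extract a continuous $c\rho$-Lipschitz function sitting between $\tilde{f}_\downarrow$ and $\tilde{f}$; this function automatically coincides with $f$ on $Y$. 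Establishing the Lipschitz-preserving sandwich step is the key technical obstacle, and the slack $c > 1$ is precisely what powers the iterative construction that interpolates between the LSC upper envelope and USC lower envelope without accumulating Lipschitz cost.
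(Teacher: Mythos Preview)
The paper does not give its own proof of this fact; it is cited from \cite{Ben_Yaacov_2013}. From the paper's Proposition~\ref{Prop:Within_Delta_Continuous_Lipschitz} one can see the shape of Ben~Yaacov's argument: it proceeds via \emph{Lipschitz systems}. One sets $F_\alpha = \{y\in Y: f(y)\leq \alpha\}$ and $G_\alpha = \{y\in Y: f(y)\geq \alpha\}$, checks that $\rho(F_\alpha, G_\beta)\geq \beta-\alpha$ whenever $\alpha<\beta$, and then invokes a Urysohn-style lemma (Lemma~1.5 of \cite{Ben_Yaacov_2013}) which, given any such system of closed sets on a compact topometric space, manufactures a continuous $c\rho$-Lipschitz function with the prescribed sub- and super-level behavior. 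The slack $c>1$ is used inside that lemma, at the level-set construction stage.

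Your route via the McShane--Whitney formula is genuinely different, and the parts you carry out (extension on $Y$, $c\rho$-Lipschitz, lower semi-continuity of $\tilde f$) are correct. The gap is exactly where you say it is: the Lipschitz-preserving sandwich step. You reduce to finding a continuous $c\rho$-Lipschitz $g$ with $\tilde f_\downarrow\leq g\leq \tilde f$, but then stop, saying only that ``the slack $c>1$ is precisely what powers the iterative construction.'' The standard Kat\v{e}tov insertion theorem gives continuity but says nothing about Lipschitz constants, and an ad~hoc iteration that interleaves Lipschitz corrections with continuity corrections is not obviously convergent without a concrete mechanism. In practice, making this step precise amounts to rebuilding Ben~Yaacov's Lipschitz-system lemma from scratch, so your outline has deferred rather than avoided the main work. (A side remark: your phrase ``following Ben~Yaacov'' when introducing $\tilde f_\downarrow$ is a misattribution --- his proof is the level-set one, not McShane--Whitney.)
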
   

\begin{cor}
	\label{Cor:Largest_PM_Below}
	For any $G$-flow $X$, $\rho_\sigma$ is the largest lower semi-continuous pseudo-metric on $X$ satisfying $\rho_\sigma\leq \partial_\sigma$.
\end{cor}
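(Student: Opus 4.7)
The plan is to prove maximality: for any lower semi-continuous pseudo-metric $\rho$ on $X$ with $\rho\leq \partial_\sigma$, I will show that $\rho(x, y)\leq \rho_\sigma(x, y)$ for all $x, y\in X$, by producing, for each $c> 1$, a function $h\in \rmC_\sigma^1(X)$ with $|h(x)-h(y)| = \rho(x, y)/c$. That $\rho_\sigma$ is itself a lower semi-continuous pseudo-metric satisfying $\rho_\sigma\leq \partial_\sigma$ has already been noted in the paragraph following Definition~\ref{Def:SN_Function_On_Flow}, so $\rho_\sigma$ is a candidate for the maximum.

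The crux of the argument is the inequality $\partial_\sigma(gx, x)\leq \sigma(g)$ for every $g\in G$ and $x\in X$. To verify it, I fix $c> \sigma(g)$ together with open sets $A\ni gx$ and $B\ni x$. Since left multiplication by $g^{-1}$ is a homeomorphism of $X$, the set $g^{-1}A$ is an open neighborhood of $x$, and so meets $B$ at some point $y = g^{-1}a$ with $a\in A$. Because $\sigma(g^{-1}) = \sigma(g)< c$, we have $g^{-1}\in \rmB_\sigma(c)$, whence $y\in \rmB_\sigma(c)\cdot A\cap B$, establishing $\partial_\sigma(gx, x)\leq \sigma(g)$ after letting $c\searrow\sigma(g)$. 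Consequently, any continuous $\rho$-Lipschitz function $f$ on $X$ is $\sigma$-orbit-Lipschitz: for every $g\in G$ and $z\in X$,
$$|f(gz)-f(z)|\leq \rho(gz, z)\leq \partial_\sigma(gz, z)\leq \sigma(g),$$
so $\|fg-f\|\leq \sigma(g)$.

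Given $x, y\in X$, I then apply Fact~\ref{Fact:Topometric_Tietze} to the (trivially $\rho$-Lipschitz) function on the compact subset $\{x, y\}\subseteq X$ sending $x\mapsto 0$ and $y\mapsto \rho(x, y)$, obtaining, for each $c> 1$, an extension $\tilde{f}\in \rmC(X)$ that is $c\rho$-Lipschitz. Composing with the truncation map to $[0, 1]$ preserves the $c\rho$-Lipschitz property and fixes the values at $x$ and $y$ (since $\rho\leq \partial_\sigma\leq 1$ on $X$), so dividing by $c$ yields $h:= \tilde{f}/c\in \rmC(X, [0, 1])$ which is $\rho$-Lipschitz and hence $\sigma$-orbit-Lipschitz by the previous step, placing $h$ in $\rmC_\sigma^1(X)$. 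Then
$$\rho_\sigma(x, y)\geq |h(x)-h(y)| = \rho(x, y)/c,$$
and letting $c\searrow 1$ gives $\rho(x, y)\leq \rho_\sigma(x, y)$, as required.

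The only delicate point is the orbit inequality $\partial_\sigma(gx, x)\leq \sigma(g)$; once this is granted, the remainder of the proof is a direct application of Ben Yaacov's topometric Tietze theorem together with standard truncation and rescaling.
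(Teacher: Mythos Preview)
Your proof is correct and follows exactly the approach the paper intends: the corollary is stated immediately after Fact~\ref{Fact:Topometric_Tietze} with no separate proof, and your argument is the natural unpacking of why it follows. The key observation that any $\rho$-Lipschitz continuous function with $\rho\leq\partial_\sigma$ lies in $\rmC_\sigma(X)$ (via $\partial_\sigma(gx,x)\leq\sigma(g)$), combined with extending a two-point function using Ben Yaacov's topometric Tietze theorem, is precisely what is needed.
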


The techniques from the proof of Fact~\ref{Fact:Topometric_Tietze} yield the following analog of Fact~\ref{Fact:Within_Delta_Lipschitz}.

\begin{prop}
	\label{Prop:Within_Delta_Continuous_Lipschitz}
	Suppose $X$ is a compact space and $\rho$ is a lower semi-continuous pseudometric on $X$. If $\delta > 0$ and $f\in \rmC(X, \bbR)$ satisfies $\|f(x) - f(y)\|\leq \rho(x, y)+\delta$, then for any $c> 1$ and $\epsilon > 0$, there is a $c\rho$-Lipschitz $\tilde{f}\in \rmC(X, \bbR)$ with $\|\tilde{f} - f\|\leq \delta/2 + \epsilon$.
\end{prop}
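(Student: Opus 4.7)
My plan is to adapt the midpoint-of-intervals trick from Fact~\ref{Fact:Within_Delta_Lipschitz} to the continuous setting via the classical Lipschitz-envelope construction, and then to repair continuity using the techniques of Fact~\ref{Fact:Topometric_Tietze}. Concretely, I would begin by defining the upper and lower envelopes
\[
f^+(x) := \inf_{y \in X}\bigl[f(y) + c\rho(x,y)\bigr], \qquad f^-(x) := \sup_{y \in X}\bigl[f(y) - c\rho(x,y)\bigr].
\]
By the triangle inequality for $c\rho$, both $f^\pm$ are $c\rho$-Lipschitz, and by using the hypothesis $|f(x)-f(y)| \leq \rho(x,y) + \delta$ together with the strict inequality $c > 1$ (the latter to absorb the slack in the form $c\rho \geq \rho + (c{-}1)\rho$), one verifies the pointwise bounds $f - \delta \leq f^+ \leq f$ and $f \leq f^- \leq f + \delta$. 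Hence the midpoint $g := \tfrac{1}{2}(f^+ + f^-)$ is $c\rho$-Lipschitz with $\|g - f\|_\infty \leq \delta/2$, playing exactly the role of the midpoint of the intervals $I_k$ in the proof of Fact~\ref{Fact:Within_Delta_Lipschitz}.

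The main obstacle is that $g$ need not be continuous when $\rho$ is merely lower semi-continuous. A short compactness argument---extracting convergent minimizing (resp.\ maximizing) nets of witnesses $y$ from the compact space $X$ and invoking joint lower semi-continuity of $\rho$---shows that $f^+$ is lower semi-continuous and $f^-$ is upper semi-continuous, with $f^- \leq g \leq f^+$ throughout $X$. This is the structural information we can leverage.

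To upgrade $g$ to a \emph{continuous} $c\rho$-Lipschitz approximation within an additional $\epsilon$ of $f$, I would invoke Fact~\ref{Fact:Topometric_Tietze} as follows. Split $c = c_0 \cdot c_1$ with $c_0, c_1 > 1$, and rerun the envelope construction with $c_0$ in place of $c$, producing an LSC $c_0\rho$-Lipschitz midpoint $g_0$ with $\|g_0 - f\|_\infty \leq \delta/2$. Then, using the compactness of $X$ and the topological continuity of $f$, one identifies a closed subset $Y \subseteq X$ on which $g_0$ agrees with a continuous $c_0\rho$-Lipschitz function and which captures the values of $g_0$ up to $\epsilon$ in sup norm; Fact~\ref{Fact:Topometric_Tietze} then extends this restriction from $Y$ to a continuous $c\rho$-Lipschitz function $\tilde f$ on all of $X$, with the factor $c_1 > 1$ providing precisely the slack required for the Tietze extension. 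A final triangle inequality yields $\|\tilde f - f\|_\infty \leq \|\tilde f - g_0\|_\infty + \|g_0 - f\|_\infty \leq \epsilon + \delta/2$.

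The hard part of the argument is the continuity-preserving Lipschitz approximation step: the strictness $c > 1$ is indispensable, exactly as in Fact~\ref{Fact:Topometric_Tietze} itself, and much of the delicacy lies in choosing $Y$ so that extending via Fact~\ref{Fact:Topometric_Tietze} does not destroy the proximity to $g_0$.
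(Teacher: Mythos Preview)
Your envelope construction in the first paragraph is correct and is a natural first attempt: the functions $f^\pm$ are indeed $c\rho$-Lipschitz, and the midpoint $g$ satisfies $\|g-f\|_\infty \leq \delta/2$. You are also right that $f^+$ is lower semi-continuous and $f^-$ is upper semi-continuous.

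The gap is in your continuity-repair step. You write that one ``identifies a closed subset $Y \subseteq X$ on which $g_0$ agrees with a continuous $c_0\rho$-Lipschitz function and which captures the values of $g_0$ up to $\epsilon$ in sup norm,'' and then applies Fact~\ref{Fact:Topometric_Tietze}. But the Tietze extension only controls $\tilde f$ \emph{on} $Y$; off $Y$, the $c\rho$-Lipschitz condition only gives $|\tilde f(x) - \tilde f(y)| \leq c\rho(x,y)$ for $y \in Y$, which is useless unless every $x \in X$ is $\rho$-close to $Y$. Since $\rho$ need have nothing to do with the topology (for instance $\rho$ may take only the values $0$ and a single positive constant), there is no mechanism to produce such a $Y$, and your bound $\|\tilde f - g_0\|_\infty \leq \epsilon$ on all of $X$ is unjustified. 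You acknowledge that ``much of the delicacy lies in choosing $Y$,'' but no choice is actually made, and I do not see one that works.

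The paper's proof avoids this entirely by going back inside Ben Yaacov's argument rather than invoking Fact~\ref{Fact:Topometric_Tietze} as a black box. One sets $F_\alpha = \{x : f(x) \leq \alpha - \delta/2\}$ and $G_\alpha = \{x : f(x) \geq \alpha + \delta/2\}$; the hypothesis $|f(x)-f(y)| \leq \rho(x,y) + \delta$ gives $\rho(x,y) \geq \beta - \alpha$ whenever $\alpha < \beta$, $x \in F_\alpha$, $y \in G_\beta$, so $(F_\alpha, G_\alpha)_\alpha$ is a Lipschitz system in the sense of \cite{Ben_Yaacov_2013}. Lemma~1.5 of \cite{Ben_Yaacov_2013} then directly produces, for any finite $S \subseteq \bbR$, a \emph{continuous} $c\rho$-Lipschitz $\tilde f$ with $\tilde f[F_\alpha] \leq \alpha$ and $\tilde f[G_\alpha] \geq \alpha$ for all $\alpha \in S$; choosing $S$ to be a sufficiently fine mesh in the range of $f$ forces $\|\tilde f - f\| \leq \delta/2 + \epsilon$. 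The point is that Ben Yaacov's interpolation lemma already outputs a continuous Lipschitz function, so no separate continuity repair is needed.
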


\begin{proof}
	For each $\alpha\in \bbR$, set $F_\alpha = \{x\in X: f(x)\leq \alpha - \delta/2\}$ and $G_\alpha = \{x\in X: f(x)\geq \alpha + \delta/2\}$. Note that whenever $\alpha < \beta$, $x\in F_\alpha$, and $y\in G_\beta$, we have $|f(x) - f(y)|\geq \beta - \alpha + \delta$, implying that $\rho(x, y)\geq \beta - \alpha$. Then $(F_\alpha, G_\alpha)_{\alpha\in \bbR}$ is a \emph{Lipschitz system} (Definition~1.3 of \cite{Ben_Yaacov_2013}). By Lemma~1.5 of \cite{Ben_Yaacov_2013}, we can find for any finite $S\subseteq \bbR$ a $c\rho$-Lipschitz $\tilde{f}\in \rmC(X, \bbR)$ satisfying $\tilde{f}[F_\alpha]\leq \alpha$ and $\tilde{f}[G_\alpha]\geq \alpha$ for each $\alpha\in S$. By choosing $S$ appropriately, we can ensure $\|\tilde{f} - f\|\leq \delta/2 + \epsilon$.  
\end{proof}

\begin{defin}
	\label{Def:Seminorm_Respecting}
	Given a $G$-flow $X$ and $S\subseteq \sn^1(G)$, we say that $X$ is $S$-respecting if for each $\sigma\in S$, we have $\rho_\sigma = \partial_\sigma$. If $S = \{\sigma\}$, we write $\sigma$-respecting in place of $\{\sigma\}$-respecting. We say $X$ is \emph{cofinally seminorm respecting} if $X$ is $S$-respecting for some upwards cofinal $S\subseteq \sn^1(G)$. 
\end{defin}

We note that if $X$ is $\sigma$-respecting, then it is $c\sigma$-respecting for any $c> 1$.

\begin{prop}
	\label{Prop:Gleason_Respecting}
	Every Gleason complete $G$-flow is $\sn^1(G)$-respecting.
\end{prop}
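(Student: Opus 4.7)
My plan is to show that on a Gleason complete flow $X$, the function $\partial_\sigma$ is itself a lower semi-continuous pseudo-metric on $X$. Granting this, Corollary~\ref{Cor:Largest_PM_Below} applied to $\partial_\sigma$, viewed as an l.s.c.\ pseudo-metric bounded above by itself, will force $\partial_\sigma \leq \rho_\sigma$; combined with $\rho_\sigma \leq \partial_\sigma$ noted after Definition~\ref{Def:SN_Function_On_Flow}, this will yield $\rho_\sigma = \partial_\sigma$ as needed.

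Symmetry of $\partial_\sigma$ is immediate from the symmetry of $\sigma$ and of each ball $\rmB_\sigma(c)$: the condition $\rmB_\sigma(c+\epsilon) A \cap B \neq \emptyset$ is equivalent to $A \cap \rmB_\sigma(c+\epsilon) B \neq \emptyset$. Lower semi-continuity is equally easy and requires nothing special about $X$: if $\partial_\sigma(x, y) > c$, then some $A \in \op(x, X)$, $B \in \op(y, X)$, and $\epsilon > 0$ witness $\rmB_\sigma(c+\epsilon) A \cap B = \emptyset$, and the same triple witnesses $\partial_\sigma(x', y') > c$ for every $(x', y') \in A \times B$, so $\{\partial_\sigma > c\}$ is open.

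The main content, and the only place where Gleason completeness will enter, is the triangle inequality. Suppose $\partial_\sigma(x, y) \leq a$ and $\partial_\sigma(y, z) \leq b$ with $a+b < 1$; given $A \in \op(x, X)$, $C \in \op(z, X)$, and $\eta > 0$, I aim to produce a point of $\rmB_\sigma(a+b+\eta) A \cap C$. First I would form the open set $V := \bigcup_{g \in \rmB_\sigma(b+\eta/3)} g^{-1} C$ and observe, by applying $\partial_\sigma(y, z) \leq b$ to each open neighborhood of $y$, that $y \in \ol V$. Now Gleason completeness, invoked for any $W \in \cN(G)$ with $W \subseteq \rmB_\sigma(\eta/3)$, yields $y \in \Int(\ol{WV})$, which is an open neighborhood of $y$. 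Applying $\partial_\sigma(x, y) \leq a$ to this neighborhood produces $h \in \rmB_\sigma(a+\eta/3)$ and $a' \in A$ with $ha' \in \ol{WV}$. Since $WV \subseteq \rmB_\sigma(b+2\eta/3) \cdot C$ (as $\sigma(wg^{-1}) \leq \sigma(w) + \sigma(g) < \eta/3 + b + \eta/3$), pushing back by the homeomorphism $h^{-1}$ places $a'$ in $\ol{\rmB_\sigma(a+b+\eta) \cdot C}$, and the openness of $A$ around $a'$ then forces $A \cap \rmB_\sigma(a+b+\eta) C \neq \emptyset$, equivalent by the symmetry noted above to $\rmB_\sigma(a+b+\eta) A \cap C \neq \emptyset$.

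The hard part is precisely the triangle inequality: without Gleason completeness one only reaches the closure membership $y \in \ol V$, which does not suffice to invoke $\partial_\sigma(x, y) \leq a$, since that hypothesis tests only against \emph{open} neighborhoods of $y$. Gleason completeness is exactly the mechanism that promotes $y \in \ol V$ to $y \in \Int(\ol{WV})$, an open set sitting inside a controlled thickening of $V$; the rest of the triangle argument is then a routine chase through the definitions and the bookkeeping of $\sigma$-bounds.
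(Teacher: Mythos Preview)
Your proposal is correct and follows essentially the same route as the argument the paper cites from \cite{ZucMHP}: one shows that on a Gleason complete flow $\partial_\sigma$ is itself a lower semi-continuous pseudo-metric (with Gleason completeness used precisely to promote $y\in\ol{V}$ to an open neighborhood of $y$ contained in a controlled thickening of $V$, which is exactly what the triangle inequality needs), and then Corollary~\ref{Cor:Largest_PM_Below} forces $\partial_\sigma=\rho_\sigma$. The paper's own proof is just a pointer to that reference, so your write-up in fact supplies the details the paper omits.
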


\begin{proof}
	This is a rephrasing of the main results from Section~4 of \cite{ZucMHP}; while only stated there for norms on Polish groups, the general argument is almost identical.
\end{proof}

\begin{prop}
	\label{Prop:LC_Respecting}
	For locally compact $G$, every $G$-flow $X$ is $\snpc(G)$-respecting.
\end{prop}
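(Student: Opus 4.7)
The plan is to show directly that $\rho_\sigma^X \geq \partial_\sigma^X$ for every $\sigma \in \snpc(G)\cap \sn^1(G)$; combined with the opposite inequality $\rho_\sigma \leq \partial_\sigma$ noted after Definition~\ref{Def:SN_Function_On_Flow}, this gives the required equality. It suffices to show that if $x_0, x_1 \in X$ and $c \in [0, 1)$ satisfy $\partial_\sigma^X(x_0, x_1) > c$, then $\rho_\sigma^X(x_0, x_1) \geq c$. Unpacking the hypothesis, pick open $A_0 \ni x_0$, $A_1 \ni x_1$ and $\epsilon_0 > 0$ with $\rmB_\sigma(c + \epsilon_0) A_0 \cap A_1 = \emptyset$; by shrinking $\epsilon_0$ we may assume $c + \epsilon_0 < 1$. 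The task reduces to producing an explicit $f \in \rmC_\sigma^1(X)$ separating $x_0$ and $x_1$.

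Pick $\epsilon_1 \in (0, \epsilon_0)$ and set $c' = c + \epsilon_0 - \epsilon_1 \in (c, 1)$. By Urysohn's lemma, choose $h \in \rmC(X, [0, c'])$ with $h(x_0) = 0$ and $h \equiv c'$ on $X \setminus A_0$. Define
$$f(x) = \inf\{\sigma(g) + h(gx) : g \in G\}.$$
Subadditivity and symmetry of $\sigma$ make $f$ bounded by $h \leq c' \leq 1$ and $\sigma$-orbit-Lipschitz, so once continuity of $f$ is established, $f \in \rmC_\sigma^1(X)$. Taking $g = e_G$ gives $f(x_0) = 0$. To see $f(x_1) \geq c'$, fix any $g \in G$: if $gx_1 \notin A_0$ then $h(gx_1) = c'$; otherwise $x_1 \in g^{-1}A_0 \cap A_1$, and since $\rmB_\sigma(c' + \epsilon_1)A_0 \cap A_1 = \emptyset$ and $\sigma(g) = \sigma(g^{-1})$, one has $\sigma(g) \geq c' + \epsilon_1$. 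In either case $\sigma(g) + h(gx_1) \geq c'$.

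The main technical step, and the only place where precompactness of $\sigma$ is essential, is showing that $f$ is continuous rather than merely upper semi-continuous. Upper semi-continuity is automatic since $f$ is an infimum of continuous functions $x \mapsto \sigma(g) + h(gx)$. For lower semi-continuity at a point $x$, suppose for contradiction some net $x_i \to x$ and some $r < f(x) \leq c' < 1$ satisfy $f(x_i) < r$ for all $i$; for each $i$ pick $g_i \in G$ with $\sigma(g_i) + h(g_i x_i) < r$. Since $h \geq 0$, we have $g_i \in \rmB_\sigma(r) \subseteq \rmB_\sigma(1)$, and $\ol{\rmB_\sigma(1)}$ is compact in $G$ because $\sigma \in \snpc(G)$. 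Passing to a subnet gives $g_i \to g \in G$; joint continuity of the action yields $g_i x_i \to gx$, and continuity of $\sigma$ and $h$ then give $\sigma(g) + h(gx) \leq r$, contradicting $r < f(x)$. Hence $f \in \rmC_\sigma^1(X)$ and $\rho_\sigma^X(x_0, x_1) \geq |f(x_1) - f(x_0)| \geq c' > c$, completing the proof.
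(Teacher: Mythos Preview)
Your proof is correct. The paper takes a different, shorter route: it first observes that when $G$ is locally compact and $\sigma\in\snpc(G)$, one has
\[
\partial_\sigma^X(x,y)=\begin{cases}\inf\{\sigma(g):gx=y\}&\text{if $x,y$ lie in the same orbit},\\ 1&\text{otherwise},\end{cases}
\]
which is visibly a lower semi-continuous pseudo-metric, and then invokes Corollary~\ref{Cor:Largest_PM_Below} (``$\rho_\sigma$ is the largest lower semi-continuous pseudo-metric below $\partial_\sigma$'') to conclude $\partial_\sigma=\rho_\sigma$. Your argument instead builds a separating function $f\in\rmC_\sigma^1(X)$ by hand via the infimum $f(x)=\inf_g\{\sigma(g)+h(gx)\}$ and uses compactness of $\overline{\rmB_\sigma(1)}$ to secure lower semi-continuity of $f$. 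The trade-off: the paper's proof is cleaner once Corollary~\ref{Cor:Largest_PM_Below} (hence Fact~\ref{Fact:Topometric_Tietze}, Ben~Yaacov's topometric Tietze theorem) is available, whereas your construction is entirely self-contained and does not touch that machinery. Both arguments deploy precompactness of $\rmB_\sigma(1)$ at the same essential moment---you to extract a convergent subnet of the $g_i$, the paper to reduce $\partial_\sigma$ to an exact orbit infimum.
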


\begin{proof}
	Given $x\in X$ and a compact $K\subseteq G$, then for any open $B\supseteq Kx$, we can find $A\in \op(x, X)$ with $KA\subseteq B$. It follows that
	given $\sigma\in \snpc(G)$ and $x, y\in X$, we have 
	\begin{align*}
		\partial_\sigma(x, y) =\begin{cases}
			\inf\{\sigma(g): gx = y\} \quad &\text{if $x, y$ belong to the same orbit},\\
			1 \quad &\text{otherwise}.
		\end{cases}
	\end{align*}
	It then follows from Corollary~\ref{Cor:Largest_PM_Below} that $\partial_\sigma = \rho_\sigma$. 	
\end{proof}

\begin{exa}
	Suppose $G$ is a non-Archimedean Polish group, and let $\vec{U} = \la U_n: n< \omega\ra$ be a base of clopen subgroups. If $\vec{c} = \la c_n: n< \omega\ra$ satisfies $c_n > 2c_{n+1}$, then a $G$-flow $X$ is $\sigma_{\vec{U}, \vec{c}}$-respecting iff for each $n< \omega$, the relation $R_n = \rm{cl}\{(x, gx): x\in X, g\in U_n\}$ (see Definition~\ref{Def:Orbit_Relation}) is an equivalence relation. For an example where this is not the case, consider $G = S_\infty$ and $X$ the space of ``least-$2$-forgetful" linear orders on $\bbN$, considered implicitly by Frasnay \cite{Frasnay} and more explicitly in unpublished work of Tsankov. This is the space $\rm{LO}(\bbN)/\sim_2$, where $L_0\sim_2 L_1$ iff $L_0 = L_1$ or if there are $m, n\in \bbN$ with $\{m, n\}\times( \bbN\setminus \{m, n\})\subseteq L_i$ for each $i< 2$ and $L_0$ and $L_1$ agree except on $\{m, n\}$. If $U_n$ denotes the pointwise stabilizer of $\{0,..., n-1\}$, then $R_2$ is not an equivalence relation. However, we note that $X$ is Fubini (see Corollary~\ref{Cor:UMF_CAP}). 
\end{exa}

\begin{theorem}
	\label{Thm:Seminorm_Respecting_Fubini}
	If $G$ is Fubini and $X$ is a cofinally seminorm respecting $G$-flow, then $X$ is Fubini.
\end{theorem}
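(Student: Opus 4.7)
The plan is to verify item $(3)$ of Proposition~\ref{Prop:Fubini_Flow_Tietze} for $X$ by combining the Fubini property of $G$ (through item $(2)$ of Theorem~\ref{Thm:Fubini_Groups}) with the seminorm-respecting hypothesis on $X$, which is precisely what is needed to substitute Proposition~\ref{Prop:Within_Delta_Continuous_Lipschitz} for the role that Fact~\ref{Fact:Within_Delta_Lipschitz} plays in the proof of Theorem~\ref{Thm:Fubini_Groups}. Let $S \subseteq \sn^1(G)$ be upward cofinal such that $X$ is $S$-respecting.

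Fix $\sigma_0 \in \sn^1(G)$ and $\delta > 0$. First apply item $(2)$ of Theorem~\ref{Thm:Fubini_Groups} to get $\sigma_1' \in \sn^1(G)$ such that for every $\sigma_2 \in \sn^1(G)$ and every $\delta' > 0$, one can find $F \in \rm{FS}(G)$ and $\epsilon > 0$ with $\Phi(\sigma_2, \sigma_0, F, \epsilon) \leq \sigma_1' + \delta'$ pointwise. Using upward cofinality of $S$, pick $\sigma \in S$ with $\sigma_1' \leq \sigma$, and set $\sigma_1 := 2\sigma$. I claim this $\sigma_1$ witnesses item $(3)$ for $\sigma_0$ and $\delta$.

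To verify this, fix $\sigma_2 \in \sn(G)$, which we may take in $\sn^1(G)$. Apply the Fubini control above with tolerance $\delta/4$ to produce $F$ and $\epsilon$ with $\Phi(\sigma_2, \sigma_0, F, \epsilon) \leq \sigma + \delta/4$. Any $f \in \rmC^1_{\Phi(\sigma_2, \sigma_0, F, \epsilon)}(X)$ then satisfies $\|fh - f\| \leq \sigma(h) + \delta/4$ for every $h \in G$. The key translation step is that this orbit-Lipschitz bound implies the topometric bound $|f(x) - f(y)| \leq \partial_\sigma(x, y) + \delta/4$ for every $x, y \in X$: given $c > \partial_\sigma(x, y)$, by definition of $\partial_\sigma$ one can locate arbitrarily close approximates $w$ to $x$ and $hw$ to $y$ with $\sigma(h) < c$, and then continuity of $f$ combined with $|f(w) - f(hw)| \leq \sigma(h) + \delta/4$ gives $|f(x) - f(y)| \leq c + \delta/4$; letting $c \downarrow \partial_\sigma(x, y)$ yields the claim. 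Because $X$ is $\sigma$-respecting, $\partial_\sigma = \rho_\sigma$, so in fact $|f(x) - f(y)| \leq \rho_\sigma(x, y) + \delta/4$.

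Now apply Proposition~\ref{Prop:Within_Delta_Continuous_Lipschitz} to the real and imaginary parts of $f$ with Lipschitz constant $c = 2$ and some small tolerance $\epsilon' > 0$, producing $\tilde{f} \in \rmC(X)$ that is $2\rho_\sigma$-Lipschitz and satisfies $\|\tilde{f} - f\| \leq \delta$ (the complex case follows from the real case as in Fact~\ref{Fact:Within_Delta_Lipschitz}). Since $\rho_\sigma(x, hx) \leq \partial_\sigma(x, hx) \leq \sigma(h)$ for all $x$ and $h$, the $2\rho_\sigma$-Lipschitz condition yields $\|\tilde{f}h - \tilde{f}\| \leq 2\sigma(h) = \sigma_1(h)$, so $\tilde{f} \in \rmC_{\sigma_1}(X)$, verifying item $(3)$ of Proposition~\ref{Prop:Fubini_Flow_Tietze}. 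The only genuinely substantive step is the passage from the orbit-Lipschitz bound to the $\partial_\sigma$-bound; the $S$-respecting hypothesis is precisely what converts this into a $\rho_\sigma$-bound that the continuous Tietze-style correction of Proposition~\ref{Prop:Within_Delta_Continuous_Lipschitz} can act upon.
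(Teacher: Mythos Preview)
Your approach is exactly the one the paper takes: use item~(2) of Theorem~\ref{Thm:Fubini_Groups} to control $\Phi(\sigma_2,\sigma_0,F,\epsilon)$, pass to a seminorm $\sigma\in S$ so that $\rho_\sigma=\partial_\sigma$, and then invoke Proposition~\ref{Prop:Within_Delta_Continuous_Lipschitz} in place of Fact~\ref{Fact:Within_Delta_Lipschitz}. The paper phrases this as verifying that $\Sigma_\cV^GX\subseteq\alpha_G(J\times X)$ is $(\sigma_0,c\sigma_1)$-Tietze while you verify item~(3) of Proposition~\ref{Prop:Fubini_Flow_Tietze} directly, but the content is identical.

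There is one small gap. Your ``key translation step'' asserting $|f(x)-f(y)|\leq\partial_\sigma(x,y)+\delta/4$ only works when $\partial_\sigma(x,y)<1$: your approximation argument needs $c<1$ to invoke the definition of $\partial_\sigma$, and when $\partial_\sigma(x,y)=1$ the best you have from $\|f\|\leq 1$ is $|f(x)-f(y)|\leq 2$, which can exceed $1+\delta/4$. (Concretely, if $x,y$ lie in disjoint orbit closures and $f$ is $\pm 1$ on each, the inequality fails.) The fix is immediate: since $\|f\|\leq 1$ and $\partial_\sigma\leq 1$, you always have $|f(x)-f(y)|\leq 2\partial_\sigma(x,y)+\delta/4=2\rho_\sigma(x,y)+\delta/4$, so apply Proposition~\ref{Prop:Within_Delta_Continuous_Lipschitz} with the pseudometric $2\rho_\sigma$ instead; the output is then $2c\rho_\sigma$-Lipschitz, and taking $\sigma_1=3\sigma$ (or any fixed multiple larger than $2$) in place of $2\sigma$ makes the rest of your argument go through verbatim.
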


\begin{proof}
	The proof is exactly as outlined in the beginning of the section. Given $\sigma_0\in \sn^1(G)$, find $\sigma_1\in \sn^1(G)$ as in item $(2)$ of Theorem~\ref{Thm:Fubini_Groups}. As $X$ is cofinally seminorm respecting, we may replace $\sigma_1$ by a larger seminorm if needed with $\rho_{\sigma_1} = \partial_{\sigma_1}$. Now fix an infinite set $J$ and $\cV\in \beta J$. We fix $c> 1$ and show that $\Sigma_\cV^GX\subseteq \alpha_G(J\times X)$ is $(\sigma_0, c\sigma_1)$-Fubini. Fix $p\in \rmC_{\sigma_0}(\Sigma_\cV^GX)$, and let $(p_j)_{j\in J}\in \rmC_G(J\times X)$ satisfy $(p_j)_\cV = p$. Following the proof of $(2)\Rightarrow (3)$ from Theorem~\ref{Thm:Fubini_Groups}, for any $\delta > 0$, we have $J_\delta:= \{j\in J: p_j\in \rmC_{\sigma_1+\delta}(X)\}\in \cV$. Since $\rho_{\sigma_1} = \partial_{\sigma_1}$, apply Proposition~\ref{Prop:Within_Delta_Continuous_Lipschitz} for each $j\in J_\delta$ to obtain $q_j\in \rmC_{c\sigma_1}(X)$ with $\|p_j - q_j\|\leq \delta$. We then mimic the rest of the proof of $(2)\Rightarrow (3)$ from Theorem~\ref{Thm:Fubini_Groups}.
\end{proof}

In the proof of the previous theorem, the constant $c> 1$ is not actually needed.

\begin{prop}
	\label{Prop:No_Constant}
	Suppose $I$ is an infinite set, $\la X_i: i\in I\ra$ are $G$-flows, and $\cU\in \beta I$, then if $\sigma_0, \sigma_1\in \sn(G)$ and the inclusion $\Sigma_\cU^GX_i\subseteq \alpha_G(\bigsqcup \vec{X})$ is $(\sigma_0, c\sigma_1)$-Tietze for every $c> 1$, then it is $(\sigma_0, \sigma_1)$-Tietze.
\end{prop}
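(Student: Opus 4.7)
Fix $f \in \rmC_{\sigma_0}^1(Z)$, abbreviating $Z := \Sigma_\cU^G X_i$ and $Y := \alpha_G(\bigsqcup \vec{X})$. For each $c > 1$, the hypothesis yields $\tilde{f}_c \in \rmC_{c\sigma_1}^1(Y)$ with $\tilde{f}_c|_Z = f$. Setting
$$A_c := \{\tilde{g} \in \rmC_{c\sigma_1}^1(Y) : \tilde{g}|_Z = f\},$$
each $A_c$ is a nonempty convex, norm-closed, and norm-bounded subset of $\rmC(Y)$, and the family $\{A_c\}_{c > 1}$ is decreasing as $c \downarrow 1$, with $\bigcap_{c > 1} A_c = \{\tilde{g} \in \rmC_{\sigma_1}^1(Y) : \tilde{g}|_Z = f\}$. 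Showing that this intersection is nonempty will exactly produce the extension required by $(\sigma_0,\sigma_1)$-Tietze, so the proposition reduces to that statement.

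The plan is to promote the elementary pointwise-closure of each $A_c$ to weak compactness in $\rmC(Y)$, and then invoke the finite intersection property. The pointwise-closure step is routine: if a net $(\tilde{g}_\alpha) \subseteq A_c$ converges pointwise on $Y$ to some $\tilde{g} \in \rmC(Y)$, then $\tilde{g}|_Z = f$ and the bounds $|\tilde{g}(hy) - \tilde{g}(y)| \leq c\sigma_1(h)$ both pass to the limit, placing $\tilde{g} \in A_c$. To upgrade to weak compactness, I would appeal to Grothendieck's characterization of weakly compact subsets of $\rmC(K)$ for $K$ compact Hausdorff: a norm-bounded subset is relatively weakly compact if and only if its pointwise closure in $\bbC^K$ is contained in $\rmC(K)$. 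Granted weak compactness of each $A_c$, the decreasing family $\{A_c\}_{c > 1}$ of nonempty weakly compact sets has nonempty intersection, delivering any element as the desired continuous $\sigma_1$-orbit-Lipschitz extension of $f$.

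The hard step will be verifying the hypothesis of Grothendieck's theorem, namely that every pointwise cluster point in $\bbC^Y$ of a sequence from $A_c$ is continuous on $Y$. This is not automatic, since pointwise limits of continuous functions on a compact space need not be continuous. I would attack it by exploiting the specific structure of $Y$ as the $G$-equicontinuous compactification of $\bigsqcup \vec{X}$, together with the uniform orbit-Lipschitz constraint—which bounds oscillation along each $G$-orbit uniformly in the sequence—hoping to force continuity of any pointwise cluster point. Should this direct verification prove unwieldy, a plausible alternative is to equip $\rmC_{\sigma_1}(Y)$ with a natural Lipschitz-type Banach norm (taking $\max$ of sup norm and orbit-Lipschitz constant against $\sigma_1$) and realize $\rmC_{\sigma_1}^1(Y)$ as the weak-$*$ compact unit ball of an Arens-Eells-type dual structure adapted to $\sigma_1$ and to the continuous $G$-action on $Y$; the intersection statement then becomes a direct consequence of Banach-Alaoglu and finite intersection for weak-$*$ compact sets.
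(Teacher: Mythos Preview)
Your compactness strategy has a genuine gap at exactly the step you flag as ``hard.'' The orbit-Lipschitz bound $|\tilde g(hy)-\tilde g(y)|\le c\sigma_1(h)$ controls oscillation \emph{along $G$-orbits only}; it says nothing about topological equicontinuity on $Y$. Consequently there is no reason a pointwise cluster point of a sequence from $A_c$ should be continuous, and Grothendieck's criterion will typically fail. To see this concretely, note that on $Y\setminus Z$ you may modify any $\tilde g\in A_c$ freely subject only to the orbit-Lipschitz constraint and continuity; nothing stops a sequence of such modifications from converging pointwise to something discontinuous. Your proposed fallback via an Arens--Eells dual has the same defect: the natural predual would give weak-$*$ compactness in a topology of pointwise convergence, but weak-$*$ limits of $\rho_{\sigma_1}$-Lipschitz continuous functions need not remain continuous on $Y$, since $\rho_{\sigma_1}^Y$ is only lower semi-continuous and does not generate the topology of $Y$.

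The paper's argument avoids compactness entirely and instead uses the concrete description of $\rmC(Y)$ as tuples $(p_i)_{i\in I}\in \rmC_G(\bigsqcup\vec X)$. One first extends the scaled function $\frac{n-1}{n}p$ to some $(p_{i,n})_{i\in I}\in\rmC_{\sigma_1}^1(\bigsqcup\vec X)$ for each $n$ (using $(\sigma_0,\frac{n}{n-1}\sigma_1)$-Tietze and rescaling). One then performs an ultrafilter diagonalization: the sets $I_n=\{i:\|p_{i,n}-p_{i,n+1}\|\text{ is small}\}$ lie in $\cU$, and one picks, coordinate by coordinate, $p_i:=p_{i,n(i)}$ with $n(i)$ as large as possible subject to $i\in\bigcap_{k\le n(i)}I_k$. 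This produces a single tuple in $\rmC_{\sigma_1}^1(\bigsqcup\vec X)$ whose $\cU$-class equals $p$. The point is that the ultracoproduct structure---not any abstract functional-analytic compactness---is what makes the limit land in $\rmC_{\sigma_1}$.
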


\begin{proof}
	Fix $p\in \rmC_{\sigma_0}^1(\Sigma_\cU^GX_i)$, and for each $n\in \bbN$, we can find $(p_{i, n})_{i\in I}\in \rmC_{\sigma_1}^1(\bigsqcup_{i\in I}X_i)$ with $(p_{i, n})_\cU =\frac{(n-1)p}{n}$. In particular, we have for each $n\in \bbN$ that $I_n:= \{i\in I: \|p_{i, n} - p_{i, n+1}\|\leq 2^{-n}\|p\|\}\in \cV$. Set $I_\infty:= \bigcap_{n\in \bbN} I_n$. If $I_\infty\in \cU$, then for each $i\in I_\infty$, the $p_{i, n}$ converge uniformly to some $p_i\in \rmC_{\sigma_1}^1(X_i)$. For $i\in I\setminus I_\infty$, set $p_i\equiv 0$. Then $(p_i)_{i\in I}\in \rmC_{\sigma_1}^1(\bigsqcup_{i\in I}X_i)$ satisfies $(p_i)_\cU = p$. If $I_\infty\not\in \cU$, then for each $n\in \bbN$, the set $I_n' := I_n\setminus I_\infty$ is in $\cU$. For each $i\in (\bigcup_{k\leq n} I_k')\setminus I_{n+1}'$, define $p_i = p_{i, n}$, and for $i\in I\setminus I_0'$, set $p_i\equiv 0$. Then $(p_i)_{i\in I}\in \rmC_{\sigma_1}^1(\bigsqcup_{i\in I}X_i)$ and $(p_i)_\cU = p$. 
\end{proof}

\subsection{Weak Rigidity}
\label{Section:Rigidity}

\begin{defin}
	\label{Def:Weak_Rigid}
	We say that a $G$-flow $X$ is \emph{weakly rigid} if for every ultracopower of $X$, the ultracopower map $\pi_{X, \cU}^G\colon \Sigma_\cU^GX\to X$ is an isomorphism. 
\end{defin}

Note that any factor of a weakly rigid $G$-flow is also weakly rigid. Also note that every weakly rigid $G$-flow is Fubini. 

\begin{prop}
	\label{Prop:Weak_Rigid}
	If $X$ is a $G$-flow, then $X$ is weakly rigid iff for every $\sigma\in \sn(G)$, we have that $\rho_\sigma^X$ is continuous. 
\end{prop}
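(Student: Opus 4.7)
The plan is to recast weak rigidity as a compactness condition on the orbit-Lipschitz function classes and then identify that compactness with continuity of $\rho_\sigma^X$ via Arzelà–Ascoli.

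First, I would note that since $\pi_{X,\cU}^G \colon \Sigma_\cU^G X \to X$ is always a surjective continuous map of compact Hausdorff spaces, it is an isomorphism iff it is injective, iff (by Gelfand duality) its dual $\hat{\pi}_{X,\cU}^G \colon \rmC(X) \to \rmC(\Sigma_\cU^G X)$ is surjective. The image of $\hat{\pi}_{X,\cU}^G$ consists precisely of the constant tuples $(f)_\cU^G$ for $f \in \rmC(X)$, while $\rmC(\Sigma_\cU^G X) \cong \bigcup_{\sigma,s} \prod_I \rmC_\sigma^s(X) / {\sim_\cU}$. Thus weak rigidity is equivalent to the statement that for every $\sigma \in \sn(G)$, every $s > 0$, every infinite $I$, every $\cU \in \beta I$, and every $(p_i)_{i \in I} \in \prod_I \rmC_\sigma^s(X)$, there exists $f \in \rmC(X)$ with $\lim_{i \to \cU} \|p_i - f\| = 0$.

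Second, this condition is exactly relative norm-compactness of $\rmC_\sigma^s(X)$ in $\rmC(X)$: allowing $\cU$ to be arbitrary converts it to ``every net in $\rmC_\sigma^s(X)$ has a norm-convergent subnet''. Since $\rmC_\sigma^s(X)$ is norm-closed in $\rmC(X)$ (both defining inequalities pass to uniform limits), this is the same as saying $\rmC_\sigma^s(X)$ is norm-compact. By rescaling $\sigma \mapsto \sigma/s$ (noting $\sn(G)$ is closed under positive scalar multiplication and $\rmC_\sigma^s(X) = s \cdot \rmC_{\sigma/s}^1(X)$), it suffices to quantify over $s = 1$. Thus weak rigidity is equivalent to $\rmC_\sigma^1(X)$ being norm-compact for every $\sigma \in \sn(G)$.

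Third, I would invoke the Arzelà–Ascoli theorem on the compact Hausdorff space $X$: the uniformly bounded family $\rmC_\sigma^1(X)$ is relatively norm-compact iff it is equicontinuous (pointwise equicontinuity and uniform equicontinuity coincide on compact $X$). The last bridge is the equivalence between equicontinuity of $\rmC_\sigma^1(X)$ and continuity of $\rho_\sigma^X$. For the forward direction, equicontinuity at $x_0$ produces a neighborhood $V$ of $x_0$ with $\sup_{f \in \rmC_\sigma^1(X)} |f(x) - f(x_0)| \leq \epsilon$ on $V$, i.e.\ $\rho_\sigma(x, x_0) \leq \epsilon$, which combined with the lower semi-continuity of $\rho_\sigma$ and a triangle-inequality argument gives continuity of $\rho_\sigma$ on $X \times X$. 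Conversely, continuity of $\rho_\sigma$ at $(x_0, x_0)$ directly yields equicontinuity at $x_0$ via $|f(x) - f(x_0)| \leq \rho_\sigma(x, x_0)$ for all $f \in \rmC_\sigma^1(X)$.

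No single step is genuinely hard; the main point to get right is the translation in the first two paragraphs, specifically being careful that closure of $\rmC_\sigma^s(X)$ under norm limits upgrades ``relative compactness'' to ``compactness'' and that $(p_i)_\cU^G = (f)_\cU^G$ means precisely $\lim_{i \to \cU} \|p_i - f\| = 0$. Once these are in hand, Arzelà–Ascoli and the definition of $\rho_\sigma^X$ do the rest.
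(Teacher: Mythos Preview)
Your argument is correct and takes a genuinely different route from the paper. The paper works pointwise: for the ``not continuous $\Rightarrow$ not weakly rigid'' direction, it takes a net $(x_i,y_i)\to\Delta_X$ with $\rho_\sigma^X(x_i,y_i)$ bounded away from $0$, passes to a cofinal ultrafilter, and exhibits two distinct points $x_\cU\neq y_\cU\in\Sigma_\cU^G X$ with the same $\pi_{X,\cU}^G$-image; for the converse, it argues that continuity of every $\rho_\sigma^X$ makes the diagonal embedding $j_{X,\cU}^G\colon X\to\Sigma_\cU^G X$ continuous and forces the $G$-ultrapower $\Pi_\cU^G X$ to collapse to $\im(j_{X,\cU}^G)$, whence $j_{X,\cU}^G$ is onto by density.

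Your approach instead dualizes: weak rigidity becomes surjectivity of $\hat{\pi}_{X,\cU}^G$, which you unwind as ``every $\cU$-class of a tuple in $\prod_I\rmC_\sigma^s(X)$ equals a constant tuple'', i.e.\ norm-compactness of each $\rmC_\sigma^s(X)$; Arzel\`a--Ascoli then converts this to equicontinuity, and the definition of $\rho_\sigma^X$ plus the triangle inequality finishes. This is a clean, systematic chain of standard equivalences and makes the role of $\rho_\sigma^X$ as a modulus of equicontinuity explicit. The paper's argument, by contrast, is more hands-on and illustrates concretely how failure of continuity of $\rho_\sigma^X$ produces genuine new points in the ultracopower; it also connects the result to the embedding $j_{X,\cU}^G$ and the ultrapower $\Pi_\cU^G X$, objects introduced earlier in the paper. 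One small note: in your final step you invoke lower semi-continuity of $\rho_\sigma^X$, but you do not actually need it---the triangle inequality alone upgrades ``$\rho_\sigma^X(x,x_0)\to 0$ as $x\to x_0$'' to joint continuity.
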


\begin{proof}
	First note that the continuity of $\rho_\sigma^X$ is equivalent to the statement that whenever $(x_i, y_i)_{i\in I}$ is a net from $X\times X$ with $(x_i, y_i)\to \Delta_X$, we have $\rho_\sigma^X(x_i, y_i)\to 0$. 
	
	Suppose $\rho_\sigma^X$ is not continuous, and let $(x_i, y_i)_{i\in I}$ be a net from $X\times X$ with $(x_i, y_i)\to \Delta_X$, but with $\rho_\sigma^X(x_i, y_i)$ bounded away from $0$. We may assume that $\lim x_i = \lim y_i = z$ for some $z\in X$. Let $\cU\in \beta I$ be any cofinal ultrafilter, and let $x_\cU  = \lim_{i\to \cU} (i, x_i) \in \Sigma_\cU^GX_i$, and similarly for $y_\cU$. Then $x_\cU\neq y_\cU$, but $\pi_{X, \cU}^G(x_\cU) = \pi_{X, \cU}^G(y_\cU) = z$.
	
	Now suppose for every $\sigma\in \sn(G)$ that $\rho_\sigma^X$ is continuous. This yields that for any set $I$ and $\cU\in \beta I$, the map $j_{X, \cU}^G\colon X\to \Sigma_\cU^GX$ is continuous. Continuity of every $\rho_\sigma^X$ also yields that the $G$-ultra\emph{power} $\Pi_\cU^GX$ coincides with $\im(j_{X, \cU}^G)$. As $\Pi_\cU^GX\subseteq \Sigma_\cU^GX$ is dense, we get equality, hence $j_{X, \cU}^G$ is an isomorphism.
\end{proof} 

This gives us a much simpler proof of the following result of Jahel-Zucker \cite{JahelZuckerExt} and Barto\v{s}ov\'a-Zucker \cite{ZucThesis}. Recall that every topological group admits a \emph{universal minimal flow}, a minimal flow which factors onto all other minimal flows, and that this flow is unique up to isomorphism. Let $\rmM(G)$ denote the universal minimal flow of $G$. We note that $\rmM(G)$ is \emph{coalescent}, i.e.\ every factor map from $\rmM(G)$ onto $\rmM(G)$ is an isomorphism (see \cite{Auslander}). It follows from this and the discussion after Theorem~\ref{Thm:Universal_Irreducible_Extension} that $\rmM(G)$ is Gleason complete. 

\begin{cor}
	\label{Cor:UMF_CAP}
	Let $G$ be Polish. Then $\rmM(G)$ is metrizable iff for every $G$-flow $Z$, the set $\rm{Min}_G(Z)\subseteq \rm{Sub}_G(Z)$ of minimal flows is Vietoris closed. In particular, this holds iff $\rmM(G)$ (and hence every minimal flow) is weakly rigid.
\end{cor}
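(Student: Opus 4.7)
The plan is to establish both (3) $\Leftrightarrow$ (2) and (3) $\Leftrightarrow$ (1), which together yield all equivalences in the statement. For setup: coalescence of $\rmM(G)$ together with Theorem~\ref{Thm:Universal_Irreducible_Extension} implies that $\rmM(G)$ is Gleason complete, so Proposition~\ref{Prop:Gleason_Respecting} gives $\rho_\sigma^{\rmM(G)} = \partial_\sigma^{\rmM(G)}$ for every $\sigma \in \sn^1(G)$, and Proposition~\ref{Prop:Weak_Rigid} reduces weak rigidity of $\rmM(G)$ to continuity of every $\rho_\sigma^{\rmM(G)}$. The ``hence every minimal flow is weakly rigid'' will follow separately: any minimal $Y$ is a factor of $\rmM(G)$, and factors of weakly rigid flows are weakly rigid (since the ultracopower map on $Y$ is forced to be an isomorphism once $\Sigma_\cU^G \rmM(G) \cong \rmM(G)$ and a factor map $\rmM(G) \twoheadrightarrow Y$ is fixed).

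For (3) $\Leftrightarrow$ (2), I will use Lemma~\ref{Lem:Vietoris}, which exhibits $\Sigma_\cU^G \rmM(G)$ as the Vietoris limit of the constant net at $\rmM(G)$ in $\subg(\alpha_G(I\times \rmM(G)))$. Assuming (2), this limit is minimal; universality of $\rmM(G)$ supplies a factor $\rmM(G)\to \Sigma_\cU^G \rmM(G)$, whose composition with $\pi_{\rmM(G),\cU}^G$ is an endomorphism of $\rmM(G)$ and hence an isomorphism by coalescence, forcing $\pi_{\rmM(G),\cU}^G$ itself to be an isomorphism. Conversely, assuming (3) and given a Vietoris-convergent net $Y_i\to Y$ in $\subg(Z)$ with each $Y_i$ minimal, I pick factor maps $\rmM(G)\twoheadrightarrow Y_i$; functoriality of $\Sigma_\cU^G$ combined with Proposition~\ref{Prop:Vietoris_Limit_WC} yields surjections $\rmM(G)\cong \Sigma_\cU^G\rmM(G) \twoheadrightarrow \Sigma_\cU^G Y_i \twoheadrightarrow Y$, so $Y$ is a factor of $\rmM(G)$ and hence minimal.

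For (3) $\Rightarrow$ (1), Polishness of $G$ ensures, via the remark after Fact~\ref{Fact:Birkhoff_Kakutani}, that $\sn^1(G)$ admits a countable cofinal subfamily $\{\sigma_n\}$; adjoining positive rational scalings yields a countable $\{\tau_m\}$ cofinal in $\sn(G)$. Assuming (3), each $\rho_{\tau_m}^{\rmM(G)}$ is continuous, so $d := \sum_m 2^{-m}\rho_{\tau_m}/(1+\|\rho_{\tau_m}\|)$ is a continuous pseudo-metric on $\rmM(G)$. To see $d$ separates points, given $x\neq y$ I pick a separating $f\in \rmC(\rmM(G))$ with $\|f\|\leq 1$, note that $f\in \rmC_{\tau_f}^1$ for $\tau_f(g) := \|fg-f\|\in \sn(G)$, and use cofinality to find $m$ with $\tau_f\leq \tau_m$, whence $f\in \rmC_{\tau_m}^1$ and $\rho_{\tau_m}(x,y)\geq |f(x)-f(y)|>0$. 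A continuous metric on a compact Hausdorff space is automatically compatible with the topology, so $\rmM(G)$ is metrizable.

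For (1) $\Rightarrow$ (3), which I expect to be the main obstacle, I fix a compatible metric $d$ on $\rmM(G)$ and any $\sigma\in \sn(G)$. Since $\rho_\sigma$ is automatically lower semi-continuous as a supremum of continuous functions, it suffices to verify upper semi-continuity of $\partial_\sigma = \rho_\sigma$. I will first establish the sequential reformulation in the compact metric setting: $\partial_\sigma(x,y)\leq c$ iff there exist $g_k\in G$ and $z_k\in \rmM(G)$ with $\sigma(g_k)\to c$, $z_k\to x$, and $g_kz_k\to y$ — the forward direction comes from applying the defining condition with $A = B_d(x,1/k)$, $B = B_d(y,1/k)$ and $\epsilon = 1/k$, while the converse follows because any given $A\ni x$, $B\ni y$, $\epsilon>0$ eventually satisfies $z_k\in A$, $g_kz_k\in B$, and $\sigma(g_k)<c+\epsilon$, witnessing $\rmB_\sigma(c+\epsilon)\cdot A\cap B\neq \emptyset$. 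Upper semi-continuity then follows by a diagonal extraction: given $(x_n,y_n)\to(x,y)$ with $\partial_\sigma(x_n,y_n)\leq c$, witnessing sequences $(g_k^n,z_k^n)$ for each $n$ combine via suitable indices $k_n$ into a single witnessing sequence for $(x,y)$. The essential role of metrizability is precisely in the sequential reformulation step; without it, the defining condition involves arbitrary open neighborhoods and a direct diagonal argument is unavailable.
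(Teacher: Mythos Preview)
Your treatment of $(2)\Leftrightarrow(3)$ is correct and matches the paper's approach (the paper compresses the coalescence step, but it is exactly what you spell out).

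For $(3)\Rightarrow(1)$, your route is more elaborate than necessary, and the claim that $\sn^1(G)$ has a countable cofinal subfamily is not justified by the remark after Fact~\ref{Fact:Birkhoff_Kakutani}: that remark produces for each $\sigma$ \emph{some} dominating $\sigma_{\vec U}$, but there is no reason countably many such sequences should dominate all seminorms, since the rate at which $\sigma(g)\to 0$ near $e_G$ is unconstrained. A one-line fix: since $G$ is Polish, take a single compatible norm $\sigma$; the paper notes just after Definition~\ref{Def:SN_Function_On_Flow} that $\rho_\sigma$ is then a metric, and weak rigidity plus Proposition~\ref{Prop:Weak_Rigid} makes it continuous, hence compatible.

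The real gap is in $(1)\Rightarrow(3)$. You correctly reduce to upper semi-continuity of $\partial_\sigma$, but your diagonal argument proves the wrong inequality: from $(x_n,y_n)\to(x,y)$ with $\partial_\sigma(x_n,y_n)\leq c$ you conclude $\partial_\sigma(x,y)\leq c$, which says $\{\partial_\sigma\leq c\}$ is closed---that is \emph{lower} semi-continuity, which you already knew and which holds for any $G$-flow without metrizability. Upper semi-continuity would require that $\partial_\sigma(x,y)\leq c$ forces $\partial_\sigma(x_n,y_n)\leq c+\epsilon$ eventually, and there is no mechanism for transferring witnesses for the pair $(x,y)$ to witnesses for nearby pairs $(x_n,y_n)$. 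The paper does not attempt to prove this directly; it cites \cite{BYMT} and \cite{ZucMHP} for the equivalence ``$\rmM(G)$ metrizable $\Leftrightarrow \partial_\sigma^{\rmM(G)}$ continuous,'' whose proof uses the structure theory of Gleason complete flows rather than metrizability of the underlying space alone.
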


\begin{proof}
	Let $\sigma\in \sn(G)$ be a \emph{norm}. By \cite{BYMT} and \cite{ZucMHP}, $\rmM(G)$ is metrizable iff $\partial_\sigma^{\rmM(G)} = \rho_\sigma^{\rmM(G)}$ is a compatible metric on $\rmM(G)$ iff $\partial_\sigma^{\rmM(G)}$ is continuous. Hence if $\rmM(G)$ is metrizable, then every ultracopower of $\rmM(G)$ is isomorphic to $\rmM(G)$, hence minimal. In particular, in $\rm{Sub}_G(Z)$, any Vietoris limit of minimal flows is a factor of an ultracoproduct of minimal flows (Proposition~\ref{Prop:Vietoris_Limit_WC}), hence a factor of an ultracopower of $\rmM(G)$, hence minimal. The converse follows directly from Proposition~\ref{Prop:Weak_Rigid} and \ref{Lem:Vietoris}. 
\end{proof}

Inspired by the above corollary, Basso and Zucker in \cite{BassoZucker} define a topologial group to be CAP (closed almost periodic) if the conclusion of the corollary holds. Hence among Polish groups, the CAP groups are exactly those with metrizable universal minimal flow. We note the following corollary for this more general class.

\begin{cor}
	\label{Cor:CAP_Ults}
	A topological group $G$ is CAP iff $\rmM(G)$ is weakly rigid iff the class of minimal $G$-flows is closed under ultracoproducts.
\end{cor}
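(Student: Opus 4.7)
The plan is to establish the equivalence by showing $(1)\Rightarrow(2)\Rightarrow(3)\Rightarrow(1)$, where (1) is CAP, (2) is weak rigidity of $\rmM(G)$, and (3) is closure of minimal flows under ultracoproducts. The previous Corollary~\ref{Cor:UMF_CAP} handles the Polish case via metrizability of $\rmM(G)$ and norm-based arguments; in general we don't have a metric, so the idea is to replace the metrization argument by the coalescence of $\rmM(G)$ (noted just before Corollary~\ref{Cor:UMF_CAP}).

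For $(1)\Rightarrow(2)$, fix an infinite set $I$ and $\cU\in \beta I$. By Lemma~\ref{Lem:Vietoris}, $\Sigma_\cU^G \rmM(G)$ is the Vietoris limit of the copies $\{i\}\times \rmM(G)$, each of which is minimal inside $\alpha_G(I\times \rmM(G))$. CAP then forces $\Sigma_\cU^G\rmM(G)$ to be minimal. Since $\rmM(G)$ is universal among minimal $G$-flows, pick a factor map $f\colon \rmM(G)\to \Sigma_\cU^G\rmM(G)$. The composition $\pi_{\rmM(G),\cU}^G \circ f\colon \rmM(G)\to \rmM(G)$ is a factor map of the universal minimal flow, hence an isomorphism by coalescence. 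This forces $f$ to be injective (so an isomorphism) and in turn $\pi_{\rmM(G),\cU}^G$ is an isomorphism, so $\rmM(G)$ is weakly rigid.

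For $(2)\Rightarrow(3)$, let $\la X_i : i\in I\ra$ be minimal $G$-flows and $\cU\in \beta I$. Choose factor maps $\phi_i\colon \rmM(G)\to X_i$. The induced map $\Sigma_\cU^G\phi_i\colon \Sigma_\cU^G \rmM(G)\to \Sigma_\cU^G X_i$ is a factor map (surjectivity follows from Lemma~\ref{Lem:Vietoris} applied to both sides, since the induced map on $\rm{Sub}_G$ sends $\{i\}\times \rmM(G)$ to $\{i\}\times X_i$). By weak rigidity $\Sigma_\cU^G\rmM(G)\cong \rmM(G)$ is minimal, so its factor $\Sigma_\cU^G X_i$ is minimal.

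For $(3)\Rightarrow(1)$, given a $G$-flow $Z$ and a Vietoris-convergent net of minimal subflows $(Y_i)_{i\in I}$ in $\rm{Sub}_G(Z)$ with limit $Y$, Proposition~\ref{Prop:Vietoris_Limit_WC} produces an ultracoproduct $\Sigma_\cU^G Y_i$ that factors onto $Y$. By (3) this ultracoproduct is minimal, and a factor of a minimal flow is minimal, so $Y\in \rm{Min}_G(Z)$. Thus $\rm{Min}_G(Z)$ is Vietoris closed, which is precisely CAP. The only slightly delicate step is verifying surjectivity of $\Sigma_\cU^G\phi_i$ in $(2)\Rightarrow(3)$, but this follows straightforwardly from the continuity of the Vietoris map induced by the extension of $\bigsqcup \phi_i$, which is the main technical ingredient already encapsulated by Lemma~\ref{Lem:Vietoris} and Proposition~\ref{Prop:Vietoris_Limit_WC}.
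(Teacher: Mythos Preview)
Your proof is correct and follows essentially the same approach as the paper. The paper's argument is extremely terse, simply observing that the equivalences are immediate from Lemma~\ref{Lem:Vietoris} and Proposition~\ref{Prop:Vietoris_Limit_WC} except for the step that CAP forces the ultracopower map on $\rmM(G)$ to be an isomorphism, which it dispatches with the single word ``coalescent''; you have spelled out the $(1)\Rightarrow(2)\Rightarrow(3)\Rightarrow(1)$ cycle in full using exactly these ingredients.
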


\begin{proof}
	The only part which doesn't immediately follow from the above discussion is why, when $G$ is CAP, the ultracopower map onto $\rmM(G)$ must be an isomorphism. This is because $\rmM(G)$ is coalescent.
\end{proof}

We end by noting one more corollary, which follows from results implicit in \cite{ZucMHP} . If $G$ is a topological group and $H\leq G$ is a closed subgroup, the \emph{right uniformity} on $G/H$ is the uniformity whose entourages have the form $\{(gH, kH): UgH\cap kH\neq \emptyset\}$ for some $U\in \cN_G$. Let $\wh{G/H}$ denote the \emph{right completion} of $G/H$. We call $H\leq G$ \emph{co-precompact} if $\wh{G/H}$ is compact. 

\begin{cor}
	\label{Cor:G_over_H_wk_rigid}
	Suppose $G$ is Polish and that $X$ is Gleason complete and contains a point with dense orbit. Then $X$ is weakly rigid iff $X\cong \wh{G/H}$ for $H\leq G$ some closed, co-precompact subgroup. 
\end{cor}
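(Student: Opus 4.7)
The plan is to establish both directions through the characterization of weak rigidity given by Proposition~\ref{Prop:Weak_Rigid}, namely that every $\rho_\sigma^X$ is continuous, combined with Gleason completeness which upgrades this to continuity of $\partial_\sigma^X$. The main bridge between the two directions is the observation that for any $\sigma\in \sn^1(G)$ and $g_1, g_2 \in G$, the natural right-uniform pseudometric on $G/H$,
\[
\tilde\rho_\sigma(g_1H,\, g_2H) := \inf\{\sigma(g_2hg_1^{-1}) : h\in H\},
\]
compares to $\partial_\sigma^X(g_1x_0, g_2x_0)$ via the orbit map, and under the hypotheses the two will coincide.

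For the direction $(\Leftarrow)$, assume $X\cong \widehat{G/H}$ with $H$ closed co-precompact. Each $\tilde\rho_\sigma$ is a bounded, continuous, right-uniformly continuous pseudometric on $G/H$, so by the universal property of the completion it extends to a continuous pseudometric $\bar\rho_\sigma$ on $X$. I would verify by a short net argument (picking approximating orbit points $gH\in A$, $ugH\in B$ inside small open sets $A\ni x,B\ni y$, using density of $G/H$) that $\bar\rho_\sigma(x,y)\leq \partial_\sigma^X(x,y)$ for every $x,y\in X$; together with the trivial bound $\partial_\sigma^X\leq \tilde\rho_\sigma$ on $G/H$ and lower semi-continuity of $\rho_\sigma^X = \partial_\sigma^X$ (Proposition~\ref{Prop:Gleason_Respecting}), this forces $\bar\rho_\sigma = \partial_\sigma^X = \rho_\sigma^X$. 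Continuity of $\rho_\sigma^X$ for every $\sigma$ then yields weak rigidity by Proposition~\ref{Prop:Weak_Rigid}.

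For the harder direction $(\Rightarrow)$, assume $X$ is weakly rigid, pick $x_0\in X$ with dense orbit, and let $H = \mathrm{Stab}(x_0)$, a closed subgroup. The orbit map $\iota\colon G/H\to X$ is a continuous $G$-equivariant injection with dense image, and since $X$ is a $G$-flow, $\iota$ is automatically uniformly continuous from the right uniformity on $G/H$ to the unique compatible uniformity $\cU_X$ on $X$ (because $\sa(G)\to X$ is uniformly continuous and factors through $G/H$). To conclude that $\iota$ is a \emph{uniform embedding}, so that $X$ is the (compact, hence co-precompact) completion of $G/H$ in the right uniformity, I need that the pullback uniformity $\iota^*\cU_X$ refines the right uniformity. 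This reduces to the identification $\partial_\sigma^X(g_1x_0,g_2x_0) = \tilde\rho_\sigma(g_1H,g_2H)$ for every $\sigma\in \sn^1(G)$: the $\leq$ direction is immediate, and the $\geq$ direction uses continuity of $\partial_\sigma^X$ (from weak rigidity) together with density of the orbit, picking approximants $g_nH\to g_1H$ and $u_ng_nH\to g_2H$ with $\sigma(u_n)$ controlled, and extracting a limit of the $u_n$ inside the relatively Polish set $\{u:\sigma(u)\leq c+\epsilon\}$ thanks to Polishness of $G$ and the fact that $u_ng_n x_0$ converges in $X$ forces the $u_n$ to accumulate along the fiber $g_2Hg_1^{-1}$. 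This \emph{limit-extraction step is the main obstacle} and is exactly the point where one must combine all hypotheses (Polish $G$, Gleason completeness, and continuity of $\partial_\sigma^X$); the approach I expect to work is to run the argument along a sequential net, using that the continuous pseudometric $\partial_\sigma^X$ controls orbital displacements on both sides so that any cluster point of $(u_n)$ in $\sa(G)$ actually lifts to an element $u\in g_2Hg_1^{-1}$ with $\sigma(u)\le c+\epsilon$. Once this identification is in place, $\iota$ is a uniform embedding, $X\cong \widehat{G/H}$, and compactness of $X$ gives that $H$ is co-precompact.
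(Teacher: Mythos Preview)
The paper's own proof is simply a citation to Theorem~5.5 and Proposition~6.2 of \cite{ZucMHP}, so you are attempting a self-contained argument where the paper defers to prior work. Your $(\Leftarrow)$ direction is fine: the pseudometric $\tilde\rho_\sigma$ is right-uniformly continuous on $G/H$, so it extends continuously to the compact completion $X$, and the sandwich $\rho_\sigma^X\leq\partial_\sigma^X\leq\bar\rho_\sigma$ together with $\rho_\sigma^X=\partial_\sigma^X$ (Gleason completeness) and $\bar\rho_\sigma=\tilde\rho_\sigma\leq\partial_\sigma^X$ on the dense orbit forces equality, giving continuity of $\rho_\sigma^X$.

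The $(\Rightarrow)$ direction, however, has a genuine gap precisely at the extraction step you flag. Your sketch produces sequences $g_n,u_n\in G$ with $g_nx_0\to g_1x_0$, $u_ng_nx_0\to g_2x_0$, and $\sigma(u_n)\leq c+\epsilon$, and you propose to pass to a cluster point of $(u_n)$. But in a Polish group that is not locally compact, the $\sigma$-ball $\{u:\sigma(u)\leq c+\epsilon\}$ is typically \emph{not} precompact, so $(u_n)$ need not have any cluster point in $G$. Passing to $\sa(G)$ does not help: a cluster point $p\in\sa(G)$ has no reason to lie in $G$, and the assertion that convergence of $u_ng_nx_0$ in $X$ ``forces the $u_n$ to accumulate along the fiber $g_2Hg_1^{-1}$'' is exactly what needs to be proved. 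Indeed, you only know $g_nx_0\to g_1x_0$ in $X$, not $g_n\to g_1$ in $G$, so even the relationship between $p$ and the coset $g_2Hg_1^{-1}$ is unclear.

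The argument in \cite{ZucMHP} gets around this by using more of the Gleason-complete structure: one shows (via the near-ultrafilter description, or an Effros-type open-mapping argument once the orbit is seen to be comeager) that the orbit map $G/H\to G\cdot x_0$ is a homeomorphism for the subspace topology, which in combination with the continuity of $\partial_\sigma^X$ yields the uniform embedding directly. Your outline is correct in spirit, but to close the gap you must replace the compactness-style limit extraction with one of these structural inputs.
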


\begin{proof}
	See Theorem~5.5 and Proposition~6.2 from \cite{ZucMHP}.
\end{proof}

\section{LRPC groups}
\label{Section:LRPC}

This section investigates Definition~\ref{Def:Seminorm_Respecting} in greater detail in the case that $G$ is LRPC. In particular, we will show that for $\sigma\in \snrpc(G)$, the class of $\sigma$-respecting $G$-flows is closed under ultracopowers. Along the way, we obtain a new characterization of RPC groups in terms of the Vietoris properties of the topologically transitive subflows of a $G$-flow.

\begin{lemma}
	\label{Lem:Rho_Partial_Z}
	Given any $\sigma\in \sn^1(G)$ and a tuple $\vec{X} = \la X_i: i\in I\ra$ of $\sigma$-respecting $G$-flows, then $\alpha_G(\bigsqcup \vec{X})$ is $\sigma$-respecting. 
\end{lemma}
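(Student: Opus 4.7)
Write $Z = \alpha_G(\bigsqcup \vec{X})$. Since $\rho_\sigma^Z \leq \partial_\sigma^Z$ holds on every $G$-flow, my goal is to show that whenever $\partial_\sigma^Z(x, y) > c$, there is some $f \in \rmC_\sigma^1(Z)$ separating $x$ from $y$ by more than $c$. The plan is to exploit the fiber structure of $Z$ over $\beta I$ via the natural $G$-map $\pi_I \colon Z \to \beta I$ (the action on $\beta I$ being trivial): each $X_i$ sits as a clopen $G$-subflow of $Z$ since $\{i\}$ is clopen in $\beta I$. A short density argument establishes that for any $x \in Z$ with $\pi_I(x) = \cU$ and any open neighborhood $A$ of $x$ in $Z$, the set $\{i : A \cap X_i \neq \emptyset\}$ belongs to $\cU$. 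Using this, the case $\pi_I(x) \neq \pi_I(y)$ is immediate: the pullback of a $0/1$-valued clopen indicator on $\beta I$ is a $G$-invariant element of $\rmC_\sigma^1(Z)$ already realizing separation $1$.

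In the main case $\pi_I(x) = \pi_I(y) = \cU$, I will fix open neighborhoods $A \ni x$, $B \ni y$ in $Z$ and $\epsilon_0 > 0$ witnessing $\rmB_\sigma(c + \epsilon_0) A \cap B = \emptyset$. Because each $X_i$ is $G$-invariant, this identity persists after intersecting with $X_i$, which forces that for every $i$ in the ``good'' set $J := \{i \in I : A \cap X_i \neq \emptyset \text{ and } B \cap X_i \neq \emptyset\}$ and every $\alpha \in A \cap X_i$, $\beta \in B \cap X_i$, one has $\partial_\sigma^{X_i}(\alpha, \beta) \geq c + \epsilon_0$. The $\sigma$-respecting hypothesis on $X_i$ then replaces $\partial_\sigma^{X_i}$ by $\rho_\sigma^{X_i}$, and lower semi-continuity of $\rho_\sigma^{X_i}$ extends the bound to $\ol{A \cap X_i} \times \ol{B \cap X_i}$ inside $X_i$.

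At this point I will invoke Fact~\ref{Fact:Topometric_Tietze} on each $X_i$ with $i \in J$, working with the lsc pseudo-metric $\rho_\sigma^{X_i}$: extend the two-valued Lipschitz function taking values $0$ on $\ol{A \cap X_i}$ and $c + \epsilon_0/2$ on $\ol{B \cap X_i}$ to a $c'\rho_\sigma^{X_i}$-Lipschitz continuous function on $X_i$, truncate to $[0, c + \epsilon_0/2]$, and divide by $c'$, choosing $c' > 1$ close enough to $1$ that $(c + \epsilon_0/2)/c' > c$. For $i \notin J$, set the coordinate to $0$. The resulting tuple $(f_i)_{i \in I}$ defines an element of $\rmC_\sigma^1(\bigsqcup \vec{X}) = \rmC_\sigma^1(Z)$. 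To finish, any approximating net $x_\beta \to x$ drawn from $A \cap (\bigsqcup \vec{X})$ gives $f_{\pi_I(x_\beta)}(x_\beta) = 0$, while any approximating net $y_\beta \to y$ drawn from $B \cap (\bigsqcup \vec{X})$ has $\pi_I(y_\beta) \to \cU$ and so eventually lies in $J$ (as $J \in \cU$), producing the value $(c + \epsilon_0/2)/c' > c$. The main obstacle I foresee is bookkeeping the constant loss inherent to Fact~\ref{Fact:Topometric_Tietze}; the strict inequality $\partial_\sigma^Z(x, y) > c$ supplies exactly the $\epsilon_0$ margin needed to absorb it.
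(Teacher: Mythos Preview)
Your argument is correct and follows essentially the same approach as the paper: witness $\partial_\sigma^Z(x,y)>c$ by open sets $A,B$, restrict to each $X_i$, use the $\sigma$-respecting hypothesis together with Fact~\ref{Fact:Topometric_Tietze} to build $f_i\in\rmC_\sigma^1(X_i)$ separating $\ol{A\cap X_i}$ from $\ol{B\cap X_i}$, and glue. Two minor remarks: your case split on $\pi_I(x)\neq\pi_I(y)$ and the set $J$ are unnecessary, since when $A_i$ or $B_i$ is empty the Tietze step is vacuous and a constant works (this is how the paper handles all $i$ uniformly); on the other hand, you are more careful than the paper in absorbing the constant $c'>1$ from Fact~\ref{Fact:Topometric_Tietze}.
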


\begin{proof}
	Write $Z := \alpha_G(\bigsqcup \vec{X})$. We always have $\rho_\sigma^Z\leq \partial_\sigma^Z$. For the other inequality, suppose $0< c < 1$ and $x, y\in Z$ satisfy $\partial_\sigma^Z(x, y)> c$. Let $A\in \op(x, Z)$, $B\in \op(y, Z)$, and $\epsilon > 0$ be such that $\rmB_{\sigma}(c+\epsilon)\cdot \ol{A}\cap \ol{B} = \emptyset$. For each $i\in I$, write $A_i = A\cap X_i$, $B_i = B\cap X_i$. Then $\rmB_{\sigma}(c+\epsilon)\cdot \ol{A_i}\cap \ol{B_i} = \emptyset$ for each $i\in I$. Since each $X_i$ is $\sigma$-respecting, use Fact~\ref{Fact:Topometric_Tietze} to find $p_i\in \rmC_{\sigma}^1(X_i)$ with $p_i|_{\ol{A_i}}\equiv 0$ and $p_i|_{\ol{B_i}} \equiv c+\epsilon/2$. It follows that $(p_i)_{i\in I}\in \rmC_\sigma^1(\bigsqcup \vec{X})$, and letting $p$ denote the continuous extension to $Z$, we have $p(x) = 0$ and $p(y)= c+\epsilon /2$.
\end{proof}

\begin{lemma}
	\label{Lem:LRPC_Tietze}
	Given $\sigma\in \snrpc(G)$, $\vec{X}= \la X_i: i\in I\ra$ a tuple of $\sigma$-respecting $G$-flows and $\cU\in \beta I$, then writing $Z = \alpha_G(\bigsqcup\vec{X})$ and $X = \Sigma_\cU^GX_i$, we have that $X\subseteq Z$ is $(\sigma, \sigma)$-Tietze. In particular, $\rho_\sigma^X = \rho_\sigma^Z|_{X^2}$. 
\end{lemma}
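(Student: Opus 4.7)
The plan is to mimic the strategy behind Corollary~\ref{Cor:LRPC_Tietze} (which handles the case $X_i = \sa(G)$), replacing the appeal to Fact~\ref{Fact:Within_Delta_Lipschitz} on each factor $G$ by an appeal to Proposition~\ref{Prop:Within_Delta_Continuous_Lipschitz} on each $X_i$, which is available because each $X_i$ is $\sigma$-respecting. The argument will first establish $(\sigma, c\sigma)$-Tietze for every $c > 1$, then close the gap using Proposition~\ref{Prop:No_Constant}.

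First I would fix $c > 1$ and $f \in \rmC_\sigma^1(X)$, and pick any representing tuple $(p_i)_{i\in I} \in \rmC_{\sigma'}^1(\bigsqcup \vec X)$ with $(p_i)_\cU^G = f$ for some $\sigma' \in \sn(G)$. Since $f \in \rmC_\sigma^1(X)$, for every $F \in \rm{FS}(G)$ and $\epsilon > 0$ the set $\{i : \|p_i g - p_i\| \leq \sigma(g) + \epsilon\,\,\forall g \in F\}$ lies in $\cU$. The crucial input is the LRPC strengthening of the Fubini property: by Proposition~\ref{Prop:Int_k_bounded_Fubini} applied to $\sigma_0 = \sigma_1 = \sigma \in \snrpc(G)$, for every $n < \omega$ one can choose $F_n \in \rm{FS}(G)$ and $\epsilon_n > 0$ with $\Phi(\sigma', \sigma, F_n, \epsilon_n) \leq \sigma + 1/n$ pointwise. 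A direct unpacking of the definition of $\Phi$ (concatenating orbit-Lipschitz estimates along factorizations) then gives, for every $i$ in the set $I_n := \{i : \|p_i g - p_i\| \leq \sigma(g) + \epsilon_n\,\,\forall g \in F_n\} \in \cU$, that $\|p_i h - p_i\| \leq \sigma(h) + 1/n$ for every $h \in G$.

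Next I would translate this orbit-Lipschitz bound into a pseudo-metric bound on each $X_i$. Using the $\sigma$-respecting hypothesis $\rho_\sigma^{X_i} = \partial_\sigma^{X_i}$ together with continuity of $p_i$, a short calculation from the definition of $\partial_\sigma$ shows $|p_i(x) - p_i(y)| \leq \rho_\sigma^{X_i}(x, y) + 1/n$ for all $x, y \in X_i$ and $i \in I_n$. Since $\rho_\sigma^{X_i}$ is lower semi-continuous, Proposition~\ref{Prop:Within_Delta_Continuous_Lipschitz} yields $q_i^{(n)} \in \rmC(X_i)$ which is $c\rho_\sigma^{X_i}$-Lipschitz (hence $c\sigma$-orbit-Lipschitz) with $\|q_i^{(n)} - p_i\|$ of order $1/n$. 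Setting $q_i := q_i^{(n)}$ for $i \in I_n \setminus I_{n+1}$ and $q_i \equiv 0$ otherwise produces a uniformly bounded tuple $(q_i)_{i \in I} \in \rmC_{c\sigma}(\bigsqcup \vec X)$ with $(q_i)_\cU^G = f$. Its unique continuous extension to $Z = \alpha_G(\bigsqcup \vec X)$ is automatically $c\sigma$-orbit-Lipschitz (since $\sigma$-orbit-Lipschitz bounds pass from a dense $G$-invariant subset to the continuous extension) and restricts to $f$ on $X$, verifying $(\sigma, c\sigma)$-Tietze. Proposition~\ref{Prop:No_Constant} then promotes this to $(\sigma, \sigma)$-Tietze.

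For the ``in particular'', the inequality $\rho_\sigma^Z|_{X^2} \leq \rho_\sigma^X$ is immediate because restriction to the $G$-invariant subspace $X$ sends $\rmC_\sigma^1(Z)$ into $\rmC_\sigma^1(X)$, while the reverse inequality is exactly what $(\sigma, \sigma)$-Tietze provides: each $f \in \rmC_\sigma^1(X)$ admits an extension $\tilde f \in \rmC_\sigma^1(Z)$, so $|f(x) - f(y)| = |\tilde f(x) - \tilde f(y)| \leq \rho_\sigma^Z(x, y)$ for all $x, y \in X$. The main obstacle in the whole argument is the transition from the orbit-Lipschitz bound $\|p_i h - p_i\| \leq \sigma(h) + 1/n$ to a genuine $\rho_\sigma^{X_i}$-Lipschitz-up-to-$1/n$ estimate on each $X_i$; it is here that the $\sigma$-respecting assumption does essential work, since without it Proposition~\ref{Prop:Within_Delta_Continuous_Lipschitz} would not be available on the pseudo-metric that records orbit displacements.
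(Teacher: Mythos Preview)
Your proposal is correct and follows essentially the same route as the paper. The paper's proof simply says to combine Proposition~\ref{Prop:Int_k_bounded_Fubini} (the LRPC case with $\sigma_0=\sigma_1=\sigma$) with the argument of Theorem~\ref{Thm:Seminorm_Respecting_Fubini}, then invoke Proposition~\ref{Prop:No_Constant}; you have unpacked exactly these steps, including the use of Proposition~\ref{Prop:Within_Delta_Continuous_Lipschitz} on each $X_i$ via the $\sigma$-respecting hypothesis, and you are more explicit than the paper about the passage from the orbit-Lipschitz bound $\|p_ih-p_i\|\leq\sigma(h)+1/n$ to the pseudo-metric bound $|p_i(x)-p_i(y)|\leq\rho_\sigma^{X_i}(x,y)+1/n$.
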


\begin{proof}
	By Proposition~\ref{Prop:Int_k_bounded_Fubini} and mimicking the proof of Proposition~\ref{Theorem:Seminorm_Respecting_Fubini}, one proves that $X\subseteq Z$ is $(\sigma, c\sigma)$-Tietze for any $c> 1$. Proposition~\ref{Prop:No_Constant} yields that $X\subseteq Z$ is $(\sigma, \sigma)$-Tietze. The last statement follows from this.
\end{proof}

By combining the previous two lemmas, and with notation as in Lemma~\ref{Lem:LRPC_Tietze}, it follows that to show that $X$ is $\sigma$-respecting, it suffices to show that $\partial_\sigma^X = \partial_\sigma^Z|_{X^2}$. In working towards this, we prove slightly more than we need, along the way obtaining a new characterization of RPC groups.

\begin{defin}
	\label{Def:Orbit_Relation}
	Let $X$ be a $G$-flow, and fix $U\in \cN(G)$. We define the $U$-relation $R_U^X := \rm{cl}\{(x, gx): x\in X, g\in U\} \subseteq X^2$. Equivalently, $(x, y)\in R_U^X$ iff $y\in \bigcap_{A\in \op(x, X)} \ol{UA}$. In the case $U = \rm{B}_\sigma(c)$ for some $\sigma\in \sn(G)$ and $c>0$, we can write $R_{\sigma, c}^X$ in place of $R_{\rmB_\sigma(c)}^X$. In particular, note that given $0\leq c< 1$, we have $\{(x, y)\in X^2: \partial_\sigma(x, y)\leq c\} = \bigcap_{\epsilon > 0} R_{\sigma, c+\epsilon}^X$. 
\end{defin}

We have the following characterization of when $U\in \cN(G)$ is RPC in terms of the behavior of $R_U^X$ as $X$ varies.

\begin{prop}
	\label{Prop:RPC_Vietoris_1}
	Given $U\in \cN(G)$, the following are equivalent.
	\begin{enumerate}
		\item 
		$U\in \ngrpc$ is RPC.
		\item
		Whenever $Z$ is a $G$-flow and  $(X_i)_{i\in I}$ is a net from $\rm{Sub}_G(Z)$ with $X_i\to X\in \rm{Sub}_G(Z)$, then $R_U^{X_i}\to R_U^X$ in $\exp(Z^2)$.
	\end{enumerate}
	
\end{prop}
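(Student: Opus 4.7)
The Vietoris convergence $R_U^{X_i}\to R_U^X$ in $\exp(Z^2)$ is equivalent to the two containments $R_U^X\subseteq\liminf R_U^{X_i}$ and $\limsup R_U^{X_i}\subseteq R_U^X$, so I would handle each separately.

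For the direction $(1)\Rightarrow(2)$, the first containment requires no assumption on $U$. Given $(x, y)\in R_U^X$ witnessed by a net $(a_\alpha, g_\alpha a_\alpha)\to(x, y)$ with $a_\alpha\in X$ and $g_\alpha\in U$, Vietoris convergence $X_i\to X$ provides $a_\alpha^{(i)}\in X_i$ with $a_\alpha^{(i)}\to a_\alpha$ as $i$ varies, and continuity of the $G$-action yields $(a_\alpha^{(i)}, g_\alpha a_\alpha^{(i)})\in R_U^{X_i}$ approaching $(x, y)$ after a diagonal argument. The second containment is where the RPC hypothesis enters: given $(a_j, g_j a_j)\to(x, y)$ with $a_j\in X_{i_j}$ and $g_j\in U$, I would apply internal $1$-boundedness (Lemma~\ref{Lem:LRPC_1_bounded}) to find, for each $V\in\cN(G)$, a finite $F\subseteq U$ with $U\subseteq VFV$, and decompose $g_j = v_j f_j w_j$ with $v_j, w_j\in V, f_j\in F$. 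Passing to a subnet where $f_j = f_V$ is constant, the uniform continuity of the action on the compact space $Z$ in the group direction controls $|h(y) - h(f_V x)|$ for each $h\in\rmC(Z)$ by a modulus of the form $\omega_h(V) + \omega_h(f_V V f_V^{-1})$, where $\omega_h(W):=\sup_{w\in W, z\in Z}|h(wz)-h(z)|$ tends to $0$ as $W$ shrinks. Since $U$ being RPC implies $G$ is LRPC, one can draw $V$ from a cofinal base of RPC neighborhoods, handle the conjugation moduli carefully (using that $F$ is finite at each stage), and conclude $f_V x\to y$ along the net of shrinking $V$; hence $y\in\ol{Ux}$, which yields $(x, y)\in R_U^X$.

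For $(2)\Rightarrow(1)$, I would argue by contrapositive. If $U$ is not RPC, fix $V\in\cN(G)$ with $U\not\subseteq VFV$ for any finite $F\subseteq G$ and inductively choose $g_n\in U$ with $g_n\notin V\{g_0,\ldots,g_{n-1}\}V$, producing a $V$-Roelcke-separated sequence. The plan is to use this sequence to build an explicit $G$-flow $Z$ (a natural candidate is a suitable quotient of the Samuel compactification $\sa(G)$ reflecting the separation) together with a convergent net $X_i\to X$ in $\subg(Z)$ whose orbit relations $R_U^{X_i}$ accumulate at a point of $X\times X$ lying outside $R_U^X$. The main obstacle is exactly this reverse direction: translating the combinatorial failure of RPC into a Vietoris-level dynamical counterexample requires carefully chosen anchoring points and orbits, and is considerably more involved than $(1)\Rightarrow(2)$, which is itself delicate in its management of the conjugation term.
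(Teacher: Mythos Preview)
Your plan for the first containment $R_U^X\subseteq\liminf R_U^{X_i}$ is fine and matches the paper.

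For the second containment your approach has a genuine gap. You aim to prove $f_V x\to y$, hence $y\in\ol{Ux}$. But $y\in\ol{Ux}$ is strictly stronger than $(x,y)\in R_U^X$ and need not hold. Concretely, your estimate $|h(y)-h(f_Vx)|\lesssim\omega_h(V)+\omega_h(f_VVf_V^{-1})$ contains a conjugation term with $f_V\in U$ depending on $V$; since $U$ is only RPC (not precompact), there is no reason for $f_VVf_V^{-1}$ to shrink as $V$ does, and ``$F$ is finite at each stage'' does not help because $F$ changes with $V$. The fix is to let the base point vary rather than pinning it at $x$: after decomposing $g_j=v_jf_Vw_j$ on a subnet, pass to a further subnet so that $w_ja_j\to x'_V\in X$. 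Then $|h(x'_V)-h(x)|\leq\omega_h(V)$ and $|h(f_Vx'_V)-h(y)|\leq\omega_h(V)$ with no conjugation, so $(x'_V,f_Vx'_V)\in\{(a,ga):a\in X,\,g\in U\}$ and $(x'_V,f_Vx'_V)\to(x,y)$ as $V$ shrinks, giving $(x,y)\in R_U^X$. This is exactly what the paper does, phrased in neighborhood language: it fixes $P_0,Q_0$, chooses $V$ and $F$ once, and extracts $g\in F$ and $w\in\ol{P_0}\cap X$ with $gw\in\ol{Q_0}\cap X$.

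For $\neg(1)\Rightarrow\neg(2)$, your outline is too schematic to evaluate. The paper's construction is concrete and worth knowing: take $I=[U]^{<\omega}$ with a cofinal ultrafilter $\cU$, set $Z=\alpha_G(I\times\sa(G))$, $X_i=\{i\}\times\sa(G)$, and $X=\Sigma_\cU^G\sa(G)$. With $V$ witnessing non-RPC, choose $g_i\in\Int(U\setminus ViV)$ for each $i\in I$; then $x=\lim_\cU(i,e_G)$ and $y=\lim_\cU(i,g_i)$ satisfy $(x,y)\in\limsup R_U^{X_i}$ but $(x,y)\notin R_U^X$, because the near-ultrafilter neighborhoods $C_{I\times W}$ and $C_{WA}$ (for $W^2\subseteq V$ and $A=\bigcup_i\Int(U\setminus ViV)$) cannot be connected by any single $g\in U$. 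Your idea of a $V$-separated sequence is the right intuition, but the ultracopower of $\sa(G)$ is what makes it into an actual witness.
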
	

\begin{proof}
	$(1)\Rightarrow (2)$: By passing to a subnet if needed, we may assume $R_U^{X_i}\to S^X\subseteq X^2$. If $x\in X$, then we may find a subnet $(X_\alpha)_{\alpha}$ and $x_\alpha\in X_\alpha$ with $x_\alpha\to x$. Then for any $g\in U$, we have $(x_\alpha, gx_\alpha)\in R_U^{X_\alpha}$, and hence $(x, gx)\in S$. As $S^X$ is closed, we have $R_U^X\subseteq S^X$. This direction holds for any $U\in \cN(G)$.
	
	In the reverse direction, suppose $(x, y)\in S^X$, and fix $P_0\in \op(x, Z)$ and $Q_0\in \op(y, Z)$.  Setting $P = P_0\cap X$, $Q = Q_0\cap X$, we will show that $U{\cdot}\ol{P}\cap \ol{Q}\neq \emptyset$. Find $P_1\in \op(x, Z)$ and $Q_1\in \op(y, Z)$ with $\ol{P_1}\subseteq P_0$ and $\ol{Q_1}\subseteq Q_0$. We can find $V\subseteq \cN(G)$ with $V\subseteq U$ and with both $V{\cdot}\ol{P_1}\subseteq P_0$ and $V{\cdot}\ol{Q_1}\subseteq Q_0$.
	
	As $U\subseteq G$ is assumed to be RPC, and using Lemma~\ref{Lem:LRPC_1_bounded}, find a finite $F\subseteq U$ with $U\subseteq VFV$. We claim that given $Y\in \rm{Sub}_G(Z)$ with $R_U^Y\cap ((Y\cap P_1)\times (Y\cap Q_1))$ non-empty, we have that $F{\cdot}(P_0\cap Y)\cap (Q_0\cap Y)\neq \emptyset$. Towards a contradiction, suppose not. Then 
	\begin{align*}
		F{\cdot}(P_0\cap Y)\cap (Q_0\cap Y) &= \emptyset\\[1 mm]
		\Rightarrow FV{\cdot}(P_1\cap Y)\cap V{\cdot}(Q_1\cap Y) &= \emptyset\\[1 mm]
		\Leftrightarrow VFV{\cdot}(P_1\cap Y)\cap (Q_1\cap Y) &= \emptyset.
	\end{align*}
	This is a contradiction since $U\subseteq VFV$ and by our assumption on $Y$. 
	
	Eventually we have $R_U^{X_i}\cap ((P_1\cap X_i)\times (Q_1\cap X_i)) \neq \emptyset$. So eventually $F{\cdot}(P_0\cap X_i)\cap (Q_0\cap X_i) \neq \emptyset$. Passing to a subnet, this is witnessed by the same $g\in F\subseteq U$. If $w_i\in P_0\cap X_i$ is chosen so that $gw_i\in Q_0\cap X_i$, then passing to a subnet, if $w_i\to w\in \overline{P_0}\cap X$, we then have $gw\in \overline{Q_0}\cap X$ as desired.
	\vspace{3 mm}

	$\neg(1)\Rightarrow \neg(2):$ Fix $U\in \cN(G)\setminus \ngrpc$. We set $I := \cP_{fin}(U)$. Choose $\cU\in \beta I$ such that for each $F\in I$, $\{F'\in I: F\subseteq F'\}\in \cU$. We will show that the flows $Z := \alpha_G(I\times \sa(G))\cong \alpha_G(I\times G)$, $X_i := \{i\}\times \sa(G)$, and $X := \Sigma_\cU^G$ witness the failure of item $(2)$. We first observe that since $I\times G$ is a pre-Gleason $G$-space, we have $\alpha_G(I\times \Sa(G)) \cong \rmS_G(I\times G)$, and $\Sigma_\cU^G\Sa(G)$ can be identified with $$\{\sfp\in \rmS_G(I\times G): \forall S\in \cU\, [S\times G\in \sfp]\}.$$ 
	Let us write $x := \lim_{i\to \cU} (i, e_G)\in \Sigma_\cU^G\sa(G)$. We will find $g_i\in U$ such that, setting $y := \lim_{i\to \cU}g_ix_i$ that $(x, y)\not\in R_U^X$.  Fix $V\in \cN(G)$ which witnesses that $U$ is not RPC; by shrinking $V$ if needed, we can in fact assume that whenever $F\in I$, we have $A_F:= \Int(U\setminus VFV)\neq \emptyset$. Thus given $i\in I$, pick $g_i\in A_i$. To see that this works, fix $W\in \cN(G)$ with $W^2\subseteq V$. Set $A = \bigcup_{i\in I} A_i$, and observe that by the remark after Definition~\ref{Def:Near_Ult_Space}, $C_{I\times W}\cap X$ is a neighborhood of $x$ in $X$ and $C_{WA}\cap X$ is a neighborhood of $y$ in $X$. Fix $g\in U$. Whenever $i\in I$ satisfies $g\in i$, we have $VgV\cap A_i = \emptyset$. Thus for $\cU$-many $i\in I$, we have $gV\cap VA_i= \emptyset$. This implies $g\cdot (C_{I\times W}\cap X)\cap (C_{WA}\cap X) = (C_{I\times gW}\cap X)\cap (C_{WA}\cap X) = \emptyset$. Hence $(x, y)\not\in R_U^X$ as desired. 
\end{proof}

We record the following corollary which provides a new characterization of RPC groups. Recall that a $G$-flow $X$ is \emph{topologically transitive} if every open $G$-invariant subset of $X$ is dense. Write $\rm{TT}_G(X)\subseteq \rm{Sub}_G(X)$ for the set of subflows of $X$ which are topologically transitive. 

\begin{cor}
	\label{Cor:TT_RPC}
	A topological group $G$ is RPC iff for any $G$-flow $X$, $\rm{TT}_G(X)\subseteq \rm{Sub}_G(X)$ is Vietoris closed.
\end{cor}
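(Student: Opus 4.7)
The plan is to use the characterization that a $G$-flow $Y$ is topologically transitive if and only if $R_G^Y := \overline{\{(y, gy) : y \in Y,\, g \in G\}} = Y \times Y$ (extending Definition~\ref{Def:Orbit_Relation} in the obvious way to $U = G$). For if $Y$ is topologically transitive and $(y_1, y_2) \in Y^2$, then any opens $A \ni y_1$ and $B \ni y_2$ admit some $g \in G$ with $gA \cap B \neq \emptyset$, producing a pair $(y, gy) \in A \times B$, so $(y_1, y_2) \in R_G^Y$. Conversely, if $R_G^Y = Y^2$ and $A, B \subseteq Y$ are non-empty opens, pick $(y_1, y_2) \in A \times B \subseteq R_G^Y$; any net $(z_\alpha, g_\alpha z_\alpha)$ from the defining graph converging to $(y_1, y_2)$ eventually gives $g_\alpha A \cap B \neq \emptyset$.

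For the forward direction, suppose $G$ is RPC and let $(Y_i)$ be a net in $\rm{TT}_G(X)$ converging to $Y \in \rm{Sub}_G(X)$. I claim the proof of $(1) \Rightarrow (2)$ in Proposition~\ref{Prop:RPC_Vietoris_1} extends verbatim to $U = G$: the only use of $U \in \cN(G)$ was to find some $V \in \cN(G)$ with $V \subseteq U$, which is automatic for $U = G$, while the RPC hypothesis directly supplies the requisite finite $F \subseteq G$ with $G \subseteq VFV$. Hence $R_G^{Y_i} \to R_G^Y$ in $\exp(X^2)$. Each $Y_i$ being topologically transitive gives $R_G^{Y_i} = Y_i \times Y_i$, and the squaring map $W \mapsto W \times W$ on $\exp(X)$ is Vietoris continuous (for subbasic opens of the form $\{K : K \cap U \neq \emptyset\}$ this is routine, and for $\{K : K \subseteq U\}$ it follows from the tube lemma). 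Thus $Y_i \times Y_i \to Y \times Y$, which combined with the previous convergence yields $R_G^Y = Y \times Y$, so $Y$ is topologically transitive.

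For the converse, suppose $G$ is not RPC, and invoke the construction from the proof of $\neg(1) \Rightarrow \neg(2)$ of Proposition~\ref{Prop:RPC_Vietoris_1} with $U = G$. This produces an infinite set $I$, an ultrafilter $\cU \in \beta I$, the flow $Z = \alpha_G(I \times \sa(G))$, subflows $X_i = \{i\} \times \sa(G) \cong \sa(G)$, and their Vietoris limit $X = \Sigma_\cU^G \sa(G)$ (via Lemma~\ref{Lem:Vietoris}). Each $X_i$ is topologically transitive, since $\sa(G)$ carries the dense orbit of $e_G$. However, the proof exhibits $x, y \in X$ together with open neighborhoods in $X$ whose $G$-translates are disjoint, so $(x, y) \notin R_G^X$; hence $R_G^X \neq X \times X$, so $X$ is not topologically transitive, and $\rm{TT}_G(Z)$ fails to be Vietoris closed. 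The main conceptual step is the characterization via $R_G^Y$; the main obstacle—checking that Proposition~\ref{Prop:RPC_Vietoris_1}'s argument truly goes through for $U = G$ in both directions—is resolved by direct inspection of that proof.
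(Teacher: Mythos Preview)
Your proof is correct and follows essentially the same approach as the paper: characterize topological transitivity via $R_G^Y = Y^2$ and then invoke Proposition~\ref{Prop:RPC_Vietoris_1} with $U = G$. The paper's proof is a two-line sketch, while you spell out the details, including the (necessary) verification that the proof of Proposition~\ref{Prop:RPC_Vietoris_1} goes through for $U = G$ even though $G$ may not literally belong to the fixed base $\cN(G)$.
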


\begin{proof}
	A $G$-flow $X$ is topologically transitive iff $R_G^X = X^2$. The corollary now follows from Proposition~\ref{Prop:RPC_Vietoris_1}
\end{proof}

\begin{prop}
	\label{Prop:RPC_Vietoris_2}
	Let $\vec{X} = \la X_i: i\in I\ra$ be $G$-flows, $Z = \alpha_G(\bigsqcup\vec{X})$, $X = \Sigma_\cU^GX_i$, and suppose $U\in \ngrpc$. Then $R_U^X = R_U^Z\cap X^2$. 
\end{prop}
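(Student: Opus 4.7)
The inclusion $R_U^X \subseteq R_U^Z \cap X^2$ is immediate: $R_U^X$ is closed in $X^2$, hence (since $X^2$ is closed in $Z^2$) also in $Z^2$, and it contains the $U$-graph on $X$, which is a subset of the $U$-graph on $Z$ whose $Z^2$-closure is $R_U^Z$; so $R_U^X \subseteq R_U^Z$, and trivially $R_U^X \subseteq X^2$.

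For the reverse inclusion, my plan is to combine Proposition~\ref{Prop:RPC_Vietoris_1} with a combinatorial characterization of Vietoris ultrafilter limits. By Lemma~\ref{Lem:Vietoris}, $X_i \to X$ in $\sub_G(Z)$ along $\cU$; since $U$ is RPC, Proposition~\ref{Prop:RPC_Vietoris_1} gives $R_U^{X_i} \to R_U^X$ along $\cU$ in $\exp(Z^2)$. By the same Hausdorff/normality argument used in the proof of Lemma~\ref{Lem:Vietoris}, one verifies that for compact $W$ and closed subsets $K_i, K \subseteq W$ with $K_i \to_\cU K$, we have the combinatorial description $K = \{w \in W : \forall\, A \in \op(w, W),\ \{i : K_i \cap A \neq \emptyset\} \in \cU\}$. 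Applied here, this yields
$$R_U^X = \bigl\{(z, w) \in Z^2 : \forall\, \text{open } O \ni (z, w),\ \{i : R_U^{X_i} \cap O \neq \emptyset\} \in \cU\bigr\}.$$
Fix $(x, y) \in R_U^Z \cap X^2$ and an open $A \times B \ni (x, y)$ in $Z^2$. It then suffices to show $\{i : R_U^{X_i} \cap (A \times B) \neq \emptyset\} \in \cU$.

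Now I would reproduce the RPC shrinking from the reverse direction of Proposition~\ref{Prop:RPC_Vietoris_1}: pick open $A_1, B_1$ with $x \in A_1$, $y \in B_1$ and $\ol{A_1} \subseteq A$, $\ol{B_1} \subseteq B$; choose $V \in \cN(G)$ with $V\ol{A_1} \subseteq A$ and $V\ol{B_1} \subseteq B$; by Lemma~\ref{Lem:LRPC_1_bounded} applied to the RPC set $U$, find a finite symmetric $F \subseteq U$ with $U \subseteq VFV$. The assumption $(x, y) \in R_U^Z$ applied to $A_1 \times B_1$ provides $z_0 \in A_1$ and $g \in U$ with $gz_0 \in B_1$; writing $g = v_1 f v_2$ with $v_1, v_2 \in V$ and $f \in F$ gives $v_2 z_0 \in A$, $f v_2 z_0 \in B$. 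The crucial step is then to use the hypothesis $x, y \in X$, i.e.\ the $\cU$-density property
$$\forall A' \in \op(x, Z),\ \{i : X_i \cap A' \neq \emptyset\} \in \cU$$
(and similarly for $y$), together with the $G$-invariance of each $X_i$ and the density of $\bigsqcup X_i$ in $Z$, to conclude that $\{i : \exists z_i \in X_i \cap A \text{ with } fz_i \in B\} \in \cU$ for some $f \in F$; each such $i$ then satisfies $R_U^{X_i} \cap (A \times B) \neq \emptyset$ via the pair $(z_i, fz_i)$.

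The main obstacle is exactly this last step: passing from the single $Z$-witness $v_2 z_0$ (together with the openness of the conditions and the density of $\bigsqcup X_j$) to a $\cU$-large set of $X_i$-witnesses. I expect the argument to hinge on the finiteness of $F$ (so that pigeonhole selects one $f \in F$ $\cU$-frequently) and on the Vietoris forward implication applied to the open set $A \cap f^{-1}(B) \subseteq Z$, after verifying that this set meets $X$ --- which in turn is forced by the RPC shrinking combined with $(x, y) \in R_U^Z$ and the fact that $X$ contains both $x$ and $y$.
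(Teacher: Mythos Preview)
Your setup is fine: Lemma~\ref{Lem:Vietoris} gives $X_i\to_\cU X$, Proposition~\ref{Prop:RPC_Vietoris_1} then gives $R_U^{X_i}\to_\cU R_U^X$, and your combinatorial description of the Vietoris ultralimit is correct. The problem is the last step, and your proposed way of closing it is circular. You want to show that $A\cap f^{-1}(B)$ meets $X$ for some $f\in F$; but ranging over all basic neighborhoods $A\times B$ of $(x,y)$, this says exactly that $U\cdot(A\cap X)\cap (B\cap X)\neq\emptyset$ for all such $A,B$, i.e.\ that $(x,y)\in R_U^X$, which is precisely the conclusion you are after. The RPC shrinking from Proposition~\ref{Prop:RPC_Vietoris_1} only produces a witness $w=v_2z_0\in A\cap f^{-1}(B)$ somewhere in $Z$; nothing in your argument forces $w$ (or any nearby point) to land in $X$, and the ``Vietoris forward implication applied to $A\cap f^{-1}(B)$'' requires knowing this set meets $X$ before it says anything about $\cU$-many $X_i$.

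The paper avoids this by changing the net. Instead of working with the fixed family $(X_i)_{i\in I}$, it takes a net $(x_j,g_jx_j)_{j\in J}$ from the $U$-graph in $Z$ converging to $(x,y)$, sets $Y_j=\ol{G\cdot x_j}$, and passes to a Vietoris-convergent subnet $Y_j\to Y$. Now $(x_j,g_jx_j)\in R_U^{Y_j}$ holds for free, and the key point is that $Y\subseteq X$: each $\pi_I[Y_j]$ is a singleton, and since $x_j\to x\in X$ these singletons converge to $\cU$, so every Vietoris-limit point of the $Y_j$ lies in $\pi_I^{-1}(\{\cU\})=X$. Proposition~\ref{Prop:RPC_Vietoris_1} applied to the net $(Y_j)$ then gives $(x,y)\in R_U^Y\subseteq R_U^X$. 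If you prefer to stay within your framework, the same idea can be phrased as follows: use density of $\bigsqcup X_i$ to choose the witnesses $x_j\in X_{i(j)}$, note $(x_j,g_jx_j)\in R_U^{X_{i(j)}}\cap(A\times B)$ for large $j$, and observe that $i(j)=\pi_I(x_j)\to\cU$ in $\beta I$; hence the set $\{i: R_U^{X_i}\cap(A\times B)\neq\emptyset\}$ meets every member of $\cU$ and therefore belongs to $\cU$. Either way, the missing ingredient is the use of $\pi_I$ to transport the witnessing net into $X$, not further RPC shrinking.
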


\begin{proof}
	We always have $R_U^X\subseteq R_U^Z\cap X^2$. For the other direction, suppose $(x, y)\in R_U^Z\cap X^2$. Let $(x_j)_{j\in J}$ be a net from $Z$ and $(g_j)_{j\in J}$ be a net from $G$ with $x_j\to x$ and $g_jx_j\to y$. Let $Y_j = \ol{G\cdot x_j}$ and passing to a subnet if needed, let $Y = \lim_j Y_j$. Then by considering the natural map $\pi_I\colon Z\to \beta I$ and noting that each $\pi_I[Y_j]$ is a singleton, we see that $Y\subseteq X$, and by Proposition~\ref{Prop:RPC_Vietoris_1}, we have $R_U^{Y_j}\to R_U^Y$. Hence $(x, y)\in R_U^Y\subseteq R_U^X$.  
\end{proof}

\begin{cor}
	\label{Cor:SNRPC_Ults}
	With notation as in Lemma~\ref{Lem:LRPC_Tietze}, then $\partial_\sigma^X = \partial_\sigma^Z|_{X^2}$. In particular, whenever $\sigma\in \snrpc(G)$, the class of $\sigma$-respecting $G$-flows is closed under ultracoproducts.
\end{cor}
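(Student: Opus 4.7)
The plan is to deduce the equality $\partial_\sigma^X = \partial_\sigma^Z|_{X^2}$ directly from Proposition~\ref{Prop:RPC_Vietoris_2}, using the observation after Definition~\ref{Def:Orbit_Relation} that each sublevel set of $\partial_\sigma$ is an intersection of the relations $R_{\sigma,c}$. The ``in particular'' part should then follow by combining this with Lemma~\ref{Lem:Rho_Partial_Z} and Lemma~\ref{Lem:LRPC_Tietze}.

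More precisely, first I would observe that because $\sigma \in \snrpc(G)$, the open ball $\rmB_\sigma(1)$ is RPC, and since RPC sets form an ideal closed under inclusion, every $\rmB_\sigma(c)$ with $0 < c \leq 1$ is also RPC, hence in $\ngrpc$. Next, fix $0 \leq c < 1$; by the remark after Definition~\ref{Def:Orbit_Relation},
\begin{equation*}
\{(x,y) \in X^2 : \partial_\sigma^X(x,y) \leq c\} = \bigcap_{\epsilon > 0} R_{\sigma, c+\epsilon}^X,
\end{equation*}
and similarly for $Z$. Applying Proposition~\ref{Prop:RPC_Vietoris_2} to each $U = \rmB_\sigma(c+\epsilon) \in \ngrpc$ (taking $\epsilon$ small enough that $c+\epsilon \leq 1$) yields $R_{\sigma, c+\epsilon}^X = R_{\sigma, c+\epsilon}^Z \cap X^2$. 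Intersecting over $\epsilon$ and pulling the intersection through $X^2$ gives the desired identity $\partial_\sigma^X = \partial_\sigma^Z|_{X^2}$.

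For the ``in particular'' statement, suppose each $X_i$ is $\sigma$-respecting. Lemma~\ref{Lem:Rho_Partial_Z} gives that $Z$ is $\sigma$-respecting, i.e.\ $\rho_\sigma^Z = \partial_\sigma^Z$. Lemma~\ref{Lem:LRPC_Tietze} gives $\rho_\sigma^X = \rho_\sigma^Z|_{X^2}$. Chaining these with the equality just established, we obtain
\begin{equation*}
\rho_\sigma^X = \rho_\sigma^Z|_{X^2} = \partial_\sigma^Z|_{X^2} = \partial_\sigma^X,
\end{equation*}
so $X$ is $\sigma$-respecting, proving closure under ultracoproducts.

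No real obstacle is expected here: the main content of the corollary is already packaged in Proposition~\ref{Prop:RPC_Vietoris_2} (which handles the RPC approximation on the $R_U$ side) and in Lemmas~\ref{Lem:Rho_Partial_Z} and~\ref{Lem:LRPC_Tietze} (which handle the $\rho_\sigma$ side via Tietze extension). The only minor point to verify cleanly is the reduction to $c < 1$; the case $c = 1$ is trivial since $\partial_\sigma \leq 1$ everywhere by boundedness of $\sigma \in \sn^1(G)$, so both restrictions agree at the top value.
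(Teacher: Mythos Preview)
Your proposal is correct and follows exactly the route the paper intends: the equality $\partial_\sigma^X = \partial_\sigma^Z|_{X^2}$ is obtained by applying Proposition~\ref{Prop:RPC_Vietoris_2} to each $U = \rmB_\sigma(c+\epsilon)\in \ngrpc$ and intersecting, and the ``in particular'' is the chain $\rho_\sigma^X = \rho_\sigma^Z|_{X^2} = \partial_\sigma^Z|_{X^2} = \partial_\sigma^X$ via Lemmas~\ref{Lem:Rho_Partial_Z} and~\ref{Lem:LRPC_Tietze}. The paper leaves this corollary without an explicit proof precisely because it is the straightforward combination you wrote out.
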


\iffalse
{\color{red} We note the following proposition related to Corollary~\ref{Cor:SNRPC_Ults} that we will make use of in Section ???

\begin{prop}
	\label{Prop:RPC_Vietoris_3}
	Let $Z$ be a Gleason complete $G$-flow, $(X_i)_{i\in I}$ a net from $\rm{Sub}_G(Z)$ with $X_i\to X\in \rm{Sub}_G(Z)$, and $\sigma\in \snrpc$. Then if $\partial_\sigma^{X_i} = \partial_\sigma^Z|_{X_i^2}$ for every $i\in I$, we have $\partial_\sigma^X = \partial_\sigma^Z|_{X^2}$.
\end{prop}

\begin{proof}
	Fix $x, y\in X$. We always have $\partial_\sigma^Z(x, y)\leq \partial_\sigma^X(x, y)$. For the other direction, suppose $\partial_\sigma^Z(x, y)\leq c$. It suffices to show for each $\epsilon > 0$ that $\partial_\sigma^X(x, y)\leq c+\epsilon$. So fix $\epsilon > 0$. Fix $A\in \op(x, Z)$ and $B\in \op(y, Z)$. As $\partial_\sigma^Z(x, y)\leq c$ and as $Z$ is Gleason complete, we have $y\in \Int(\ol{\rmB_\sigma(c+\epsilon)\cdot A}):= C$. For large enough $i\in I$, we have both $X_i\cap A\neq \emptyset$ and $X_i\cap B\cap C\neq \emptyset$. Fix $y_i\in X_i\cap B\cap C$ and $D\in \op(y_i, Z)$. Then $\rmB_{\sigma}(c+\epsilon)\cdot D\cap A\neq \emptyset$. Salvage this if possible... 
\end{proof}
}
\fi

We end the section by isolating for LRPC groups a weaker and easier-to-verify condition than cofinally seminorm respecting which implies that a given $G$-flow is Fubini. 

\begin{defin}
	\label{Def:Urysohn}
	Given a $G$-flow $X$, $K$ and $L\in \exp(X)$, $\sigma\in \sn(G)$, and $0< c <1$, we say that $(K, L)$ is \emph{$(\sigma, c)$-separable} if  there is $p\in \rmC_\sigma(X, [0,1])$ with $p|_K\equiv 0$ and $p|_L\equiv c$. We say $(K, L)$ is \emph{$\sigma$-separable} if $(K, L)$ is $(\sigma, c)$-separable for every $0< c < 1$. 
	
	Given $U\in \cN_G$, and $\sigma\in \sn(G)$, we say that a $G$-flow $X$ is \emph{$(U, \sigma)$-Urysohn} if whenever $K, L\in \exp(X)$ satisfy $(K\times L)\cap R_U^X = \emptyset$, then $(K, L)$ is $\sigma$-separable. 
	
	If $G$ is LRPC, we say that a $G$-flow $X$ is \emph{Urysohn} if for any $U\in \ngrpc$, there is $\sigma\in \snrpc(G)$ such that $X$ is $(U, \sigma)$-Urysohn. If $\zeta\colon \ngrpc\to \snrpc(G)$ is a function, we call $X$ \emph{$\zeta$-Urysohn} if for each $U\in \ngrpc$, $X$ is $(U, \zeta(U))$-Urysohn.
\end{defin}

We make a few remarks about various aspects of the definition. First, for $K, L\in \exp(X)$ to be $(\sigma, c)$-separable, it suffices to find $p\in \rmC_\sigma(X, [0, 1])$ such that for some intervals $C, D\subseteq [0, 1]$ with $C< D$,and $\min(D)-\max(C)\geq c$, we have $p[K]\subseteq C$ and $p[L]\subseteq D$. Second, by a compactness argument, $(K\times L)\cap R_U^X = \emptyset$ iff there are open $A, B\in \op(X)$ with $K\subseteq A$, $L\subseteq B$, and $UA\cap B = \emptyset$. Third, if $\sigma\in \snrpc(G)$ and $X$ is a $\sigma$-respecting $G$-flow, then by Fact~\ref{Fact:Topometric_Tietze}, $X$ is $(\rmB_\sigma(1), \sigma)$-Urysohn. In particular, if $X$ is $S$-respecting for some $S\subseteq \snrpc(G)$ with the property that $\{\rmB_\sigma(1): \sigma\in S\}$ forms a base at $e_G$, then $X$ is Urysohn.

\begin{prop}
	\label{Prop:Urysohn_Fubini}
	If $G$ is LRPC and $X$ is an Urysohn $G$-flow, then $X$ is Fubini.
\end{prop}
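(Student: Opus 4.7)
The plan is to verify item (2) of Proposition~\ref{Prop:Fubini_Flow_Tietze}: for any infinite set $J$ and any $\cV\in \beta J$, I must show that $\Sigma_\cV^G X\subseteq \alpha_G(J\times X)$ is weakly Tietze. Since $G$ is LRPC, $\snrpc(G)$ is upwards cofinal in $\sn^1(G)$, and since enlarging $\sigma_0$ only weakens the hypothesis, I may fix $\sigma_0\in \snrpc^1(G)$ from the start. Given $\delta>0$, let $K:=\lceil 1/\delta\rceil$ and for each $k=1,\ldots,K$ use the Urysohn hypothesis on $X$ to choose $\tau_k\in \snrpc(G)$ such that $X$ is $(\rmB_{\sigma_0}((k-\tfrac12)\delta),\tau_k)$-Urysohn. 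Set $\sigma_1:=\sigma_0\vee\tau_1\vee\cdots\vee\tau_K\in \snrpc(G)$, which will serve as the required witness seminorm.

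Fix $p\in \rmC_{\sigma_0}^1(\Sigma_\cV^G X)$ with representative $(p_j)_{j\in J}\in \rmC_{\sigma_2}(J\times X)$ for some $\sigma_2\in \sn(G)$. By Proposition~\ref{Prop:Int_k_bounded_Fubini}, in item (2) of Theorem~\ref{Thm:Fubini_Groups} applied to $\sigma_0$ I may take $\sigma_1'=\sigma_0$, so there exist $F\in \rm{FS}(G)$ and $\epsilon>0$ with $\Phi(\sigma_2,\sigma_0,F,\epsilon)\le \sigma_0+\delta$ pointwise. Copying the ultrafilter argument from $(2)\Rightarrow(3)$ in Theorem~\ref{Thm:Fubini_Groups}, the hypothesis $p\in \rmC_{\sigma_0}^1(\Sigma_\cV^G X)$ forces
$$J_0:=\{j\in J:\|p_jh-p_j\|\le \sigma_0(h)+\delta \text{ for all } h\in G\}\in\cV.$$
For each $j\in J_0$, continuity of $p_j$ combined with passage to limits along defining nets yields: whenever $(x,y)\in R_{\sigma_0,c}^X$, one has $|p_j(x)-p_j(y)|\le c+\delta$.

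It remains to correct each $p_j$ (for $j\in J_0$) into some $q_j\in \rmC_{\sigma_1}(X)$ with $\|q_j-p_j\|=O(\delta)$; setting $q_j\equiv 0$ off $J_0$, the tuple $(q_j)_{j\in J}$ then represents an element of $\rmC_{\sigma_1}(\alpha_G(J\times X))$ whose restriction to $\Sigma_\cV^GX$ is $(q_j)_\cV^G$ and $O(\delta)$-approximates $p$, completing the proof. To build $q_j$, I imitate Proposition~\ref{Prop:Within_Delta_Continuous_Lipschitz} against the lsc pseudometric $\rho_{\sigma_1}^X$: for each $\alpha\in S:=\{k\delta:0\le k\le \lfloor 1/\delta\rfloor\}$, set $F_\alpha^j:=\{p_j\le \alpha-\delta/2\}$ and $G_\alpha^j:=\{p_j\ge \alpha+\delta/2\}$. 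For $\alpha<\beta$ in $S$, the near-Lipschitz property of $p_j$ forces $(F_\alpha^j\times G_\beta^j)\cap R_{\sigma_0,(\beta-\alpha)-\delta/4}^X=\emptyset$; choosing the index $k\le K$ with $(k-\tfrac12)\delta\le(\beta-\alpha)-\delta/4$, the $(\rmB_{\sigma_0}((k-\tfrac12)\delta),\tau_k)$-Urysohn hypothesis provides, for each $\eta>0$, a separating $r\in \rmC_{\tau_k}(X,[0,1])\subseteq \rmC_{\sigma_1}^1(X)$ with gap $\beta-\alpha-\eta$, proving $\rho_{\sigma_1}^X(F_\alpha^j,G_\beta^j)\ge \beta-\alpha$. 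Thus $(F_\alpha^j,G_\alpha^j)_{\alpha\in S}$ is a Lipschitz system for $\rho_{\sigma_1}^X$, and Ben Yaacov's Lemma~1.5 from \cite{Ben_Yaacov_2013} combined with Fact~\ref{Fact:Topometric_Tietze} produces, for any $c>1$, a $c\rho_{\sigma_1}^X$-Lipschitz continuous $q_j$ with $\|q_j-p_j\|\le \delta/2+\eta$; such a $q_j$ automatically lies in $\rmC_{c\sigma_1}(X)$, since any $\rho_\sigma$-Lipschitz continuous function is $\sigma$-orbit-Lipschitz. After relabeling $c\sigma_1$ as the final $\sigma_1$, the construction is complete. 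The main obstacle is the bookkeeping in the Lipschitz-system verification: one must match each level-set gap $\beta-\alpha$ with the correct Urysohn witness $\tau_k$ so that the single seminorm $\sigma_1$ suffices uniformly across all scales and all $j\in J_0$.
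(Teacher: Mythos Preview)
Your argument is correct, but it takes a genuinely different (and considerably longer) route than the paper.

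The paper does not go through Proposition~\ref{Prop:Fubini_Flow_Tietze} at all. Instead it verifies the Fubini property directly by showing that for any $I,J,\cU,\cV$, the image of $\hat{\psi}$ separates points of $\Sigma_\cU^G\Sigma_\cV^GX$. Given $x\neq y$ there, one picks $U\in\ngrpc$ and open $A\ni x$, $B\ni y$ with $(\ol{A}\times\ol{B})\cap R_U^{\alpha_G(I\times W)}=\emptyset$ (where $W=\Sigma_\cV^GX$), pushes this disjointness down via Proposition~\ref{Prop:RPC_Vietoris_2} to obtain, for every index $(i,j)$, closed sets $\ol{C_{ij}},\ol{D_{ij}}\subseteq X$ with $(\ol{C_{ij}}\times\ol{D_{ij}})\cap R_U^X=\emptyset$, and then invokes the Urysohn hypothesis \emph{once}: the single $\sigma\in\snrpc(G)$ attached to $U$ yields $p_{ij}\in\rmC_\sigma(X,[0,1])$ with $p_{ij}|_{C_{ij}}\equiv 0$, $p_{ij}|_{D_{ij}}\equiv 1/2$, so $((p_{ij})_\cV)_\cU\in\im(\hat{\psi})$ separates $x,y$.

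Your approach instead establishes the full weakly-Tietze property by reconstructing a $\rho_{\sigma_1}$-Lipschitz approximation to an arbitrary $p$, via the Ben~Yaacov Lipschitz-system machinery of Proposition~\ref{Prop:Within_Delta_Continuous_Lipschitz}. This is closer in spirit to Theorem~\ref{Thm:Seminorm_Respecting_Fubini} and yields somewhat more quantitative Tietze information, at the cost of more bookkeeping. One incidental remark: your family $\tau_1,\dots,\tau_K$ is redundant. Since the Urysohn hypothesis gives $(\sigma,c)$-separability for \emph{every} $c<1$, a single $\tau:=\tau_1$ attached to $U=\rmB_{\sigma_0}(\delta/2)$ already forces $\rho_\tau(F_\alpha^j,G_\beta^j)\ge c$ for all $c<1$, hence $\ge\beta-\alpha$ for every pair $\alpha<\beta$ in $S$; the scale-matching with $k$ is unnecessary. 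The paper's point-separation argument is shorter precisely because Gelfand duality only asks for separating functions, not full approximations.
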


\begin{proof}
	Fix infinite sets $I, J$ and ultrafilters $\cU\in \beta I$ and $\cV\in \beta J$. With $\psi\colon \fubini{\cU}{\cV}{G}X\to \Sigma_{\cU\otimes \cV}^G X$ as in Section~\ref{Section:Fubini}, we need to show that functions in $\im(\hat{\psi})$ separate points in $\fubini{\cU}{\cV}{G}X$. Let $x\neq y\in \fubini{\cU}{\cV}{G}X$. Write $W = \Sigma_\cV^GX$, $Y = \alpha_G(I\times W)$, and $Z = \alpha_G(J\times X)$. Using continuity of the action, find $A, B\in \op(Y)$ and $U\in \ngrpc$ with $x\in A$, $y\in B$, and $(\ol{A}\times \ol{B})\cap R_U^{Y}= \emptyset$. For each $i\in I$, let $A_i\in \op(W)$ satisfy $A\cap (\{i\}\times W) = \{i\}\times A_i$, and likewise for $B_i$. Note that $(\ol{A_i}\times \ol{B_i})\cap R_U^{W} = \emptyset$. By Proposition~\ref{Prop:RPC_Vietoris_2}, we have $(\ol{A_i}\times \ol{B_i})\cap R_U^{Z} = \emptyset$. Let $C_i, D_i\in \op(Z)$ satisfy $\ol{A_i}\subseteq C_i$, $\ol{B_i}\subseteq D_i$, and $(\ol{C_i}\times \ol{D_i})\cap R_U^{Z} = \emptyset$. For each $i\in I$ and $j\in J$, let $C_{ij}\in \op(X)$ satisfy $C_i\cap (\{j\}\times C_i) = \{j\}\times C_{ij}$, and likewise for $D_{ij}$. Note that $(\ol{D_{ij}}\times \ol{D_{ij}})\cap R_U^X = \emptyset$. As $X$ is Urysohn, there is $\sigma \in \snrpc(G)$ such that for each $(i, j)\in I\times J$, there is $p_{ij}\in \rmC_\sigma(X, [0, 1])$ with $p_{ij}|_{C_{ij}}\equiv 0$ and $p_{ij}|_{D_{ij}}\equiv 1/2$. Thus $((p_{ij})_{\cV})_\cU\in \im(\hat{\psi})$ and separates $x$ and $y$. 
\end{proof}

Using very similar proof ideas, we also have:

\begin{prop}
	\label{Prop:Urysohn_Ults}
	If $G$ is LRPC, $U\in \ngrpc$, and $\sigma\in \snrpc(G)$, then the class of $(U, \sigma)$-Urysohn $G$-flows is closed under ultracoproducts.
\end{prop}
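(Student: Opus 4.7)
The plan is to mimic the strategy of Proposition~\ref{Prop:Urysohn_Fubini}, except that now only a single ultracoproduct appears instead of an iterated Fubini one. Fix a tuple $\vec{X} = \la X_i : i\in I\ra$ of $(U,\sigma)$-Urysohn $G$-flows and $\cU \in \beta I$; set $X := \Sigma_\cU^G X_i$ and $Z := \alpha_G(\bigsqcup \vec{X})$. Given $K, L \in \exp(X)$ with $(K \times L) \cap R_U^X = \emptyset$ and $c \in (0,1)$, I must produce $p \in \rmC_\sigma(X,[0,1])$ with $p|_K \equiv 0$ and $p|_L \equiv c$.

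The first step is to transfer the separation hypothesis into the ambient $Z$. Viewing $K, L$ as closed subsets of $Z$, Proposition~\ref{Prop:RPC_Vietoris_2} (applicable since $U \in \ngrpc$) gives $R_U^X = R_U^Z \cap X^2$, hence $(K \times L) \cap R_U^Z = \emptyset$. The remark after Definition~\ref{Def:Urysohn}, together with normality of $Z$, then lets me select open sets $K \subseteq \tilde A_Z \subseteq \ol{\tilde A_Z} \subseteq A_Z$ and $L \subseteq \tilde B_Z \subseteq \ol{\tilde B_Z} \subseteq B_Z$ in $Z$ with $U A_Z \cap B_Z = \emptyset$.

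Next, I slice by $X_i$. For each $i \in I$, set $A_i := A_Z \cap X_i$, $\tilde A_i := \tilde A_Z \cap X_i$, and analogously $B_i,\tilde B_i$. Since $X_i$ is closed and $G$-invariant in $Z$, we have $U A_i \cap B_i \subseteq U A_Z \cap B_Z = \emptyset$, together with $\ol{\tilde A_i} \subseteq A_i$ and $\ol{\tilde B_i} \subseteq B_i$ (closures in $X_i$). The remark after Definition~\ref{Def:Urysohn} applied inside $X_i$ yields $(\ol{\tilde A_i} \times \ol{\tilde B_i}) \cap R_U^{X_i} = \emptyset$, so the $(U,\sigma)$-Urysohn hypothesis on $X_i$ furnishes $p_i \in \rmC_\sigma(X_i,[0,1])$ with $p_i|_{\ol{\tilde A_i}} \equiv 0$ and $p_i|_{\ol{\tilde B_i}} \equiv c$.

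Finally, the tuple $(p_i)_{i \in I}$ lies in $\rmC_\sigma(\bigsqcup \vec{X},[0,1])$ and hence extends to some $\tilde p \in \rmC_\sigma(Z,[0,1])$ (orbit-Lipschitz continuity is preserved under continuous extension since $G$ acts on $Z$ by homeomorphisms). Density of $\bigsqcup X_i$ in $Z$ forces $\bigsqcup \tilde A_i$ and $\bigsqcup \tilde B_i$ to be dense in $\tilde A_Z$ and $\tilde B_Z$ respectively, so continuity of $\tilde p$ gives $\tilde p \equiv 0$ on $\tilde A_Z \supseteq K$ and $\tilde p \equiv c$ on $\tilde B_Z \supseteq L$. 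Then $p := \tilde p|_X = (p_i)_\cU^G$ is the desired function, and since $c$ was arbitrary, $X$ is $(U,\sigma)$-Urysohn. The only conceptual input is Proposition~\ref{Prop:RPC_Vietoris_2}; the rest is bookkeeping essentially parallel to Proposition~\ref{Prop:Urysohn_Fubini}, so I do not anticipate a serious obstacle.
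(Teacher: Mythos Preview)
Your proof is correct and follows essentially the same route as the paper's: invoke Proposition~\ref{Prop:RPC_Vietoris_2} to push the $R_U$-separation from $X$ up to $Z$, enlarge $K,L$ to open sets in $Z$ whose closures remain $R_U^Z$-separated, slice into each $X_i$, apply the $(U,\sigma)$-Urysohn hypothesis coordinatewise, and assemble the resulting $p_i$ into $(p_i)_\cU^G$. Your explicit density argument for why $(p_i)_\cU^G$ vanishes on $K$ and equals $c$ on $L$ is a detail the paper leaves implicit, and your use of the intermediate sets $\tilde A_Z,\tilde B_Z$ is a harmless extra layer of normality that the paper absorbs into the single pair $A,B$ with $(\ol A\times\ol B)\cap R_U^Z=\emptyset$.
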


\begin{proof}
	Let $\vec{X} = \la X_i: i\in I\ra$ be a tuple of $(U, \sigma)$-Urysohn $G$-flows. Write $X = \Sigma_\cU^GX_i$ and $Z = \alpha_G(\bigsqcup \vec{X})$. Let $K, L\in \exp(X)$ satisfy $(K\times L)\cap R_U^X  = \emptyset$. By Proposition~\ref{Prop:RPC_Vietoris_2}, also $(K\times L)\cap R_U^Z = \emptyset$. Find $A, B\in \op(Z)$ with $K\subseteq A$, $L\subseteq B$, and $(\ol{A}\times \ol{B})\cap R_U^Z = \emptyset$. Let $A_i = A\cap X_i$, $B_i = B\cap X_i$, and note that $(\ol{A_i}\times \ol{B_i})\cap R_U^{X_i} = \emptyset$. Fix $0< c < 1$, and let $p_i\in \rmC_\sigma(X_i, [0,1])$ satisfy $p_i|_{A_i} \equiv 0$ and $p_i|_{B_i} \equiv c$. Then $(p_i)_\cU\in \rmC_\sigma(\Sigma_\cU^GX_i)$ and satisfies $(p_i)_\cU|_K \equiv 0$ and $(p_i)_\cU|_L\equiv c$ as desired.
\end{proof}

\section{Weak types for flows of LRPC groups}

\subsection{Discrete groups}	

As a warmup, we first give an account of weak types in the case that $G$ is discrete. In this case (and also for the locally compact case dealt with later), weak types and the natural containment relation among them give precise characterizations of weak equivalence and weak containment. For actions of $\bbZ$ on Cantor space, we recover the notion of \emph{weak approximate conjugacy} introduced by Lin and Matui \cite{Lin_Matui}, and for free actions of a general countable group on Cantor space, we recover the notions of \emph{qualitative} weak containment/equivalence isolated in unpublished work of Elek \cite{Elek_Wk_Cont}. 

\begin{defin}
	\label{Def:Weak_Type_Discrete}
	Fix a discrete group $G$. Given $F\in \fin{G}$, we form the relational language $\cL_F:= \{E_g: g\in F\}\cup \{C_n: 1\leq n< \omega\}$, where each $E_g$ is binary and each $C_n$ is $n$-ary. 
	
	Given a $G$-flow $X$ and $\cO\in [\op(X)]^{<\omega}$, the \emph{full $\cL_F$-structure} on $\cO$, denoted $\str(\cO, F)$, is the $\cL_F$-structure with vertex set $\cO$ so that the following hold: 
	\begin{itemize}
		\item
		Given $g\in F$ and $A, B\in \cO$, we have $(A, B)\in E_g^{\str(\cO, F)}$ iff $\ol{gA}\cap \ol{B} = \emptyset$. 
		\item
		Given $A_0,...,A_{n-1}\subseteq \cO$, we have $(A_0,..., A_{n-1})\in C_n^{\str(\cO, F)}$ iff $\bigcup_{m<n} A_m = X$. 
	\end{itemize}
	Given a finite $\cL_F$-structure $\bM$, we say that $X$ \emph{realizes} $\bM$ if there is  $\cO\in [\op(X)]^{<\omega}$ of $X$ such that there is a bijective monomorphism  $e\colon \bM\to \str(\cO, F)$ (i.e.\ images of related tuples remain related). In this case we also say that $\cO$ \emph{realizes} $\bM$ and that $e$ is a \emph{realization} of $\bM$ (note that $\cO$ can realize several different $\bM$ up to isomorphism). The \emph{$F$-weak type} of $X$, denoted $\rm{tp}_F(X)$, is the collection of $\cL_F$-structures with vertex set some $n\in \bbN$ which are realized by $X$. If $H\subseteq G$ is infinite, the \emph{$H$-weak type} of $X$ is the set $\rm{tp}_H(X):= \bigcup \{\rm{tp}_F(X): F\in \fin{H}\}$. When $H = G$, we simply call $\rm{tp}_G(X)$ the \emph{weak type} of $X$.
\end{defin}

\begin{rem}
	One can consider the exact same definitions of full $\cL_{F}$-structure and of realization, but working with finite subsets of $\exp(X)$ instead of $\op(X)$. It is straightforward, using finitely many applications of normality in compact spaces, that if $X$ realizes $\bM$ using sets from $\exp(X)$, then $X$ realizes $\bM$ using sets from $\op(X)$. 
\end{rem}

Note that if $X$ is a factor of $Y$, then $\rm{tp}_G(X)\subseteq \rm{tp}_G(Y)$.

\begin{theorem}
	\label{Thm:Weak_Type_Discrete}
	Let $G$ be a discrete group, and let $X$ and $Y$ be $G$-flows. Then $X\preceq_G Y$ iff $\rm{tp}_G(X)\subseteq \rm{tp}_G(Y)$. In particular, $X\sim_G Y$ iff $\rm{tp}_G(X) = \rm{tp}_G(Y)$, so there is a \emph{set} of weak equivalence classes of $G$-flows (rather than a proper class).
\end{theorem}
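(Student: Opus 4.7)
The proof has two directions; in both, information transfer happens via encoding the relations $E_g$ and $C_n$ as algebraic identities among continuous $[0,1]$-valued functions. If $\phi\colon \Sigma_\cU Y\to X$ is a factor map and $\cO\subseteq \op(X)$ realizes $\bM\in \rm{tp}_F(X)$, then $\{\phi^{-1}[A]:A\in \cO\}$ realizes $\bM$ in $\Sigma_\cU Y$: continuity gives $\overline{\phi^{-1}[A]}\subseteq \phi^{-1}[\overline{A}]$, $G$-equivariance gives $g\cdot \phi^{-1}[A]=\phi^{-1}[gA]$, and surjectivity preserves cover relations and bijectivity. Hence $\rm{tp}_G(X)\subseteq \rm{tp}_G(\Sigma_\cU Y)$, and it suffices to show $\rm{tp}_G(\Sigma_\cU Y)\subseteq \rm{tp}_G(Y)$. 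Given a realization $\{A_0,\ldots,A_{n-1}\}\subseteq \op(\Sigma_\cU Y)$ of $\bM\in \rm{tp}_F(\Sigma_\cU Y)$, I would first shrink each $A_k$ to $A_k'$ with $\overline{A_k'}\subseteq A_k$ still realizing $\bM$, encode $A_k'$ by $f_k\in \rmC(\Sigma_\cU Y,[0,1])$ with $\{f_k>0\}=A_k'$, represent $f_k$ by $(f_k^i)_{i\in I}$ with $f_k^i\in \rmC(Y,[0,1])$, and set $A_k^i:=\{f_k^i>0\}$. The relation $\overline{gA_k'}\cap \overline{A_\ell'}=\emptyset$ translates to $(gf_k)\cdot f_\ell\equiv 0$, and a cover relation $C_m(A_{k_0},\ldots,A_{k_{m-1}})$ to positivity of $\max_j f_{k_j}$; these are algebraic statements in the ultraproduct $C^*$-algebra, and \L{}o\'s's theorem transfers them to $\cU$-many $i\in I$, yielding a realization of $\bM$ in $Y$.

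\textbf{Backward direction.} Assume $\rm{tp}_G(X)\subseteq \rm{tp}_G(Y)$. Let $I:=\{(F,\cO): F\in \fin{G},\, \cO\in [\op(X)]^{<\omega}\}$ ordered by componentwise inclusion, and fix $\cU\in \beta I$ with $\{j\in I: j\supseteq i\}\in \cU$ for every $i\in I$. For each $i=(F,\cO)\in I$, the hypothesis yields a bijective monomorphism $e_i\colon \str(\cO,F)\to \str(\cO_i',F)$ with $\cO_i'\subseteq \op(Y)$. I would build $\phi\colon \Sigma_\cU Y\to X$ by dualizing to a unital $G$-equivariant $*$-embedding $\hat\phi\colon \rmC(X)\hookrightarrow \rmC(\Sigma_\cU Y)$. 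For each $A\in \op(X)$, fix once and for all some $h_A\in \rmC(X,[0,1])$ with $\{h_A>0\}=A$; by Stone--Weierstrass, the $*$-algebra generated by $\{h_A:A\in \op(X)\}$ is dense in $\rmC(X)$. For each $i\in I$ and $A\in \cO_i$, pick $k_A^i\in \rmC(Y,[0,1])$ with $\{k_A^i>0\}=e_i(A)$, chosen so that the family $\{k_B^i : B\in \cO_i\}$ simultaneously witnesses every algebraic identity (vanishing products, positivity of maxima, $G$-action compatibility via $E_g$) imposed among $\{h_B\}_{B\in \cO_i}$ by $\str(\cO_i,F)$. Set $\hat\phi(h_A):=(k_A^i)_\cU$ and extend multiplicatively.

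\textbf{Main obstacle.} The backward direction is delicate because the $e_i$ are chosen independently for different $i$, yet $\hat\phi$ must be an exact $*$-homomorphism, not merely an approximate one. The weak type records closure-disjointness and covers but not multiplicative identities like $h_{A\cap B}=h_A h_B$, which depend on the specific choices of $h_A$. The workaround is that any single identity needed to verify the $*$-homomorphism property involves only finitely many opens at once, so by cofinality of $\cU$ those opens all lie in $\cO_i$ for $\cU$-many $i$, and a \L{}o\'s-style argument transfers the identity. The ``in particular'' claim is immediate: each weak type is a subset of the (set-sized) collection of isomorphism classes of finite $\cL_{\fin{G}}$-structures on finite vertex sets, so $\sim_G$ has a set of equivalence classes.
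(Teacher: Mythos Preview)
Your forward direction is essentially the paper's argument, phrased via functions rather than open sets; one small caution is that from $(g^{-1}f_k^i)\cdot f_\ell^i\approx 0$ you only get disjointness of the \emph{open} supports at level $i$, not of their closures, so you should take $A_k^i=\{f_k^i>1/2\}$ (after choosing $f_k\equiv 1$ on a slightly smaller realizing family) to recover the closure-disjointness required by $E_g$.

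The backward direction, however, has a real gap. Your proposed workaround---that any identity among the $h_A$ involves finitely many opens, which eventually lie in $\cO_i$---does not help, because the realization $e_i$ is only a monomorphism of $\cL_F$-structures: it preserves the relations $E_g$ and $C_n$, but nothing forces $e_i(A\cap B)=e_i(A)\cap e_i(B)$, nor $e_i(gA)=g\cdot e_i(A)$. Concretely, if $A,B\in\op(X)$ are disjoint but $\ol A\cap\ol B\neq\emptyset$, then $h_A h_B=0$ in $\rmC(X)$, yet this relation is invisible to $\str(\cO_i,F)$ (since $E_{e_G}(A,B)$ fails), so there is no reason $k_A^i k_B^i$ should vanish. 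Hence the assignment $h_A\mapsto (k_A^i)_\cU$ cannot be extended to a $*$-homomorphism. The paper sidesteps this entirely by constructing the point map $\phi\colon\Sigma_\cU Y\to X$ directly: one declares $\phi(y)=x$ exactly when $y\in\ol{\bigcup_{i\in I} e_i(A)}$ for every $A\in\op(x,X)$, and then verifies by hand that $\phi$ is well-defined (using $E_{e_G}$), total and onto (using the $C_n$ relations), continuous, and $G$-equivariant (using the $E_g$ relations together with suitable covers). The point is that the weak-type data suffices to pin down a \emph{map of spaces}, even though it is far too coarse to pin down a $*$-homomorphism on generators.
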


\begin{proof}
	First assume $X\preceq_G Y$; we may assume $X = \Sigma_\cU Y$ for some $\cU\in \beta I$. Fix a finite $F\subseteq G$ and $\bM\in \rm{tp}_F(X)$, where $\bM$ has vertex set $n\in \bbN$ and is realized by $\cO:= \{A_m: m < n\}\in \fin{\op(X)}$ by the map $m\to A_m$. For each $m< n$, let $B_m\in \op(\beta(I\times Y))$ be such that $\ol{A_m}\subseteq B_m$ and $\ol{g\cdot B_k}\cap \ol{B_\ell} = \emptyset$ whenever $g\in F$ and $E_g^\bM(k, \ell)$ holds. Then for any $S\subseteq n$ with $X\subseteq \bigcup_{m\in S} A_m$, we have $X\subseteq \bigcup_{m\in S} B_m\in  \op(\beta(I\times Y))$, so for $\cU$-many $i\in I$, we have $\{i\}\times Y\subseteq \bigcup_{m\in S} B_m$. Thus for a suitable $i\in I$, it follows that $\bM\in \rm{tp}(Y)$ as realized by $\{B_m\cap (\{i\}\times Y): m< n\}$.

	In the other direction, suppose $\rm{tp}_G(X)\subseteq \rm{tp}_G(Y)$. We will find a suitable index set $I$, $\cU\in \beta I$, and a factor map $\phi\colon \Sigma_\cU Y\to X$. Let $\{(\cO_i, F_i): i\in I\}$ list all pairs where $\cO_i$ is a finite open cover of $X$ and $F_i\subseteq G$ is finite. We view $I$ as a directed set ordered under inclusion. Let $\cU\in \beta I$ be any cofinal ultrafilter. Let $\bM_i = \str(\cO_i, F_i)$. For each $i\in I$, let $\cQ_i$ be a finite open cover of $\{i\}\times Y$ which realizes $\bM_i$, and fix a realization $e_i\colon \bM_i\to \str(\cQ_i, F_i)$. It will be helpful to extend the domain of $e_i$ to all of $\op(X)$ by setting $e_i(A) = \emptyset$ whenever $A\not\in \bM_i$.

	We define $\phi\colon \Sigma_\cU Y\to X$ by declaring that $\phi(y) = x$ iff for every $A\in \op(x, X)$, we have $y\in \ol{\bigcup_{i\in I} e_i(A)}$. We check that $\phi$ is well defined. If $x_0, x_1\in X$, find $A_0\in \op(x_0, X)$ and $A_1\in \op(x_1, X)$ with $\ol{A_0}\cap \ol{A_1}= \emptyset$. Thus for $\cU$-many $i\in I$, we have $\ol{e_i(A_0)}\cap \ol{e_i(A_1)} = \emptyset$, so in particular, $\ol{\bigcup_{i\in I} e_i(A_0)}\cap \ol{\bigcup_{i\in I} e_i(A_1)} = \emptyset$. Thus $\phi(y)$, if it exists, is unique. To show $\phi(y)$ exists, suppose towards a contradiction that for each $x\in X$, there was $A_x\in \op(x, X)$ with $y\not\in \ol{\bigcup_{i\in I} e_i(A_x)}$. Let $\cO = \{A_{x_j}: j< k\}$ be a finite subcover of $X$. Then for $\cU$-many $i\in I$, we have $\{i\}\times Y\subseteq \bigcup_{j< k} e_i(A_{x_j})$. In particular, $\Sigma_\cU Y\subseteq \bigcup_{j< k} \ol{\bigcup_{i\in I} e_i(A_{x_j})}$, contradicting our assumption about $y$.

	To see that $\phi$ is continuous, fix a closed set $K\subseteq X$. We show that $$\phi^{-1}(K) = \bigcap_{\substack{A\in \op(X)\\ K\subseteq A}}\left(\ol{\bigcup_{i\in I} e_i(A)}\cap \Sigma_\cU Y\right).$$
	If $y\in \Sigma_\cU Y$ satisfies $\phi(y)\in K$, then clearly $y$ belongs to the right hand side. If $y\in \Sigma_\cU Y$ satisfies $\phi(y)\not\in K$, then for each $x\in K$, we can find $A_x\in \op(x, X)$ with $y\not\in \ol{\bigcup_{i\in I} e_i(A_x)}$. Passing to a finite subcover of $K$, we see that $y$ does not belong to the right hand side.

	To see that $\phi$ is onto, we simply note that by considering $K = \{x\}$, the above formula for $\phi^{-1}(K)$ is clearly non-empty.
	
	To see that $\phi$ is $G$-equivariant, fix $y\in \Sigma_\cU Y$ and $g\in G$. Write $x = \phi(y)$. Towards showing that $\phi(gy) = gx$, fix  $A\in \op(gx, X)$. Find $B, C\in \op(X)$ with $x\in B$, $A\cup C = X$, and $\ol{gB}\cap \ol{C} = \emptyset$. For $\cU$-many $i\in I$, we have $e_i(A)\cup e_i(C) = \{i\}\times Y$ and $\ol{g\cdot e_i(B)}\cap \ol{e_i(C)} = \emptyset$. In particular, for such $i\in I$, we have $g\cdot e_i(B)\subseteq e_i(A)$. As $y\in \ol{\bigcup_{i\in I} e_i(B)}$, we have $gy\in \ol{\bigcup_{i\in I} e_i(A)}$ as desired.
\end{proof}
	
Next, we investigate how weak types behave under taking ultracoproducts. This will allow us to equip the set of weak types with a canonical compact Hausdorff topology.

\begin{prop}
	\label{Prop:Weak_Type_Ults_Discrete}
	Fix a discrete group $G$, a tuple $\vec{X} = \la X_i: i\in I\ra$ of $G$-flows, and $\cU\in \beta I$. Then if $F\in \fin{G}$ and $\bM\in \rm{Str}(F)$, we have $\bM\in \rm{tp}_F(\Sigma_\cU X_i)$ iff for $\cU$-many $i\in I$, we have $\bM\in \rm{tp}_F(X_i)$.  
\end{prop}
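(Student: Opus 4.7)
The plan is to adapt the technique from the ``$\preceq_G$'' direction of Theorem~\ref{Thm:Weak_Type_Discrete} in both directions.

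For the forward direction, suppose $\bM\in\rm{tp}_F(\Sigma_\cU X_i)$ is realized by $\cO=\{A_m : m<n\}\in\fin{\op(\Sigma_\cU X_i)}$. Since $\Sigma_\cU X_i$ is closed and $G$-invariant in $\beta(\bigsqcup_i X_i)$, the sets $\ol{gA_k}$ and $\ol{A_\ell}$ remain disjoint closed subsets of the ambient $\beta$-space whenever $E_g^\bM(k,\ell)$. By normality of $\beta(\bigsqcup_i X_i)$ applied simultaneously to these finitely many disjointness constraints, I will find open sets $B_m\in\op(\beta(\bigsqcup_i X_i))$ with $\ol{A_m}\subseteq B_m$ and $\ol{gB_k}\cap\ol{B_\ell}=\emptyset$ whenever $E_g^\bM(k,\ell)$. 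For each covering relation $C_r^\bM(k_0,\ldots,k_{r-1})$, the open set $\bigcup_j B_{k_j}$ contains $\Sigma_\cU X_i=\pi_I^{-1}(\{\cU\})$, so by continuity of $\pi_I\colon\beta(\bigsqcup_i X_i)\to\beta I$ the set $\{i\in I : X_i\subseteq\bigcup_j B_{k_j}\}$ lies in $\cU$. Intersecting over the finitely many covering relations of $\bM$ yields $J\in\cU$ such that $\{B_m\cap X_i : m<n\}$ realizes $\bM$ in $X_i$ for every $i\in J$.

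For the backward direction, set $J:=\{i\in I : \bM\in\rm{tp}_F(X_i)\}\in\cU$ and fix realizations $\{A_m^i : m<n\}$ for $i\in J$. I will first apply the shrinking lemma iteratively (one index $m$ at a time, taking the union of the finitely many ``deficit'' closed sets produced by the covering relations of $\bM$ that involve $m$) to produce open $A_m^{i,0}\subseteq A_m^i$ with $\ol{A_m^{i,0}}\subseteq A_m^i$ such that $\{A_m^{i,0}\}$ still realizes $\bM$ in $X_i$. Urysohn's lemma then supplies $f_m^i\in\rmC(X_i,[0,1])$ with $f_m^i\equiv 1$ on $\ol{A_m^{i,0}}$ and $f_m^i\equiv 0$ on $X_i\setminus A_m^i$ (and $f_m^i\equiv 0$ when $i\notin J$). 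Set $f_m:=(f_m^i)_\cU\in\rmC(\Sigma_\cU X_i,[0,1])$ and $A_m:=\{y\in\Sigma_\cU X_i : f_m(y)>1/2\}$. For any covering $C_r^\bM(k_0,\ldots,k_{r-1})$: for $i\in J$ the closures $\ol{A_{k_j}^{i,0}}$ cover $X_i$, hence $\max_j f_{k_j}^i\equiv 1$; passing through the ultraproduct gives $\max_j f_{k_j}\equiv 1$ on $\Sigma_\cU X_i$, so $\bigcup_j A_{k_j}=\Sigma_\cU X_i$. For any disjointness $E_g^\bM(k,\ell)$: for $i\in J$ the supports of $f_k^i\circ g^{-1}$ and $f_\ell^i$ lie in the disjoint sets $g A_k^i$ and $A_\ell^i$, so $\min(f_k^i\circ g^{-1},f_\ell^i)\equiv 0$; the ultraproduct yields $\min(f_k\circ g^{-1},f_\ell)\equiv 0$ on $\Sigma_\cU X_i$, which forces $\ol{gA_k}\cap\ol{A_\ell}\subseteq\{y : f_k(g^{-1}y)\geq 1/2\}\cap\{y : f_\ell(y)\geq 1/2\}=\emptyset$.

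The main technical obstacle is the backward direction, where purely combinatorial realizations in each $X_i$ must be synthesized into an honest open realization in $\Sigma_\cU X_i$. The shrinking-then-Urysohn maneuver is what allows the open sets to be encoded as continuous functions compatible with ultraproduct formation, and the threshold $1/2$ is precisely what permits coverings (via the ultraproduct functions attaining value $1$) and disjointness (via the vanishing of the pointwise minimum) to be transferred to $\Sigma_\cU X_i$.
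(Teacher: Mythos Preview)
Your forward direction is essentially identical to the paper's. Your backward direction is correct but takes a different, more laborious route. The paper simply sets $B_m = \ol{\bigcup_{i\in I_0} e_i(m)}\cap\Sigma_\cU X_i$ as \emph{closed} sets and invokes the remark after Definition~\ref{Def:Weak_Type_Discrete} that realizations may be taken in $\exp(X)$ rather than $\op(X)$; disjointness and covering then follow in a couple of lines (implicitly using a Urysohn function on each $X_i$ to separate $\ol{ge_i(k)}$ from $\ol{e_i(\ell)}$, then passing to the ultracoproduct). Your approach instead manufactures genuine open sets in $\Sigma_\cU X_i$ by shrinking, applying Urysohn coordinate-wise, and thresholding the ultraproduct functions at $1/2$. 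This buys you an open realization without appealing to the closed-set remark, at the cost of the extra shrinking/Urysohn bookkeeping.

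One small slip: you write ``the closures $\ol{A_{k_j}^{i,0}}$ cover $X_i$,'' but your iterative shrinking only guarantees that the \emph{open} sets $A_{k_j}^{i,0}$ cover. This is harmless for your argument, since $A_{k_j}^{i,0}\subseteq\ol{A_{k_j}^{i,0}}\subseteq\{f_{k_j}^i = 1\}$, so the open cover already forces $\max_j f_{k_j}^i\equiv 1$.
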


\begin{proof}
	The proof of the forward direction is almost identical to the proof of the forward direction of Theorem~\ref{Thm:Weak_Type_Discrete}; if $\bM\in \rm{tp}_F(\Sigma_\cU X_i)$, then $\{i\in I: \bM\in \rm{tp}_F(X_i)\}\in \cU$.
	
	For the other direction, suppose we have $I_0:=\{i\in I: \bM\in \rm{tp}_F(X_i)\}\in \cU$. For each $i\in I_0$, let $e_i\colon \bM\to \str(\cO_i, F)$ be any realization, where $\cO_i\in \fin{\op(X_i)}$. For each $m< M$, set $B_m = \ol{\bigcup_{i\in I} e_i(m)}\cap \Sigma_\cU X_i.$ Towards showing that $\{B_m: m< M\}$ realizes $\bM$ (see the remark after Definition~\ref{Def:Weak_Type_Discrete}), we clearly have $\ol{gB_k}\cap \ol{B_\ell} = \emptyset$ whenever $g\in F$ and $(k, \ell)\in E_g^\bM$. Also, whenever $(a_0,..., a_{n-1})\in C_n^\bM$, we have $\bigcup_{j< n} B_{a_j} = \Sigma_\cU X_i$.
\end{proof}

Using Proposition~\ref{Prop:Weak_Type_Ults_Discrete}, we equip the set of weak equivalence classes of $G$-flows with a compact Hausdorff topology as follows. Letting $\rm{WT}(G) = \{\rm{tp}_G(X): X \text{ a $G$-flow}\}$, we simply view this as a subspace of $2^{\rm{Str}(G)}$ with the usual product topology. Proposition~\ref{Prop:Weak_Type_Ults_Discrete} then shows that ultracoproduct is a continuous operation on the space of weak types; in particular, $\rm{WT}(G)\subseteq 2^{\rm{Str}(G)}$ is a closed subspace. When $G$ is countable, this space is metrizable.

\subsection{LRPC groups}

We now work towards analogous results for LRPC groups. First, we modify Definition~\ref{Def:Weak_Type_Discrete} by strengthening the disjointness condition. We do this in two different ways, thus creating two different notions of weak type. 

\begin{defin}
	\label{Def:Weak_Type_LRPC}
	Fix an LRPC group $G$. Given $F\in \fin{G}$ and $N\in \fin{\ngrpc}$, we form the relational language $\cL_{F, N}:= \{E_{g, U}: g\in F, U\in N\}\cup \{C_n: 1\leq n< \omega\}$, where each $E_{g, U}$ is binary and each $C_n$ is $n$-ary. Write $\rm{Str}(F, N)$ for the set of finite $\cL_{F, N}$-structures with vertex set some $n\in \bbN$. If $H\subseteq G$ and $\cB\subseteq \ngrpc$, write $\rm{Str}(H, \cB) = \bigcup\{\rm{Str}(F, N): F\in \fin{H}, N\in \fin{\cB}\}$.
	
	Given a $G$-flow $X$ and $\cO\in [\op(X)]^{<\omega}$, the \emph{full $\cL_{F, N}$-structure} on $\cO$, denoted $\str(\cO, F, N)$, is the $\cL_{F, N}$-structure with vertex set $\cO$ so that the following hold: 
	\begin{itemize}
		\item
		Given $g\in F$ and $A, B\in \cO$, we have $(A, B)\in E_g^{\str(\cO, F, N)}$ iff $(\ol{gA}\times \ol{B}) = \emptyset$. 
		\item
		Given $A_0,...,A_{n-1}\subseteq \cO$, we have $(A_0,..., A_{n-1})\in C_n^{\str(\cO, F)}$ iff $\bigcup_{m<n} A_m = X$. 
	\end{itemize}
	
	Given $\bM\in \rm{Str}(F, N)$ and a $G$-flow $X$, we say that $X$ \emph{realizes} $\bM$ if for some $\cO\in \fin{\op(X)}$, there is a bijective monomorphism $e\colon \bM\to \str(\cO, F, N)$. We call $e$ a \emph{realization} of $\bM$ in $X$.

	Given $H\subseteq G$ and $\cB\subseteq \ngrpc$, the \emph{$(H, \cB)$-weak type} of $X$, denoted $\rm{tp}_{H, \cB}(X)$ is the set of $\bM\in \rm{Str}(H, \cB)$ which are realized by $X$. When $H = G$ and $\cB = \ngrpc$, we omit $\cB$ from the notation and call $\rm{tp}_G(X)$ the \emph{weak type} of $X$. 
	
	Given $G$-flows $X$ and $Y$, we say that  $X$ is \emph{weak type contained} in $Y$ if $\rm{tp}_G(X)\subseteq \rm{tp}_G(Y)$, and we say that $X$ and $Y$ are \emph{weak type equivalent} if $\rm{tp}_G(X) = \rm{tp}_G(Y)$. 
\end{defin}

\begin{rem}
	Similar to Definition~\ref{Def:Weak_Type_Discrete}, one can work with $\exp(X)$ instead of $\op(X)$.
\end{rem}

Note that every $G$-flow has a weak type and that weak type equivalence is an equivalence relation on the class of $G$-flows. We next show that for Urysohn $G$-flows, weak type interacts nicely with weak containment. We prove one direction in quite a bit more generality. To that end, suppose $\bM\in \rm{Str}(F, N)$ and $\zeta\colon N\to \ngrpc$ is a \emph{non-expansive map}, i.e.\ satisfying $\zeta(U)\subseteq U$ for each $U\in N$. We let $\zeta^*(\bM)\in \rm{Str}(F, \zeta[N])$ be defined on vertex set $M$ so that the $C_n$ relations are the same, and whenever $E_{g, U}^\bM(a, b)$ holds, then $E_{g, \zeta(U)}^{\zeta^*(\bM)}$ holds, and these are the only relations in $\zeta^*(\bM)$.

\begin{theorem}
	\label{Thm:Weak_Type_LRPC}
	Let $G$ be an LRPC group, and let $X$ and $Y$ be $G$-flows.
	\begin{enumerate}
		\item 
		If $X\preceq_G Y$, then $\rm{tp}_G(X)\subseteq \rm{tp}_G(Y)$. In particular, if $X\sim_G Y$, then $\rm{tp}_G(X) = \rm{tp}_G(Y)$.
		\item
		If $Y$ is Urysohn, $H\subseteq G$ is dense, $\cB\subseteq \ngrpc$ is a base at $e_G$, and $\zeta\colon \cB\to \cB$ is a non-expansive map such that $\zeta^*[\rm{tp}_{H, \cB}(X)]\subseteq \rm{tp}_{H, \cB}(Y)$, then $X\preceq_G Y$.
	\end{enumerate}
	In particular, if both $X$ and $Y$ are Urysohn, then $X\sim_G Y$ iff $\rm{tp}_{G}(X) = \rm{tp}_{G}(Y)$, and $\rm{tp}_G(Y)$ is completely determined by $\rm{tp}_{H, \cB}(Y)$. Thus  
\end{theorem}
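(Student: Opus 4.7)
The plan is to adapt Theorem~\ref{Thm:Weak_Type_Discrete} and Proposition~\ref{Prop:Weak_Type_Ults_Discrete} to the LRPC setting, using Proposition~\ref{Prop:RPC_Vietoris_2} to transfer $R_U$-relations between $\alpha_G(\bigsqcup \vec{Y})$ and the fibres $Y_i$, and using the Urysohn hypothesis on $Y$ to supply $G$-continuous separating functions needed in the equicontinuous compactification.

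For (1), suppose $\phi\colon \Sigma_\cU^G Y\to X$ is a factor map and let $\bM\in \rm{tp}_{F, N}(X)$ be realized by $\{A_m:m<n\}\in \fin{\op(X)}$. Since $R_U^X$ is closed and $X$ compact, each disjointness $(\ol{gA_k}\times \ol{A_\ell})\cap R_U^X=\emptyset$ persists under a small enlargement $A_m\subseteq A_m'\in \op(X)$ still realizing $\bM$. Pulling back via $\phi$, one selects open $\tilde A_m\subseteq \alpha_G(\bigsqcup \vec Y)$ with $\tilde A_m\cap \Sigma_\cU^G Y=\phi^{-1}(A_m')$, and after a further shrinking to preserve $R_U$-disjointness in $\alpha_G(\bigsqcup \vec Y)$, Proposition~\ref{Prop:RPC_Vietoris_2} automatically transports this $R_U$-disjointness to each fibre $Y_i$. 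The covering clause is transmitted to $\cU$-many fibres via Lemma~\ref{Lem:Vietoris}, so that for such an $i$ the traces $\tilde A_m\cap (\{i\}\times Y)$ furnish a realization of $\bM$ in $Y$.

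For (2), the argument follows the blueprint of the $(\Leftarrow)$ direction of Theorem~\ref{Thm:Weak_Type_Discrete}. Let $I$ be the directed set of triples $(\cO, F, N)$ with $\cO$ a finite open cover of $X$, $F\in \fin H$, and $N\in \fin{\cB}$, and let $\cU\in \beta I$ be any cofinal ultrafilter. For $i=(\cO_i, F_i, N_i)\in I$, set $\bM_i=\str(\cO_i, F_i, N_i)\in \rm{tp}_{H,\cB}(X)$, and by hypothesis pick a realization $e_i$ of $\zeta^*(\bM_i)$ in $Y$, extended by $e_i(A)=\emptyset$ for $A\in \op(X)\setminus \cO_i$. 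Define $\phi\colon \Sigma_\cU^G Y\to X$ by declaring $\phi(y)=x$ iff $y\in \ol{\bigcup_i e_i(A)}$ for every $A\in \op(x,X)$, with closures taken in $\alpha_G(\bigsqcup \vec{Y})$.

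The verifications of well-definedness, surjectivity, continuity, and equivariance on $H$ proceed as in the discrete case, with the Urysohn hypothesis supplying the required separations. For instance, to check uniqueness of $\phi(y)$, given $x_0\neq x_1\in X$ use continuity of the action and the fact that $\cB$ is a base at $e_G$ to find $A_j\in \op(x_j, X)$ and $U\in \cB$ with $(\ol{A_0}\times \ol{A_1})\cap R_U^X=\emptyset$; for any $i$ extending $(\{A_0, A_1, X\}, \{e_G\}, \{U\})$, the realization gives $(\ol{e_i(A_0)}\times \ol{e_i(A_1)})\cap R_{\zeta(U)}^Y=\emptyset$, and since $Y$ is Urysohn there is $\sigma\in \snrpc(G)$ (independent of $i$) and $p_i\in \rmC_\sigma(Y,[0,1])$ separating $\ol{e_i(A_0)}$ and $\ol{e_i(A_1)}$. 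The tuple $(p_i)_i$ lies in $\rmC_\sigma(\bigsqcup_i \{i\}\times Y)$, so its extension to $\alpha_G(\bigsqcup \vec Y)$ yields a function in $\rmC(\Sigma_\cU^G Y)$ separating the two candidate closures, forcing $\phi(y)$ to be single-valued. Surjectivity and continuity are handled by the same covering and closure identities as in the discrete proof; equivariance for $g\in H$ uses the relations $E_{g, U}^{\bM_i}$, and equivariance for general $g\in G$ follows from density of $H$ together with joint continuity of the action and of $\phi$. Taking $\zeta=\rm{id}_\cB$ yields the final claim. The main obstacle is to manage the Urysohn-supplied separating functions so that their uniformity in $\sigma$ across fibres yields genuine $G$-continuity on $\bigsqcup \vec Y$; this is the precise role of the Urysohn hypothesis and explains why it cannot be dropped for arbitrary $G$-flows.
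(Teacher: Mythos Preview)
Your proposal is correct and follows essentially the same approach as the paper, with only minor presentational differences. For part~(1), the paper simplifies by assuming $X=\Sigma_\cU^G Y$ outright (valid since a factor trivially has smaller weak type), whereas you keep the factor map $\phi$ explicit; note also that Proposition~\ref{Prop:RPC_Vietoris_2} is what lets you pass $R_U$-disjointness from $\Sigma_\cU^G Y$ up to the ambient $\alpha_G(\bigsqcup\vec Y)$, while the passage from the ambient down to each fibre $Y_i$ is automatic from $R_U^{Y_i}\subseteq R_U^Z$. For part~(2), your explanation of the Urysohn step---producing $p_i\in\rmC_\sigma(Y,[0,1])$ with $\sigma$ uniform in $i$ so that $(p_i)_i\in\rmC_G(\bigsqcup_i Y)$ extends---is exactly the mechanism the paper is invoking when it asserts that the two closures are disjoint; for surjectivity, the paper spells out that $\ol{e_i(B)}\subseteq e_i(A)$ whenever $\ol B\subseteq A$ (via an auxiliary $C$ and the $E_{e_G,U}$ relation), giving a directed intersection of nonempty compacta, which is the detail behind your ``same covering and closure identities.''
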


\begin{proof}
	$(1)$: We may assume that for some infinite set $I$ and $\cU\in \beta I$, we have $X = \Sigma_\cU^G Y$. Fix $F\in \fin{G}$, $N\in \fin{\ngrpc}$, and $\bM\in \rm{tp}_{F, N}(X)$ is realized by $e\colon M\to \op(X)$. Note that when $(k, \ell)\in E_{g, U}^\bM$, we have $(\ol{g\cdot e(k)}\times \ol{e(\ell)})\cap R_U^{\alpha_G(I\times Y)} = \emptyset$ by Proposition~\ref{Prop:RPC_Vietoris_2}. For each $m< M$, let $B_m\in \op(\alpha_G(I\times Y))$ be such that $\ol{e(m)}\subseteq B_m$ and $(\ol{g\cdot B_k}\times \ol{B_\ell})\cap R_U^{\alpha_G(I\times Y)} = \emptyset$ whenever $g\in F$, $U\in N$, and $E_{g, U}^\bM(k, \ell)$ holds. Then for any $K\subseteq M$ with $X\subseteq \bigcup_{m\in K} e(m)$, we have $X\subseteq \bigcup_{m\in K} B_m\in  \op(\alpha_G(I\times Y))$, so for $\cU$-many $i\in I$, we have $\{i\}\times Y\subseteq \bigcup_{m\in K} B_m$. Thus for a suitable $i\in I$, it follows that $\bM\in \rm{tp}(Y)$ is realized by the map $m\to B_m\cap (\{i\}\times Y)$.
	\vspace{3 mm}
	
	$(2)$: We will find a suitable index set $I$, $\cU\in \beta I$, and a factor map $\phi\colon \Sigma_\cU^G Y\to X$. Let $\{(\cO_i, F_i, N_i): i\in I\}$ list all tuples where $\cO_i\in \fin{\op(X)}$,  $F_i\in [H]^{<\omega}$, and $N_i\in \fin{\cB}$. Let $\cU\in \beta I$ be any ultrafilter such that every $\cU$-large set is upwards cofinal. Let $\bM_i = \str(\cO_i, F_i, N_i)$. For each $i\in I$, let $\cQ_i$ be a finite open cover of $\{i\}\times Y$ which realizes $\zeta(\bM_i)$, and fix a realization $e_i\colon \zeta(\bM_i)\to \str(\cQ_i, F_i, N_i)$. It will be helpful to extend the domain of $e_i$ to all of $\op(X)$ by setting $e_i(A) = \emptyset$ whenever $A\not\in \bM_i$.

	We define $\phi\colon \Sigma_\cU^G Y\to X$ by declaring that $\phi(y) = x$ iff for every $A\in \op(x, X)$, we have $y\in \ol{\bigcup_{i\in I} e_i(A)}$. We check that $\phi$ is well defined. If $x_0, x_1\in X$, find $A_0\in \op(x_0, X)$, $A_1\in \op(x_1, X)$, and $U\in \ngrpc$ with $(\ol{A_0}\times \ol{A_1})\cap R_U^X = \emptyset$. Thus for $\cU$-many $i\in I$, we have $(\ol{e_i(A_0)}\times \ol{e_i(A_1)})\cap R_{\zeta(U)}^{\{i\}\times Y} = \emptyset$. As $Y$ is Urysohn, this implies $\ol{\bigcup_{i\in I} e_i(A_0)}\cap \ol{\bigcup_{i\in I} e_i(A_1)} = \emptyset$. Thus $\phi(y)$, if it exists, is unique  (this is the only part of the proof where we need that $Y$ is Urysohn). To show $\phi(y)$ exists, suppose towards a contradiction that for each $x\in X$, there was $A_x\in \op(x, X)$ with $y\not\in \ol{\bigcup_{i\in I} e_i(A_x)}$. Let $\cO = \{A_{x_j}: j< k\}$ be a finite subcover of $X$. Then for $\cU$-many $i\in I$, we have $\{i\}\times Y\subseteq \bigcup_{j< k} e_i(A_{x_j})$. In particular, $\Sigma_\cU^G Y\subseteq \bigcup_{j< k} \ol{\bigcup_{i\in I} e_i(A_{x_j})}$, contradicting our assumption about $y$.  
	
	To see that $\phi$ is continuous, fix a closed set $K\subseteq X$. We show that $$\phi^{-1}(K) = \bigcap_{\substack{A\in \op(X)\\ K\subseteq A}}\left(\ol{\bigcup_{i\in I} e_i(A)}\cap \Sigma_\cU^G Y\right).$$
	If $y\in \Sigma_\cU^G Y$ satisfies $\phi(y)\in K$, then clearly $y$ belongs to the right hand side. If $y\in \Sigma_\cU^G Y$ satisfies $\phi(y)\not\in K$, then for each $x\in K$, we can find $A_x\in \op(x, X)$ with $y\not\in \ol{\bigcup_{i\in I} e_i(A_x)}$. Passing to a finite subcover of $K$, we see that $y$ does not belong to the right hand side.

	To see that $\phi$ is onto, consider $K = \{x\}$ in the above formula for $\phi^{-1}(K)$. If $A, B\in \op(x, X)$ and $\ol{B}\subseteq A$, then find $C\in \op(X)$ and $U\in \cB$ with $A\cup C = X$ and $(\ol{B}\times \ol{C})\cap R_U^X = \emptyset$. It follows that for $\cU$-many $i\in I$, we have $e_i(A)\cup e_i(C) = \{i\}\times Y$ and $(\ol{e_i(B)}\times \ol{e_i(C)})\cap R_{\zeta(U)}^{\{i\}\times Y} = \emptyset$, in particular implying that $\ol{e_i(B)}\subseteq e_i(A)$. Hence the intersection in the formula for $\phi^{-1}(\{x\})$ is a directed intersection of compact sets, hence non-empty.  
	
	To see that $\phi$ is $G$-equivariant, it is enough to show that $\phi$ is $H$-equivariant, so fix $y\in \Sigma_\cU^G Y$ and $g\in H$. Write $x = \phi(y)$. Towards showing that $\phi(gy) = gx$, fix  $A\in \op(gx, X)$. Find $B, C\in \op(X)$ and $U\in \ngrpc$ with $x\in B$, $A\cup C = X$, and $(\ol{gB}\times \ol{C})\cap R_U^X = \emptyset$. For $\cU$-many $i\in I$, we have $e_i(A)\cup e_i(C) = \{i\}\times Y$ and $(\ol{g\cdot e_i(B)}\times \ol{e_i(C)})\cap R_{\zeta(U)}^{\{i\}\times Y} = \emptyset$. In particular, for such $i\in I$, we have $g\cdot e_i(B)\subseteq e_i(A)$. As $y\in \ol{\bigcup_{i\in I} e_i(B)}$, we have $gy\in \ol{\bigcup_{i\in I} e_i(A)}$ as desired.
\end{proof}

\iffalse
Our next goal is to generalize Proposition~\ref{Prop:Weak_Type_Ults_Discrete}. However, we need stronger assumptions on the tuple $\la X_i: i\in I\ra$ in order to do this. In particular, we need to strengthen the Urysohn assumption to $T$-respecting for a suitable $T\subseteq \snrpc(G)$. 

\begin{defin}
	In the context of Definition~\ref{Def:Weak_Type_LRPC}, given $g\in G$, $\sigma\in \snrpc(G)$ and $c\in (0, 1)$, we can write $E_{g, \sigma, c}$ in place of $E_{g, \rmB_\sigma(c)}$.
	
	Call $T\subseteq \snrpc(G)$ \emph{generating} if $\cN_G^T:= \{\rmB_\sigma(c): \sigma\in T, 0< c< 1\}$ is a base at $e_G$. Fix a generating $T\subseteq \snrpc(G)$.
	
	Given $H\subseteq G$, Write 
	$$\rm{Str}(H, T) = \bigcup_{\substack{D\in \fin{(0, 1)}\\ F\in \fin{H}\\ S\in \fin{T}}} \rm{Str}(\cL_{F, \{\rmB_\sigma(c): \sigma\in S, c\in D\}}).$$
	On $\rm{Str}(H, T)$, we define the transitive, anti-symmetric relation $\ll$ by declaring that $\bM'\ll \bM$ iff $M' = M$, the $C_n$-relations are the same, and for each $a, b\in M' = M$, we have that if $(a, b)\in E_{g,\sigma, c}^\bM$, there is some $c'< c$ with $(a, b)\in E_{g, \sigma, c'}^{\bM'}$, and if $(a, b)\in E_{g, \sigma, c'}^{\bM'}$, then there is some $c> c'$ with $(a, b)\in E_{g, \sigma, c}^\bM$.   
	
\end{defin}
\fi

Given a map $\zeta\colon \ngrpc\to \snrpc(G)$ (in the context of Definition~\ref{Def:Urysohn}) and $c\in (0, 1)$, we define $\zeta_c\colon \ngrpc\to\ngrpc$ via $\zeta_c(U) = \rmB_{\zeta(U)}(c)$. Note that by modifying $\zeta$ if needed (by replacing $\zeta(U)$ by a pointwise larger seminorm), we can ensure that each $\zeta_c$ is non-expansive.

\begin{prop}
	\label{Prop:Weak_Type_Ults_LRPC}
	Fix an LRPC group $G$, a function $\zeta\colon \ngrpc\to \snrpc(G)$, a tuple $\vec{X} = \la X_i: i\in I\ra$ of $\zeta$-Urysohn $G$-flows, and $\cU\in \beta I$. Write $X = \Sigma_\cU^GX_i$ (which by Proposition~\ref{Prop:Urysohn_Ults} is $\zeta$-Urysohn).
	\begin{enumerate}
		\item 
		If $\bM\in \rm{tp}_G(X)$, then for $\cU$-many $i\in I$, we have $\bM\in \rm{tp}_G(X_i)$.
		\item 
		If $\bM\in \rm{tp}_G(X_i)$ for $\cU$-many $i\in I$, then for every $c\in (0, 1)$, we have $\zeta_c(\bM)\in \rm{tp}_G(X)$.
	\end{enumerate}
	Furthermore, upon endowing $2^{\rm{Str}(G, \ngrpc)}$ with the product topology and identifying subsets of $\rm{Str}(G, \ngrpc)$ with their characteristic functions, we have that $\rm{tp}_G(X)$ is uniquely defined by the property that 
	$$\zeta_{1/2}^*[\lim_\cU \rm{tp}(X_i)]\subseteq  \rm{tp}_G(X)\subseteq \lim_\cU \rm{tp}(X_i).$$
	The number $1/2$ here is unimportant; any $0< c< 1$ would do. 
\end{prop}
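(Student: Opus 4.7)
I would prove part (1) by adapting the forward direction of Theorem~\ref{Thm:Weak_Type_LRPC}(1). Suppose $\bM\in \rm{tp}_G(X)$ is realized by $e\colon \bM\to \str(\cO,F,N)$ with $\cO\subseteq \op(X)$, and write $Z = \alpha_G(\bigsqcup\vec{X})$. For each $(k,\ell)\in E_{g,U}^\bM$, the disjointness $(\ol{g\cdot e(k)}\times \ol{e(\ell)})\cap R_U^X = \emptyset$ combined with Proposition~\ref{Prop:RPC_Vietoris_2} yields $(\ol{g\cdot e(k)}\times \ol{e(\ell)})\cap R_U^Z = \emptyset$. Using compactness and continuity of the action, enlarge each $e(m)$ to an open $B_m\in \op(Z)$ with $\ol{e(m)}\subseteq B_m$ preserving all disjointness conditions. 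Then for $\cU$-many $i\in I$, the family $\{B_m\cap X_i : m< M\}$ realizes $\bM$ in $X_i$: the covering relations transfer since $\bigcup_j B_{a_j}$ is open in $Z$ and contains $X$, hence contains $X_i$ for $\cU$-many $i$, while the $E_{g,U}$ relations survive because $R_U^{X_i}\subseteq R_U^Z$.

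For part (2), the technical core, I use the $\zeta$-Urysohn property to build separating functions. Let $e_i\colon \bM\to \str(\cO_i,F,N)$ be realizations in $X_i$ for $\cU$-many $i$. For each $(k,\ell)\in E_{g,U}^\bM$, apply $(U,\zeta(U))$-Urysohn in $X_i$ to obtain $p_i^{k,\ell,g,U}\in \rmC_{\zeta(U)}(X_i,[0,1])$ with $p_i^{k,\ell,g,U}|_{g\ol{e_i(k)}}\equiv 0$ and $p_i^{k,\ell,g,U}|_{\ol{e_i(\ell)}}\equiv 1$. Form $p^{k,\ell,g,U}:=(p_i^{k,\ell,g,U})_\cU^G\in \rmC_{\zeta(U)}(X)$. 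Fix $c\in(0,1)$ and choose $0<\epsilon_0<\epsilon_1<1$ with $\epsilon_1-\epsilon_0>c$. Define open sets
$$A_m := \bigcap_{(m,\ell)\in E_{g,U}^\bM}\{x\in X : p^{m,\ell,g,U}(gx)<\epsilon_0\} \,\cap\, \bigcap_{(k,m)\in E_{g,U}^\bM}\{x\in X: p^{k,m,g,U}(x)>\epsilon_1\}.$$
To see $m\mapsto A_m$ realizes $\zeta_c(\bM)$: for a covering tuple $(a_0,\ldots,a_{n-1})\in C_n^\bM$, any $x\in X$ is the limit of points $x_\alpha\in X_{i_\alpha}$ (by the Vietoris convergence $X_i\to X$ from Lemma~\ref{Lem:Vietoris}), and $x_\alpha\in e_{i_\alpha}(a_{j_\alpha})$ by covering in $X_{i_\alpha}$; after passing to a subnet with $j_\alpha=j$ fixed, the values $p_{i_\alpha}^{a_j,\ell,g,U}(gx_\alpha)=0$ and $p_{i_\alpha}^{k,a_j,g,U}(x_\alpha)=1$ transfer by continuity of the ultracoproduct extension on $Z$ to $p^{a_j,\ell,g,U}(gx)=0$ and $p^{k,a_j,g,U}(x)=1$, placing $x\in A_{a_j}$. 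For the $E_{g,\zeta_c(U)}^{\zeta_c(\bM)}$ relations, suppose $(gx,y)\in R_{\zeta_c(U)}^X$ with $gx\in \ol{gA_k}$ and $y\in \ol{A_\ell}$; continuity gives $p^{k,\ell,g,U}(gx)\leq \epsilon_0$ and $p^{k,\ell,g,U}(y)\geq \epsilon_1$, so their difference exceeds $c$, but since $R_{\zeta_c(U)}^X$ is the closure of pairs $(z,hz)$ with $h\in \rmB_{\zeta(U)}(c)$ and $p^{k,\ell,g,U}$ is $\zeta(U)$-orbit-Lipschitz, that difference must be at most $c$, a contradiction.

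For part (3), translating into the product topology on $2^{\rm{Str}(G,\ngrpc)}$, parts (1) and (2) directly yield the two inclusions, using that the compactness of the product topology ensures $\lim_\cU \rm{tp}(X_i) = \{\bM : \{i\in I: \bM\in \rm{tp}_G(X_i)\}\in \cU\}$ is well-defined. Uniqueness follows because the upper inclusion, via part (1), forces any $\bM\in \rm{tp}_G(X)$ to lie in $\lim_\cU \rm{tp}(X_i)$, while the lower inclusion, via part (2) applied at every $c\in(0,1)$, prescribes exactly which $\zeta_c$-weakened structures must appear in $\rm{tp}_G(X)$; these constraints, together with how $\zeta_c^*$ rigidly specifies the signature translation from $U$ to $\rmB_{\zeta(U)}(c)$, pin down $\rm{tp}_G(X)$ regardless of the particular $c$ chosen. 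The main obstacle I expect is the value-transfer step in part (2): ensuring that $p^{k,\ell,g,U}$ really does attain the claimed boundary values at limit points $x\in X$ coming from nets in $e_{i_\alpha}(m)$, which hinges on the interplay between the $G$-equicontinuous ultracoproduct's continuous extension to $Z$ and the Vietoris convergence $X_i\to X$.
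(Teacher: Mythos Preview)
Your arguments for parts (1) and (2) are correct. Part (1) is essentially the paper's proof. For part (2) you take a more elaborate route than the paper: rather than building open sets $A_m$ as level sets of the Urysohn functions, the paper simply sets $B_m = \ol{\bigcup_{i\in I_0} e_i(m)}\cap X$ (closed sets, invoking the remark that one may work with $\exp(X)$). The covering relations then transfer because finite unions commute with closure, and the $\zeta$-Urysohn functions directly witness that $(gB_k, B_\ell)$ is $(\zeta(U), c')$-separable for every $c' < 1$, which forces $(gB_k\times B_\ell)\cap R_{\zeta_c(U)}^X = \emptyset$. Your construction recovers the same conclusion but with more bookkeeping; the value-transfer step you worry about is fine, since the tuple $(p_i)_i$ lies in $\rmC_{\zeta(U)}(\bigsqcup_i X_i)$ and so extends continuously to $Z$. (One small correction: the Urysohn property only gives $p_i|_{\ol{e_i(\ell)}}\equiv c'$ for $c' < 1$, not $1$; choose $c' > c$ and adjust $\epsilon_1 < c'$ accordingly.)

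The real gap is in the ``furthermore.'' What you wrote is not a proof of uniqueness: you merely restate the two inclusions and assert that they ``pin down $\rm{tp}_G(X)$.'' But the content of the claim is that \emph{among weak types of $\zeta$-Urysohn flows}, there is at most one $T$ with $\zeta_{1/2}^*[\lim_\cU \rm{tp}(X_i)]\subseteq T\subseteq \lim_\cU \rm{tp}(X_i)$, and nothing you say addresses why two distinct such $T$ cannot exist. The paper's argument goes through Theorem~\ref{Thm:Weak_Type_LRPC}(2): if $Y$ is another $\zeta$-Urysohn flow with $\rm{tp}_G(Y)$ sandwiched the same way, then $\zeta_{1/2}^*[\rm{tp}_G(X)]\subseteq \zeta_{1/2}^*[\lim_\cU \rm{tp}(X_i)]\subseteq \rm{tp}_G(Y)$, so by Theorem~\ref{Thm:Weak_Type_LRPC}(2) we get $X\preceq_G Y$, hence $\rm{tp}_G(X)\subseteq \rm{tp}_G(Y)$ by part (1) of that theorem; symmetrically $\rm{tp}_G(Y)\subseteq \rm{tp}_G(X)$. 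You need to invoke this mechanism (or something equivalent) explicitly.
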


\begin{proof}
	The proof of $(1)$ is almost identical to the proof of  Theorem~\ref{Thm:Weak_Type_LRPC}(1).
	
	For the other direction, fix $F\in \fin{G}$ and $N\in \fin{\ngrpc}$, and suppose that $\bM\in \rm{Str}(F, N)$ satisfies $I_0:= \{i\in I: \bM\in \rm{tp}_{F, N}(X_i)\}\in \cU$. For $i\in I_0$, fix a realization $e_i\colon M\to \str(\cO_i, N, F)$, where $\cO_i\in \fin{\op(X)}$. For each $m< M$, write $B_m = \ol{\bigcup_{i\in I_0} e_i(m)}\cap X$. For any $k, \ell< M$ with $(k, \ell)\in E_{g, U}^\bM$, the assumption that each $X_i$ is $\zeta$-Urysohn implies that $(gB_k\times B_\ell)\cap R_{\zeta_c(U)}^{X} = \emptyset$ for any $c\in (0, 1)$. Also, if $(a_0,..., a_{n-1})\in C_n^\bM$, then $\bigcup_{j<n} B_{a_j} = X$.  By considering the remark after Definition~\ref{Def:Weak_Type_LRPC}, we have $\zeta_c^*(\bM)\in \rm{tp}_G(X)$.
	
	The ``furthermore" follows from Theorem~\ref{Thm:Weak_Type_LRPC}(2).
\end{proof}

Given an LRPC group $G$, write $\rm{WT}(G)\subseteq 2^{\rm{Str}(G, \ngrpc)}$ for the set of weak types of $G$-flows, and given a function $\zeta \colon \ngrpc\to \snrpc(G)$, write $\rm{WT}_\zeta(G)$ for the set of weak types of $\zeta$-Urysohn $G$-flows. With a bit more work, Proposition~\ref{Prop:Weak_Type_Ults_LRPC} will allow us to equip $\rm{WT}_\zeta(G)$ with a compact Hausdorff topology , which will be metrizable whenever $G$ is separable and metrizable. In particular, when $G$ is locally compact, there is a \emph{single} $\zeta$ such that \emph{every} $G$-flow is $\zeta$-Urysohn, thus giving us a compact Hausdorff topology on all of $\rm{WT}(G)$. While this topology is a direct result of considering the product topology on $2^{\rm{Str}(G, \ngrpc)}$, it is not just the subspace topology.

\begin{theorem}
	\label{Thm:Weak_Type_Topology_LRPC}
	Fix an LRPC group $G$ and a function $\zeta\colon \ngrpc\to \snrpc(G)$. There is a canonical compact Hausdorff topology $\tau$ on $\rm{WT}_\zeta(G)$, uniquely defined by the property that whenever $\la X_i: i\in I\ra$ is a tuple of $\zeta$-Urysohn $G$-flows and $\cU\in \beta I$, we have $\tau\-\lim_{\cU}\rm{tp}_G(X_i) = \rm{tp}_G(\Sigma_\cU^GX_i)$. 
	
	Furthermore, when $G$ is separable and metrizable, this topology on $\rm{WT}_\zeta(G)$ is metrizable. 
\end{theorem}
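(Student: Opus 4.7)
The plan is to construct $\tau$ via the ultrafilter-monad structure, using Proposition~\ref{Prop:Weak_Type_Ults_LRPC} as the principal input.

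For each $T\in\rm{WT}_\zeta(G)$ I would fix a $\zeta$-Urysohn representative $X_T$ with $\rm{tp}_G(X_T)=T$ and define
\[\ell\colon\beta\rm{WT}_\zeta(G)\to\rm{WT}_\zeta(G),\qquad \ell(\cU):=\rm{tp}_G\bigl(\Sigma_\cU^G X_T\bigr),\]
where the domain is the Stone space of ultrafilters on the discrete set $\rm{WT}_\zeta(G)$. The ``furthermore'' clause of Proposition~\ref{Prop:Weak_Type_Ults_LRPC} shows that $\ell(\cU)$ depends only on the product-topology limit $\lim_\cU T$ in $2^{\rm{Str}(G,\ngrpc)}$, and in particular is independent of the choice of representatives.

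Next I would invoke Manes' characterization of compact Hausdorff spaces as Eilenberg--Moore algebras of the ultrafilter monad: $\ell$ determines a compact Hausdorff topology provided (i) $\ell(\wh T)=T$ for each principal ultrafilter $\wh T$, and (ii) the iterated-limit identity $\ell\circ\beta\ell=\ell\circ\mu$ holds on $\beta\beta\rm{WT}_\zeta(G)$. Condition (i) is immediate since the ultracoproduct along a principal ultrafilter $\wh T$ is isomorphic to $X_T$ and so has weak type $T$. Condition (ii) reduces to a family-indexed Fubini identity $\Sigma_\cU^G\Sigma_\cV^G X_{ij}\cong\Sigma_{\cU\otimes\cV}^G X_{ij}$ for arbitrary tuples $(X_{ij})$ of $\zeta$-Urysohn flows; the proof of Proposition~\ref{Prop:Urysohn_Fubini} proceeds by Urysohn-separating two points of the iterated ultracoproduct using open neighborhoods in the product $I\times J\times(\text{flow})$ together with the uniform modulus $\zeta$, and adapts verbatim when the single flow is replaced by a family. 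The defining property of $\tau$ is then automatic, and uniqueness follows because any compact Hausdorff topology is recovered from its ultrafilter limits.

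For metrizability when $G$ is Polish, I would use second-countability of $G$ to fix a countable dense $H\leq G$ and a countable $\cB\subseteq\ngrpc$ that is a base at $e_G$ and is closed under $U\mapsto\rmB_{\zeta(U)}(c)$ for $c$ in a countable dense subset $D\subseteq(0,1)$. Theorem~\ref{Thm:Weak_Type_LRPC}(2) shows that weak equivalence of $\zeta$-Urysohn flows is already determined by the countable data $\rm{tp}_{H,\cB}(X)$, so the restriction $T\mapsto T\cap\rm{Str}(H,\cB)$ is injective into the compact metrizable Cantor space $2^{\rm{Str}(H,\cB)}$. The naive restriction is not $\tau$-continuous because of the $\zeta_c^*$-sandwich in Proposition~\ref{Prop:Weak_Type_Ults_LRPC}; the main obstacle, which I would overcome by instead mapping $T$ to the refined datum $(T\cap\rm{Str}(H,\cB),\,\{\zeta_c^*[T]\cap\rm{Str}(H,\cB):c\in D\})$, is checking that this enriched embedding is both $\tau$-continuous and injective. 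Once established, it realizes $(\rm{WT}_\zeta(G),\tau)$ as the image of a continuous injection from a compact space into a compact metrizable space, so $\tau$ is metrizable.
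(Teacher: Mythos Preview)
Your approach via Manes' theorem is genuinely different from the paper's. The paper works concretely inside $2^{\rm{Str}(G,\ngrpc)}$: writing $W=\rm{WT}_\zeta(G)$ and $f=\zeta_{1/2}^*$, Proposition~\ref{Prop:Weak_Type_Ults_LRPC} shows that for every $y\in\overline{W}$ there is a unique $x\in W$ with $f[y]\subseteq x\subseteq y$, giving a surjection $\pi\colon\overline{W}\to W$. One then checks directly that $E_\pi$ is closed (a three-line argument using only $f[x]\subseteq x$ and the sandwich), so the quotient topology on $W$ is compact Hausdorff, and the ultralimit property follows from the construction. For metrizability the paper simply reruns the same argument with $S=\rm{Str}(H,\cB)$ countable, obtaining $W$ as a Hausdorff quotient of a compact metric space.

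Your monad route can in principle be made to work, but there are two gaps. First, the associativity of a $\beta$-algebra is not the fixed-$\cV$ identity $\Sigma_\cU^G\Sigma_\cV^G X_{ij}\cong\Sigma_{\cU\otimes\cV}^G X_{ij}$: unwinding $\ell\circ\mu_W=\ell\circ\beta\ell$ you must compare $\Sigma_{\mathfrak{U}}^G\bigl(\Sigma_\cV^G X_T\bigr)_{\cV\in\beta W}$ with $\Sigma_{\mu_W(\mathfrak{U})}^G X_T$, which is a Fubini identity with the \emph{inner ultrafilter varying with the outer index} (a sum $\sum_\cU\cV_i$ rather than $\cU\otimes\cV$). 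The proof of Proposition~\ref{Prop:Urysohn_Fubini} does plausibly adapt to this more general setting---the uniformity of $\zeta$ is exactly what is needed---but you should state and verify the correct version, not the $\cU\otimes\cV$ one. Second, your metrizability argument is incomplete: you acknowledge that the naive restriction $T\mapsto T\cap\rm{Str}(H,\cB)$ need not be $\tau$-continuous, but you give no reason why adjoining the data $\{\zeta_c^*[T]\cap\rm{Str}(H,\cB):c\in D\}$ repairs this. Indeed the sandwich from Proposition~\ref{Prop:Weak_Type_Ults_LRPC} only pins down $T$ between $\zeta_{1/2}^*[\lim_\cU T_i]$ and $\lim_\cU T_i$, and it is not clear that your enriched datum converges along $\tau$-convergent nets. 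The paper sidesteps this entirely by taking the quotient of $\overline{W}\subseteq 2^{\rm{Str}(H,\cB)}$ directly.
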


\begin{rem}
	The criterion on ultralimits tells us exactly which nets converge and what they converge to. However, one needs to check that this notion of convergence arises from a topological space. This can be done directly at the level of nets (see Exercise 11D of \cite{Willard}), but we give a more concrete description of the topology suitable for the ``furthermore." 
\end{rem}

\begin{proof}
	We reason a bit more abstractly (mainly to simplify notation). Let $S$ be a set and $f\colon S\to S$ a function (we will take $S = \rm{Str}(G, \ngrpc)$ and $f = \zeta_{1/2}^*$). We identify $\cP(S)$ and $2^S$, though we mostly work with the former. Let $W\subseteq \cP(S)$ (we will take $W = \rm{WT}_\zeta(G)$) satisfy the following:
	\begin{itemize}
		\item 
		For each $x\in W$, we have $f[x]\subseteq x$.
		\item
		For any $x, y\in W$, if $f^2[x]\subseteq y$ and $f^2[y]\subseteq x$, then $x = y$ (this holds by Theorem~\ref{Thm:Weak_Type_LRPC}).
		\item
		For each $y\in \ol{W}$ (where the closure is in the usual product topology), there is a unique $x\in W$ with $f[y]\subseteq x\subseteq y$ (this holds by Proposition~\ref{Prop:Weak_Type_Ults_LRPC}).
	\end{itemize}  
	We define a map $\pi\colon \ol{W}\to W$ where given $y\in \ol{W}$, $\pi(y)$ is the unique $x\in W$ as above. We will endow $W$ with the quotient topology induced by $\pi$; to show that this is compact Hausdorff, we need to check that the associated equivalence relation $E_\pi$ is closed in $\ol{W}\times \ol{W}$. First, we note that $(y, z)\in E_\pi$ iff both $f[y]\subseteq z$ and $f[z]\subseteq y$. Now let $(x_i, y_i)_{i\in I}$ be a net from $E_\pi$ with $x_i\to x$ and $y_i\to y$. By passing to a subnet, we may assume that the nets $(f[x_i])_{i\in I}$ and $(f[y_i])_{i\in I}$ are convergent, say with limits $u$ and $v$, respectively. Then we have $f[x]\subseteq u\subseteq y$, the first inclusion by the first bullet above, the second since $f[x_i]\subseteq y_i$ for every $i\in I$. Similarly, $f[y]\subseteq v\subseteq x$, showing that $(x, y)\in E_\pi$. The desired property of this topology on $\rm{WT}_\zeta(G)$ now follows  
	
	Now suppose additionally that $G$ is separable and metrizable. By Theorem~\ref{Thm:Weak_Type_LRPC}, we can identify $\rm{WT}_\zeta(G)$ with a subset of $2^{\rm{Str}(H, \cB)}$ for a countable dense $H\subseteq G$ and a countable base $\cB\subseteq \ngrpc$. We can treat $\zeta_{1/2}$ as a function with domain $\cB$, modifying it if needed (by replacing $\zeta_{1/2}(U)$ with a potentially smaller member of $\cB$) to have range $\cB$. Then the above considerations show that the topology on $\rm{WT}_\zeta(G)$ defined above is a Hausdorff continuous image of a compact metric space, hence is itself compact metric (\cite{Willard}, Corollary 23.2).
\end{proof}

\begin{exa}
	Let $G$ be Polish, and fix $\sigma\in \snpc(G)$ a \emph{norm}. In particular, if $G$ is locally compact, recall by Proposition~\ref{Prop:LC_Respecting} that every $G$-flow is $\sigma$-respecting. We discuss a slight variant of weak type which more naturally captures the topology on $\rm{WT}_\sigma(G)$, the space of weak types of $\sigma$-respecting $G$-flows. Given $F\in \fin{G}$, define the relational language $\cL_F'$ to contain the symbols $C_n$ as in Definitions~\ref{Def:Weak_Type_Discrete} and \ref{Def:Weak_Type_LRPC}, but the binary relations now have the form $\{E_{g, c}: g\in F, c\in (0, 1)\}$. Given a finite $\cL_F'$-structure $\bM'$ and $\bM\in \rm{Str}(G, \{\rmB_\sigma(c): 0< c< 1\})$, say that $\bM\ll \bM'$ if $M = M'$, the $C_n$ relations are the same, and the following both hold:
	\begin{itemize}
		\item
		For any $a, b\in M$ and $0< c'< 1$, we have that $(a, b)\in E_{g, c'}^{\bM'}$ implies that for some $c< c'$, we have $(a, b)\in E_{g, \rmB_\sigma(c)}^\bM$.
		\item
		For any $a, b\in M$ and $0< c< 1$, we have that $(a, b)\in E_{g, \rmB_\sigma(c)}^\bM$ implies that for some $c'> c$, we have $(a, b)\in E_{g, c'}^{\bM'}$.  
	\end{itemize}
	If $\bM'$ is a finite $\cL_F'$-structure and $X$ is a $G$-flow, say that $X$ \emph{realizes} $\bM'$ if for every $\bM\in \rm{Str}(G, \{\rmB_\sigma(c): 0< c< 1\})$ with $\bM\ll \bM'$, we have that $X$ realizes $\bM$ in the sense of Definiton~\ref{Def:Weak_Type_LRPC}.
	
	Write $\rm{Str}'(F)$ for the set of $\cL_F'$-structures with underlying set some $n\in \bbN$, and given $H\subseteq G$, write $\rm{Str}'(H) = \bigcup_{F\in \fin{H}} \rm{Str}'(F)$. Fix a countable dense subgroup $H\subseteq G$. Instead of viewing $\rm{Str}'(H)$ as just a set, we view it as a locally compact metric space with metric $\rho$ in a natural way, where given $0< d< 1$ and $\bM, \bN\in \rm{Str}'(G)$, we declare that $\rho(\bM, \bN)\leq d$ iff $M = N$, the $C_n$ relations are the same, and the following both hold:
		\begin{itemize}
		\item
		For any $a, b\in M$ and $0< c< 1$, we have that $(a, b)\in E_{g, c}^{\bM}$ implies that for some $c'\in (c-d, c+d)$, we have $(a, b)\in E_{g, c'}^\bN$.
		\item
		The above with $\bM$ and $\bN$ reversed. 
	\end{itemize}
	If the above doesn't hold for any $0< d< 1$, we declare that $\rho(\bM, \bN) = 1$. We then note that the set of members of $\rm{Str}'(G)$ realized by a given $G$-flow is closed in this metric space. Thus we can identify $\rm{WT}_\sigma(G)$ with a closed subspace of $\exp((\rm{Str}'(G), \rho))$.
\end{exa}

\section{Dynamical property (T)}

Property (T) is of vital importance to the study of representation theory and ergodic theory. For locally compact groups, it is equivalent to demanding that for representations, the property of not containing a non-zero invariant vector is closed in the space of weak types of unitary representations, and for p.m.p.\ actions, that the property of being ergodic is closed in the space of weak types of p.m.p.\ actions. It is thus natural to attempt to define a dynamical variant. However, various equivalent versions of Property (T) become inequivalent in the dynamical setting, so we propose two possible definitions.

\begin{defin}
	\label{Def:Dyn_T}
	We say that a $G$-flow $X$ is \emph{topologically ergodic} if there is no factor map from $X$ to a non-trivial motionless $G$-flow. We say that the topological group $G$ has \emph{Dynamical Property (T)} if for any $G$-flow $Z$, the subspace $\rm{TErg}_G(Z)\subseteq \rm{Sub}_G(Z)$ of topologically ergodic $G$-flows is closed; equivalently, if any ultracoproduct of topologically ergodic $G$-flows is topologically ergodic. We say that $G$ has \emph{weak Dynamical Property (T)} if whenever $X$ weakly contains a non-trivial motionless $G$-flow, then $X$ is not topologically ergodic. 
\end{defin}

While the direct sum of unitary representations which do not contain a non-zero invariant vector also enjoys this property (see Proposition~1.2.1 of \cite{B_dlH_V}), it is not necessarily true that a product of topologically ergodic $G$-flows remains topologically ergodic. For instance, the $\bbZ$-flow $X$ given by irrational rotation of the circle is minimal, but $X^2$ is not topologically ergodic. Hence, we isolate the two definitions above. 

\begin{prop}
	\label{Prop:Compact_T}
	Every compact group has Dynamical Property (T).
\end{prop}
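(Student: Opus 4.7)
The plan is to prove the proposition via Haar averaging. For a compact group $G$, the fact that orbits in any $G$-flow are closed (as continuous images of the compact set $G$) means that topological ergodicity of a $G$-flow $X$ is equivalent to $X$ being a single $G$-orbit. Indeed, if all orbits are closed then $X/G$ is a Hausdorff motionless quotient, and $X$ is topologically ergodic iff $X/G$ is a point, which happens iff $X = Gx$ for some $x \in X$. Dually, $X$ is topologically ergodic iff the only $G$-invariant functions in $\rmC(X)$ are the constants. I will verify Dynamical Property (T) directly using the ultracoproduct characterization: fix a tuple $\vec{X} = \la X_i : i \in I\ra$ of topologically ergodic $G$-flows and $\cU \in \beta I$, and set $X = \Sigma_\cU^G X_i$; the goal is to show that every $G$-invariant $p \in \rmC(X)$ is a constant.

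Given such a $p$, pick a representing tuple $(p_i)_{i\in I} \in \rmC_\sigma^s(\bigsqcup \vec{X})$ for some $\sigma \in \rm{SN}(G)$, so that $p = (p_i)_\cU^G$. Using the normalized Haar measure $\mu$ on $G$, set $\bar{p}_i := \int_G p_i g\, d\mu(g) \in \rmC(X_i)$. A standard change of variables using left-invariance of $\mu$ shows each $\bar{p}_i$ is $G$-invariant on $X_i$, and since $X_i$ is a single orbit, $\bar{p}_i \equiv c_i$ is a constant. The tuple $(c_i)_{i \in I}$ automatically lies in $\rmC_G(\bigsqcup \vec{X})$ (constants on each $X_i$ are orbit-Lipschitz for the zero seminorm), so it defines the constant function $(c_i)_\cU^G = \lim_{i \to \cU} c_i$ on $X$.

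It remains to check that $(c_i)_\cU^G = p$, which reduces to the identity $p = \int_G pg\, d\mu(g)$ (which holds because $p$ is $G$-invariant) combined with a commutation step: the map $g \mapsto (p_i g)_{i \in I}$ is a continuous function from $G$ into $\prod_{i \in I}\rmC_\sigma^s(X_i)$ (since each $p_i g$ is again $\sigma$-orbit-Lipschitz by right-invariance of orbit-Lipschitz norms, and this is uniform in $i$), so its Bochner integral exists there and is computed coordinatewise as $(\bar{p}_i)_{i \in I}$. The quotient map $\rmC_G(\bigsqcup \vec{X}) \to \rmC(X)$ is a contractive $*$-homomorphism and therefore commutes with the Bochner integral, giving $(\bar{p}_i)_\cU^G = \int_G (p_i g)_\cU^G\, d\mu(g) = \int_G pg\, d\mu(g) = p$. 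Thus $p$ is constant and $X$ is topologically ergodic.

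The only substantive obstacle is the commutation of Haar integration with the ultracoproduct quotient map in the last step; this reduces to a routine application of the fact that continuous linear operators commute with Bochner integrals once one verifies (using right-invariance of the orbit-Lipschitz condition) that the integrand takes values in a fixed $\prod_{i\in I}\rmC_\sigma^s(X_i)$ slice of $\rmC_G(\bigsqcup \vec{X})$. Everything else is formal. Note that this argument does not require the stronger statement that the ultracoproduct is itself a single orbit; it suffices to rule out non-constant $G$-invariant continuous functions.
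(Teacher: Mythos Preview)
Your argument is correct and works, but it takes a different route than the paper. The paper uses the Vietoris characterization: for compact $G$, topologically ergodic means transitive (a single orbit), and if $X_i\to X$ in $\subg(Z)$ with each $X_i$ transitive, then given $x,y\in X$ one finds $x_\alpha\in X_\alpha$, $y_\alpha\in X_\alpha$ along a subnet with $x_\alpha\to x$, $y_\alpha\to y$, chooses $g_\alpha\in G$ with $g_\alpha x_\alpha=y_\alpha$, and uses compactness of $G$ to pass to a convergent subnet $g_\alpha\to g$, giving $gx=y$. This is a two-line compactness extraction and avoids any integration.

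Your Haar-averaging approach via the dual characterization (topologically ergodic iff $\rmC(X)^G=\bbC\cdot 1$) is a legitimate alternative, and arguably more in the spirit of the representation-theoretic Property~(T) analogy. One small inaccuracy: the claim that $p_ig$ remains $\sigma$-orbit-Lipschitz is not correct in general, since $\|(p_ig)h-p_ig\|=\|p_i(ghg^{-1})-p_i\|\leq\sigma(ghg^{-1})$, not $\sigma(h)$. This does not matter for your argument, however: all you actually need is that $g\mapsto (p_ig)_{i\in I}$ is norm-continuous into the Banach space $\rmC_G(\bigsqcup\vec{X})$ (which follows directly from $G$-continuity of $(p_i)_{i\in I}$), so that the Bochner integral over the compact group $G$ exists there and the contractive quotient map to $\rmC(\Sigma_\cU^G X_i)$ commutes with it. With that correction the proof goes through as written.
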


\begin{proof}
	If $G$ is compact, then a $G$-flow $X$ is topologically ergodic iff $X$ is transitive. If $Z$ is a $G$-flow and $(X_i)_{i\in I}$ is a net from $\rm{TErg}_G(Z)$ with $X_i\to X\in \rm{Sub}_G(X)$, then it is routine using the compactness of $G$ to check that $X$ is also transitive.
\end{proof}

For positive results regarding Dynamical Property (T), the above proposition is about all there is.

\begin{prop}
	\label{Prop:Polish_Not_wk_T}
	Suppose $G$ is a topological group such that there is a topologically ergodic $G$-flow with two distinct fixed points. Then $G$ does not have Dynamical Property (T). In particular, no non-compact Polish group has Dynamical Property (T).
\end{prop}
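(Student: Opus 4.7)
The plan is to construct, given the topologically ergodic $G$-flow $Y$ with distinct fixed points $y_0 \neq y_1$, an explicit ultracoproduct of topologically ergodic $G$-flows that fails to be topologically ergodic. For each $n \geq 1$, I would form the ``chain'' $Z_n$, defined as the quotient of the disjoint union $Y_0 \sqcup \cdots \sqcup Y_{n-1}$ of $n$ copies of $Y$ under the identifications $y_1^{(k)} \sim y_0^{(k+1)}$ for $0 \leq k \leq n-2$. Since the identified points are fixed, the diagonal $G$-action descends to a continuous action on $Z_n$, producing a $G$-flow with two distinguished ``endpoint'' fixed points $y_0^{(0)}$ and $y_1^{(n-1)}$.

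First I would verify that each $Z_n$ is topologically ergodic: a continuous $G$-invariant $\phi \colon Z_n \to \bbC$ restricts to a constant on each copy $Y_k \cong Y$ (by topological ergodicity of $Y$), and these constants must agree across glued points, so $\phi$ is globally constant. The heart of the argument is then to build a continuous $G$-invariant non-constant function on $\Sigma_\cU^G Z_n$ for any non-principal $\cU \in \beta\bbN$. Fix a Urysohn $f \in \rmC(Y, [0,1])$ with $f(y_0) = 0$, $f(y_1) = 1$, and set $\sigma_f(g) := \|fg - f\|$, which lies in $\sn(G)$. Then define $p_n \in \rmC(Z_n, [0,1])$ by $p_n|_{Y_k}(y) := (k + f(y))/n$, noting that the gluing condition is satisfied because both sides evaluate to $(k+1)/n$. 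A direct computation yields $\|p_n g - p_n\| = \sigma_f(g)/n$ for every $g \in G$, so the tuple $(p_n)_n$ is uniformly $\sigma_f$-orbit-Lipschitz and represents a continuous function $p := (p_n)_\cU^G \in \rmC(\Sigma_\cU^G Z_n)$.

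With these pieces in place, $\|pg - p\| = \lim_\cU \sigma_f(g)/n = 0$ for each $g$ (since $\cU$ is non-principal), so $p$ is $G$-invariant, while $p$ takes values $0$ and $1$ at the fixed points of $\Sigma_\cU^G Z_n$ represented by the tuples $(y_0^{(0)})_n$ and $(y_1^{(n-1)})_n$, so it is non-constant. This produces a factor map from $\Sigma_\cU^G Z_n$ onto a non-trivial motionless $G$-flow, showing that this ultracoproduct of topologically ergodic flows is not topologically ergodic, and hence $G$ does not have Dynamical Property (T).

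For the ``in particular'' claim, it suffices to exhibit, for every non-compact Polish group $G$, a topologically ergodic $G$-flow with two distinct fixed points. For $G = \bbZ$ or $\bbR$ the two-point compactification $[-\infty, +\infty]$ works immediately, and for more general non-compact Polish $G$ the plan is to build a two-point compactification $Y = G \cup \{y_0, y_1\}$ using two distinct $G$-invariant filters ``at infinity'' on $G$; topological ergodicity would follow by checking that neither singleton $\{y_i\}$ is open. The main obstacle will be producing two such distinct $G$-invariant filters, which may require a case analysis depending on the coarse structure of $G$.
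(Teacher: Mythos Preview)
Your main argument is correct and follows the same strategy as the paper: glue $n$ copies of the given flow end-to-end at their fixed points to form topologically ergodic chains $Z_n$, then exhibit a non-constant $G$-invariant continuous function on their ultracoproduct. The paper phrases this as a Vietoris limit inside an explicitly constructed ambient flow $Z\supseteq \bigsqcup_n Z_n\sqcup [0,1]$, using a step function $\psi_n$ to define the topology at $[0,1]$; your version works directly with $\Sigma_\cU^G Z_n$ and uses the continuous $p_n$ built from a Urysohn function $f$, which makes the equicontinuity estimate $\|p_ng-p_n\|=\sigma_f(g)/n$ transparent. These are the same idea in slightly different packaging, and your verification that each $Z_n$ is topologically ergodic (constant on each copy, constants forced to agree at glued points) is exactly right.

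The gap is in the ``in particular.'' Your plan to produce a two-point compactification $G\cup\{y_0,y_1\}$ via two distinct $G$-invariant filters at infinity is vague, and you already anticipate needing a case analysis you do not carry out. The paper avoids this entirely: for any non-compact Polish $G$, the Samuel compactification $\sa(G)$ contains at least two distinct minimal subflows $M\neq N$ (the paper cites \cite{Bartosova_Thesis}). One then takes $X$ to be the quotient of $\sa(G)$ collapsing each of $M$ and $N$ to a single point. This $X$ inherits a dense orbit from $\sa(G)$, hence is topologically ergodic, and by construction has two distinct fixed points. You should replace your sketch with this argument.
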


\begin{proof}
	Let $X$ be the $G$-flow as in the proposition statement, with fixed points $x\neq y\in X$. Write $X = X_1$, and define the $G$-flow $X_n$ by gluing together $n$ copies of $X$ in a line. More precisely, let $X_{1, n}$,..., $X_{n, n}$ be the $n$ copies of $X$, and write $x_{m, n}$, $y_{m, n}$ be the points in $X_{m, n}$ corresponding to $x$ and $y$. We form $X_n$ by attaching $y_{k, n}$ to $x_{k+1, n}$ for each $1\leq k< n$. On each $X_n$, define the function $\psi_n\colon X_n\to [0, 1]$ by setting $\psi_n(z) = k/n$ iff $z\in X_{k, n}\setminus \{y_{k, n}\}$ for $k< n$, and setting $\psi_n(z) = 1$ for any $z\in X_{n, n}$. Let $\psi$ denote the disjoint union of the functions $\psi_n$. Define a space $Z$ which as a set is $(\bigsqcup_{1\leq n< \omega} X_n)\sqcup [0, 1]$; we define a compact Hausdorff topology on $Z$ by declaring that each $X_n$ is a clopen subspace, and given a net $(z_i)_{i\in I}$ from $Z$ which isn't eventually contained in some $X_n$, we declare that $\lim_{i\in I} z_i = c\in [0, 1]$ iff $\lim_{i\in I} \psi(z_i) = c$. We turn $Z$ into a $G$-flow by viewing each $X_n$ as a subflow and declaring that the action is trivial on $[0, 1]$. Then we have that each $X_n$ is topologically ergodic, but $\lim X_n$ is the motionless subflow $[0, 1]$.
	
	For the ``in particular," we simply note that for any non-compact Polish group, $\sa(G)$ contains infinitely many minimal subflows (see for instance \cite{Bartosova_Thesis}). Let $M\neq N\subseteq \sa(G)$ be two distinct subflows, and let $X$ be the quotient of $\sa(G)$ which collapses both $M$ and $N$ down to fixed points. 
\end{proof}

It is an open question whether, given a general non-precompact group $G$, $\sa(G)$ contains two distinct minimal subflows. This is related to the concept of \emph{ambitability} from \cite{Pachl}.

We turn now to weak Dynamical Property (T), again with some negative results. 

\begin{prop}
	\label{Prop:LC_Not_T}
	No locally compact, non-compact group has weak Dynamical Property (T). 
\end{prop}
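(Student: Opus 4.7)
The plan is to exhibit a single $G$-flow that is topologically ergodic yet weakly contains a non-trivial motionless $G$-flow, thereby witnessing the failure of weak Dynamical Property (T). The natural candidate is $X = \sa(G)$. It is topologically ergodic because $G$ embeds into $\sa(G)$ as the dense $G$-orbit of $e_G$ under left translation, so any $G$-equivariant continuous map to a motionless flow is constant on $G$ and hence everywhere.

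The core task is to produce a non-constant, $G$-invariant continuous function on some ultracopower $\Sigma_\cU^G \sa(G)$; the image of such a function yields the desired factor map onto a non-trivial compact subset of the line with trivial action. Assume first that $G$ is $\sigma$-compact, so that one has a continuous proper symmetric subadditive function $\sigma_0 \colon G \to [0, \infty)$ with $\sigma_0(e_G) = 0$ (obtained, for instance, from a compatible proper left-invariant metric when $G$ is second countable). Set $\sigma := \min(\sigma_0, 1) \in \sn(G)$, choose $g_n \in G$ with $\sigma_0(g_n) \geq n$, and define ``growing tent'' functions $f_n \in \rmC(G, [0,1])$ by $f_n(h) := \max(0, 1 - \sigma_0(h g_n^{-1})/n)$. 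Subadditivity and symmetry of $\sigma_0$ yield $|f_n(gh) - f_n(h)| \leq \sigma_0(g)/n$, and combined with $\|f_n\|_\infty \leq 1$ this gives the uniform bound $\|f_n g - f_n\|_\infty \leq \sigma(g)$ for every $n$ (checked separately in the cases $\sigma_0(g) \leq 1$ and $\sigma_0(g) > 1$). Hence $(f_n)_{n < \omega} \in \rmC_\sigma^1(\omega \times \sa(G))$, and for any countably incomplete $\cU \in \beta\omega$ the function $(f_n)_\cU$ is a well-defined element of $\rmC(\Sigma_\cU^G \sa(G))$. The decay $\|f_n g - f_n\|_\infty \leq \sigma_0(g)/n \to 0$ shows $(f_n)_\cU$ is $G$-invariant, while $f_n(g_n) = 1$ and $f_n(e_G) = 0$ show it is non-constant: its values at $\lim_\cU(n, g_n)$ and $\lim_\cU(n, e_G)$ are $1$ and $0$ respectively. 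The induced factor map onto the image of $(f_n)_\cU$ then witnesses that a non-trivial motionless flow is weakly contained in $\sa(G)$.

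The main obstacle is the case when $G$ is not $\sigma$-compact, since then no proper continuous $\sigma_0 \colon G \to [0, \infty)$ exists on all of $G$. A workaround is to fix an open $\sigma$-compact subgroup $H = \bigcup_n U^n$ generated by a precompact symmetric neighborhood $U$ of $e_G$. If $H$ is non-compact, a proper length function on $H$ supports a parallel construction, using openness of $H$ to control shifts by elements outside $H$ via truncation; if $H$ is compact then $G/H$ is a discrete infinite $G$-set and the one-point compactification $(G/H)^+$ (with the Alexandroff point fixed) is a topologically ergodic $G$-flow on which an analogous bump-function argument applies. In every case the underlying mechanism is the same: growing supports combined with subadditivity of a length-like function produce asymptotically invariant continuous functions that remain uniformly bounded in a fixed seminorm, and then $(f_n)_\cU$ gives the required surjection onto a non-trivial motionless flow.
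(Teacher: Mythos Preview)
Your overall strategy matches the paper's: show that $\sa(G)$, which is topologically ergodic, weakly contains a non-trivial motionless flow by exhibiting a non-constant $G$-invariant function on some ultracopower. Your tent-function construction is clean when a continuous unbounded symmetric subadditive $\sigma_0$ exists; for $\sigma$-compact $G$ this can be arranged via Kakutani--Kodaira (pass to a second-countable quotient by a compact normal subgroup) together with Struble's theorem, though you should say so rather than leave the general $\sigma$-compact case implicit. The paper instead takes $p_{(F,\epsilon)} = \Phi(\sigma, 0, F, \epsilon)$ from Notation~\ref{Notation:Fubini_SN}, indexed over $\rm{FS}(G)\times(0,1)$: these seminorms lie below a fixed $\sigma\in\snpc(G)$ (so uniformly in $\rmC_\sigma(\sa(G))$), satisfy $\|p_{(F,\epsilon)}g - p_{(F,\epsilon)}\| \leq \epsilon$ for $g\in F$ (giving asymptotic invariance along a suitable ultrafilter), and have $\{h: p_{(F,\epsilon)}(h)<1/2\}$ contained in a compact set while $G$ is non-compact, forcing $\|p_{(F,\epsilon)}\|\geq 1/2$. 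This works uniformly with no case split.

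Your non-$\sigma$-compact case has a genuine gap. When $H$ is non-compact, extending the $H$-tent functions by $0$ to $G\setminus H$ keeps them in $\rmC_\sigma(G)$, but for any $g\notin H$ the supports of $f_n$ and $f_ng$ lie in distinct $H$-cosets, so $\|f_ng-f_n\|=1$ for every $n$; the limit $(f_n)_\cU$ is then only $H$-invariant, not $G$-invariant, and no truncation repairs this. When $H$ is compact, the discrete $G$-set $G/H$ carries no a priori length structure (take $G$ uncountable discrete, $H=\{e_G\}$), so an ``analogous bump-function argument'' is not evident and would need to be supplied. The paper's $\Phi(\sigma,0,F,\epsilon)$ handles both cases at once: it replaces your $\sigma_0(\,\cdot\,)/n$ by a seminorm built from a \emph{finite} $F\subseteq G$ and a tolerance $\epsilon$, available in any locally compact group, with precompactness of $\ol{\rmB_\sigma(1)}$ standing in for properness of $\sigma_0$.
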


\begin{proof}
	We construct an ultracopower of $\sa(G)$ which admits a non-trivial, $G$-invariant continuous function. Fix $I = \rm{FS}(G)\times (0, 1)$, and let $\cU\in \beta I$ be any ultrafilter such that for any $F\in \rm{FS}(G)$ and any $\epsilon > 0$, we have $\{(F', \epsilon'): F'\supseteq F, \epsilon' < \epsilon \}\in \cU$ and furthermore, so that for any $\delta > 0$ and $n< \omega$, we have $\{(F, \epsilon): |F^n|\cdot \epsilon < \delta\}\in \cU$. Fix some $\sigma = \sigma_{\vec{U}}\in \snpc^1(G)$ for some $\vec{U}$ with $U_{n+1}^3\subseteq U_n$ (Fact~\ref{Fact:Birkhoff_Kakutani}). In particular, note that the Haar measure of $U_n$ tends to zero. For each $i= (F, \epsilon) \in I$, set $p_i = \Phi(\sigma, 0, F, \epsilon )$ (Notation~\ref{Notation:Fubini_SN}). Then $\|p_i\|\geq 1/2$ for $\cU$-many $i\in I$ (indeed, our demand on $\cU$ ensures that we can bound the Haar measure of the set $\{g\in G: p_i(g)\geq 1/2\}$ away from $0$ for $\cU$-many $i\in I$) and $(p_i)_\cU$ is $G$-invariant.    
\end{proof}

It is natural to ask if there are examples of non-precompact topological groups which do have weak Dynamical Property (T). Given Corollary~\ref{Cor:TT_RPC} and the recent result of Ibarluc\'ia \cite{Ibarlucia_Prop_T} that all Polish RPC groups have Property (T), perhaps it is true that \emph{every} RPC group has weak Dynamical Property (T).

	\bibliographystyle{amsplain}
	\bibliography{bibzucker}
	
\end{document}